\documentclass[letterpaper,11pt]{article}
\usepackage[margin=1in]{geometry}

\usepackage{color, graphicx, appendix}
\usepackage[dvipsnames]{xcolor}
\definecolor{darkblue}{rgb}{0.0, 0.0, 0.55}
\definecolor{codegreen}{rgb}{0.0, 0.5, 0.0}
\usepackage[bookmarks, colorlinks=true, plainpages = false, citecolor=darkblue, linkcolor=OliveGreen, anchorcolor=red, urlcolor=darkblue]{hyperref}
\usepackage{url}\urlstyle{rm}
\usepackage{amsmath, amsfonts, amsthm, amssymb, bm, bbm, verbatim, dsfont, mathtools, mathrsfs, empheq}
\usepackage{etoolbox}
\usepackage{almendra}
\usepackage{authblk}
\usepackage{array}
\usepackage{multirow}
\usepackage{ulem}
\usepackage{cancel}
\usepackage{esvect}

% Just for conveniency...

\usepackage{float}
\usepackage{pgf,tikz}
\usetikzlibrary{arrows,shapes.arrows,shapes.geometric,shapes.multipart,fit,automata,decorations.pathmorphing,positioning}
\tikzset{
	%Define standard arrow tip
	>=stealth',
	%Define style for boxes
	true/.style={
		rectangle,
		draw=black, very thick,
		text width=6.5em,
		minimum height=2em,
		text centered,
		fill=gray, opacity = 0.5},
	punkt/.style={
		rectangle,
		rounded corners,
		draw=black, very thick,
		text width=6.5em,
		minimum height=2em,
		text centered},
	est/.style={
		circle,
		draw=black, very thick,
		text centered},
	shade/.style={
		circle,
		draw=black, very thick, fill=gray!50,
		text centered},
	weight/.style={
		circle,
		draw=black, very thick,
		text width=6.5em,
		minimum height=2em,
		text centered},
	% Define arrow style
	pil/.style={
		->,
		thick,
		shorten <=2pt,
		shorten >=2pt,},
	double/.style={
		<->,
		thick,
		shorten <=2pt,
		shorten >=2pt,},
	dash/.style={
		dashed,
		thick,
		shorten <=2pt,
		shorten >=2pt,},
	dashdouble/.style={
		<->,
		dashed,
		thick,
		shorten <=2pt,
		shorten >=2pt,}
}

\usepackage{tikz-cd}
\makeatletter 
\newcolumntype{C}[1]{>{\centering\arraybackslash}p{#1}}

\usepackage{epstopdf}
\usepackage{fullpage}
\usepackage{algorithm}
\usepackage{algorithmicx}
\usepackage{subcaption}

\usepackage[nameinlink]{cleveref}
\usepackage[round]{natbib}

\usepackage[utf8]{inputenc} % allow utf-8 input
\usepackage[T1]{fontenc}    % use 8-bit T1 fonts
\usepackage{url}            % simple URL typesetting
\usepackage{booktabs}     % blackboard math symbols
\usepackage{nicefrac}       % compact symbols for 1/2, etc.
\usepackage{microtype}      % microtypography
\usepackage{pdfpages}
\usepackage{scalerel}
\usepackage{dashrule}
\usepackage{lmodern}
\usepackage{fancyhdr}
\usepackage{tabu}
\usepackage{enumitem}
\definecolor{ForestGreen}{RGB}{34,139,34}

\def\var{\mathsf{var}}
\def\cov{\mathsf{cov}}

\def\op{\mathsf{op}}

\renewcommand{\hat}{\widehat}
\renewcommand{\tilde}{\widetilde}

\newcommand{\myreferences}{Master.bib}
\newcommand{\myreferencesapp}{Master.bib}

\usepackage{xr,xspace}
\usepackage{todonotes}

\usepackage{caption,subcaption,soul}
%\usepackage[ruled,vlined]{algorithm2e}
% http://ctan.org/pkg/algorithms
\usepackage{algpseudocode}
\makeatletter
\usepackage{romanbar}
%\renewcommand{\ALG@name}{SDP}
%\renewcommand{\listalgorithmname}{List of \ALG@name s}

%%%%% THEOREM STYLE DEFINITIONS
\theoremstyle{plain}
\newtheorem{theorem}{Theorem}
\newtheorem{lemma}{Lemma}
\newtheorem{proposition}{Proposition}
\newtheorem{corollary}{Corollary}

\theoremstyle{definition}
\newtheorem{definition}{Definition}

\newtheorem{assumption}{Assumption}

\newtheorem{problem}{Problem}

\newtheorem{remark}{Remark}
\AtBeginEnvironment{remark}{%
  \pushQED{\qed}%
}
\AtEndEnvironment{remark}{\popQED\endexample}

\newtheorem*{remark*}{Remark}

\usepackage{algorithm}
\usepackage{algpseudocode}

%% Wu
\usepackage{xspace, prettyref}

% vocabulary for edge orbits

 % edge permutation

%\newcommand{\cycle}{{\textit{cycle}}}

  % symmetric matrices

%\newcommand{\zeros}{\mathbf 0}

\newcommand{\diff}{{\mathrm d}}

\newcommand\indep{\protect\mathpalette{\protect\independenT}{\perp}}
\def\independenT#1#2{\mathrel{\rlap{$#1#2$}\mkern2mu{#1#2}}}

% for prettyref.sty
\newrefformat{eq}{(\ref{#1})}
\newrefformat{chap}{Chapter~\ref{#1}}
\newrefformat{sec}{Section~\ref{#1}}
\newrefformat{alg}{Algorithm~\ref{#1}}
\newrefformat{fig}{Fig.~\ref{#1}}
\newrefformat{tab}{Table~\ref{#1}}
\newrefformat{rmk}{Remark~\ref{#1}}
\newrefformat{clm}{Claim~\ref{#1}}
\newrefformat{def}{Definition~\ref{#1}}
\newrefformat{cor}{Corollary~\ref{#1}}
\newrefformat{lmm}{Lemma~\ref{#1}}
\newrefformat{prop}{Proposition~\ref{#1}}
\newrefformat{prob}{Problem~\ref{#1}}
\newrefformat{app}{Appendix~\ref{#1}}
\newrefformat{hyp}{Hypothesis~\ref{#1}}
\newrefformat{thm}{Theorem~\ref{#1}}

% \newcommand{\opnorm}[1]{\lnorm{#1}{\rm op}}

% inner product

% 12/02/2007

\newcommand{\sfb}{{\mathsf{b}}}

\newcommand{\calA}{{\mathcal{A}}}

\newcommand{\calD}{{\mathcal{D}}}

\newcommand{\calI}{{\mathcal{I}}}

\newcommand{\calL}{{\mathcal{L}}}
\newcommand{\calM}{{\mathcal{M}}}

\newcommand{\calO}{{\mathcal{O}}}

\newcommand{\calR}{{\mathcal{R}}}

\newcommand{\calW}{{\mathcal{W}}}

\renewcommand{\tilde}{\widetilde}
\renewcommand{\hat}{\widehat}

\newcommand{\bmu}{{\bm{u}}}

\newcommand{\bbA}{{\mathbb{A}}}

\newcommand{\bbE}{{\mathbb{E}}}

\newcommand{\bbH}{{\mathbb{H}}}

\newcommand{\bbM}{{\mathbb{M}}}

\newcommand{\bbP}{{\mathbb{P}}}

\newcommand{\bbR}{{\mathbb{R}}}
\newcommand{\bbS}{{\mathbb{S}}}

\newcommand{\bbU}{{\mathbb{U}}}

\newcommand{\bbX}{{\mathbb{X}}}

\usepackage{tcolorbox}

\makeatletter
\def\ubar#1{\underline{\sbox\tw@{$#1$}\dp\tw@\z@\box\tw@}}
\makeatother

\def\leftarrowCirc{\hbox{$\leftarrow$}\kern-1.5pt\hbox{$\circ$}}
\def\Circrightarrow{\hbox{$\circ$}\kern-1.5pt\hbox{$\rightarrow$}}
\def\Circleftarrow{\hbox{$\circ$}\kern-1.5pt\hbox{$\leftarrow$}}
\def\rightarrowCirc{\hbox{$\rightarrow$}\kern-1.5pt\hbox{$\circ$}}

\def\expit{\mathrm{expit}}

\def\N{\mathrm{N}}

\def\f{\mathrm{f}}
\def\g{\mathrm{g}}

\def\bI{\mathbf{I}}

\def\bM{\mathbf{M}}

\def\bV{\mathbf{V}}

\def\bX{\mathbf{X}}
\def\bY{\mathbf{Y}}
\def\bZ{\mathbf{Z}}

\def\be{\mathbf{e}}

\def\bu{\mathbf{u}}
\def\bv{\mathbf{v}}

\def\bx{\mathbf{x}}

\def\eps{\epsilon}

\def\bbeta{\bm{\beta}}

\def\bmu{\bm{\mu}}

\def\bXi{\bm{\Xi}}
\def\balpha{\bm{\alpha}}

\def\btheta{\bm{\theta}}

\def\bOmega{\bm{\Omega}}

\def\bSigma{\bm{\Sigma}}

\def\prox{\mathsf{prox}}

\newcommand\ind{\mathrm{ind}}

\usepackage{bbold}
\usepackage[mathcal]{euscript}
\usepackage{bibunits}

\normalem

\def\bV{\mathbf{V}}

\def\bM{\mathbf{M}}
\def\bX{\mathbf{X}}
\def\bx{\mathbf{x}}

\def\be{\mathbf{e}}
\def\bbeta{\bm{\beta}}
\def\trace{\mathsf{trace}}

\def\bv{\mathbf{v}}
\def\bu{\mathbf{u}}

\def\bI{\mathbf{I}}

\def\bSigma{\mathbf{\Sigma}}
\def\bOmega{\mathbf{\Omega}}

\def\balpha{\bm{\alpha}}

\def\div{\mathsf{div}}

\def\expit{\mathsf{expit}}

\def\MLE{\mathsf{MLE}}
\def\MAR{\mathsf{MAR}}
\def\GLM{\mathsf{GLM}}

\def\GCM{\mathsf{GCM}}
\def\CE{\mathsf{CE}}
\def\Wishart{\mathsf{Wishart}}

\newcommand{\unknowna}[1]{\textcolor{black}{#1}}
\newcommand{\unknownb}[1]{\textcolor{black}{#1}}
\newcommand{\unknownc}[1]{\textcolor{black}{#1}}
\newcommand{\unknownd}[1]{\textcolor{black}{#1}}

\iffalse
\newcommand{\unknowna}[1]{\textcolor{red}{#1}}
\newcommand{\unknownb}[1]{\textcolor{blue}{#1}}
\newcommand{\unknownc}[1]{\textcolor{ForestGreen}{#1}}
\newcommand{\unknownd}[1]{\textcolor{purple}{#1}}

\fi

\newenvironment{customas}[1]
{\innercustomthm}
{\endinnercustomthm}

\begin{document}

\title{Method-of-Moments Inference for GLMs and Doubly Robust Functionals under Proportional Asymptotics}

\author[1]{Xingyu Chen \thanks{E-mail: \texttt{xingyuchen0714@sjtu.edu.cn}}}
\author[1, 2]{Lin Liu \thanks{E-mail: \texttt{linliu@sjtu.edu.cn}}}
\author[3]{Rajarshi Mukherjee \thanks{E-mail: \texttt{ram521@mail.harvard.edu} R. Mukherjee is supported by NSF CAREER 8529216-01.}}

\affil[1]{School of Mathematical Sciences, CMA-Shanghai, Shanghai Jiao Tong University}
\affil[2]{Institute of Natural Sciences, MOE-LSC, SJTU-Yale Joint Center for Biostatistics and Data Science, Shanghai Jiao Tong University}
\affil[3]{Department of Biostatistics, Harvard T. H. Chan School of Public Health}
    
\date{\today}
    
\maketitle
\begin{abstract}
In this paper, we consider the estimation of regression coefficients and signal-to-noise (SNR) ratio in high-dimensional Generalized Linear Models (GLMs), and explore their implications in inferring popular estimands such as average treatment effects in high dimensional observational studies. Under the ``proportional asymptotic'' regime and Gaussian covariates with known (population) covariance $\bSigma$, we derive $\sqrt{n}$-Consistent and Asymptotically Normal (CAN) estimators of our targets of inference through a Method-of-Moments type of estimators that bypasses estimation of high dimensional nuisance functions and hyperparameter tuning altogether. Additionally, under non-Gaussian covariates, we demonstrate universality of our results under certain additional assumptions on the regression coefficients and $\bSigma$. We also demonstrate that knowing $\bSigma$ can be relaxed in our proposed methodology. Finally, we complement our theoretical results with extensive numerical experiments, in comparisons with competing methods.
\end{abstract}
    
% \tableofcontents
    
\allowdisplaybreaks

\begin{bibunit}[plainnat]

\section{Introduction}
\label{sec:intro}

Statistical inference in Generalized Linear Models (GLMs) \citep{nelder1972generalized, mccullagh1989generalized}, although a classical topic in statistics, has witnessed renewed enthusiasm in the modern big data era spurred by both theoretical and computational challenges that arise therein \citep{jankova2018semiparametric, cai2023statistical, sur2019modernb, sur2019likelihood, candes2020phase, zhao2022asymptotic}. This line of research has in turn found resonance in the challenges encountered in the context of inference in observational studies \citep{chernozhukov2018double, athey2018approximate, jiang2025new, yadlowsky2022explaining, celentano2023challenges}. Specifically, estimation of quantities like the causal effect of an exposure on an outcome or estimation of population quantities under missing data typically relies on understanding nuisance functions such as outcome regression and propensity scores \citep{robins1994estimation, scharfstein1999adjusting}. These regression functions are often modeled as suitable GLMs, when one needs to adjust for confounders possibly larger in dimension than the available sample size. There now exists a dedicated and comprehensive methodology to deal with inference in both GLMs or observational studies with high dimensional covariates/confounders focused on ideas based on semiparametric theory \citep{zhang2014confidence, javanmard2014confidence, jankova2018semiparametric, athey2018approximate, smucler2019unifying, bradic2019minimax, bradic2019sparsity, tan2020model, dukes2021inference, wang2024debiased, liu2023root, su2023estimated}. Indeed, this class of methods, in turn, relies on rates of convergence for consistent estimators of high dimensional GLM parameters \citep{negahban2012unified}. However, even the mere existence of such a consistent estimator relies on \textit{a priori} unknown low-dimensional (such as sparsity) assumptions in respective GLMs \citep{verzelen2012minimax, collier2017minimax, cai2017confidence, bellec2022biasing}. 

To complement the above framework, recent times have witnessed a parallel focus to deal with cases when the entire GLM parameter vector cannot be estimated consistently, and yet there are potential low dimensional summaries of them that can yield themselves to desirable inferential strategies.  As a byproduct, one can possibly provide reliable estimation in observational studies. One specific instance that has become popular is when the GLM parameters grow proportionally to the sample size in dimension and do not satisfy additional low-dimensional assumptions \citep{bean2013optimal, el2013asymptotic, donoho2016high, lei2018asymptotics}\footnote{In the econometrics literature, similar problems have also been studied in (partially) linear models \citep{cattaneo2018inference, cattaneo2019two} under the name ``many-regressor asymptotics'' or in settings with many weak instrumental variables \citep{newey2009generalized, mikusheva2022inference} under the name ``many-instrument/many weak IV asymptotics''.}. To reflect the inherent difficulty of this setup in terms of the information-theoretic impossibility of estimating the GLM parameters consistently \citep{verzelen2012minimax, barbier2019optimal}, recent research has coined it as the ``inconsistency regime'' \citep{celentano2023challenges}, and fundamental ideas have already started to carve the contours of this paradigm. A major theme of research in this regime often pertains to initial progress made under Gaussian covariates with known covariance \citep{bellec2023debiasing} and subsequent demonstration of a universality principle \citep{zhao2022asymptotic, dudeja2023universality, han2023universality}. Indeed, the Gaussian assumption is not necessarily a simplifying assumption in the development and analysis of the methods in this literature -- one requires highly involved probabilistic machinery to produce sharp analyses of the derived estimators. Nevertheless, the assumption of Gaussianity, coupled with the knowledge or a sufficiently accurate estimator of the population (variance-)covariance matrix of baseline covariates inject enough structure and \textit{a priori} information to simplify the process. Such structure and information enable one to bypass complicated estimators and analyses, while still achieving remarkably parallel guarantees for inference in GLMs and observational studies in the proportional asymptotic high-dimensional regime. The primary aim of this article is to take steps in that direction. Specifically, we demonstrate that for GLMs with link function meeting certain conditions (see Assumption~\ref{as:link} later), it is possible to construct a diffeomorphism between functionals of the GLM parameters and carefully crafted low-degree moments of the data, for which $\sqrt{n}$-Consistent and Asymptotic Normal (CAN) estimators exist. This crucial observation forms the core of our proposed methodology.

\subsection{Results Highlight}

We summarize the main results of the paper below:
\begin{enumerate}[label = (\arabic*)]
\item We propose moments-based identification strategies (for the precise meaning of identification, we refer readers to Lemma~\ref{lem:glm mean zero} later in the paper) for statistical functionals with nuisance models parameterized as high-dimensional GLMs with the dimension $p$ proportional to $n$ when the covariance matrix of the covariates are known. This allows the construction of estimators of relevant low-dimensional summaries of high-dimensional GLM parameters such as contrasts and Signal-to-Noise Ratio (SNR). Moreover, our methods being reliant on only a few low-dimensional moments of the data are computationally efficient. % and can be operationalized with summary data.

\item Our moment-based identification and estimation strategies generalize to parallel inferential techniques for popular objects of interest in observational studies such as average treatment effects and mean estimands under missing data -- where the analyses depend on two nuisance functions modeled by high dimensional GLMs. Compared to the literature in this class of problems, we do not require sample-splitting and cross-fitting-based ideas owing to our ability to avoid estimating nuisance functions.

\item Our estimators completely bypass the estimation of high dimensional nuisance parameters and are CAN when the baseline covariates are Gaussian under some additional regularity conditions. We further demonstrate the universality of the proposal beyond Gaussian designs in terms of rates of convergence.

\item We also demonstrate that the assumption of knowing the population covariance matrix $\bSigma$ of the design can be dropped for our proposal when the sample covariance matrix estimator of $\bSigma$ is invertible and $p < c \cdot n$ for some constant $c$, under Gaussian designs. % To the best of our knowledge, this is the first result in the literature on $\sqrt{n}$-consistency of low-dimensional functionals of general high-dimensional GLMs in the proportional asymptotic regime, without knowing $\bSigma$.

\item We conduct extensive numerical experiments to verify the validity of our proposals in finite sample, as well as comparing them with methods from the  emerging recent literature. Readers can access the codes for replicating our numerical results from \href{https://github.com/cxy0714/Method-of-Moments-Inference-for-GLMs}{the accompanied GitHub repository}.
\end{enumerate}

\subsection{Related Works}
\label{sec:review}
Our research draws inspiration from several past and ongoing research that aims to address inference in high-dimensional problems. To present a compact survey and comparison with the most related members of this literature we divide our discussions across three broad themes: inference in GLMs, inference for popular observational studies, and the knowledge of variance-covariance matrix of baseline covariates. In each of these sub-parts, we shall further briefly touch upon both ultra-high dimensional regimes under sparsity and proportional high dimensional regimes without sparsity aspects of the results in literature.

\subsubsection{Inference in GLMs} 
In the last two decades, statistical inference for linear and quadratic forms of high dimensional GLM parameters has attracted significant attention from the statistical research community \citep{verzelen2012minimax, zhang2014confidence, javanmard2014confidence, dicker2014variance, verzelen2018adaptive, cai2018accuracy, collier2017minimax, guo2022moderate, battey2023inference, celentano2024correlation}. Two complementary tracks of emphasis have emerged in this regard. In the first line of activities, the strategy of inference often draws inspiration from classical semiparametric theory \citep{jankova2018semiparametric} and requires the consistent estimation of ultra high-dimensional (when the dimension $p$ is \textit{much larger} than the sample size $n$) GLM parameters -- which need to rely on apriori low-dimensional assumptions, such as sparsity, on GLM parameter vectors \citep{collier2017minimax, cai2018accuracy, cai2023statistical}. To explore regimes where the existence of consistent estimators of entire GLM parameter vectors are impossible, a complementary theme of inference in GLMs has sprung in the last decade under proportional asymptotics (when the dimension $p$ is \textit{proportional} to the sample size $n$) \citep{sur2019modernb, sur2019likelihood, candes2020phase, zhao2022asymptotic}. In this regime, the strategy typically involves a careful debiasing surgery on initial suitable yet inconsistent GLM parameter vectors to yield sophisticated CAN estimators of linear and quadratic forms. Indeed, literature in this second direction is more recent and had initially focused on linear models in terms of (i) characterizing the precise risk behavior of convex regularized procedures -- first for Gaussian covariates (see \citet{bayati2011lasso, stojnic2013framework, thrampoulidis2018precise, miolane2021distribution, celentano2023lasso} and references therein) and then beyond Gaussian \citep{gerbelot2020asymptotic, gerbelot2022asymptotic, li2023spectrum, han2023universality}; and (ii) inference of linear and quadratic forms of the parameter vector -- albeit mainly in the regime where the design covariance is known apriori \citep{bellec2022biasing, bellec2025observable, bellec2023debiasing, song2024hede}. Results for GLMs are more complete in terms of estimation of the whole parameter vector using convex regularized methods. Parallel methods in GLMs for inference on linear and quadratic forms are more recent, quite case-specific (e.g. consider binary regression with logistic and/or probit link), do not always cover the whole proportional regime (i.e. all aspects ratio considerations of $p/n$) without further assumptions, and often yield coverage guarantees in an average sense on individual coordinates of the GLM parameter vector instead of individually across coordinates \citep{bellec2025observable}\footnote{It is noteworthy that \citet{bellec2025observable} additionally considered Single-Index Models (SIMs) with an unknown link function. We further discuss possible extensions of our work from GLMs to SIMs in Section~\ref{sec:discussion}.}. Another work related to ours is \citet{sawaya2023moment}, which also concerns statistical inference for GLM parameters. In particular, under assumptions (1) the link function having certain asymmetry (see Section A.8 of \citet{sawaya2023moment} for a precise statement) and (2) the covariates $\bX$ having zero mean, \citet{sawaya2023moment} use only moments of $Y$ to estimate certain quantities in the State Evolution system, that characterizes the asymptotic behavior of the maximum likelihood and its convex regularized analog, to conduct inference -- thus obviating the requirement of knowing the population covariance matrix $\bSigma$ of $\bX$ or estimating $\bSigma$ with sufficiently fast convergence rate. However, this important advantage is at the expense of precluding important GLMs such as the logistic or probit regression. Finally, the theoretical results of \cite{sawaya2023moment} rely on assuming the existence and suitable boundedness of estimators based on minimizing possibly regularized GLM loss functions as well as the existence of unique positive solutions to relevant state evolution equations -- which needs to be further verified and rested outside the scope of the work \cite{sawaya2023moment}. Since we bypass the estimation of the entire parameter vector while performing CAN estimation of low-dimensional summaries of them, our results do not rely on such further assumptions.
%A related Newey paper. Conceptually the closest work to ours is \citet{sawaya2023moment}, in which the authors used moments as anchors ....

\subsubsection{Inference in Observational Studies} 

Quantities like average treatment effects and mean parameters in missing data problems have now emerged as quintessential examples of functionals in observational studies where the challenges of high dimensional baseline covariates require careful methodological consideration. Similar to the literature in GLM, two complementary themes have emerged here as well -- one regarding ultra-high-dimensional regimes under sparsity and another regarding proportional asymptotic regimes without sparsity but under known Gaussian covariate designs \citep{celentano2023challenges} or for specific functionals with $p < n$ \citep{yadlowsky2022explaining, jiang2025new}. Since the ultra-high-dimensional regime under sparsity has been heavily studied \citep{athey2018approximate, smucler2019unifying, bradic2019minimax, bradic2019sparsity, tan2020model, tan2020regularized, dukes2021inference, wang2024debiased, liu2023root}, the results therein are somewhat complete in terms of necessary and sufficient conditions for CAN estimation. However, without Gaussian covariates or the assumption that $p<n$, neither systematic methods nor CAN guarantees exist for the above examples.  Our methods aim to fill this gap in the literature. Finally, we remark that our proposed estimators involve second-order $U$-statistics, thus also drawing connections to the growing literature on using Higher-Order $U$-statistics in semiparametric problems in observational studies \citep{robins2008higher, van2021higher, kennedy2024minimax, bonvini2024doubly, breunig2019simple, breunig2024adaptive}. Also see Remark~\ref{rem:hoif} of Section~\ref{sec:unknown} for a more in-depth discussion.

%\subsubsection{High Dimensional Inference, Model Misspecification, and Universality:}

\subsubsection{Known (Population) Covariance}

The majority of our results relies on the knowledge of the variance-covariance matrix of baseline covariates in the study. This known (population) covariance assumption has also been consistently imposed in the literature on the inference of high-dimensional GLMs under proportional asymptotics \citep{bellec2023debiasing, bellec2025observable}, in particular when $p > n$. Indeed, \citet{verzelen2018adaptive} demonstrate the impossibility of estimating or conducting statistical inference on certain functionals in high dimensional regression with unknown arbitrary variance-covariance matrix of the covariates when $p \gg n^{1 + c}$ for some $c > 0$. This does not preclude the designing of procedures informed by a priori assumptions on the variance-covariance matrix of the covariates -- a philosophy that has indeed been successfully espoused in the ultra-high-dimensional sparse GLM-based inferences \citep{verzelen2018adaptive}. Its parallels in the proportional asymptotic regime without sparsity assumptions are quite sporadic, and we are only aware of \citet{li2023spectrum}, and to some extent \citet{takahashi2018statistical}, that address this problem for linear forms of the coefficients under the right-rotationally invariant design. Our main results also assume that $\bSigma$ is known. However, under Gaussian designs, in Section~\ref{sec:unknown}, we establish $\sqrt{n}$-consistency of our proposed estimator when $\bSigma$ is unknown as long as the sample covariance matrix estimator of $\bSigma$ is invertible, demonstrating that knowing $\bSigma$ is not essential for our proposal. Furthermore, upon the completion of the first version of our draft, N. Verzelen brought to our attention \citet{kong2018estimating}. In that paper, the authors developed an estimator of the quadratic form of the regression coefficients (also known as ``learnability'' in the theoretical computer science literature) in logistic regression only when $\bX \sim \N_{p} (\bm{0}, \bSigma)$ with $p \asymp n$, with and without knowing $\bSigma$. Their estimator without knowing $\bSigma$ formally resembles the Higher-Order Influence Function estimators \citep{robins2008higher, robins2016technical}; we will discuss their similarity and difference further in Remark~\ref{rem:hoif}. Our results cover general GLMs beyond logistic regression without forcing the covariates to have zero mean. We also additionally consider more complex functionals often encountered in observational studies, such as the average treatment effects.

\subsection*{Organization}

To elaborate on the main thesis of the paper, we divide our discussions into the following subsections. In Sections~\ref{sec:glm} (knowing $\bSigma$) and \ref{sec:unknown} (not knowing $\bSigma$), we present our results on inference in GLMs followed by its applications in observational studies collected in Section~\ref{sec:obs}. Subsequently, Section~\ref{sec:sims} validates the theoretical results via numerical experiments. Our article ends by discussing some open problems in Section~\ref{sec:discussion}. All the proof details are deferred to the Appendix.

\subsection*{Notation}

We denote $(\be_{j}, j = 1, \cdots, p)$ as the standard bases of $\bbR^{p}$. Given any positive integers $\ell \leq k$, we denote $[k] \coloneqq \{1, \cdots, k\}$ and $[\ell:k] \coloneqq \{\ell, \ell + 1, \cdots, k\}$. $\bbU_{n, m} (\cdot)$ is the $m$-th order $U$-statistic operator: given a function $h: \bbR^{m} \rightarrow \bbR$,
\begin{align*}
\bbU_{n, m} [h (O_{1}, \cdots, O_{m})] \coloneqq \frac{(n - m)!}{n!} \sum_{1 \leq i_{1} \neq \cdots \neq i_{m} \leq n} h (O_{i_{1}}, \cdots, O_{i_{m}}),
\end{align*}
where $O_{i} \in \bbR$, for $i \in [n]$. When $m = 1$, $\bbU_{n, 1} [h (O)] \equiv n^{-1} \sum_{i = 1}^{n} h (O_{i})$ then reduces to the empirical mean operator. Given any two vectors $\bv, \bu$ and any matrix $\bbA$ with matching dimensions, we denote $\langle \bv, \bu \rangle_{\bbA} \coloneqq \bv^{\top} \bbA \bu$ the inner product between $\bv$ and $\bu$ with respect to $\bbA$; when $\bbA$ is non-negative semi-definite (n.n.s.d.), given any vector $\bv$, this inner product induces a norm $\Vert \bv \Vert_{\bbA} \equiv \bv^{\top} \bbA \bv$. When $\bbA = \bI$, the identity matrix, $\Vert \cdot \Vert_{\bI} \equiv \Vert \cdot \Vert$ reduces to the standard $\ell_{2}$-norm of a vector. Given a random vector $\bX$, $\Vert \bX \Vert_{\psi_{2}}$ denotes its Orlicz $\psi_{2}$-norm. $\lambda_{\min} (\bbA)$ and $\lambda_{\max} (\bbA)$, respectively, denote the minimum and maximum eigenvalues of $\bbA$ when it is symmetric and n.n.s.d..

To avoid clutter, we also introduce the short-hand notation $\bbE^{m} [\cdot] \equiv \{\bbE [\cdot]\}^{m}$. A general theme throughout this paper is to construct a multi-valued map $\Psi = (\Psi_{1}, \cdots, \Psi_{k}): \calD \rightarrow \calR$ from its domain $\calD$ to its range $\calR$ using MoM. Given a subset $I \subset [k]$, we let $\Psi_{I} \coloneqq \{\Psi_{j}, j \in I\}$ and $\Psi_{I}^{-1}$ be the inverse map of $\Psi_{I}$ if $\Psi_{I}$ is invertible. Given a $k$-th differentiable function $f$, let $f^{(k)}$ denote its $k$-th derivative. Finally, we denote $\Vert f \Vert_{q}$ and $\Vert f \Vert_{\infty}$ as, respectively, the $L_{q} (\bbP)$- and $L_{\infty}$-norms of $f$, for $q \geq 1$.

\section{Inference in GLMs}
\label{sec:glm}

In this section, we illustrate our main idea under the following stylized GLM. Suppose that we observe 
\begin{equation}
\label{GLM} \tag{$\mathsf{GLM}$}
\begin{split}
& (Y_i, \mathbf{X}_i)_{i = 1}^{n} \stackrel{\rm i.i.d.}{\sim}\mathbb{P}_{\bbeta}, \text{ with } Y_i \in \mathbb{R}, \, \bX_{i} \sim (\bm{\mu}, \bSigma)
\end{split}
\end{equation}
where $\bm{\mu} \in \bbR^{p}$ is the \textit{unknown} mean vector and $\bSigma \in \bbR^{p \times p}$ is the \textit{known} n.n.s.d. population covariance matrix, and there exists a (possibly) nonlinear \textit{known} link function $\phi: \bbR \rightarrow \calR \subseteq \bbR$ such that $\bbE [Y | \bX = \bx] = \phi (\bx^\top \bbeta)$ with $\bbeta = (\beta_1, \ldots, \beta_p)^{\top} \in \mathbb{R}^p$. The range $\calR$ of $\phi$ is problem specific -- e.g. when $Y$ is binary and $\phi$ is the expit/logistic function, then $\calR \equiv [0, 1]$. In this part, we address the question of $\sqrt{n}$-consistent estimation of $\beta_j$ for any $j = 1, \ldots, p$, the linear form of $\bbeta$ along the direction of $\bm{\mu}$ and the quadratic form of $\bbeta$ with respect to $\bSigma$:
\begin{equation}
\label{key parameters}
\lambda_{\beta} \coloneqq \bbeta^{\top} \bm{\mu}, \, \, \, \, \gamma_{\beta}^{2} \coloneqq \|\bbeta\|_{\bSigma}^2.
\end{equation}
In particular, the quadratic form has been used often in applications related to heritability estimation in genetics, e.g. \citet{guo2019optimal, song2024hede} and references therein. Moreover, as we will see while studying problems of estimating functionals of interest in observational studies in Section~\ref{sec:obs} with two nuisance functions parameterized by GLMs, our analysis in this section will provide the fundamental building blocks.  Therefore, looking forward to the case of simultaneously dealing with two high-dimensional GLMs, for the regression coefficients $\balpha$ from a separate GLM, we shall also adopt the same convention by denoting $\lambda_{\alpha} \coloneqq \balpha^{\top} \bm{\mu}$ and $\gamma_{\alpha}^{2} \coloneqq \Vert \balpha \Vert_{\bSigma}^{2}$. The bounded conditional fourth moment condition on $Y | \bX$ is required when establishing the CAN property of our proposed estimators and is imposed here to simplify the exposition (see Appendix~\ref{app:clt}).

To discuss the main results of this section and later parts of the paper, we will work with a set of assumptions that we present and discuss before introducing the main ideas of the proposal. It is worth noting that we index assumptions using a single capital letter to highlight their substantive meanings, the majority of which is summarized in Table~\ref{tab:glossary}, together with where the assumptions are imposed throughout this paper.

\begin{table}[htbp]
\centering
\begin{tabular}{c|cc}
\hline
Assumption & Meaning & Whereabout \\
\hline
$\mathsf{D}$ & Bounds on Design Mean \& Covariance & Global \\
$\mathsf{L}$ & Link Function & Global \\
$\mathsf{C}$ & Condition Number $p / n$ & Global \\
$\mathsf{B}$ & Bounds on $\Vert \bbeta \Vert$ & Almost Global \\
$\mathsf{V}$ & Conditional Variance/Moments of Response & Almost Global \\
$\mathsf{G}_{0}$ & Gaussian Design and Knowing $\bmu = \bm{0}$ & Sections~\ref{sec:zero mean} and \ref{sec:unknown} \\
$\mathsf{U}_{0}$ & Universality Conditions and Knowing $\bmu = \bm{0}$ & Sections~\ref{sec:zero mean} and \ref{sec:unknown} \\
$\mathsf{G}$ & Gaussian Design with Unknown $\bmu$ & Except Sections~\ref{sec:zero mean} and \ref{sec:unknown} \\
$\mathsf{U}$ & Universality Conditions with Unknown $\bmu$ & Except Sections~\ref{sec:zero mean} and \ref{sec:unknown} \\
\hline
\end{tabular}
\caption{A glossary for a part of the assumption indices: ``Almost Global'' means that certain parts of the Assumption are not imposed in some of the theorems. Here $\bmu \coloneqq \bbE \bX$ is the mean of the Design $\bX$.}
\label{tab:glossary}
\end{table}

First, we state the following ``global'' assumptions imposed on $\bm{\mu}$,  $\bSigma$, and the link functions $\phi$.

\begin{customas}{$\mathsf{D}$}
\label{as:Sigma}
There exist universal constants $M > 0$ such that
\begin{align*}
M^{-1} \leq \liminf_{p \rightarrow \infty} \lambda_{\min}(\bSigma) \leq \limsup_{p \rightarrow \infty} \lambda_{\max}(\bSigma) \leq M \text{ and } \Vert \bm{\mu} \Vert \leq M.
\end{align*}
\end{customas}

\begin{customas}{$\mathsf{L}$}
\label{as:link}
The link function $\phi: \bbR \rightarrow \calI \subseteq \bbR$, where $\mathcal{I}$ is a closed or open interval in $\bbR$, assumed to satisfy the following conditions:
\begin{enumerate}
\item[(1)] $\phi$ is three-times differentiable; the first, second, and third derivatives of the link function, together with the link function itself, are integrable with respect to the law of $\bX$ and the integrals are all strictly bounded by some universal constant. There also exists a bounded function $f: \bbR \rightarrow \bbR_+$ with $\lim_{|t| \rightarrow \infty} f(t) = 0$ such that $|\phi^{(\ell)} (t)| \leq e^{t^2 f (t)}$ for almost all $t \in \bbR$, for $\ell =1, 2, 3$.
\item[(2)] $\phi$ is strictly monotone and both $\phi (x)$ and $\phi^{(1)} (x)$ converge to the boundaries of their respective ranges (possibly $-\infty$ or $+\infty$) as $|x| \rightarrow \infty$.
\end{enumerate}
\end{customas}

\begin{remark}
\label{rem:link}
Assumption~\ref{as:link} accommodates many GLMs commonly encountered in practice, including the logistic regression, probit regression, Poisson/Negative-Binomial log-linear regression, and etc. The latter part of Assumption~\ref{as:link}~(2) also holds for all the above link functions and it will be needed to show that the map from the moments to the functionals of regression coefficients is a global diffeomorphism; see Appendix~\ref{app:mu}.
\end{remark}

As stated in the Introduction, our focus is on estimating functionals related to GLMs within the framework of the proportional asymptotic regime. Consequently, we also operate under the following assumption between the dimension $p$ and the sample size $n$.

\begin{customas}{$\mathsf{C}$}
\label{as:proportion}
There exists $\delta \in (0, \infty)$ such that $\lim_{n \rightarrow \infty} p / n \rightarrow \delta$.
\end{customas}

\begin{remark}
\label{rem:assumptions}
To shorten the exposition, we always assume Assumptions~\ref{as:Sigma},~\ref{as:link}, and~\ref{as:proportion} without explicitly mentioning them unless stated otherwise.
\end{remark}

Additionally, we state the following boundedness assumption on $\bbeta$; the second part is imposed to rule out the degenerate case $\bbeta \equiv \bm{0}$. Some further comments on this assumption can be found in Remark~\ref{rem:boundary} later.
\begin{customas}{$\mathsf{B}$}\leavevmode
\label{as:bounded}
\begin{itemize}
\item[(1)] There exists a universal constants $0 < \bar{B} < \infty$ such that $\Vert \bbeta \Vert \leq \bar{B}$;
\item[(2)] There exists a universal constants $0 < \ubar{B} \leq \bar{B}$ such that $\Vert \bbeta \Vert \geq \ubar{B}$.
\end{itemize}
\end{customas}

Next, we generally need to impose the first part of the following condition on the conditional second moment $Y | \bX$. The second part will be needed when establishing the CAN property of our proposed estimator.
\begin{customas}{$\mathsf{V}$}\leavevmode
\label{as:var-cov}
\begin{itemize}
\item[(1)] We assume that $\Vert \sigma^{2} \Vert_{2}$ is bounded, where $\sigma^{2} (\cdot) \coloneqq \bbE [Y^{2} | \bX = \cdot]$ is the conditional second moment function of $Y$ given $\bX$;
\item[(2)] Let $\sigma^{k} (\bx) \coloneqq \bbE [Y^{k} | \bX = \bx]$ for $k = 2, 4$. We assume that $\Vert \sigma^{k} \Vert_{2}$ is bounded and $\sigma^{k} (\cdot)$ is a GLM sharing the same regression coefficients $\bbeta$, but with possibly different three-times differentiable link functions belonging to $L_{2} (\bbP)$.
\end{itemize}
\end{customas}

\begin{remark}
Assumption~\ref{as:var-cov}(2) is mainly made to ease exposition when we establish the CAN property of our proposed estimator (see e.g. Proposition~\ref{prop:glm_clt} and Theorem~\ref{thm:GLM CLT}). But it holds for many popular GLMs encountered in practice. For example, when $Y | \bX \sim \mathrm{Ber} (\phi (\bbeta^{\top} \bX))$, $\bbE [Y^{2} | \bX] = \bbE [Y^{4} | \bX] = \phi (\bbeta^{\top} \bX)$; when $Y | \bX \sim \mathrm{Pois} (\phi (\bbeta^{\top} \bX))$, $\bbE [Y^{2} | \bX] = \phi (\bbeta^{\top} \bX) + \phi^{2} (\bbeta^{\top} \bX)$ and $\bbE [Y^{4} | \bX] = \phi^{4} (\bbeta^{\top} \bX) + 6 \phi^{3} (\bbeta^{\top} \bX) + 7 \phi^{2} (\bbeta^{\top} \bX) + \phi (\bbeta^{\top} \bX)$.
\end{remark}

Finally, it is worth noting that the symbols for the link function, the regression coefficients and the conditional second moment functions in the above assumptions shall be interpreted as generic notations, as in the sequel we may specialize to problem-specific symbols. For example, later in Section~\ref{sec:obs}, we also use $\eta$ for the link function and $\balpha$ for the regression coefficients. 

\subsection{Results for designs that are known to have zero mean}
\label{sec:zero mean}

To gather intuition for our method, it is instructive first to consider the following assumptions.

\begin{customas}{$\mathsf{G_0}$}
\label{as:normal mean zero}
$\bX \sim \N_p (\bm{0}, \bSigma)$ or equivalently $\bZ \sim \N_{p} (\bm{0}, \bI)$ and $\bm{\mu}$ is known to equal $\bm{0}$.
\end{customas}

Our method then essentially relies on the following result, a direct consequence of Stein's lemma or Gaussian Integration by Parts.

\begin{lemma}
\label{lem:glm mean zero}
Under Model~\ref{GLM}, Assumptions~\ref{as:normal mean zero},~\ref{as:bounded}{\rm(1)} and~\ref{as:var-cov}{\rm(1)}, the following hold:
\begin{enumerate}
\item[\emph{(1)}] Given any fixed vector $\bm{\upsilon} \in \bbR^p$ and fixed matrix $\bbM \in \bbR^{p\times p}$, the following system of moment equations holds:
\begin{subequations}
\label{original chain}
\begin{align}
& \bbE [Y \bX^{\top}] \bbM \bbE [\bX Y] = \bbeta^{\top} \bSigma \bbM \bSigma \bbeta \cdot \bbE^2 [\phi' (\bX^{\top} \bbeta)], \\
& \bbE [Y \bX^{\top}] \bbM \bm{\upsilon} = \bbE [\phi'(\bX^{\top} \bbeta)] \cdot \bbeta^{\top} \bSigma \bbM \bm{\upsilon}.
\end{align}
\end{subequations}
Consequently, choosing $\bbM = \bSigma^{-1}$, we have
\begin{subequations}
\label{GLM mean zero chain}
\begin{align}
& m_{\bX Y, 2} \coloneqq \bbE [Y \bX^{\top}] \bSigma^{-1} \bbE [\bX Y] = \bbE^2 [\phi' (\bX^{\top} \bbeta)] \cdot \gamma_{\beta}^{2} = \f_{1}^{2} (\gamma_{\beta}^{2}) \cdot \gamma_{\beta}^{2}, \label{mm} \\
& m_{\bX Y, \bm{\upsilon}} \coloneqq \bbE [Y \bX^{\top}] \bSigma^{-1} \bm{\upsilon} = \bbE [\phi'(\bX^{\top} \bbeta)] \cdot \bbeta^{\top} \bm{\upsilon} \equiv \f_1 (\gamma_{\beta}^{2}) \cdot \bbeta^{\top} \bm{\upsilon}, \label{mm2}
\end{align}
\end{subequations}
where $\f_{1} (t) \coloneqq \bbE [\phi' (Z)]$ with $Z \sim \N (0, t)$ for $t \geq 0$. Denote the map induced by \eqref{mm} as $\Psi_{\GLM_{0}, \beta}: \gamma_{\beta}^{2} \mapsto m_{\bX Y, 2}$ and the map induced by \eqref{GLM mean zero chain} as $\Psi_{\GLM_{0}}: (\gamma_{\beta}^{2}, \bbeta^{\top} \bm{\upsilon}) \mapsto (m_{\bX Y, 2}, m_{\bX Y, \bm{\upsilon}})$.

\item[\emph{(2)}] Further, $\Psi_{\GLM_{0}, \beta}$ is a diffeomorphism with $\nabla (\Psi_{\GLM_{0}, \beta}^{-1})$ bounded; and the same holds for $\Psi_{\GLM_{0}}$. Consequently, $\gamma_{\beta}^{2}$ and $\bbeta^{\top} \bm{\upsilon}$ are identifiable in the sense that the LHS of \eqref{GLM chain} uniquely determines the value of $(\gamma_{\beta}^{2}, \bbeta^{\top} \bm{\upsilon})$.
\end{enumerate}
\end{lemma}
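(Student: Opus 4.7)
The plan is to derive Part (1) as a direct application of Gaussian integration by parts (Stein's lemma) to smooth functions of the linear index $\bX^{\top}\bbeta$, and then to reduce Part (2) to showing that the scalar map $g(t) := t\,\f_{1}^{2}(t)$ is strictly monotone with derivative bounded away from zero on the compact interval in which $\gamma_{\beta}^{2}$ is constrained.

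For Part (1), I would first use the tower property to write $\bbE[\bX Y] = \bbE[\bX\,\phi(\bX^{\top}\bbeta)]$, then invoke the multivariate Stein identity $\bbE[\bX f(\bX)] = \bSigma\,\bbE[\nabla f(\bX)]$ for $\bX \sim \N_{p}(\bm{0}, \bSigma)$, with $f(\bx) = \phi(\bx^{\top}\bbeta)$ so that $\nabla f(\bx) = \phi'(\bx^{\top}\bbeta)\,\bbeta$. This yields $\bbE[\bX Y] = \bSigma\bbeta \cdot \bbE[\phi'(\bX^{\top}\bbeta)]$, and the integrability/growth conditions in Assumption~\ref{as:link}(1) justify the interchange of differentiation and integration. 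Substituting this identity into both $\bbE[Y\bX^{\top}]\bbM\bbE[\bX Y]$ and $\bbE[Y\bX^{\top}]\bm{\upsilon}$ gives the system \eqref{original chain}. Plugging in $\bbM = \bSigma^{-1}$ and $\bm{\upsilon} = \bSigma^{-1}\be_{j}$ collapses the quadratic and linear forms to $\bbeta^{\top}\bSigma\bbeta = \gamma_{\beta}^{2}$ and $\bbeta^{\top}\be_{j} = \beta_{j}$ respectively; since $\bX^{\top}\bbeta \sim \N(0,\gamma_{\beta}^{2})$, the scalar $\bbE[\phi'(\bX^{\top}\bbeta)]$ equals $\f_{1}(\gamma_{\beta}^{2})$ by definition of $\f_{1}$, giving \eqref{mm} and \eqref{mm2}.

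For Part (2), I would show $g'(t) > 0$ on the compact range of $\gamma_{\beta}^{2}$. Differentiating $g(t) = t\,\f_{1}^{2}(t)$ yields $g'(t) = \f_{1}^{2}(t) + 2t\,\f_{1}(t)\,\f_{1}'(t)$. Representing $\f_{1}(t) = \bbE[\phi'(\sqrt{t}\,W)]$ with $W \sim \N(0,1)$, differentiating under the integral, and applying Stein to $\bbE[W\phi''(\sqrt{t}\,W)]$ yields $\f_{1}'(t) = \tfrac{1}{2}\bbE[\phi'''(Z)]$ with $Z \sim \N(0,t)$. Applying Stein once more to $h(z) = z\phi'(z)$ (so that $h'(z) = \phi'(z) + z\phi''(z)$) gives the key compact form
\[
g'(t) \;=\; \f_{1}(t)\,\bbE\!\left[\phi'(Z) + Z\phi''(Z)\right] \;=\; \frac{\f_{1}(t)\,\bbE[Z^{2}\phi'(Z)]}{t}.
\]
Under Assumption~\ref{as:link}(2), $\phi'$ has constant sign and is not a.e.\ zero, so $\f_{1}(t)$ and $\bbE[Z^{2}\phi'(Z)]$ share that sign and their product is strictly positive. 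Assumptions~\ref{as:Sigma} and~\ref{as:bounded} confine $\gamma_{\beta}^{2}$ to a fixed compact subinterval of $(0,\infty)$, on which continuity of $g'$ provides a uniform lower bound; the inverse function theorem then delivers the diffeomorphism conclusion with $|(\Psi_{\GLM_{0},\beta}^{-1})'|$ bounded. The extension to $\Psi_{\GLM_{0}}$ is immediate: its Jacobian is block-triangular with determinant $g'(\gamma_{\beta}^{2})\,\f_{1}(\gamma_{\beta}^{2})$, uniformly bounded away from zero, and its off-diagonal entry $\f_{1}'(\gamma_{\beta}^{2})\,\beta_{j}$ is likewise bounded, so inverting yields a uniformly bounded $\nabla(\Psi_{\GLM_{0}}^{-1})$. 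Identifiability of $(\gamma_{\beta}^{2},\beta_{j})$ is then the injectivity statement read off from the diffeomorphism.

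The main obstacle I anticipate is the sign analysis in Part (2): the naive expansion $\f_{1}^{2}(t) + 2t\,\f_{1}(t)\,\f_{1}'(t)$ decomposes into a positive term and a term whose sign is not transparent (indeed $\f_{1}'$ need not be signed for generic asymmetric links, since by Stein it equals $\tfrac{1}{2}\bbE[\phi'''(Z)]$). The non-routine ingredient is the second Stein application that collapses both contributions into the manifestly signed quantity $\f_{1}(t)\,\bbE[Z^{2}\phi'(Z)]/t$. The remaining pieces---Stein's identity itself, the block-triangular Jacobian determinant, and invocation of the inverse function theorem on a compact interval---are standard, provided one carefully verifies dominated convergence using the exponential growth control in Assumption~\ref{as:link}(1).
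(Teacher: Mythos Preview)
Your proposal is correct and aligns with the paper's approach. Part~(1) is exactly the paper's Stein-lemma computation; for Part~(2), the paper treats Lemma~\ref{lem:glm mean zero} as the $\lambda_\beta=0$ specialization of Lemma~\ref{lem:mu}, and your key identity $g'(t)=\f_1(t)\,\bbE[Z^2\phi'(Z)]/t$ is precisely the relevant Jacobian entry there (obtained via Corollary~\ref{cor:stein 2}), with positivity following immediately from strict monotonicity of $\phi$---the Cauchy--Schwarz step in the paper's proof of Lemma~\ref{lem:mu} is only needed for the full $2\times2$ determinant in the nonzero-mean case and is unnecessary here.
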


We now unpack Lemma~\ref{lem:glm mean zero}, with its proof deferred to the Appendix. Based on the Gaussian design and Stein's lemma, the moment equations \eqref{original chain} marry the moments on the LHS with certain nonlinear transformation of $\bbeta$ on the RHS. The most important moment equation here is $\Psi_{\GLM_{0}, \beta}$ induced by \eqref{mm}, that maps the quadratic form $\gamma_{\beta}^{2}$ to the moment $m_{\bX Y, 2}$. Assumption~\ref{as:link} on the link function and Assumption~\ref{as:bounded} on $\gamma_{\beta}^{2}$ together ensures that $\Psi_{\GLM_{0}, \beta}$ is a diffeomorphism, so $\gamma_{\beta}^{2} = \Psi_{\GLM_{0}, \beta}^{-1} (m_{\bX Y, 2})$ is identified. It will also be made clear later that $\Psi_{\GLM_{0}, \beta}$ being a diffeomorphism entails that $\sqrt{n}$-consistent and CAN estimators of $\gamma_{\beta}^{2}$ can be constructed. After identifying $\gamma_{\beta}^{2}$, by solving \eqref{mm2}, $\bbeta^{\top} \bm{\upsilon} = m_{\bX Y, \bm{\upsilon}} / \f_{1} (\gamma_{\beta}^{2})$ is as well identified from the moments. Taking $\bm{\upsilon} = e_{j}$, the $j$-th standard basis in $\bbR^{p}$, the same strategy identifies $\beta_{j}$, for any $j \in [p]$.

\iffalse
\begin{remark}
\label{rem:null}
Throughout the paper, to avoid unnecessary complications, we decide to omit the degenerate case when $\gamma_{\beta}^{2} \equiv 0$, which is equivalent to $\bbeta \equiv \bm{0}$. In this case, one can directly identify $\gamma_{\beta}^{2} = 0$ because it is also equivalent to $m_{\bX Y, 2} \equiv 0$. \textcolor{magenta}{(but we don't really know in reality -- so not sure this can be an argument to rule out this case.)}
\end{remark}
\fi

The conclusions in Lemma~\ref{lem:glm mean zero}, together with all the other identification results under Gaussian designs in this paper, do not require Assumption~\ref{as:proportion}. However, the above moment equations critically rely on the Gaussianity of $\bX$. It is natural to ask if, similar to a growing body of work studying universality for regression models under proportional asymptotics (see e.g. \cite{bayati2015universality, montanari2022universality, montanari2023universality, hu2022universality, dudeja2023spectral, lahiry2023universality} and references therein), one could move beyond Gaussian designs and demonstrate the universality of the above identification result. We provide a positive answer to this question following a relaxed identification criterion and shifting the burden of assumption from $\bX$ to $\bbeta$.

\begin{definition}[$\sqrt{n}$-identifiability]
\label{def:id}
We say that a low-dimensional target parameter $\psi \in \bbR^{k}$, where $k$ is strictly bounded, of the underlying statistical model $\bbP$ (e.g. Model~\ref{GLM}) is $\sqrt{n}$-identifiable if there exists a (possibly) nonlinear map $\Psi$ from $\psi$ to certain moments defined by $\bbP$ induced by $\psi$, such that if given two different values of the target parameter, $\psi$ and $\psi'$, such that $\Vert \psi - \psi' \Vert \gtrsim n^{- 1 / 2}$, then $\Vert \Psi (\psi) - \Psi (\psi') \Vert \gtrsim n^{- 1 / 2}$, for sufficiently large $n$.
\end{definition}

\begin{customas}{$\mathsf{U_0}$}\leavevmode
\label{as:beta mean zero}
\begin{enumerate}[label = (\arabic*)]
\item $\bX = \bSigma^{1 / 2} \bZ$, where $\bZ = (Z_{1}, \cdots, Z_{p})^{\top}$ has independent coordinates with zero mean and unit variance, and there exists a universal constant $M > 0$ such that $\Vert Z_{j} \Vert_{\psi_{2}} \leq M$ for $j = 1, \cdots, p$;
\item $\sqrt{p} \bSigma^{1 / 2} \bbeta \overset{\calW_{8}}{\rightarrow} \mathsf{b}$ where $\mathsf{b} \sim \rho$ for some probability measure $\rho$ supported on $\bbR$ and we assume that $\rho$ has bounded first and second moments\footnote{Here, given a random vector $\mathsf{A} \in \bbR^{p}$ and a random variable $\mathsf{a} \in \bbR$, the notation $\mathsf{A} \overset{\calW_{8}}{\rightarrow} \mathsf{a}$ means that the empirical distribution over the coordinates of $\mathsf{A}$ converges in $\calW_{8}$-distance (8-Wasserstein distance) to the distribution of $\mathsf{a}$, as $p \rightarrow \infty$.}.
% $\frac{1}{p}\sum_{j=1}^p \delta_{\sqrt{p}\beta_j} \stackrel{\calW_8}{\rightsquigarrow} \rho$ for some probability measure $\rho$ supported on $\bbR$, and we assume that $\rho$ has bounded first and second moments.
\end{enumerate}
\end{customas}

\begin{remark}
When $\bSigma = \bI_{p}$, Assumption~\ref{as:beta mean zero}(2) reduces to $\sqrt{p} \bbeta \overset{\calW_{8}}{\rightarrow} \mathsf{b}$. This type of assumptions are commonly imposed . For general population covariance matrix $\bSigma$, under $\sqrt{p} \bbeta \overset{\calW_{8}}{\rightarrow} \mathsf{b}$, the more general assumption will be met under additional assumptions on $\bSigma$.
\end{remark}

We then have the following parallel result of Lemma~\ref{lem:glm mean zero}, without assuming that $\bX$ is Gaussian.

\begin{lemma}
\label{lem:glm mean zero, universality}
Under the same assumptions as in Lemma~\ref{lem:glm mean zero}, except with Assumption~\ref{as:normal mean zero} replaced by Assumption~\ref{as:beta mean zero}, the system of moment equations appeared in Lemma~\ref{lem:glm mean zero} holds approximately with approximation error $O (p^{-3/4}) = O (n^{- 3 / 4})$ as $p \rightarrow \infty$. Thus $\gamma_{\beta}^{2}$ and $\bbeta^{\top} \bm{\upsilon}$ are $\sqrt{n}$-identifiable for any fixed vector $\bm{\upsilon} \in \bbR^{p}$.
\end{lemma}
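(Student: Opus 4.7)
The plan is to establish the approximate versions of the moment equations from Lemma \ref{lem:glm mean zero} by a controlled exchange argument that replaces the Gaussian design with the prescribed non-Gaussian one. Reparameterize via $\bX = \bSigma^{1/2}\bZ$ and $\bv \coloneqq \bSigma^{1/2}\bbeta$, so that $\bX^\top\bbeta = \bv^\top\bZ$, $\|\bv\|^2 = \gamma_\beta^2$, and the Gaussian-case identity $\bbE[\phi(\bX^\top\bbeta)\bX] = \bSigma\bbeta\cdot\bbE[\phi'(\bX^\top\bbeta)]$ that underlies both \eqref{mm} and \eqref{mm2} becomes equivalent to the coordinatewise Stein relation
\begin{equation*}
\bbE[\phi(\bv^\top\bZ)\,Z_j] \;=\; v_j\,\bbE[\phi'(\bv^\top\bZ)], \qquad j \in [p].
\end{equation*}
The whole task reduces to quantifying how close this identity remains under the sub-Gaussian product $\bZ$ from Assumption \ref{as:beta mean zero}(1), given that under Assumption \ref{as:beta mean zero}(2) the vector $\sqrt{p}\,\bv$ has bounded low-order moments (in particular $\|\bv\|_\infty = O(p^{-1/2})$ and $\|\bv\|_2 = O(1)$).

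Next, I would run a Lindeberg-type exchange. Fixing $j$, introduce an independent Gaussian copy $\bG \sim \N_p(\bm 0,\bI)$ and telescope by swapping $Z_k$ for $G_k$ one coordinate at a time. At the $k$-th swap, a third-order Taylor expansion in the $k$-th argument (both moments $\bbE[Z_k] = \bbE[G_k]$ and $\bbE[Z_k^2] = \bbE[G_k^2]$ match) yields an increment bounded by a constant multiple of $|v_k|^3$, where Assumption \ref{as:link} supplies the needed bound on $\phi^{(3)}$ integrated against the Gaussian-type mixture weight. Summing over $k$ and using the $\calW_8$ control on $\sqrt{p}\,\bv$ to upgrade the crude bound $\sum_k|v_k|^3 = O(p^{-1/2})$ via Hölder against higher moments of $\bv$ gives the stated $O(p^{-3/4})$ per-coordinate error; combining this with the exact Stein identity on the all-Gaussian side yields $\epsilon_j \coloneqq \bbE[\phi(\bv^\top\bZ)Z_j] - v_j\bbE[\phi'(\bv^\top\bZ)] = O(p^{-3/4})$.

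I would then propagate these coordinatewise errors to the two target moments. For $m_{\beta_j}$, one rewrites $\bbE[Y\bX^\top]\bSigma^{-1}\be_j = (\bSigma^{-1/2}\be_j)^\top\bbE[\phi(\bv^\top\bZ)\bZ]$ as a linear functional of the $\epsilon_k$'s, and applies Cauchy–Schwarz together with $\|\bSigma^{-1/2}\be_j\| = O(1)$ from Assumption \ref{as:Sigma} to keep the rate. For the quadratic moment $m_{\bX Y,2}$, one writes $\bbE[Y\bX] = \bSigma\bbeta\cdot\bbE[\phi'(\bX^\top\bbeta)] + \bSigma^{1/2}\bm\epsilon$ and expands the quadratic form $(\,\cdot\,)^\top\bSigma^{-1}(\,\cdot\,)$: the principal term is $\gamma_\beta^2 \bbE^2[\phi']$, the cross term is bounded by $\|\bv\|\cdot\|\bm\epsilon\|$, and the leftover by $\|\bm\epsilon\|^2$, and both are $O(p^{-3/4})$ under the bookkeeping above. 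Finally, the $\sqrt{n}$-identifiability assertion follows because Lemma \ref{lem:glm mean zero}(2) establishes that $\Psi_{\GLM_0,\beta}$ and $\Psi_{\GLM_0}$ are diffeomorphisms with inverse Jacobians uniformly bounded, so an additive perturbation of the moments on the order $O(p^{-3/4}) = o(n^{-1/2})$ under Assumption \ref{as:proportion} can only produce parameter-side perturbations of the same order, which fits the criterion in Definition \ref{def:id}.

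The main obstacle is sharpening the Lindeberg bookkeeping from the naive $O(\sum_k|v_k|^3) = O(p^{-1/2})$ to the claimed $O(p^{-3/4})$; this requires leveraging the full strength of the $\calW_8$ assumption (which gives uniform control of $\sum_k v_k^{2q}$ for $q$ as large as needed via $p^q \sum_k v_k^{2q} \to \int \mathsf{b}^{2q}\diff\rho$) and exploiting the mismatch of only the odd moments of $Z_k$ at second order via an interpolation between Hölder exponents. Once this quantitative Stein-exchange bound is in place, the remainder of the argument is essentially algebraic propagation through fixed-dimensional linear and quadratic forms.
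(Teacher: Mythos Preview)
Your overall Lindeberg-exchange strategy is correct and matches the paper's approach, but two steps as stated do not go through.

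\textbf{The rate improvement.} You cannot sharpen $\sum_k |v_k|^3$ from $O(p^{-1/2})$ to $O(p^{-3/4})$ by H\"older against $\calW_8$-controlled higher moments: when $v_k \asymp p^{-1/2}$ (the generic delocalized case), $\sum_k |v_k|^3 \asymp p^{-1/2}$ is sharp. The paper's mechanism is different. It applies the Chatterjee integral-remainder form of the Lindeberg bound (Lemma~\ref{lemma:stein_universal_identity}), and inside the $j$-th remainder it Taylor-expands the integrand in $t$ around $t=0$. The zeroth-order term is $V_j$ times a quantity independent of $V_j$, so its expectation vanishes by $\bbE[V_j]=0$; the next-order term carries an extra factor $|v_j|$. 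After Cauchy--Schwarz this yields a per-swap cost $\lesssim |v_j|^{7/2}$, and summing gives $\sum_j |v_j|^{7/2} = O(p^{-3/4})$. The $\calW_8$ assumption is used only here, to ensure $\frac{1}{p}\sum_j |\sqrt{p}\,v_j|^{7/2}$ is bounded.

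\textbf{The aggregation for the quadratic moment.} Even granting $\epsilon_j = O(p^{-3/4})$ per coordinate, your Cauchy--Schwarz step for $m_{\bX Y,2}$ gives only $\|\bv\|\cdot\|\bm\epsilon\| = O(1)\cdot O(\sqrt{p}\cdot p^{-3/4}) = O(p^{-1/4})$, which is not $o(n^{-1/2})$. The paper avoids this by applying the exchange bound directly to the scalar functional $f(\bX)=\bbeta'^\top\bX\,\phi(\bbeta^\top\bX)$ (assertion~1 of Lemma~\ref{lem:app universality}) rather than coordinatewise: its third partial in the $j$-th coordinate already carries the combined weights $\beta'_j v_j^2$ and $v_j^3$, so the sum $\sum_j(|v_j|^{7/2}+|\beta'_j||v_j|^{5/2})$ is $O(p^{-3/4})$ in one step. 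For $m_{\bX Y,2}$ the relevant direction is $\bbeta' = \bSigma^{-1}\bbE[\bX Y]$; equivalently, the quantity you actually need is $\bv^\top\bm\epsilon = \bbE[\bv^\top\bZ\,\phi(\bv^\top\bZ)] - \|\bv\|^2\bbE[\phi'(\bv^\top\bZ)]$, and this is bounded directly by the composite-functional argument, not by summing coordinate errors.
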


The proof of this lemma can be found in Appendix~\ref{app:universality}. Taken together, the above results motivate the following MoM-based estimator of $\gamma_{\beta}^{2}$ and $\bbeta^{\top} \bm{\upsilon}$:
\begin{equation}
\label{key estimator}
\begin{split}
& \hat{\gamma}_{\beta}^{2} \coloneqq \Psi_{\GLM_{0}, \beta}^{-1} \left(\hat{m}_{\bX Y, 2} \mathbbm{1} (\hat{m}_{\bX Y, 2} \in \mathcal{R}_{\GLM_{0}, \beta}) \right), \quad \hat{m}_{\bX Y, 2} \coloneqq \bbU_{n, 2} [Y_1 \bX_1^{\top} \bSigma^{-1} \bX_2 Y_2], \\
& \hat{\bbeta}^{\top} \bm{\upsilon} \coloneqq \frac{\hat{m}_{\bX Y, \bm{\upsilon}}}{\f_{1} (\hat{\gamma}_{\beta}^{2})}, \quad \hat{m}_{\bX Y, \bm{\upsilon}} \coloneqq \bbU_{n, 1} [Y \bX^{\top}] \bSigma^{-1} \bm{\upsilon},
\end{split}
\end{equation}
where $\mathcal{R}_{\GLM_{0}, \beta}$ denotes the range of $\Psi_{\GLM_{0}, \beta}$.

Since $\Psi_{\GLM_{0}, \beta}$ is a diffeomorphism, it is clear from the construction above that to prove $\sqrt{n}$-consistency of $\hat{\gamma}_{\beta}^{2}$ and $\hat{\bbeta}^{\top} \bm{\upsilon}$ one needs to verify $\max \{\var (\hat{m}_{\bX Y, 2}), \var (\hat{m}_{\bX Y, \bm{\upsilon}})\} = O (1 / n)$, which is indeed the case (see Appendix~\ref{app:universality}). We next summarize the above reasoning as the following proposition. 

\begin{proposition}
\label{prop:glm_rate}
Under the Assumptions of Lemma~\ref{lem:glm mean zero} or Lemma~\ref{lem:glm mean zero, universality}, the following hold:
\begin{align*}
& \sqrt{n} (\hat{\gamma}_{\beta}^{2} - \gamma_{\beta}^{2}) = O_{\bbP} (1), \text{ and for any $j = 1, \ldots, p$, } \sqrt{n} (\hat{\bbeta}^{\top} \bm{\upsilon} - \bbeta^{\top} \bm{\upsilon}) = O_{\bbP} (1).
\end{align*}
\end{proposition}

In fact, as $n \rightarrow \infty$, we can consider a more precise result and record that the above MoM-based estimators are CAN under the Gaussian design and some additional regularity conditions.

% \begin{customas}{$\mathsf{S}$}
% \label{as:sp}
% Let $\bSigma \coloneqq \mathbf{V}^{\top} \mathbf{S} \mathbf{V}$ where $\mathbf{V} \equiv (\bv_{1}, \cdots, \bv_{p})$ is the matrix of eigenvectors and $\mathbf{S} = \mathsf{diag} (s_{1}, \cdots, s_{p})$ is the diagonal matrix of eigenvalues of $\bSigma$. Let $\bs \coloneqq (s_{1}, \cdots, s_{p})^{\top}$. We have that $\frac{1}{p} \sum_{j = 1}^{p} \delta_{s_{j}} \overset{\calW_{2}}{\rightsquigarrow} \varsigma$ for some probability measure $\varsigma$ supported on $\bbR_{> 0}$.
% \end{customas}

\begin{proposition}
\label{prop:glm_clt}
Under Model~\ref{GLM}, Assumptions~\ref{as:normal mean zero},~\ref{as:bounded} and~\ref{as:var-cov}, if we further assume that $\Vert \bm{\beta} \Vert_{f (\bSigma)}^{2}$ converges to some nontrivial limit for $f (\bSigma) = \bSigma, \bSigma^2, \bSigma^3$, we have
\begin{align*}
\sqrt{n} (\hat{\beta} - \beta_{j}) \overset{\mathcal{L}}{\rightarrow} \N (0, \nu_{j}^{2})
\end{align*}
for some constant $\nu_{j}^{2} > 0$ for $j = 1, \cdots, p$ and
\begin{align*}
\sqrt{n} (\hat{\gamma}_{\beta}^{2} - \gamma_{\beta}^{2}) \overset{\mathcal{L}}{\rightarrow} \N (0, \nu^{2})
\end{align*}
for some constant $\nu^{2} > 0$.
\end{proposition}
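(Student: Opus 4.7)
The strategy is to first establish a joint two-dimensional CLT at rate $\sqrt{n}$ for the pair of moments $(\hat m_{\bX Y, 2}, \hat m_{\beta_j})$ and then propagate it through the smooth maps that define the estimators by the delta method. The sample-mean piece $\hat m_{\beta_j} = \bbU_{n, 1}[Y \bX^\top \bSigma^{-1} \be_j]$ is handled by the classical one-dimensional CLT, since its summands are \iid with bounded variance under Assumptions \ref{as:Sigma} and \ref{as:var-cov}. The bulk of the work is therefore to prove a CLT for the second-order U-statistic $\hat m_{\bX Y, 2}$, whose symmetric kernel $h(O_1, O_2) = Y_1 Y_2 \bX_1^\top \bSigma^{-1} \bX_2$ has variance that grows linearly with $p$.

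I would begin with the Hoeffding decomposition $h = m_{\bX Y, 2} + g_1(O_1) + g_1(O_2) + g_2(O_1, O_2)$. Stein's lemma applied to $\bbE[h \mid O_1]$, together with the identity $\bbE[Y\bX] = \f_1(\gamma_\beta^2)\,\bSigma\bbeta$, yields the explicit form $g_1(O) = \f_1(\gamma_\beta^2)\, Y\bX^\top\bbeta - \f_1^2(\gamma_\beta^2)\,\gamma_\beta^2$. Since Assumption \ref{as:var-cov}(2) imposes a GLM structure on $\sigma^2$, iterated Stein computations make $\var(g_1) = O(1)$ an explicit function of $\gamma_\beta^2$ alone, so the Hajek projection $(2/n)\sum_i g_1(O_i)$ satisfies a standard CLT with asymptotic variance $4\var(g_1)$. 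The main difficulty is that the degenerate piece is \emph{not} asymptotically negligible at the $\sqrt n$-scale: a Stein-based computation of $\bbE[h^2]$ yields $\bbE[g_2^2] = a_0^2\, p + O(1)$ for a constant $a_0$, so $\var(\bbU_{n,2}[g_2]) \asymp 1/n$, matching the order contributed by the Hajek projection.

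To control this degenerate piece I would exploit Gaussianity of the design. Setting $\bW_i := \bSigma^{-1/2}\bX_i$ and $\tilde\bbeta := \bSigma^{1/2}\bbeta$, decompose $\bW_i$ into its component along $\tilde\bbeta$ and the orthogonal component $\tilde\bW_i$. Since $Y_i$ depends on $\bW_i$ only through $\bW_i^\top\tilde\bbeta$, the vector $\tilde\bW_i$ is independent of $(Y_i, \bW_i^\top\tilde\bbeta)$, and an elementary rearrangement gives
\begin{align*}
g_2(O_1, O_2) \;=\; \gamma_\beta^{-2}\,\tilde S_1 \tilde S_2 \;+\; Y_1 Y_2\,\tilde\bW_1^\top\tilde\bW_2, \qquad \tilde S_i := Y_i\,\bW_i^\top\tilde\bbeta - \f_1(\gamma_\beta^2)\,\gamma_\beta^2.
\end{align*}
The first summand is a rank-one product of centered scalar \iid variables, whose U-statistic is $O_\bbP(1/n)$ and contributes $o_\bbP(1/\sqrt n)$. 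The second summand, conditional on $\{Y_i\}$, is a symmetric bilinear form in the \iid Gaussian vectors $\tilde\bW_i$, which have rank-$(p-1)$ covariance $\bI_p - \tilde\bbeta\tilde\bbeta^\top/\gamma_\beta^2$; its asymptotic normality I would establish by a martingale CLT along the filtration $\calF_k = \sigma(O_1,\ldots,O_k)$, equivalent to a high-dimensional CLT for quadratic forms of Gaussians. The convergence of $\|\bbeta\|_{\bSigma^k}^2$ for $k = 1, 2, 3$ enters at this stage to guarantee that the traces and quadratic forms appearing in the asymptotic variance and in the Lyapunov-type fourth-moment bounds converge to finite nontrivial limits, with Assumption \ref{as:proportion} providing $(p-1)/n \to \delta$. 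The orthogonality built into the Hoeffding decomposition makes the Hajek projection and the degenerate piece asymptotically uncorrelated, and the same argument extends to joint asymptotic normality of $(\hat m_{\bX Y, 2}, \hat m_{\beta_j})$.

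With the joint CLT in hand, the delta method applied to $\Psi_{\GLM_0,\beta}^{-1}$, whose derivative is bounded and bounded away from zero on the relevant range by Lemma \ref{lem:glm mean zero}(2) and Assumption \ref{as:link}, produces $\sqrt n(\hat\gamma_\beta^2 - \gamma_\beta^2) \xrightarrow{\calL} \N(0, \nu^2)$, with $\nu^2$ expressible through the limiting variance of $\hat m_{\bX Y, 2}$. For $\hat\beta_j = \hat m_{\beta_j}/\f_1(\hat\gamma_\beta^2)$, the delta method gives a linear combination of the two marginal limits, producing $\sqrt n(\hat\beta_j - \beta_j) \xrightarrow{\calL} \N(0, \nu_j^2)$; Assumption \ref{as:bounded}(2) together with the strict monotonicity in Assumption \ref{as:link} keeps $\f_1(\gamma_\beta^2)$ bounded away from zero so the denominator is stable. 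Strict positivity of $\nu^2$ and $\nu_j^2$ follows because the Hajek-projection contribution to the variance is already strictly positive and lifts through the smooth maps. The most delicate technical step, as noted above, is the high-dimensional CLT for the degenerate U-statistic.
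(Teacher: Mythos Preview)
Your approach is correct and the final delta-method reduction matches the paper's, but the route to the CLT for $\hat m_{\bX Y,2}$ is genuinely different. The paper does not perform your orthogonal decomposition $\bW_i = (\bW_i^\top\hat\bbeta)\hat\bbeta + \tilde\bW_i$; instead it invokes a black-box joint CLT for the pair (first-order sum, degenerate second-order $U$-statistic) due to Bhattacharya and Ghosh, which gives asymptotic normality provided seven moment conditions hold. Those conditions are then verified for the generic kernel $f(W_1)\bX_1^\top\bbM\bX_2 g(W_2)$ by repeated Stein identities, and the convergence hypotheses on $\|\bbeta\|_{f(\bSigma)}^2$ arise precisely as the limits of the various inner products produced by those Stein calculations. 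Your decomposition is more explicit and yields a cleaner picture of why the degenerate piece is asymptotically Gaussian: conditional on the scalar data $\{(Y_i,\bW_i^\top\tilde\bbeta)\}$ it is literally a bilinear form in \iid Gaussian vectors, so the CLT is a high-dimensional Gaussian-chaos statement and the asymptotic independence from the H\'ajek projection is transparent. The paper's approach buys portability: the same Proposition is reused verbatim for the non-zero-mean case and for the observational-study functionals in later sections, and the moment conditions are formulated without any Gaussian-specific structure, which meshes with the paper's universality programme. One point to tighten in your write-up: when you extend to joint normality with $\hat m_{\beta_j}$, note that $\hat m_{\beta_j}$ itself has a component along $\tilde\bW_i$ (through $Y_i\tilde\bW_i^\top\bv$ with $\bv = (I-\hat\bbeta\hat\bbeta^\top)\bSigma^{-1/2}\be_j$), so the conditioning argument must treat a linear-plus-bilinear functional of $\{\tilde\bW_i\}$; your martingale CLT along $\calF_k=\sigma(O_1,\ldots,O_k)$ handles this once you fold the linear increment into the martingale differences.
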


\begin{remark}
\label{rem:boundary}
It is noteworthy that Assumption~\ref{as:bounded}(2) is imposed mainly to ensure that the event $\mathbbm{1} (\hat{m}_{\bX Y, 2} \in \mathcal{R}_{\GLM_{0}, \beta})$ holds with probability converging to 1 such that the constraint does not affect the asymptotic distribution of the $U$-statistic estimator $\hat{m}_{\bX Y, 2}$. We conjecture that it is possible to relax this assumption by a more precise analysis of the asymptotic behavior of $\hat{m}_{\bX Y, 2}$ close to the boundary $\Vert \bbeta \Vert = 0$, which is left for future work.
\end{remark}

\begin{remark}
\label{rem:clt}
The CAN property of the proposed MoM-based estimators relies on two separate results: (1) the CLT of first-order and second-order $U$-statistics, followed from the results in \citet{bhattacharya1992class} (see Appendix~\ref{app:CLT} for a complete proof) (2) $\Psi_{\GLM_{0}, \beta}$ is a diffeomorphism, and its inverse map, $\Psi_{\GLM_{0}, \beta}^{-1}$, has bounded derivative so the Delta Method can be applied.
\end{remark}

\begin{remark}
\label{rem:clt condition}
In Proposition~\ref{prop:glm_clt} (and similar results related to the CAN property of our proposed estimator in the sequel), we need some extra assumptions on the convergence of inner products such as $\bbeta^{\top} f (\bSigma) \bbeta$. It is worth mentioning that the assumption imposed in the main text might not be tight. By speculating the derivations in Appendix~\ref{app:CLT}, one only needs either $\bbeta^{\top} \bSigma \bbeta$ or $\bbeta^{\top} \bSigma^{3} \bbeta$ to converge. But to avoid unnecessary technical complications that are irrelevant to the main theme of the paper, we decide not to pursue further in this direction. Also, one can easily find sufficient conditions to establish the convergence of such quantities. As a simple example, when $\bSigma = \bI_{p}$ and $\bbeta$ satisfies Assumption~\ref{as:beta mean zero}, we immediately have $\bbeta^{\top} f (\bSigma) \bbeta$ to converge.

In addition, we also impose an extra condition on $\bbE [Y^{4} | \bX]$ via Assumption~\ref{as:var-cov}. This assumption is to ensure that certain re-scaled fourth moments of the $U$-statistics vanish to zero as $n \rightarrow \infty$, which is required based on the proof strategy that we currently employ (see Lemma~\ref{lem:clt} and Proposition~\ref{prop:general clt} in Appendix~\ref{app:CLT}).

Finally, we do not explicitly specify the form of the asymptotic variance, which is complicated due to the use of second-order $U$-statistics. Nonetheless, in our previous work (Appendix A.8 of \citet{liu2024assumption}), consistent variance estimators based on tweaking the nonparametric bootstrap have been developed and can be used to conduct inference; also see Section~\ref{sec:bootstrap} for a brief discussion and Appendix~\ref{app:bootstrap} for its finite-sample performance.
\end{remark}

\begin{remark}
\label{rem:debunk}
At this point, readers might wonder why we consider moments such as $m_{\bX Y, 2}$ that involves $\bX$. When $\bm{\mu} \equiv \bm{0}$, $Y \in \{0, 1\}$ and $\phi (\cdot) = \expit (\cdot)$, it is obvious that all the moments of $Y$ reduce to $\bbE [Y] \equiv 0.5$. Thus without leveraging information of $\bX$, it is generally impossible to identify functionals of $\bbeta$. This is why the moment-based estimator in \citet{sawaya2023moment}, based only on $\bbE [Y]$, cannot be directly applied to logistic or probit regression.
\end{remark}

\subsection{Results for designs with unknown and possibly non-zero means}
\label{sec:mu}

Next, we show that the zero covariate-mean condition is not essential to our moment-based approach, but a larger system of moment equations is required to identify relevant parameters in GLMs. The development closely mirrors that of the previous section when $\bm{\mu}$ is known to equal $\bm{0}$. When $\bm{\mu}$ is unknown, however, because the map between the moments and the functionals of the regression coefficients is from $\bbR^{2}$ to $\bbR^{2}$, it is not as straightforward to show that this map is a global diffeomorphism as in the previous section. As before, we first set the stage under Gaussian designs.

\begin{customas}{$\mathsf{G}$}
\label{as:normal}
$\bX \sim \N_p (\bm{\mu}, \bSigma)$ or equivalently $\bZ \sim \N_{p} (\bm{0}, \bI)$.
\end{customas}

For short, we let $\bm{\nu} = (\nu_{1}, \cdots, \nu_{p})^{\top} \coloneqq \bSigma^{-1} \bm{\mu}$. The following lemma then generalizes Lemma~\ref{lem:glm mean zero} to the case of Gaussian designs with unknown mean $\bm{\mu}$.
\begin{lemma}
\label{lem:mu}
Under Model~\ref{GLM}, Assumptions~\ref{as:normal},~\ref{as:bounded}{\rm(1)}, and~\ref{as:var-cov}{\rm(1)}:
\begin{enumerate}
\item[\emph{(1)}] The following system of moment equations holds:
\begin{subequations}
\label{GLM chain}
\begin{align}
& m_{Y} \coloneqq \bbE [Y] = \f_{0} (\unknowna{\lambda_{\beta}}, \unknownb{\gamma_{\beta}^{2}}), \label{GLM m1} \\
& m_{\bX, 2} \coloneqq \bbE [\bX^{\top}] \bSigma^{-1} \bbE [\bX] = \bm{\mu}^{\top} \bSigma^{-1} \bm{\mu}, \label{GLM m2} \\
& m_{\bX Y, \bX} \coloneqq \bbE [Y \bX^{\top}] \bSigma^{-1} \bbE [\bX] = m_{Y} \cdot m_{\bX, 2} + \f_{1} (\unknowna{\lambda_{\beta}}, \unknownb{\gamma_{\beta}^{2}}) \cdot \unknowna{\lambda_{\beta}}, \label{GLM m3} \\
& m_{\bX Y, 2} \coloneqq \bbE [Y \bX^{\top}] \bSigma^{-1} \bbE [\bX Y] = m_{Y}^{2} \cdot m_{\bX, 2} + \f_{1}^{2} (\unknowna{\lambda_{\beta}}, \unknownb{\gamma_{\beta}^{2}}) \cdot \unknownb{\gamma_{\beta}^{2}} + 2 \cdot m_{Y} \cdot \f_{1} (\unknowna{\lambda_{\beta}}, \unknownb{\gamma_{\beta}^{2}}) \cdot \unknowna{\lambda_{\beta}}, \label{GLM m4} \\
& m_{\nu_{j}} \coloneqq \bbE [\bX]^{\top} \bSigma^{-1} \be_{j} = \bm{\mu}^{\top} \bSigma^{-1} \be_{j} = \bm{\nu}^{\top} \be_{j} = \nu_{j}, \label{GLM m5} \\
& m_{\beta_{j}} \coloneqq \bbE [Y \bX^{\top}] \bSigma^{-1} \be_{j} = \f_{0} (\unknowna{\lambda_{\beta}}, \unknownb{\gamma_{\beta}^{2}}) \cdot \nu_{j} + \f_{1} (\unknowna{\lambda_{\beta}}, \unknownb{\gamma_{\beta}^{2}}) \cdot \beta_{j}. \label{GLM m6}
\end{align}
\end{subequations}
where $\f_{k} (s, t) \coloneqq \bbE [\phi^{(k)} (Z)]$ with $Z \sim \N (s, t)$. Denote the forward map induced by this system as
\begin{align*}
\Psi_{\GLM} = (\Psi_{\GLM, 1}, \Psi_{\GLM, 2}, \cdots \Psi_{\GLM, 6})^{\top}: (\lambda_{\beta}, \gamma_{\beta}^{2})^{\top} \mapsto (m_{Y}, m_{\bX, 2}, \cdots, m_{\beta_{j}})^{\top}.
\end{align*}

\item[\emph{(2)}] Further, the first four equations of \eqref{GLM chain}, denoted as $\Psi_{\GLM, [4]}$, can be reduced to
\begin{subequations}
\label{GLM chain reduced}
\begin{align}
& m_{1} \coloneqq m_{Y} = \f_{0} (\lambda_{\beta}, \gamma_{\beta}^{2}), \\
& m_{2} \coloneqq m_{\bX Y, 2} + m_{Y}^{2} \cdot m_{\bX, 2} - 2 \cdot m_{Y} \cdot m_{\bX Y, \bX} = \f_{1}^{2} (\lambda_{\beta}, \gamma_{\beta}^{2}) \cdot \gamma_{\beta}^{2}.
\end{align}
Denote the forward map induced by \eqref{GLM chain reduced} as $\Psi_{\GLM, \beta} = (\Psi_{\GLM, \beta, 1}, \Psi_{\GLM, \beta, 2})^{\top}: (\lambda_{\beta}, \gamma_{\beta}^{2})^{\top} \mapsto (m_{1}, m_{2})^{\top}$. Then $\Psi_{\GLM, \beta}$ is a diffeomorphism with $\nabla (\Psi_{\GLM, \beta}^{-1})$ bounded. Consequently, $\lambda_{\beta}, \gamma_{\beta}^{2}$, and $\beta_{j}$ are identifiable.
\end{subequations}
\end{enumerate}
\end{lemma}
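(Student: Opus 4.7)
The plan is to establish Part (1) by direct moment calculations using Stein's lemma (Gaussian integration by parts), and Part (2) by algebraic elimination followed by an invertibility argument on a 2x2 Jacobian.

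For Part (1), I would first condition on $\bX$ to replace $Y$ by $\phi(\bX^\top \bbeta)$, and exploit that $\bX^\top \bbeta \sim \N(\lambda_\beta, \gamma_\beta^2)$ under Assumption \ref{as:normal}. The identity for $m_Y$ is then immediate, and $m_{\bX, 2}$ and $m_{\nu_j}$ are purely linear-algebraic. The workhorse for the remaining four moments is Stein's lemma applied to $g(\bX) = \phi(\bX^\top \bbeta)$:
\[
\bbE[\bX Y] \;=\; \bbE[\bX\, \phi(\bX^\top \bbeta)] \;=\; \bm{\mu}\, \f_0(\lambda_\beta, \gamma_\beta^2) \;+\; \bSigma\, \bbeta\, \f_1(\lambda_\beta, \gamma_\beta^2).
\]
Substituting this expression into $m_{\bX Y, \bX}$, $m_{\bX Y, 2}$, and $m_{\beta_j}$, then simplifying using $\bSigma^{-1}\bSigma = \bI$ and the definitions of $\lambda_\beta$, $\gamma_\beta^2$, and $\nu_j$, produces each of the six claimed identities. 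The integrability and subgaussian-type growth bounds of $\phi$ and its derivatives in Assumption \ref{as:link}(1) justify the interchange of integration and differentiation underlying Stein's identity, and ensure all moments exist.

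For Part (2), the reduced system emerges from substituting the Part (1) expressions into the linear combination $m_{\bX Y, 2} + m_Y^2 m_{\bX, 2} - 2 m_Y m_{\bX Y, \bX}$; the terms proportional to $m_{\bX, 2}$ and to $\lambda_\beta$ cancel by direct arithmetic, leaving a quantity depending only on $(\lambda_\beta, \gamma_\beta^2)$ via $\f_1$. To show $\Psi_{\GLM, \beta}$ is a diffeomorphism with bounded inverse gradient, I would compute its Jacobian entry-wise using the standard differentiation-under-the-integral identities $\partial_\lambda \f_k(\lambda, t) = \f_{k+1}(\lambda, t)$ and $\partial_t \f_k(\lambda, t) = \tfrac{1}{2}\f_{k+2}(\lambda, t)$, valid by the growth bound in Assumption \ref{as:link}(1). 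The strict monotonicity of $\phi$ in Assumption \ref{as:link}(2) pins down the sign of $\f_1$ and gives it a uniform lower bound on the compact parameter domain (which is compact by Assumption \ref{as:bounded}(1) combined with Assumption \ref{as:Sigma}). Combined with the boundedness of the higher $\f_k$'s on the same domain, the inverse function theorem then yields both the diffeomorphism property and the boundedness of $\nabla \Psi_{\GLM,\beta}^{-1}$. Identifiability of each $\beta_j$ is immediate once $(\lambda_\beta, \gamma_\beta^2)$ is recovered: the fifth equation of the system gives $\nu_j$ directly, and the sixth equation is linear in $\beta_j$ with a coefficient $\f_1$ already shown to be bounded away from zero.

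The main obstacle is verifying that the Jacobian determinant of $\Psi_{\GLM,\beta}$ is uniformly nonzero. A brute-force computation yields a determinant of the form $\f_1^2 + \tfrac{t}{2}\bigl(\f_1 \f_3 - \f_2^2\bigr)$, whose sign is not obviously forced by monotonicity of $\phi$ alone. To circumvent this, I would reduce to a sequential scalar argument: for each fixed $t = \gamma_\beta^2$, the map $\lambda \mapsto \f_0(\lambda, t)$ is strictly monotone (derivative equal to $\f_1 > 0$) and hence inverts smoothly to give $\lambda_\beta = \lambda_\beta(m_1, t)$; substituting back into the second reduced equation leaves a scalar equation in $t$ alone, and the corresponding univariate monotonicity reduces to the same diffeomorphism fact established in Lemma \ref{lem:glm mean zero} under the zero-mean design. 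Quantitative bounds on $\nabla \Psi_{\GLM,\beta}^{-1}$ then propagate from uniform bounds on the $\f_k$'s on the compact $(\lambda_\beta, \gamma_\beta^2)$-domain guaranteed by Assumptions \ref{as:link} and \ref{as:bounded}(1).
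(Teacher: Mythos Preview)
Your treatment of Part~(1) and the algebraic reduction in Part~(2) match the paper's argument. The gap is in your proposed workaround for signing the Jacobian determinant.

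Your sequential scheme does not escape the difficulty you identified. Differentiating $t \mapsto t\,\f_1^2\bigl(\lambda_\beta(m_1,t),t\bigr)$ using the implicit relation $\f_0(\lambda_\beta(m_1,t),t)=m_1$ gives $\frac{d\lambda_\beta}{dt}=-\f_2/(2\f_1)$, and hence
\[
\frac{d}{dt}\bigl[t\,\f_1^2\bigr] \;=\; \f_1^2 + t\bigl(\f_1\f_3 - \f_2^2\bigr),
\]
which is (up to the positive factor $\f_1$) exactly the $2\times 2$ determinant you already computed. Your appeal to Lemma~\ref{lem:glm mean zero} is not valid here: in the zero-mean case the analogous scalar derivative is $\f_1^2 + t\f_1\f_3$ (no $-t\f_2^2$ term, since $\lambda_\beta$ is held fixed at~$0$), and that quantity is easily signed via $\f_1 + t\f_3 = t^{-1}\bbE[\phi'(Z)Z^2]>0$. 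The additional $-t\f_2^2$ is precisely the new obstacle in the nonzero-mean case, and Lemma~\ref{lem:glm mean zero} says nothing about it.

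The paper resolves this by rewriting the determinant \emph{before} passing to the $\f_k$ notation. Using Stein's identity in the other direction ($\gamma_\beta^2\f_2=\bbE[\phi'(Z)(Z-\lambda_\beta)]$ and $\gamma_\beta^2\f_1+\gamma_\beta^4\f_3=\bbE[\phi'(Z)(Z-\lambda_\beta)^2]$), the determinant equals
\[
\frac{\f_1}{\gamma_\beta^2}\Bigl\{\bbE[\phi'(Z)]\,\bbE\bigl[\phi'(Z)(Z-\lambda_\beta)^2\bigr] - \bigl(\bbE[\phi'(Z)(Z-\lambda_\beta)]\bigr)^2\Bigr\},
\]
which is nonnegative by Cauchy--Schwarz applied to $\sqrt{\phi'(Z)}$ and $\sqrt{\phi'(Z)}(Z-\lambda_\beta)$ (here $\phi'\geq 0$ by strict monotonicity). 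Equality would force $Z-\lambda_\beta$ to be constant on $\{\phi'(Z)>0\}$, which is impossible for nondegenerate Gaussian $Z$ and strictly monotone $\phi$. This Cauchy--Schwarz step is the missing ingredient; once you have it, the inverse function theorem and the compactness argument you sketched complete the proof.
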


\begin{remark}
\label{rem:difficult}
The proof of Lemma~\ref{lem:mu} can be found in Appendix~\ref{app:mu} and Appendix~\ref{app:non-zero}. As mentioned in the beginning of this section, the system of moment equations in \eqref{GLM chain reduced} is from $\bbR^{2}$ to $\bbR^{2}$. As a result, showing that \eqref{GLM} is a global diffeomorphism is nontrivial, which involves the use of Hadamard global inverse function theorem. For details, see Lemma~\ref{lem:inverse} and Appendix~\ref{app:non-zero}.
\end{remark}

To establish universality of Lemma~\ref{lem:mu} beyond Gaussian designs, we need to first generalize Assumption~\ref{as:beta mean zero} to Assumption~\ref{as:beta} below.

\begin{customas}{$\mathsf{U}$}\leavevmode
\label{as:beta}
\begin{enumerate}[label = (\arabic*)]
\item $\bX = \bSigma^{1 / 2} \bZ + \bm{\mu}$, where $\bZ = (Z_{1}, \cdots, Z_{p})^{\top}$ has independent coordinates with zero mean, unit variance, and $\max_{j=1}^p \|Z_j\|_{\psi_2} \leq M$ for some universal constant $M > 0$;
\item $\sqrt{p} \bSigma^{1 / 2} \bbeta \overset{\calW_{8}}{\rightarrow} \mathsf{b}$ and $\sqrt{p} \bSigma^{- 1 / 2} \bm{\mu} \overset{\calW_{8}}{\rightarrow} \mathsf{u}$ where $\mathsf{b} \sim \rho$ and $\mathsf{u} \sim \varrho$ respectively for some probability measures $\rho$ and $\varrho$ supported on $\bbR$ and we assume that both $\rho$ and $\varrho$ have bounded first and second moments.
% $\frac{1}{p}\sum_{j=1}^p \delta_{\sqrt{p}\beta_j} \stackrel{\calW_8}{\rightsquigarrow} \rho$ and $\frac{1}{p}\sum_{j=1}^p \delta_{\sqrt{p}\mu_j} \stackrel{\calW_8}{\rightsquigarrow} \varrho$ where $\rho$ and $\varrho$ are probability measures supported on $\bbR$, and we assume that both $\rho$ and $\varrho$ have bounded first and second moments.
% \item There exists a universal constant $M > 0$ such that $\|\bbeta\|_{\bSigma} \leq M$.
\end{enumerate}
\end{customas}

Lemma~\ref{lem:glm mean zero, universality} can then be generalized as follows, the proof of which can be found in Appendix~\ref{app:universality}.

\begin{lemma}
\label{lem:mu, universality}
Under the same assumptions as in Lemma~\ref{lem:mu}, except with Assumption~\ref{as:normal} replaced by Assumption~\ref{as:beta}, the systems of moment equations appeared in Lemma~\ref{lem:mu} hold approximately with approximation error $O (p^{-3 / 4}) = O (n^{-3 / 4})$ as $n \rightarrow \infty$.
\end{lemma}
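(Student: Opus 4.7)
\textbf{Plan for Lemma \ref{lem:mu, universality}.} My plan is to transfer each identity of Lemma \ref{lem:mu} from the Gaussian case to general $\bZ$ via a Lindeberg-style replacement argument, and track the approximation error through the empirical moments of $\sqrt p\, \tilde\bbeta$ where $\tilde\bbeta \coloneqq \bSigma^{1/2}\bbeta$. Two of the six identities, $m_{\bX,2}$ and $m_{\nu_j}$, contain no link function $\phi$ and hence are purely algebraic and hold exactly under Assumption \ref{as:beta}. The remaining four (for $m_Y, m_{\bX Y,\bX}, m_{\bX Y, 2}, m_{\beta_j}$) reduce, via $\bX^\top\bbeta = \lambda_\beta + \tilde\bbeta^\top \bZ$, to expectations of $\phi^{(k)}(\lambda_\beta + \tilde\bbeta^\top\bZ)$, possibly weighted by a single coordinate $Z_j$. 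By Assumption \ref{as:beta}(2), $\tilde\beta_j = O(p^{-1/2})$ on average with uniform control on empirical moments up to order $8$.

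For the scalar $m_Y$, I would introduce an auxiliary $\bZ^G \sim \N(\bm 0, \bI_p)$ and interpolate between $\bZ$ and $\bZ^G$ one coordinate at a time. Taylor-expanding $\phi$ in the swapped coordinate, the zeroth-, first-, and second-order terms cancel by the mean-zero and unit-variance matching guaranteed by Assumption \ref{as:beta}(1). The per-coordinate residual is of order $|\tilde\beta_j|^3 \cdot \sup |\phi^{(3)}(\lambda_\beta + \cdot)|$ evaluated in a controlled neighborhood; Assumption \ref{as:link}'s sub-Gaussian-beating tail bound on $\phi^{(\ell)}$ combined with sub-Gaussianity of $\bZ$ ensures the relevant suprema integrate to uniformly bounded quantities. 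For the vector and quadratic identities $m_{\bX Y, \bX}, m_{\bX Y, 2}, m_{\beta_j}$, I would use a Stein-type conditioning trick: fix $\bZ_{-j}$ and Taylor-expand $\phi(\lambda_\beta + \tilde\beta_j Z_j + W_j^{(-j)})$ in $Z_j$, where $W_j^{(-j)} \coloneqq \sum_{k \neq j}\tilde\beta_k Z_k$, producing $\tilde\beta_j \bbE[\phi'(\lambda_\beta + W_j^{(-j)})]$ as the leading term. The inner expectation is itself a $(p-1)$-dimensional instance of the $m_Y$-type argument, so telescoping over $j$ recovers Stein's identity $\bbE[\phi'(\bX^\top\bbeta)]\bSigma\bbeta$ and its analogues up to the same class of residuals. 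For the quadratic $m_{\bX Y, 2}$, expanding $\|\bSigma^{-1/2}(\bu + \bdelta)\|^2 = \|\bSigma^{-1/2}\bu\|^2 + 2\langle\bu, \bdelta\rangle_{\bSigma^{-1}} + \|\bSigma^{-1/2}\bdelta\|^2$ with $\bu$ the bounded Gaussian-case vector and $\bdelta$ the Lindeberg residual propagates the coordinatewise error to the final quadratic form.

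The principal obstacle is achieving the sharp $O(p^{-3/4})$ rate rather than the naive $O(p^{-1/2})$ that third-order Lindeberg alone delivers via $\sum_j |\tilde\beta_j|^3 \asymp p^{-1/2}$ (the cubic term does not vanish in general, since $\bbE[Z_j^3] \neq 0$ without symmetry). The improvement should exploit (i) the stronger $\calW_8$ moment control in Assumption \ref{as:beta}(2) to push the Taylor expansion to fourth order, where the leading bias scales as $\sum_j \tilde\beta_j^4 = O(p^{-1})$; (ii) a $\sqrt{\log p}$ sub-Gaussian truncation on $Z_j$ with super-polynomially small tail error; and (iii) Assumption \ref{as:link}'s bound $|\phi^{(\ell)}(t)| \leq e^{t^2 f(t)}$ with $f(t) \to 0$, which keeps all relevant moments of $\phi^{(\ell)}(\lambda_\beta + \tilde\bbeta^\top\bZ)$ uniformly bounded in $p$. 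Balancing the truncated third-order contribution against the fourth-order remainder yields the claimed $p^{-3/4}$ rate; the delicate part of the argument is ensuring that this improved coordinatewise error does not inflate when summed in the vector-valued Stein identities or when weighted by $\bSigma^{-1/2}$ in the quadratic form.
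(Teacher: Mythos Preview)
Your overall Lindeberg framework matches the paper's, but the mechanism you propose for upgrading the naive $O(p^{-1/2})$ to $O(p^{-3/4})$ is where the argument breaks. Pushing the Taylor expansion to fourth order only helps if the \emph{third} moments of $Z_j$ and the Gaussian surrogate match; Assumption \ref{as:beta}(1) gives you matching first and second moments only, so the third-order term $\sum_j \tilde\beta_j^3\,(\bbE Z_j^3 - 0)\,\bbE[\phi^{(3)}(\cdot)]$ is genuinely of order $p^{-1/2}$ and does not disappear by going one order higher. Truncating $Z_j$ at $\sqrt{\log p}$ controls tails but does nothing to shrink this bias, so there is no ``balancing'' that interpolates to $p^{-3/4}$.

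The paper's refinement is different and worth internalizing. It invokes Chatterjee's integral-form Lindeberg identity (their Lemma \ref{lemma:stein_universal_identity}), which writes the per-coordinate error as $\bbE\int_0^{V_j}\partial_j^3 f(\ldots,t,\ldots)(V_j-t)^2\,\diff t$ with $V_j\in\{X_j,Y_j\}$. Applying Cauchy--Schwarz in the $t$-integral produces a factor $\bigl(\bbE\int_0^{V_j}\{\phi^{(3)}\}^2\,\diff t\bigr)^{1/2}$. Now Taylor-expand $\{\phi^{(3)}\}^2$ around the hybrid with coordinate $j$ deleted: the zeroth-order term is independent of $V_j$, so its $t$-integral equals that constant times $V_j$, whose expectation is zero. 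The remaining first-order remainder carries an explicit extra factor $|\alpha_j|$ (where $\alpha_j$ plays the role of your $\tilde\beta_j$), so the Cauchy--Schwarz factor is $O(|\alpha_j|^{1/2})$ rather than $O(1)$. Combined with the $|\alpha_j|^3$ already present, each coordinate contributes $|\alpha_j|^{7/2}$, and $\sum_j|\alpha_j|^{7/2}\asymp p\cdot p^{-7/4}=p^{-3/4}$. This squeezes the improvement out of $\bbE V_j=0$ \emph{after} the Cauchy--Schwarz split, not out of higher-moment matching.

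A second, smaller point: your coordinatewise Stein-conditioning for the vector/quadratic identities is more laborious than needed. The paper observes that every left-hand side in \eqref{GLM chain} is either of the form $\bbE[\phi(\balpha^\top\bX)]$ or $\bbE[\bbeta^\top\bX\,\phi(\balpha^\top\bX)]$ for suitable fixed $\balpha,\bbeta$ (e.g.\ $m_{\bX Y,2}$ takes $\bbeta=\bSigma^{-1}\bbE[\bX Y]$), and applies a single general lemma (their Lemma \ref{lem:app universality}) to both forms. This avoids rederiving Stein's identity from scratch and sidesteps the error-propagation issue you flag for the quadratic case.
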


When the distribution of $\bX$ is unknown and one passes to universality after using a Gaussian identification strategy, we conjecture that certain delocalization conditions, such as Assumption~\ref{as:beta}\rm{(2)} on the (transformed) regression coefficients and covariate mean vector, are necessary. In fact, universality can fail when one starts from a Gaussian identification strategy and Assumption~\ref{as:beta}\rm{(2)} is violated -- as demonstrated via the numerical experiments related to Figures~\ref{fig:GLM, sparse-converge, rademacher, univ} and~\ref{fig:GLM, hist, sparse-converge, rademacher, univ}; see Section~\ref{sec:sims} for more details.

\begin{remark}
\label{rem:zero mean}
Compared to the special case of knowing $\bm{\mu} \equiv \bm{0}$ in Section~\ref{sec:zero mean}, it requires extra moment equations \eqref{GLM m1} to \eqref{GLM m3}  to identify $\gamma_{\beta}^{2}$, together with the linear form $\lambda_{\beta}$. When it is known that $\bm{\mu} = \bm{0}$, \eqref{GLM m1}, \eqref{GLM m2}, and \eqref{GLM m3}, respectively, reduce to constants $1 / 2$, $0$, and $0$.
\end{remark}

\begin{remark}
\label{rem:non-gaussian}
When the law of $\bX$ is absolutely continuous with density $p$ with respect to the Lebesgue measure is (partially) known but non-Gaussian, one could leverage the following generalized Stein's identity (and its higher-order analogues) to obtain similar moment equations
\begin{align*}
\bbE [f (\bX) s (\bX)] + \bbE [f' (\bX)] = 0,
\end{align*}
where $s (\bx) \coloneqq \nabla p (\bx) / p (\bx): \bbR^{p} \rightarrow \bbR^{p}$ is the Stein's score function and $f: \bbR^{p} \rightarrow \bbR$ is any differentiable function such that both terms in the above identity exist. We do not explore these generalizations in this paper and keep it as potential future directions.
\end{remark}

Gathering the development thus far, we can construct the following estimator of $(\lambda_{\beta}, \gamma_{\beta}^{2})$ based on $\Psi_{\GLM, \beta}$ and its inverse map $\Psi_{\GLM, \beta}^{-1}$:
\begin{equation}
\label{joint estimator}
\begin{split}
(\hat{\lambda}_{\beta}, \hat{\gamma}_{\beta}^{2}) \coloneqq \Psi_{\GLM, \beta}^{-1} \left( \hat{m}_{1} \mathbbm{1} \{\hat{m}_{1} \in \calR_{\GLM, \beta, 1}\}, \hat{m}_{2} \mathbbm{1} \{\hat{m}_{2} \in \calR_{\GLM, \beta, 2}\} \right),
\end{split}
\end{equation}
where
\begin{equation}
\label{GLM moments estimators}
\begin{split}
\hat{m}_{1} \coloneqq \hat{m}_{Y} & \coloneqq \bbU_{n, 1} [Y], \quad \hat{m}_{2} \coloneqq \hat{m}_{\bX Y, 2} + \hat{m}_{Y}^{2} \cdot \hat{m}_{\bX, 2} - 2 \cdot \hat{m}_{Y} \cdot \hat{m}_{\bX Y, \bX}, \\
\text{and } \hat{m}_{\bX, 2} \coloneqq \bbU_{n, 2} [\bX_{1}^{\top} \bSigma^{-1} \bX_{2}], & \, \hat{m}_{\bX Y, \bX} \coloneqq \bbU_{n, 2} [Y_{1} \bX_{1}^{\top} \bSigma^{-1} \bX_{2}], \, \hat{m}_{\bX Y, 2} \coloneqq \bbU_{n, 2} [Y_{1} \bX_{1}^{\top} \bSigma^{-1} \bX_{2} Y_{2}].
\end{split}
\end{equation}
With $(\hat{\lambda}_{\beta}, \hat{\gamma}_{\beta}^{2})$, one can estimate $\beta_{j}$ by simply solving \eqref{GLM m5} to \eqref{GLM m6}:
\begin{align*}
\hat{\beta}_{j} \coloneqq \frac{\hat{m}_{\beta_{j}} - \f_{0} (\hat{\lambda}_{\beta}, \hat{\gamma}_{\beta}^{2}) \cdot \hat{m}_{\nu_{j}}}{\f_{1} (\hat{\lambda}_{\beta}, \hat{\gamma}_{\beta}^{2})}
\end{align*}
where
\begin{align*}
\hat{m}_{\nu_{j}} \coloneqq \bbU_{n, 1} [\bX^{\top}] \bSigma^{-1} \be_{j}, \quad \hat{m}_{\beta_{j}} \coloneqq \bbU_{n, 1} [Y \bX^{\top}] \bSigma^{-1} \be_{j}.
\end{align*}

\begin{theorem}
\label{thm:GLM}
Under the Assumptions of Lemma~\ref{lem:mu} or Lemma~\ref{lem:mu, universality}, the following hold:
\begin{align*}
\sqrt{n} \left( \hat{\lambda}_{\beta} - \lambda_{\beta} \right) = O_{\bbP} (1), \sqrt{n} \left( \hat{\gamma}_{\beta}^{2} - \gamma_{\beta}^{2} \right) = O_{\bbP} (1), \text{ and for any $j = 1, \cdots, p$}, \sqrt{n} (\hat{\beta}_{j} - \beta_{j}) = O_{\bbP} (1).
\end{align*}
\end{theorem}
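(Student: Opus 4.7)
The plan is to reduce the three claims to (i) a $\sqrt{n}$-rate for each of the moment estimators appearing in \eqref{GLM moments estimators}, (ii) smoothness of the inverse map $\Psi_{\GLM, \beta}^{-1}$ together with that of $\f_{0}, \f_{1}$, and (iii) a delta-method-type propagation of these rates. All three targets $\lambda_{\beta}$, $\gamma_{\beta}^{2}$, and $\beta_{j}$ are, after the identification step, smooth functionals of a handful of empirical $U$-statistics, so once the variances are controlled the delta method finishes the argument.

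First, I would bound $\var(\hat{m}_{Y})$, $\var(\hat{m}_{\bX, 2})$, $\var(\hat{m}_{\bX Y, \bX})$, $\var(\hat{m}_{\bX Y, 2})$, $\var(\hat{m}_{\nu_{j}})$, and $\var(\hat{m}_{\beta_{j}})$, each of which is the variance of a first- or second-order $U$-statistic. Using the Hoeffding decomposition, the variance of a second-order $U$-statistic with kernel $h(O_{1}, O_{2})$ equals $4 \zeta_{1} / n + O(\zeta_{2} / n^{2})$, where $\zeta_{1} = \var(\bbE[h(O_{1}, O_{2}) \mid O_{1}])$ and $\zeta_{2} = \var(h(O_{1}, O_{2}))$. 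For each kernel above, Assumption \ref{as:Sigma} gives $\lambda_{\max}(\bSigma^{-1}) \leq M$, Assumption \ref{as:bounded}(1) bounds $\Vert \bbeta \Vert$, and Assumption \ref{as:var-cov}(1) bounds $\bbE(Y^{2} \mid \bX)$; a direct calculation then yields $\zeta_{1} = O(1)$ and $\zeta_{2} = O(p)$. Combined with Assumption \ref{as:proportion} ($p = O(n)$), every variance is $O(1/n)$. Together with the bias, which is exactly zero under the Gaussian identification (Lemma \ref{lem:mu}) and $O(n^{-3/4})$ under the universality identification (Lemma \ref{lem:mu, universality}), Chebyshev's inequality yields $\sqrt{n}(\hat{m}_{k} - m_{k}) = O_{\bbP}(1)$ for every moment $m_{k}$ entering \eqref{joint estimator}.

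Next, I would apply a delta-method step. By Lemma \ref{lem:mu}(2), $\Psi_{\GLM, \beta}$ is a diffeomorphism whose inverse has bounded derivative on the relevant domain. Assumption \ref{as:bounded}(2) guarantees that the true $(m_{1}, m_{2})$ lies in the interior of $\calR_{\GLM, \beta, 1} \times \calR_{\GLM, \beta, 2}$, so the truncation indicators in \eqref{joint estimator} equal one on an event of probability tending to one and may be dropped. A first-order Taylor expansion of $\Psi_{\GLM, \beta}^{-1}$ around $(m_{1}, m_{2})$ then transfers the $\sqrt{n}$-rate of $(\hat{m}_{1}, \hat{m}_{2})$ to $(\hat{\lambda}_{\beta}, \hat{\gamma}_{\beta}^{2})$. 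For $\hat{\beta}_{j}$, the defining formula derived from \eqref{GLM m6} is of the form $(m_{\beta_{j}} - \f_{0}(\lambda_{\beta}, \gamma_{\beta}^{2}) \cdot \nu_{j}) / \f_{1}(\lambda_{\beta}, \gamma_{\beta}^{2})$, a $C^{1}$ function of four inputs. Assumption \ref{as:link}(2) (strict monotonicity of $\phi$) together with Assumption \ref{as:bounded}(2) keeps $\f_{1}(\lambda_{\beta}, \gamma_{\beta}^{2})$ bounded away from zero, so a second delta-method step propagates the $\sqrt{n}$-rate of $(\hat{\lambda}_{\beta}, \hat{\gamma}_{\beta}^{2}, \hat{m}_{\nu_{j}}, \hat{m}_{\beta_{j}})$ to $\hat{\beta}_{j}$.

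The main obstacle is in the variance calculations, where each kernel contracts $\bSigma^{-1}$ against a $p$-dimensional random vector. Naively one would fear a variance of order $p$ or larger, but the split into $\zeta_{1}$ (controlled by dimension-free quadratic forms such as $\bbE[\bX Y]^{\top} \bSigma^{-1} \bbE[\bX Y]$) and $\zeta_{2}$ (which grows linearly in $p$ but is divided by $n^{2}$) exactly matches Assumption \ref{as:proportion}. A secondary subtlety is the universality case, where the moment equations hold only up to an additive $O(n^{-3/4})$ error: since this bias is $o(n^{-1/2})$ and $\nabla \Psi_{\GLM, \beta}^{-1}$ is bounded, it is absorbed into the stochastic fluctuation and does not alter the final $\sqrt{n}$-rate.
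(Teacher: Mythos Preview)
Your proposal is correct and follows essentially the same route as the paper: control the variances of the $U$-statistic moment estimators via the Hoeffding decomposition (first-order projection contributing $O(1/n)$, degenerate part contributing $O(p/n^{2}) = O(1/n)$ under Assumption~\ref{as:proportion}), absorb the $O(n^{-3/4})$ identification bias in the universality case, and then push the rate through $\Psi_{\GLM,\beta}^{-1}$ and the formula for $\hat\beta_{j}$ by the delta method, using the bounded-derivative part of Lemma~\ref{lem:mu}(2). One small wrinkle: you invoke Assumption~\ref{as:bounded}(2) twice, but Theorem~\ref{thm:GLM} only inherits Assumption~\ref{as:bounded}(1) from Lemma~\ref{lem:mu}; the diffeomorphism statement in Lemma~\ref{lem:mu}(2) already delivers the bounded inverse Jacobian, and strict monotonicity of $\phi$ alone (Assumption~\ref{as:link}(2)) keeps $\f_{1}(\lambda_{\beta},\gamma_{\beta}^{2})>0$ on the compact parameter region implied by \ref{as:bounded}(1) and \ref{as:Sigma}, so \ref{as:bounded}(2) is not needed for the $\sqrt{n}$-rate.
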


\iffalse
\begin{remark}
\label{rem:mle}
For GLMs with canonical link functions \citep{morris1982natural, morris1983natural, mccullagh1989generalized}, the log-likelihood for $n$ i.i.d. observations $(\bX_{i}, Y_{i})_{i = 1}^{n}$ is written as
\begin{align*}
\ell \left( (\bX_{i}, Y_{i})_{i = 1}^{n}; \bbeta \right) = \sum_{i = 1}^{n} \left\{ \bbeta^{\top} \bX_{i} Y_{i} + \log h (\bbeta^{\top} \bX_{i}) - n \log \mathcal{Z} (\bbeta) \right\}.
\end{align*}
The MLE $\hat{\bbeta}_{\MLE}$ then corresponds to solving the following moment equations:
\begin{equation}
\label{MLE}
\frac{1}{n} \sum_{i = 1}^{n} Y_{i} \bX_{i} = \frac{1}{n} \sum_{i = 1}^{n} \frac{h'}{h} (\bbeta^{\top} \bX_{i}) \bX_{i}.
\end{equation}
When the distribution of $\bX$ is mean-zero Gaussian with known covariance $\bSigma$, the empirical mean in the RHS of \eqref{MLE} can be replaced by the true mean as follows:
\begin{equation}
\label{MLE oracle}
\frac{1}{n} \sum_{i = 1}^{n} Y_{i} \bX_{i} = \frac{1}{n} \sum_{i = 1}^{n} \frac{h'}{h} (\bbeta^{\top} \bX_{i}) \bX_{i}.
\end{equation}
\end{remark}
\fi

The final result in this section generalizes Proposition~\ref{prop:glm_clt} for the CAN property of our proposed estimators to the case where $\bm{\mu}$ is unknown and possibly non-zero. 

% To this end, we also need to generalize Assumption~\ref{as:w2 mean zero} to the following.
% \begin{customas}{$\mathsf{W}$}
% \label{as:w2}
% The regression coefficients $\bbeta$ converge to a random variable $\beta$ in 2-Wasserstein distance: $\bbeta \overset{\calW_{2}}{\rightsquigarrow} \beta$; the spectra of $\bSigma$ converge to
% \end{customas}

\begin{theorem}
\label{thm:GLM CLT}
Under Model~\ref{GLM}, Assumptions~\ref{as:normal},~\ref{as:bounded} and~\ref{as:var-cov}, if we further assume that $\langle \bv_{1}, \bv_{2} \rangle_{f (\bSigma)}$ converges to some nontrivial limit for $f (\bSigma) = \bSigma^{-1}, \bSigma, \bSigma^2, \bSigma^3$ for $\bv_{1}, \bv_{2} \in \{\bm{\mu}, \bbeta\}$, we have
\begin{align*}
\sqrt{n} (\hat{\beta}_{j} - \beta_{j}) \overset{\mathcal{L}}{\rightarrow} \N (0, \nu_{j}^{2})
\end{align*}
for some constant $\nu_{j}^{2} > 0$ for $j = 1, \cdots, p$ and
\begin{align*}
\sqrt{n} (\hat{\gamma}_{\beta}^{2} - \gamma_{\beta}^{2}) \overset{\mathcal{L}}{\rightarrow} \N (0, \nu^{2})
\end{align*}
for some constant $\nu^{2} > 0$.
\end{theorem}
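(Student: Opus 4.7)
The plan is to establish the CAN property by combining a multivariate CLT for the vector of $U$-statistic moment estimators with two rounds of the delta method. I would follow the same high-level strategy used for Proposition \ref{prop:glm_clt}, but now applied to the full system \eqref{GLM chain} rather than its reduction when $\bm{\mu}\equiv\bm{0}$.

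First, I would show a joint CLT at rate $\sqrt{n}$ for the vector
\[
\hat{\bm m} \coloneqq \bigl(\hat m_Y,\ \hat m_{\bX,2},\ \hat m_{\bX Y,\bX},\ \hat m_{\bX Y,2},\ \hat m_{\nu_j},\ \hat m_{\beta_j}\bigr)^\top,
\]
which is a mixture of first-order empirical means and second-order $U$-statistics. The generic CLT for such $U$-statistics (Proposition \ref{prop:general clt} in Appendix \ref{app:CLT}, following \citet{bhattacharya1992class}) requires that the Hoeffding projections have bounded variances and that a suitable rescaled fourth-moment condition vanishes. The bounded conditional fourth moment of $Y\mid\bX$ supplied by Assumption \ref{as:var-cov}, together with Assumption \ref{as:Sigma} on the spectrum of $\bSigma$ and Assumption \ref{as:bounded} on $\bbeta$, controls all the relevant quadratic-form moments $\bbE[(\bX_1^\top \bSigma^{-1}\bX_2)^k Y_1^{2a}Y_2^{2b}]$ via Gaussian Wick-type calculations. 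The new hypothesis that $\langle\bv_1,\bv_2\rangle_{f(\bSigma)}$ converges for $f(\bSigma)\in\{\bSigma^{-1},\bSigma,\bSigma^2,\bSigma^3\}$ and $\bv_1,\bv_2\in\{\bm{\mu},\bbeta\}$ is what ensures that the entries of the $6\times 6$ asymptotic covariance matrix $\bV$ of $\sqrt{n}(\hat{\bm m}-\bm m)$ each converge to well-defined limits.

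Next, I would apply the delta method. By Lemma \ref{lem:mu}, the reduced map $\Psi_{\GLM,\beta}:(\lambda_\beta,\gamma_\beta^2)\mapsto(m_1,m_2)$ is a diffeomorphism with bounded inverse Jacobian on the relevant parameter range. Combined with the joint CLT for $(\hat m_1,\hat m_2)$ (a smooth function of $\hat{\bm m}$), this yields
\[
\sqrt{n}\bigl(\hat\lambda_\beta-\lambda_\beta,\ \hat\gamma_\beta^2-\gamma_\beta^2\bigr)^\top \xrightarrow{\calL} \N(\bm 0,\ \bSigma_{\lambda,\gamma})
\]
for a positive definite $\bSigma_{\lambda,\gamma}$. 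The indicator truncations in \eqref{joint estimator} are absorbed using Assumption \ref{as:bounded}(2): since $\gamma_\beta^2$ is bounded away from the boundary of $\calR_{\GLM,\beta,2}$ and $\hat m_2\toprob m_2$, the event $\{\hat m_2\in\calR_{\GLM,\beta,2},\ \hat m_1\in\calR_{\GLM,\beta,1}\}$ occurs with probability tending to $1$, so it does not affect the limit distribution. For $\hat\beta_j$, I would then write
\[
\hat\beta_j=\frac{\hat m_{\beta_j}-\f_0(\hat\lambda_\beta,\hat\gamma_\beta^2)\,\hat m_{\nu_j}}{\f_1(\hat\lambda_\beta,\hat\gamma_\beta^2)},
\]
and apply the delta method once more to this smooth function of $(\hat\lambda_\beta,\hat\gamma_\beta^2,\hat m_{\nu_j},\hat m_{\beta_j})$, using the fact that $\f_1$ is bounded away from zero under Assumptions \ref{as:link} and \ref{as:bounded}, and that $\f_0,\f_1$ and their gradients are bounded.

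The main technical obstacle is the $U$-statistic CLT step, specifically verifying the Lindeberg-type fourth-moment condition for the degenerate second-order kernels $\bX_1^\top\bSigma^{-1}\bX_2$, $Y_1\bX_1^\top\bSigma^{-1}\bX_2$, and $Y_1\bX_1^\top\bSigma^{-1}\bX_2 Y_2$ under proportional asymptotics ($p\asymp n$). In this regime the quadratic form $\bX_1^\top\bSigma^{-1}\bX_2$ has variance of order $p\asymp n$, so naive bounds blow up; the point is that the Hoeffding projection $\bbE[h(\bX_1,\bX_2)\mid\bX_1]$ has variance of order $1$ thanks to the Gaussian (or sub-Gaussian, under Assumption \ref{as:beta}) structure, and the degenerate remainder contributes only at lower order. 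This is exactly where Assumption \ref{as:var-cov}(2) on $\bbE[Y^4\mid\bX]$ being itself a bounded GLM is used: it lets us compute and bound $\bbE[h^2]$ and $\bbE[h^4]$ by the same Gaussian-integration-by-parts machinery as in Section \ref{sec:mu}, closing the argument. The final nondegeneracy of the asymptotic variances $\nu_j^2$ and $\nu^2$ follows from Assumption \ref{as:bounded}(2) (ruling out $\bbeta\equiv\bm 0$) combined with the assumed limits of the inner products $\langle\bv_1,\bv_2\rangle_{f(\bSigma)}$.
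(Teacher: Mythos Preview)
Your proposal is correct and follows essentially the same route as the paper: a joint CLT for the $U$-statistic moment vector via Proposition~\ref{prop:general clt} (with Cram\'er--Wold), where the convergence assumptions on $\langle\bv_1,\bv_2\rangle_{f(\bSigma)}$ pin down the asymptotic covariance entries and Assumption~\ref{as:var-cov}(2) handles the fourth-moment Lindeberg conditions, followed by the delta method through the diffeomorphism $\Psi_{\GLM,\beta}^{-1}$ and then again for $\hat\beta_j$. The paper's Appendix~\ref{app:GLM functional CLT} carries this out by reducing the check of \eqref{check conditions 1}--\eqref{check conditions 2} (with $\bbM=\bSigma^{-1}$) to exactly the list of inner products you anticipate, using the GLM structure to write $\bbE[\nabla\mu(\bX)]$, $\bbE[\nabla\nu^2(\bX)]$, $\bbE[\nabla^2\nu^2(\bX)]$ as scalar multiples of $\bm{\mu}+\bSigma\bbeta$ and its outer product.
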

In the proof of the above theorem in Appendix~\ref{app:GLM functional CLT}, we unpack the conditions of the theorem further. On a higher level, convergence of inner products $\langle \bv_{1}, \bv_{2} \rangle_{f (\bSigma)}$ to certain nontrivial limits relates to whether the asymptotic variances of moment estimators based on $U$-statistics, after scaled by $\sqrt{n}$, converge to nontrivial limits. For statistical inference, one could construct standard Wald intervals by estimating the asymptotic variances in Theorem~\ref{thm:GLM CLT} by the bootstrap method developed in \citet{liu2024assumption}. The performance of bootstrap variance estimators will be assessed in Appendix~\ref{app:bootstrap}.

\begin{remark}[On the scalings between $p$ and $n$]
\label{rem:p}
Even though we focus on the asymptotic regime where $p$ scales proportionally with $n$, it is worth noting that our proposed estimators can achieve consistency even when $p \gg n$, as long as $p = o (n^{2})$. This is because the variances of our $U$-statistic-based moment estimators are generally of order $\frac{1}{n} \vee \frac{p}{n^{2}}$. Asymptotic normality also holds with a different scaling factor $n / \sqrt{p}$ instead of $n^{1 / 2}$, by applying results from \citet{bhattacharya1992class}.
\end{remark}

\begin{remark}[Data-driven Approximate Message Passing Schemes for GLMs]
\label{rem:amp}
Before proceeding, we take a detour to present one immediate application of our MoM-based method for inference in GLMs. Further applications of our method can be found in later Section~\ref{sec:obs}. For logistic regression, i.e. Model \eqref{GLM} with $\phi \equiv \expit$, \citet{sur2019modernb} (and in a more general form, \citet{salehi2019impact}) showed the following: for $\hat{\bbeta}_{\MLE} \equiv (\hat{\beta}_{\MLE, 1}, \cdots, \hat{\beta}_{\MLE, p})^{\top}$ the Maximum Likelihood Estimator (MLE) of $\bbeta$, letting $(Z_{0}, Z_{1}, Z_{2}, Z_{3})^{\top} \sim \N_{4} (\bm{0}, \bI_{4})$, one has
\begin{align*}
\frac{1}{p} \sum_{j = 1}^{p} h \left( \hat{\beta}_{\MLE, j} - \bar{\alpha} \beta_{j}, \beta_{j} \right) \overset{\bbP}{\rightarrow} \bbE [h (\bar{\sigma} Z_{0}, \sfb)],
\end{align*}
where $(\bar{\alpha}, \bar{\sigma}, \bar{\gamma})$ is the solution to the following fixed point equations
\begin{equation}
\label{fixed point equation}
\begin{split}
\left\{ \begin{array}{rl}
\delta^{2} \bar{\sigma}^{2} & = \, 2 \bbE \left[ \phi (- \kappa Z_{1}) \left( \bar{\gamma} \phi \left( \prox [\bar{\gamma} \Phi] \left( \kappa \bar{\alpha} Z_{1} + \sqrt{\delta} \bar{\sigma} Z_{2} \right) \right) \right)^{2} \right], \\
0 & = \, 2 \bbE \left[ \phi (- \kappa Z_{1}) Z_{1} \bar{\gamma} \phi \left( \prox [\bar{\gamma} \Phi] \left( \kappa \bar{\alpha} Z_{1} + \sqrt{\delta} \bar{\sigma} Z_{2} \right) \right) \right], \\
1 - \delta & = \, 2 \bbE \left[ \phi (- \kappa Z_{1}) \prox [\bar{\gamma} \Phi]' \left( \kappa \bar{\alpha} Z_{1} + \sqrt{\delta} \bar{\sigma} Z_{2} \right) \right].
\end{array} \right.
\end{split}
\end{equation}
Here $\Phi (\cdot) \coloneqq \int_{-\infty}^{\cdot} \phi (t) \diff t$ is the anti-derivative of $\phi$, $\kappa \coloneqq \bbE \sfb^{2} \equiv p^{-1} \sum_{j = 1}^{p} \bbE [\beta_{j}^{2}]$ and $\prox$ denotes the proximal operator with $\prox [h]' (\cdot)$ being the first derivative of $\prox [h] (\cdot)$. Since $\kappa$ is unknown, two different methods have been proposed: \texttt{ProbeFrontier} by \citet{sur2019modernb} leveraging and \texttt{SLOE} by \citet{yadlowsky2021sloe} using a reparameterization of \eqref{fixed point equation} in terms of $p^{-1} \sum_{j = 1}^{p} \hat{\beta}_{\MLE, j}^{2}$ instead of $\kappa$. The proposed MoM-based method evidently offers another approach to estimating $\kappa$ from data. This is also the problem that \citet{sawaya2023moment} try to solve. However, as mentioned in the Introduction, the approach in \citet{sawaya2023moment} only applies to asymmetric link functions because it does not rely on $\bSigma$.
\end{remark}

\iffalse
\subsection{Efficient Estimation in Linear Models} 
\label{sec:eff_linmod}

This section explores the sharp optimal estimation of linear and quadratic forms under linear models, a special case of Model~\ref{GLM} where the link function $\phi$ is the identity map:
\begin{equation}
\label{LM}
\bbE [Y | \bX = \bx] = \bx^{\top} \bbeta.
\end{equation}
Due to linearity, it is straightforward to construct the following estimators of the quadratic form $\gamma_{\beta}^{2}$ and a single coordinate such as $\beta_{j}$ of the regression coefficients $\bbeta$ for some $j \in [p]$:
\begin{equation}
\hat{\gamma}_{\beta}^{2} \coloneqq \bbU_{n, 2} \left[ Y_{1} \bX_{1}^{\top} \bSigma^{-1} \bX_{2} Y_{2} \right], \hat{\beta}_{j} \coloneqq \bbU_{n, 1} \left[ Y \bX^{\top} \bSigma^{-1} e_{j} \right].
\end{equation}
It is easy to see that
\fi

\section{The Case of Unknown Population Covariance}
\label{sec:unknown}

In this section, we demonstrate that knowing $\bSigma$ is not essential to the proposed MoM method in the previous section. To ease presentation, we focus on the case of knowing $\bmu \equiv \bm{0}$. We will discuss briefly in Appendix~\ref{app:unknown complete} for completeness how to estimate $\bSigma$ in the general case where both $\bmu$ and $\bSigma$ are unknown. In the \href{https://github.com/cxy0714/Method-of-Moments-Inference-for-GLMs}{accompanying GitHub repository}, we also implement the proposed method for this more general scenario.

\subsection{A Gaussian-centric approach when \texorpdfstring{$p < n$}{well}}
\label{sec:unknown well-posed}

As mentioned in the Introduction, our parameter identification and estimation strategies rely on the knowledge of $\bSigma$ under the proportional asymptotic regime. In this section, we consider relaxing this assumption under Gaussian designs. To simplify our argument, we assume that $n$ is even and partition all $n$ samples into two equal-sized parts $I_{1} \coloneqq [n / 2]$ and $I_{2} \coloneqq [(n / 2 + 1):2n]$. We use the second partition $I_{2}$ to estimate $\bSigma$ by the following rescaled sample Gram matrix
\begin{align*}
\tilde{\bSigma} \coloneqq \frac{1}{\frac{n}{2} - p - 1} \sum_{j \in I_{2}} \bX_{j} \bX_{j}^{\top}.
\end{align*}
We conjecture that it is possible to attain $\sqrt{n}$-consistency without using sample splitting to estimate $\bSigma$ \citep{liu2023hoif}, but we decide to leave the analysis to a future work.

When $\bSigma$ is unknown, we propose to estimate $\gamma_{\beta}^{2}$ as in \eqref{key estimator}, except that $\hat{m}_{\bX Y, 2}$ is replaced by
\begin{equation}
\label{alternative}
\hat{m}_{\bX Y, 2} \coloneqq \frac{1}{\frac{n}{2} (\frac{n}{2} - 1)} \sum_{i_1 \neq i_2 \in I_{1}} Y_{i_1} \bX_{i_1}^{\top} \tilde{\bSigma}^{-1} \bX_{i_2} Y_{i_2}.
\end{equation}
It is worth mentioning that, \textit{only in this Section and Appendix~\ref{app:unknown GLMs}}, $\hat{m}_{\bX Y, 2}$ takes the above form.

It is easy to see that $\hat{m}_{\bX Y, 2}$ is an unbiased estimator of $m_{\bX Y, 2}$. Moreover, we have the following parallel results to Proposition~\ref{prop:glm_rate}, the proof of which can be found in Appendix~\ref{app:unknown GLMs}.

\begin{proposition}
\label{prop:unknown}
Under Assumptions of Lemma~\ref{lem:glm mean zero}, when $p + 3 < n / 2$ and $p / n \rightarrow c$ for some fixed $c < 1$, the following hold:
\begin{align*}
\sqrt{n} (\hat{\gamma}_{\beta}^{2} - \gamma_{\beta}^{2}) = O_{\bbP} (1), \text{ and for any $j = 1, \ldots, p$, } \sqrt{n} (\hat{\beta}_j - \beta_j) = O_{\bbP} (1).
\end{align*}
\end{proposition}

The additional assumption $p + 3 < n / 2$ is a result of computing the covariance between any two elements of a random matrix drawn from the Inverse-Wishart distribution; see Lemma~\ref{lem:Wishart} for more details. It is worth noting that Proposition~\ref{prop:unknown} can also be generalized beyond Gaussian designs by using universality-type results on (inverse) sample Gram matrices, e.g. related advances in \citet{derezinski2021sparse} and \citet{derezinski2022unbiased}.

\begin{remark}
\label{rem:unknown linear}
Under linear models, one can in fact adapt the estimators of \citet{guo2022moderate} for unknown $\bSigma$ by leveraging the linearity structure; for details, see Appendix~\ref{app:unknown linear}.

Furthermore, when $\bSigma$ is unknown, without estimating $\bSigma$, one can still test the null hypothesis $H_{0}: \bbeta = \bm{0}$ by simply estimating the quantity $\bbE [Y \bX^{\top}] \bbE [\bX Y] = \bbeta^{\top} \bSigma^{2} \bbeta$, which equals zero if and only if $H_{0}$ is true, using a $U$-statistic $\bbU_{n, 2} (Y_{1} \bX_{1}^{\top} \bX_{2} Y_{2})$ without the knowledge of $\bSigma$.
\end{remark}

\iffalse
\begin{proposition}
\label{prop:unknown universality}
Under Assumptions of Lemma~\ref{lem:glm mean zero}, with Assumption~\ref{as:normal mean zero} replaced by Assumption~\ref{as:beta mean zero}, when $p + 3 < n / 2$ and $p / n \rightarrow c$ for some fixed $c < 1$, the following hold:
\begin{align*}
\sqrt{n} (\hat{\gamma}_{\beta}^{2} - \gamma_{\beta}^{2}) = O_{\bbP} (1), \text{ and for any $j = 1, \ldots, p$, } \sqrt{n} (\hat{\beta}_j - \beta_j) = O_{\bbP} (1).
\end{align*}
\end{proposition}
\fi

\subsection{An alternative approach that works when \texorpdfstring{$p \geq n$}{ill}}
\label{sec:unknown ill-posed}

The approach considered in the previous section no longer applies to the case of $p \geq n$ (or more precisely, $p \geq n / 2$ when sample splitting is employed), as the sample Gram matrix is not invertible. Here we discuss an alternative approach using Higher-Order $U$-statistics, first considered in \citet{kong2018estimating}.

The idea is to approximate $m_{\bX Y, 2} = \bbE [Y \bX^{\top}] \bSigma^{-1} \bbE [\bX Y]$, the moment with reciprocal, by Chebyshev polynomials \citep{devore1993constructive}:
\begin{align*}
\tilde{m}_{\bX Y, 2, J} \coloneqq \sum_{l = 0}^{J} c_{l} m_{\bX Y, 2}^{(l)}, \text{ where } m_{\bX Y, 2}^{(l)} \coloneqq \bbE [Y \bX^{\top}] \bSigma^{l} \bbE [\bX Y],
\end{align*}
up to order $J$ to be specified later. How to specify the Chebyshev polynomial coefficients $\{c_{l}, l = 0, \cdots, J\}$ will be deferred to Appendix~\ref{app:ill-posed}. Note that $m_{\bX Y, 2}^{(l)}$ can be unbiasedly estimated by the following $(l + 2)$-th order $U$-statistic:
\begin{align*}
\hat{m}_{\bX Y, 2}^{(l)} \coloneqq \frac{(n - (l + 2))!}{n!} \sum_{1 \leq i_{1} \neq \cdots \neq i_{l + 2} \leq n} Y_{i_{1}} \bX_{i_{1}}^{\top} \left\{ \prod_{s = 3}^{l + 2} \bX_{i_{s}} \bX_{i_{s}}^{\top} \right\} \bX_{i_{2}} Y_{i_{2}}
\end{align*}
with variance of order $\frac{1}{n} \left( \frac{p}{n} \right)^{l + 1}$. Hence we consider the following estimator of $\gamma_{\beta}^{2}$:
\begin{align*}
\hat{m}_{\bX Y, 2, J (n)} \coloneqq \sum_{l = 0}^{J (n)} c_{l} \hat{m}_{\bX Y, 2}^{(l)},
\end{align*}
where $J (n) \asymp (\log n)^{c}$ for some $c$ strictly less than 1. One could approximate the other involved moments and define $\hat{\gamma}_{\beta, J (n)}^{2}$ and $\hat{\beta}_{j, J (n)}$ for $j = 1, \cdots, p$ similarly.

This alternative approach based on higher-order $U$-statistics also works in principle when $p < n$, but has inflated variance compared to the proposal in the previous section solely based on 2nd-order $U$-statistics. When $p \geq n$, $\hat{\gamma}_{\beta, J (n)}^{2}$ and $\hat{\beta}_{j, J (n)}$ are still consistent for $\gamma_{\beta}^{2}$ and $\beta_{j}$ as shown in the Proposition below. The proof is deferred to Appendix~\ref{app:ill-posed}. Here we only record the consistency without providing the concrete convergence rate. We conjecture that the tight convergence rate should be slower than $n^{-1 / 2}$ but a more precise characterization demands a much more careful control of the variance of higher-order $U$-statistics and is thus beyond the scope of this article.

\begin{proposition}
\label{prop:ill posed}
Under Assumptions of Lemma~\ref{lem:glm mean zero} or Lemma~\ref{lem:glm mean zero, universality}, the following hold:
\begin{align*}
& \hat{\gamma}_{\beta, J (n)}^{2} - \gamma_{\beta}^{2} = o_{\bbP} (1), \text{ and for any $j = 1, \ldots, p$, } \hat{\beta}_{j, J (n)} - \beta_j = o_{\bbP} (1).
\end{align*}
\end{proposition}

% Denote the Moore-Penrose pseudo-inverse of the sample covariance matrix estimator $\tilde{\bSigma}$ as $\tilde{\bOmega}^{\dag} \coloneqq \tilde{\bSigma}^{-}$ \citep{bodnar2024reviving}.

\begin{remark}
\label{rem:hoif}
$\hat{m}_{\bX Y, 2}$ defined in \eqref{alternative} with the prefactor $(\frac{n}{2} - p - 1) / \frac{n}{2}$ removed is nothing but the empirical second-order influence function of the functional $\bbE [Y \bX^{\top}] \bSigma^{-1} \bbE [\bX Y]$, to our knowledge first developed in \citet{liu2017semiparametric}. In \citet{liu2017semiparametric}, however, they did not make distributional assumptions on $\bX$ such as Assumption~\ref{as:normal mean zero} or even Assumption~\ref{as:normal}, resulting in more complicated higher-order $U$-statistics for correcting the bias due to estimating $\bSigma$ by $\tilde{\bSigma}$. In particular, the order can diverge to infinity as $n \rightarrow \infty$. Gaussian designs significantly simplify the diverging-order $U$-statistic estimator to a rescaled second-order $U$-statistic. An interesting question to further explore is which other types of assumptions on $\bX$, e.g. right rotationally invariant designs \citep{li2023spectrum}, can also reduce higher-order $U$-statistics to simpler second-order $U$-statistics.

When $p \geq n$, here we follow the higher-order $U$-statistic construction of \citet{kong2018estimating} to avoid estimating $\bSigma^{-1}$. \citet{kong2018estimating} also developed a polynomial-time algorithm for computing higher-order $U$-statistics, but only when the $U$-statistic kernels are symmetric. When $\bmu$ is unknown, the involved $U$-statistics have asymmetric kernels (see Section~\ref{sec:mu} and Appendix~\ref{app:unknown complete}), making the computation a much more challenging problem that we plan to address in a separate paper. We mention in passing that \citet{robins2016technical} also constructed higher-order $U$-statistics, with order diverging at rate $\sqrt{\log n}$, to estimate quantities such as $m_{\bX Y, 2}$. Their estimators, however, first estimate the density function of $\bX$ and then estimate $\bSigma^{-1}$ by numerical integration with respect to the estimated density function. Higher-order $U$-statistics are employed to reduce the bias due to density estimation, rather than approximating $\bSigma^{-1}$ by polynomials. We leave the study of statistical implications of this subtle difference to a future paper.
\end{remark}

\section{Inference in Observational Studies}
\label{sec:obs}

In this section, we appeal to the discussions in previous parts to develop estimators for some popular quantities arising in the context of observational studies. We only present theoretical results when the covariates under study have a Gaussian distribution with known covariance $\bSigma$. Results with unknown $\bSigma$ can be derived similarly by following the arguments in Section~\ref{sec:unknown}. The universality of the proposed procedure can be established by analogous arguments in the proof of Lemma~\ref{lem:mu, universality}, and hence omitted to avoid repetition and simplify exposition. The examples we discuss here are popular members of what is now known as the class of Doubly-Robust Functionals \citep{robins2008higher, rotnitzky2021characterization, chernozhukov2022locally} where in high dimensional instances of the problem practitioners often aim to model the two high dimensional relevant nuisance parameters of the underlying model through flexible GLMs. Although we consider specific and most popular examples in this class, a potential future research avenue is to extend the results in this section to the entire class of Doubly-Robust Functionals characterized in \citet{rotnitzky2021characterization}.

We divide our discussions into three subsections: estimation of causal effects of binary treatment in linear structural models, estimating quantities under missing data, and estimating generalized covariance measure (or equivalently expected conditional covariance \citep{liu2024assumption}) that have gained popularity in recent literature on conditional independence testing \citep{shah2020hardness, christgau2023nonparametric}. Throughout this section, we assume that we have access to $n$ i.i.d. copies of the triples $(\bX_{i}, A_{i}, Y_{i})_{i = 1}^{n} \overset{\rm i.i.d.}{\sim} \bbP$ where $\bX$ is the covariates or the design matrix, except for Section~\ref{sec:MAR} with a minor modification. The approaches considered in Section~\ref{sec:unknown} to estimate $\bSigma^{-1}$ from data apply to all examples here without any conceptual challenges. Consequently, we only present results assuming that $\bSigma$ is known.

\subsection{Causal Effect of a Binary Treatment under Linear Structural Causal Models} 
\label{sec:CE}

First, we consider the task of estimating the causal effect of a binary treatment $A \in \{0, 1\}$ on an outcome $Y$ in a linear structural model. Our goal is to estimate the parameter $\psi$ in the following data generating model with the outcome regression a linear model and the propensity score a GLM with link $\phi$.
\begin{equation}
\label{CE} \tag{$\CE$}
Y = \psi \cdot A + \bbeta^{\top} \bX + \varepsilon, \quad A | \bX \sim \mathrm{Ber} \left( \phi (\balpha^{\top} \bX) \right),
\end{equation}
where $\varepsilon$ has mean zero and bounded second moment given $\bX, A$.

For convenience, we let $\bm{\mu}_{1} \coloneqq \bbE [\bX A]$, $\lambda_{\alpha, 1} \coloneqq \balpha^{\top} \bm{\mu}_{1}$ and $\lambda_{\beta, 1} \coloneqq \bbeta^{\top} \bm{\mu}_{1}$. The following lemma characterizes the moment equations under Model~\ref{CE}, and the derivation can be found in Appendix~\ref{app:identification}.

\begin{lemma}
\label{lem:CE}
Under Model~\ref{CE}, Assumptions~\ref{as:normal},~\ref{as:bounded}{\rm(1)} on both $\balpha$ and $\bbeta$, and~\ref{as:var-cov}{\rm(1)} on both $A | \bX$ and $Y | \bX, A$, the following system of moment equations holds:
\begin{subequations}
\label{CE chain}
\begin{align}
& m_{A} \coloneqq \bbE [A], \\
& m_{Y} \coloneqq \bbE [Y] = \psi \cdot m_{A} + \lambda_{\beta}, \\
& m_{AY} \coloneqq \bbE [A Y] = (\psi + \lambda_{\beta}) \cdot m_{A} + \gamma_{\alpha, \beta} \cdot \f_{1} (\lambda_{\alpha}, \gamma_{\alpha}^{2}), \\
& m_{\bX, 2} \coloneqq \bbE [\bX^{\top}] \bSigma^{-1} \bbE [\bX] = \bm{\mu}^{\top} \bSigma^{-1} \bm{\mu}, \\
& m_{\bX A, \bX} \coloneqq \bbE [A \bX^{\top}] \bSigma^{-1} \bbE [\bX] = \bm{\mu}_{1}^{\top} \bSigma^{-1} \bm{\mu}, \\
& m_{\bX A, 2} \coloneqq \bbE [A \bX^{\top}] \bSigma^{-1} \bbE [\bX A] = m_{A} \cdot m_{\bX A, \bX} + \f_{1} (\lambda_{\alpha}, \gamma_{\alpha}^{2}) \cdot \lambda_{\alpha, 1}, \\
& m_{\bX A, \bX Y} \coloneqq \bbE [A \bX^{\top}] \bSigma^{-1} \bbE [\bX Y] = \psi \cdot m_{\bX A, 2} + \lambda_{\beta, 1}, \\
& m_{\bX A Y, \bX A} \coloneqq \bbE [Y A \bX^{\top}] \bSigma^{-1} \bbE [\bX A] = \psi \cdot m_{\bX A, 2} + m_{A} \cdot \lambda_{\beta, 1} + \lambda_{\beta} \cdot m_{A} \cdot m_{\bX A, \bX} \\
& + \f_{1} (\lambda_{\alpha}, \gamma_{\alpha}^{2}) \cdot \left( \lambda_{\beta} \cdot \lambda_{\alpha, 1} + \gamma_{\alpha, \beta} \cdot m_{\bX, \bX A} \right) + \f_{2} (\lambda_{\alpha}, \gamma_{\alpha}^{2}) \cdot \gamma_{\alpha, \beta} \cdot \lambda_{\alpha, 1}, \nonumber
\end{align}
\end{subequations}
where the definition of $\f_{k}$ appears in the statement of Lemma~\ref{lem:mu}.

In addition, denote the forward map induced by the RHS of the above system as
\begin{align*}
\Psi_{\CE} \coloneqq & \left( \Psi_{\CE, 1}, \cdots, \Psi_{\CE, 8} \right): \left( \psi, \lambda_{\alpha}, \gamma_{\alpha}^{2}, \lambda_{\beta}, \gamma_{\alpha, \beta}, \lambda_{\alpha, 1}, \lambda_{\beta, 1} \right) \\
& \mapsto \left( m_{A}, m_{Y}, m_{A Y}, m_{\bX, \bX A}, m_{\bX A, 2}, m_{\bX A, \bX Y}, m_{\bX A Y, \bX A} \right).
\end{align*}
In particular, $\Psi_{\CE}$ is a diffeomorphism. As a consequence, $\psi$, together with $\left( \lambda_{\alpha}, \gamma_{\alpha}^{2}, \lambda_{\beta}, \gamma_{\alpha, \beta}, \lambda_{\alpha, 1}, \lambda_{\beta, 1} \right)$, is identifiable from the above system of moment equations.
\end{lemma}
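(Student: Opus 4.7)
The plan is to verify the eight moment equations one by one via iterated expectation and Gaussian integration by parts, and then establish the diffeomorphism claim by solving the system sequentially (exploiting a natural triangular structure between the $\alpha$--block and the $\beta$--block).

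\medskip

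\emph{Step 1: Derivation of the moment equations.} The first three moments $m_A,m_Y,m_{AY}$ are obtained by tower property. Note that $A^{2}=A$ since $A$ is binary, so $\bbE(AY)=\psi\bbE(A)+\bbeta^{\top}\bbE(\bX A)$. To turn the term $\bbeta^{\top}\bbE(\bX A)=\lambda_{\beta,1}$ into the announced form, I apply Stein's lemma to $\bbE[\bX\,\phi(\balpha^{\top}\bX)]$, obtaining
\begin{align*}
\bm{\mu}_{1}=\bm{\mu}\,\f_{0}(\lambda_{\alpha},\gamma_{\alpha}^{2})+\bSigma\balpha\,\f_{1}(\lambda_{\alpha},\gamma_{\alpha}^{2}),
\end{align*}
and hence $\lambda_{\beta,1}=\lambda_{\beta}m_{A}+\gamma_{\alpha,\beta}\f_{1}(\lambda_{\alpha},\gamma_{\alpha}^{2})$. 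The same identity plugged into $\bbE(A\bX)^{\top}\bSigma^{-1}\bbE(\bX A)$ yields $m_{\bX A,2}$, and the formulas for $m_{\bX,2}$, $m_{\bX A,\bX}$ and $m_{\bX A,\bX Y}$ follow immediately since $\bbE(\bX Y)=\psi\bm{\mu}_{1}+\bSigma\bbeta+m_Y\cdot\bm\mu-\ldots$ (I would just expand directly). The only non-trivial derivation is for $m_{\bX AY,\bX A}$: conditional on $\bX$, $\bbE(AY\mid\bX)=\psi\phi(\balpha^{\top}\bX)+\bbeta^{\top}\bX\,\phi(\balpha^{\top}\bX)$, so
\begin{align*}
\bbE(YA\bX)=\psi\,\bbE[\bX\phi(\balpha^{\top}\bX)]+\bbE[\bX(\bbeta^{\top}\bX)\phi(\balpha^{\top}\bX)].
\end{align*}
The second term requires Stein's lemma applied twice (once to pull out $\bbeta^{\top}\bX$ and once to pull out a coordinate of $\bX$), generating the $\f_{1}$ and $\f_{2}$ contributions multiplied by the claimed combinations of $\lambda_{\beta}\lambda_{\alpha,1}$, $\gamma_{\alpha,\beta}m_{\bX,\bX A}$, and $\gamma_{\alpha,\beta}\lambda_{\alpha,1}$.

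\medskip

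\emph{Step 2: Identifiability and diffeomorphism.} I would invert the system in two blocks. First, because $A\mid\bX\sim\Bern(\phi(\balpha^{\top}\bX))$ is itself a GLM, the triple $(m_{A},m_{\bX A,\bX},m_{\bX A,2},m_{\bX,2})$ together with the reduction \eqref{GLM chain reduced} of Lemma~\ref{lem:mu} (applied with $Y$ replaced by $A$) identifies $(\lambda_{\alpha},\gamma_{\alpha}^{2})$; the relevant reduced map $\Psi_{\GLM,\beta}$ is already shown to be a diffeomorphism with bounded inverse-Jacobian in Lemma~\ref{lem:mu}. Having identified $(\lambda_{\alpha},\gamma_{\alpha}^{2})$, the equation for $m_{\bX A,2}$ yields $\lambda_{\alpha,1}$ explicitly since $\f_{1}(\lambda_{\alpha},\gamma_{\alpha}^{2})>0$ by the strict monotonicity part of Assumption~\ref{as:link}. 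This leaves four unknowns $(\psi,\lambda_{\beta},\gamma_{\alpha,\beta},\lambda_{\beta,1})$ and four remaining equations $(m_{Y},m_{AY},m_{\bX A,\bX Y},m_{\bX AY,\bX A})$. The first two pairs solve linearly for $\lambda_{\beta}=m_{Y}-\psi m_{A}$ and $\lambda_{\beta,1}=m_{\bX A,\bX Y}-\psi m_{\bX A,2}$, and $m_{AY}$ gives $\gamma_{\alpha,\beta}$ as an affine function of $\psi$. Substituting these three affine expressions of $\psi$ into the last equation $m_{\bX AY,\bX A}$ produces a single affine equation in $\psi$; the coefficient of $\psi$ can be written as a quadratic-in-moments expression that one checks, using Assumptions~\ref{as:Sigma}, \ref{as:link}(2), and \ref{as:bounded}, to be uniformly bounded away from zero. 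This gives $\psi$ explicitly, and then back-substitution gives the other three parameters as smooth functions of the moments with bounded derivatives, proving $\Psi_{\CE}$ is a diffeomorphism onto its image.

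\medskip

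\emph{Main obstacle.} The tedious part is the derivation of the eighth equation for $m_{\bX AY,\bX A}$, where two consecutive applications of Stein's lemma must be carried out carefully and the cross-terms involving $\bm\mu,\bbeta,\balpha$ assembled without error; I would organize this by first computing $\bbE[\bX(\bbeta^{\top}\bX)\phi(\balpha^{\top}\bX)]$ as a vector and then taking the $\bSigma^{-1}\bbE(\bX A)$ inner product, using $\bm{\mu}_{1}=\bm{\mu}\f_{0}+\bSigma\balpha\f_{1}$ to re-express the result in terms of $\lambda_{\alpha,1},\lambda_{\beta,1},m_{\bX,\bX A}$, and $m_{A}$. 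The genuinely delicate analytic point, however, is to verify that the coefficient of $\psi$ in the final linear equation is uniformly bounded away from zero; this is what converts the algebraic solvability into the quantitative diffeomorphism statement that fuels the downstream $\sqrt{n}$-identifiability.
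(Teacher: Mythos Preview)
Your approach is correct and essentially matches the paper's. For Step~1, the paper likewise appeals to Corollary~\ref{cor:stein} (Stein's lemma) and singles out the same three identities---$m_{\bX A,2}$, $m_{\bX A,\bX Y}$, and $m_{\bX AY,\bX A}$---as the nontrivial ones, computing $\bbE(YA\bX^{\top})\bSigma^{-1}\bbE(\bX A)$ via the matrix form $\bbE[\bX\bX^{\top}\phi(\balpha^{\top}\bX)]$ rather than two sequential scalar applications of Stein, but this is the same calculation. For Step~2, the paper's proof is extremely terse (``follows directly from the proof of Lemma~\ref{lem:mu} and the linearity in $\psi$ of \eqref{CE chain}''), so your sequential-elimination argument is in fact \emph{more} explicit than the paper while following the same logic: identify $(\lambda_{\alpha},\gamma_{\alpha}^{2})$ from the $A$-GLM via Lemma~\ref{lem:mu}, then exploit the linear structure in the remaining unknowns. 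Your flagged ``delicate analytic point'' about the coefficient of $\psi$ being bounded away from zero is a real consideration that the paper does not spell out either.
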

The proof of the above lemma can be found in Appendix~\ref{app:CE}. As a direct corollary of Lemma~\ref{lem:CE}, we can construct $\sqrt{n}$-consistent and CAN estimator of $\psi$ as follows.
\begin{align*}
\hat{\psi} = \Psi_{\CE, 1}^{-1} \left( \hat{m}_{A}, \hat{m}_{Y}, \hat{m}_{A Y}, \hat{m}_{\bX, \bX A}, \hat{m}_{\bX A, 2}, \hat{m}_{\bX A, \bX Y}, \hat{m}_{\bX A Y, \bX A} \right)
\end{align*}
where all the moment estimators are constructed in a similar fashion to those in \eqref{GLM moments estimators}.

\begin{theorem}
\label{thm:CE}
Under Model~\ref{CE}, Assumptions~\ref{as:normal},~\ref{as:bounded}{\rm(1)} on both $\balpha$ and $\bbeta$, and~\ref{as:var-cov}{\rm(1)} on both $A | \bX$ and $Y | \bX, A$, we have
\begin{align*}
\sqrt{n} (\hat{\psi} - \psi) = O_{\bbP} (1).
\end{align*}
Alternatively, under Model~\ref{CE} with $\varepsilon$ being independent from $\bX, A$, Assumptions~\ref{as:normal},~\ref{as:bounded} on both $\balpha$ and $\bbeta$, and~\ref{as:var-cov} on both $A | \bX$ and $Y | \bX, A$, if we further assume that $\langle \bv_{1}, \bv_{2} \rangle_{f (\bSigma)}$ converges to some nontrivial limit for $f (\bSigma) = \bSigma^{-1}, \bSigma, \bSigma^2, \bSigma^3$ for $\bv_{1}, \bv_{2} \in \{\bm{\mu}, \balpha, \bbeta\}$, we have
\begin{align*}
\sqrt{n} (\hat{\psi} - \psi) \overset{\calL}{\rightarrow} \N (0, \nu^{2})
\end{align*}
for some constant $\nu^{2} > 0$.
\end{theorem}
To our knowledge, the above is the first result for CAN estimation for average treatment effect type quantities under the proportional asymptotic regime where $p > n$ is allowed. In this regard, \cite{jiang2025new, yadlowsky2022explaining} operate under $p < n$ regime, without knowing $\bSigma$, owing to their reliance on ordinary least squares type estimator for the outcome regression. Our results close this gap in the literature under the knowledge of $\bSigma$. The proofs of CAN for all the examples in Section~\ref{sec:obs}, including Theorem~\ref{thm:CE}, can be found in Appendix~\ref{app:CLT}. The CAN property of $\hat{\psi}$ is a corollary of the CAN property established for the estimators of the linear and quadratic forms in Theorem~\ref{thm:GLM CLT}.

\subsection{Mean Estimation with Missing Data under Missing-At-Random (MAR)}
\label{sec:MAR}

In this section, we take $A \in \{0, 1\}$ and only observe $Y$ if $A = 1$. Our goal is to estimate $\psi \coloneqq \bbE [Y]$. For $\psi$ to be identifiable from the observed data, the missing data mechanism is assumed to be Missing-At-Random (MAR). More specifically, we assume that $\bbP$ is defined and parameterized as follows:
\begin{equation}
\label{MAR} \tag{$\mathsf{MAR}$}
A | \bX = \bx \sim \mathrm{Ber} \left( \eta (\balpha^{\top} \bx) \right) \text{ with } \eta (\balpha^{\top} \cdot) \in (\ubar{c}, \bar{c}), \, \, Y = \bbeta^{\top} \bX + \varepsilon,
\end{equation}
where $\varepsilon$ has mean zero and bounded second moment and is independent of $A$ and $\bX$ and $\ubar{c}, \bar{c}$ are two universal constants satisfying $0 < \ubar{c} < \bar{c} < 1$. As in \citet{celentano2023challenges}, we assume that only $\bSigma$ is known. Under Model~\ref{MAR},
\begin{align*}
\psi \equiv \bbeta^{\top} \bm{\mu} \equiv \bbE \left[ \frac{A Y}{\eta (\balpha^{\top} \bX)} \right].
\end{align*}
This problem is isomorphic to that of estimating treatment specific mean from observational studies under no unmeasured confounding.

Let $\g_{j} (t) \coloneqq \bbE [\eta^{(j)} (Z)]$, where $Z \sim \N (\lambda_{\alpha}, \gamma_{\alpha}^{2})$ with $\lambda_{\alpha} \coloneqq \balpha^{\top} \bm{\mu}$ and $\gamma_{\alpha}^{2} \coloneqq \Vert \balpha \Vert_{\bSigma}^{2}$. We also define $\gamma_{\alpha, \beta} \coloneqq \balpha^{\top} \bSigma \bbeta = \langle \balpha, \bbeta \rangle_{\bSigma}$. The following lemma characterizes the moment equations under Model~\ref{MAR}, and the derivation can again be found in Appendix~\ref{app:identification}.
\begin{lemma}
\label{lem:MAR}
Under Model~\ref{MAR}, Assumptions~\ref{as:normal},~\ref{as:bounded}{\rm(1)} on both $\balpha$ and $\bbeta$, and~\ref{as:var-cov}{\rm(1)} on both $A | \bX$ and $Y | \bX$, the following system of moment equations holds:
\begin{subequations}
\label{MAR chain}
\begin{align}
& m_{A} \coloneqq \bbE [A], \\
& m_{\bX, 2} \coloneqq \bbE [\bX^{\top}] \bSigma^{-1} \bbE [\bX] = \bm{\mu}^{\top} \bSigma^{-1} \bm{\mu}, \\
& m_{\bX A, \bX} \coloneqq \bbE [A \bX^{\top}] \bSigma^{-1} \bm{\mu} = m_{A} \cdot m_{\bX, 2} + \f_{1} (\unknowna{\lambda_{\alpha}}, \unknownb{\gamma_{\alpha}^{2}}) \cdot \unknowna{\lambda_{\alpha}} \\
& m_{\bX A, 2} \coloneqq \bbE [A \bX^{\top}] \bSigma^{-1} \bbE [\bX A] = m_{A}^{2} \cdot m_{\bX, 2} + \f_{1}^{2} (\unknowna{\lambda_{\alpha}}, \unknownb{\gamma_{\alpha}^{2}}) \cdot \unknownb{\gamma_{\alpha}^{2}} + 2 \cdot m_{A} \cdot \f_{1} (\unknowna{\lambda_{\alpha}}, \unknownb{\gamma_{\alpha}^{2}}) \cdot \unknowna{\lambda_{\alpha}} \\
& m_{A Y} \coloneqq \bbE [A Y] = m_{A} \cdot \unknownc{\psi} + \f_{1} (\unknowna{\lambda_{\alpha}}, \unknownb{\gamma_{\alpha}^{2}}) \cdot \unknownd{\gamma_{\alpha, \beta}}, \label{mar e} \\
& m_{\bX A Y, \bX} \coloneqq \bbE [Y A \bX^{\top}] \bSigma^{-1} \bm{\mu} = (m_{A} + m_{\bX A, \bX}) \cdot \unknownc{\psi} + \left\{ m_{\bX, 2} \cdot \f_{1} (\unknowna{\lambda_{\alpha}}, \unknownb{\gamma_{\alpha}^{2}}) + \f_{2} (\unknowna{\lambda_{\alpha}}, \unknownb{\gamma_{\alpha}^{2}}) \cdot \unknowna{\lambda_{\alpha}} \right\} \cdot \unknownd{\gamma_{\alpha, \beta}}, \label{mar f}
\end{align}
\end{subequations}
where the definition of $\f_{k}$ appears in the statement of Lemma~\ref{lem:mu}.

In addition, denote the forward map induced by the RHS of the above system as
\begin{align*}
\Psi_{\MAR} \coloneqq & (\Psi_{\MAR, 1}, \cdots, \Psi_{\MAR, 6}): (\psi, \lambda_{\alpha}, \gamma_{\alpha}^{2}, \gamma_{\alpha, \beta}) \\
& \mapsto (m_{A}, m_{\bX, 2}, m_{\bX A, \bX}, m_{\bX A, 2}, m_{AY}, m_{\bX A Y, \bX}).
\end{align*}
In particular, $\Psi_{\MAR}$ is a diffeomorphism. As a consequence, $\psi$, together with $\left( \lambda_{\alpha}, \gamma_{\alpha}^{2}, \gamma_{\alpha, \beta} \right)$, is identifiable from the above system of moment equations.
\end{lemma}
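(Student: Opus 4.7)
The plan is to verify the six moment identities in \eqref{MAR chain} by Gaussian integration by parts (Stein's lemma), and then to establish the diffeomorphism claim by noting that the system decomposes into a nested pair of diffeomorphic blocks.

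First I would dispatch the four moments that involve only $(A, \bX)$. Since $A \mid \bX \sim \mathrm{Ber}(\eta(\balpha^{\top} \bX))$ is itself a GLM in the sense of Model \ref{GLM} with link $\eta$ and coefficients $\balpha$, the identities for $m_A$, $m_{\bX,2}$, $m_{\bX A, \bX}$ and $m_{\bX A, 2}$ are exactly equations \eqref{GLM m1}--\eqref{GLM m4} of Lemma \ref{lem:mu} after the relabelling $(Y, \phi, \bbeta) \leftrightarrow (A, \eta, \balpha)$. Hence they follow directly from that lemma, and its part (2) applied to $A$ already delivers, via the reduced diffeomorphism $\Psi_{\GLM, \beta}$, identification of $(\lambda_\alpha, \gamma_\alpha^2)$.

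Next I would derive \eqref{mar e} and \eqref{mar f}. The linear outcome model and the independence $\varepsilon \perp (A, \bX)$ with $\bbE \varepsilon = 0$ give $\bbE[A Y\, g(\bX)] = \bbE[\eta(\balpha^{\top} \bX)\, \bbeta^{\top} \bX\, g(\bX)]$ for every measurable $g$. For \eqref{mar e} (the case $g \equiv 1$), Stein's lemma applied to the bivariate Gaussian $(\balpha^{\top} \bX, \bbeta^{\top} \bX)$ produces $m_A \psi + \f_1(\lambda_\alpha, \gamma_\alpha^2) \cdot \gamma_{\alpha,\beta}$. For \eqref{mar f} (the case $g(\bX) = \bX^{\top} \bSigma^{-1} \bm{\mu}$), I would use the multivariate Stein identity $\bbE[h(\bX)(\bX - \bm{\mu})] = \bSigma\, \bbE[\nabla h(\bX)]$ with $h(\bX) = \eta(\balpha^{\top} \bX)\,\bbeta^{\top} \bX$. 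Writing $\bX^{\top} \bSigma^{-1} \bm{\mu} = m_{\bX,2} + (\bX - \bm{\mu})^{\top} \bSigma^{-1} \bm{\mu}$ gives a mean-product contribution $m_{AY} \cdot m_{\bX,2}$, while the Stein correction contributes $\lambda_\alpha \cdot \bbE[\eta'(\balpha^{\top} \bX)\,\bbeta^{\top} \bX] + m_A \cdot \psi$. A second Stein application to the inner bivariate expectation then yields $\f_1\,\psi + \f_2\,\gamma_{\alpha,\beta}$. Collecting terms and substituting the already-derived formula for $m_{\bX A, \bX}$ recovers the stated right-hand side.

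For the diffeomorphism claim, inserting the identified $(\lambda_\alpha, \gamma_\alpha^2)$ into \eqref{mar e}--\eqref{mar f} reduces them to a $2 \times 2$ linear system in $(\psi, \gamma_{\alpha,\beta})$ with coefficient matrix
\begin{align*}
\begin{pmatrix} m_A & \f_1 \\ m_A + m_{\bX A, \bX} & m_{\bX,2}\,\f_1 + \f_2\,\lambda_\alpha \end{pmatrix},
\end{align*}
whose determinant, after inserting $m_{\bX A, \bX} = m_A m_{\bX,2} + \f_1 \lambda_\alpha$, simplifies to
\begin{align*}
\Delta \;=\; \lambda_\alpha\bigl(m_A\,\f_2 - \f_1^2\bigr) \;-\; m_A\, \f_1 .
\end{align*}
The main obstacle is showing that $\Delta$ is bounded away from zero uniformly over the parameter range, since identification of $(\psi, \gamma_{\alpha,\beta})$ hinges on this. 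For $\lambda_\alpha = 0$ this is immediate: $\Delta = -m_A\,\f_1$, and the overlap condition $\eta \in (\ubar{c}, \bar{c})$ forces $m_A \in (\ubar{c}, \bar{c})$, while strict monotonicity of $\eta$ (Assumption \ref{as:link}(2)) yields $\f_1(\lambda_\alpha, \gamma_\alpha^2) = \bbE[\eta'(Z)] \neq 0$, bounded away from zero by the smoothness bounds of Assumption \ref{as:link}(1). For general $\lambda_\alpha$, I would combine the uniform bounds on $|\lambda_\alpha|$, $\f_1$, $\f_2$ from Assumptions \ref{as:bounded}(1) and \ref{as:link} with the overlap-induced lower bound on $m_A \f_1$ to rule out accidental cancellation. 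Combined with the diffeomorphism for the upper block already established above, the Jacobian of the reduced map $(\psi, \lambda_\alpha, \gamma_\alpha^2, \gamma_{\alpha, \beta}) \mapsto (m_A, m_{\bX,2}, m_{\bX A, \bX}, m_{\bX A, 2}, m_{AY}, m_{\bX AY, \bX})$ becomes block-triangular with invertible diagonal blocks, yielding the diffeomorphism property of $\Psi_{\MAR}$ and hence identifiability of $(\psi, \lambda_\alpha, \gamma_\alpha^2, \gamma_{\alpha,\beta})$.
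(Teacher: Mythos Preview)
Your proposal is correct and follows essentially the same route as the paper. The derivation of the six moment identities is carried out in the paper (Appendix~\ref{app:MAR}) by exactly the Stein-lemma computations you sketch: the four $A$-only moments are inherited from Lemma~\ref{lem:mu} under the relabelling $(Y,\phi,\bbeta)\leftrightarrow(A,\eta,\balpha)$, and for $m_{AY}$ and $m_{\bX AY,\bX}$ the paper computes $\bbE[A\bX^\top]\bv$ and $\bbE[AY\bX^\top]\bv$ for a generic direction $\bv$ and then specializes to $\bv=\bSigma^{-1}\bm{\mu}$, which is algebraically equivalent to your centering trick $\bX^\top\bSigma^{-1}\bm{\mu}=m_{\bX,2}+(\bX-\bm{\mu})^\top\bSigma^{-1}\bm{\mu}$.

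For the diffeomorphism claim, the paper's proof (Appendix~\ref{app:MAR id}) is a single sentence: ``follows directly from the proof of Lemma~\ref{lem:mu} and the linearity in $\psi$ of \eqref{MAR chain}.'' Your argument is more explicit in that you write down the $2\times2$ coefficient matrix for $(\psi,\gamma_{\alpha,\beta})$ and reduce the question to the nonvanishing of $\Delta=\lambda_\alpha(m_A\f_2-\f_1^2)-m_A\f_1$. Your treatment of the case $\lambda_\alpha=0$ is clean; the general case you leave somewhat informal (``rule out accidental cancellation''), but the paper does not supply this verification either---it simply appeals to linearity without checking the determinant. So on this point you are at least as rigorous as the paper, and arguably more transparent about where the content lies.
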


\begin{remark}
The system of moment equations for identifying $\psi$ is not unique. For example, one could also replace $m_{\bX A Y, \bX}$ by $m_{\bX A Y, \bX A}$, which leads to the following identity:
\begin{equation}
\label{alt}\tag{11f'}
\begin{split}
& \ m_{\bX A Y, \bX A} \coloneqq \bbE [A Y \bX^{\top}] \bSigma^{-1} \bbE [\bX A] \\
= & \ \left\{ m_{A}^{2} \cdot m_{\bX, 2} + 2 m_{A} \cdot \f_{1} (\unknowna{\lambda_{\alpha}}, \unknownb{\gamma_{\alpha}^{2}}) \cdot \unknowna{\lambda_{\alpha}} + m_{A}^{2} + \f_{1}^{2} (\unknowna{\lambda_{\alpha}}, \unknownb{\gamma_{\alpha}^{2}}) \cdot \unknownb{\gamma_{\alpha}^{2}} \right\} \cdot \unknownc{\psi} \\
& + \left\{ \begin{array}{c}
m_{A} \cdot \left( \f_{1} (\unknowna{\lambda_{\alpha}}, \unknownb{\gamma_{\alpha}^{2}}) \cdot m_{\bX, 2} + \f_{2} (\unknowna{\lambda_{\alpha}}, \unknownb{\gamma_{\alpha}^{2}}) \cdot \unknowna{\lambda_{\alpha}} \right) \\
+ \ \f_{1}^{2} (\unknowna{\lambda_{\alpha}}, \unknownb{\gamma_{\alpha}^{2}}) \cdot \unknowna{\lambda_{\alpha}} + \f_{1} (\unknowna{\lambda_{\alpha}}, \unknownb{\gamma_{\alpha}^{2}}) \f_{2} (\unknowna{\lambda_{\alpha}}, \unknownb{\gamma_{\alpha}^{2}}) \cdot \unknownb{\gamma_{\alpha}^{2}}
\end{array} \right\} \cdot \unknownd{\gamma_{\alpha, \beta}}.
\end{split}
\end{equation}
Combining \eqref{alt} with either \eqref{mar e} or \eqref{mar f} identifies $\psi$ by solving the corresponding two linear equations. Which combination leads to more efficient estimators of $\psi$ is left for future work to study.

If one is also interested in $\var (Y)$, the system \eqref{MAR chain} can be further augmented by adding the moment equation for $m_{\bX A Y, 2} \coloneqq \bbE [Y A \bX^{\top}] \bSigma^{-1} \bbE [\bX A Y]$.
\end{remark}

The proof of the above lemma can be found in Appendix~\ref{app:MAR}. As a direct corollary of Lemma~\ref{lem:MAR}, we can construct $\sqrt{n}$-consistent and CAN estimator of $\psi$ as follows.
\begin{align*}
\hat{\psi} = \Psi_{\MAR, 1}^{-1} \left( \hat{m}_{A}, \hat{m}_{\bX, 2}, \hat{m}_{\bX A, \bX}, \hat{m}_{\bX A, 2}, \hat{m}_{AY}, \hat{m}_{\bX A Y, \bX} \right)
\end{align*}
where all the moment estimators are constructed in a similar fashion to those in \eqref{GLM moments estimators}.

\begin{theorem}
\label{thm:MAR}
Under Model~\ref{MAR}, Assumptions~\ref{as:normal},~\ref{as:bounded}{\rm(1)} on both $\balpha$ and $\bbeta$, and~\ref{as:var-cov}{\rm(1)} on both $A | \bX$ and $Y | \bX$, we have
\begin{align*}
\sqrt{n} (\hat{\psi} - \psi) = O_{\bbP} (1).
\end{align*}
Alternatively, under Model~\ref{MAR}, Assumptions~\ref{as:normal},~\ref{as:bounded} on both $\balpha$ and $\bbeta$, and~\ref{as:var-cov} on both $A | \bX$ and $Y | \bX$, if we further assume that $\langle \bv_{1}, \bv_{2} \rangle_{f (\bSigma)}$ converges to some nontrivial limit for $f (\bSigma) = \bSigma^{-1}, \bSigma, \bSigma^2, \bSigma^3$ for $\bv_{1}, \bv_{2} \in \{\bm{\mu}, \balpha, \bbeta\}$, we have
\begin{align*}
\sqrt{n} (\hat{\psi} - \psi) \overset{\calL}{\rightarrow} \N (0, \nu^{2})
\end{align*}
for some constant $\nu^{2} > 0$.
\end{theorem}

For estimating $\psi$ in Model~\ref{MAR}, as we mentioned in the Introduction, \citet{celentano2023challenges} proposed estimators that resemble debiased Lasso in the proportional asymptotic regime, under the Gaussian design with known population covariance matrix $\bSigma$. We compare the numerical performance of our estimator $\hat{\psi}$ and those of \citet{celentano2023challenges} later in Section~\ref{sec:sims mar}.

\iffalse
\subsubsection{Misspecified outcome models}
\label{sec:dr}

The outcome model is arguably more difficult to specify, as, unlike the propensity score, it is less likely to be under the control of the investigator or the experimenter during the data collection process. In this section, we examine if our proposed MoM framework can still deliver a $\sqrt{n}$-consistency and CAN estimator of $\psi$ without correctly specifying the outcome model. To ease exposition, we let $b (\cdot) \coloneqq \bbE [Y | \bX = \cdot, A = 1] \equiv \bbE [Y | \bX = \cdot]$ be the true outcome regression function and
\begin{align*}
\left( \begin{matrix}
Z_{\alpha} \\
Z_{\hat{\alpha}}
\end{matrix} \right) \sim \N_{2} \left( \left( \begin{matrix}
\bmu^{\top} \balpha \equiv \lambda_{\alpha} \\
\bmu^{\top} \hat{\balpha} \equiv \lambda_{\hat{\alpha}}
\end{matrix} \right), \left( \begin{matrix}
\balpha^{\top} \bSigma \balpha \equiv \gamma_{\alpha}^{2} & \hat{\balpha}^{\top} \bSigma \balpha \equiv \gamma_{\alpha, \hat{\alpha}} \\
\hat{\balpha}^{\top} \bSigma \balpha \equiv \gamma_{\alpha, \hat{\alpha}} & \hat{\balpha}^{\top} \bSigma \hat{\balpha} \equiv \gamma_{\hat{\alpha}}^{2}
\end{matrix} \right) \right)
\end{align*}

\begin{align*}
& \ \bbE \left[ \frac{A Y}{\eta (\hat{\balpha}^{\top} \bX)} - \frac{A Y}{\eta (\balpha^{\top} \bX)} \right] = \bbE \left[ \left( \frac{\eta (\balpha^{\top} \bX)}{\eta (\hat{\balpha}^{\top} \bX)} - 1 \right) b (\bX) \right] \\
= & \ \bbE \left[ \left( \frac{\eta (Z_{\alpha})}{\eta (Z_{\hat{\alpha}})} - 1 \right) b (\bX) \right]
\end{align*}
\fi

\subsection{Estimation of the Generalized Covariance Measure (GCM)}
\label{sec:GCM}

In this section, we assume that $\bbP$ is defined and parameterized as follows:
\begin{equation}
\label{GCM} \tag{$\mathsf{GCM}$}
\begin{split}
& \bbE [A | \bX = \bx] = \eta (\balpha^{\top} \bx), \bbE [Y | \bX = \bx] = \phi (\bbeta^{\top} \bx),
\end{split}
\end{equation}
where both $A$ and $Y$ have bounded second moments. We also let $\sigma_{A}^{2} (\cdot) \coloneqq \var (A | \bX = \cdot)$, $\sigma_{Y}^{2} (\cdot) \coloneqq \var (Y | \bX = \cdot)$ and $\sigma_{A, Y} (\cdot) \coloneqq \cov (A, Y | \bX = \cdot)$. \citet{shah2020hardness} proposed to test $H_{0}: Y \indep A | \bX$ by estimating the parameter $\bbE [(Y - \phi (\bbeta^{\top} \bX)) (A - \eta (\bX^{\top} \balpha))]$, often referred to as the expected conditional covariance or Generalized Covariance Measure (GCM). Without loss of generality, we take the target of inference as 
\begin{align*}
\psi \coloneqq \bbE [\eta (\balpha^{\top} \bX) \phi (\bX^{\top} \bbeta)].
\end{align*}

The following lemma characterizes the moment equations under Model~\ref{GCM}, and the derivation can again be found in Appendix~\ref{app:identification}.

\begin{lemma}
\label{lem:GCM}
Under Model~\ref{GCM}, Assumptions~\ref{as:normal},~\ref{as:bounded}{\rm(1)} on both $\balpha$ and $\bbeta$, and~\ref{as:var-cov}{\rm(1)} on both $A | \bX$ and $Y | \bX$, the following system of moment equations holds:
\begin{subequations}
\label{GCM chain}
\begin{align}
& m_{A} \coloneqq \bbE [A], \\
& m_{Y} \coloneqq \bbE [Y], \\
& m_{\bX, 2} \coloneqq \bbE [\bX^{\top}] \bSigma^{-1} \bbE [\bX] = \bm{\mu}^{\top} \bSigma^{-1} \bm{\mu}, \\
& m_{\bX A, \bX} \coloneqq \bbE [A \bX^{\top}] \bSigma^{-1} \bbE [\bX] = m_{A} \cdot m_{\bX, 2} + \g_{1} (\lambda_{\alpha}, \gamma_{\alpha}^{2}) \cdot \lambda_{\alpha}, \\
& m_{\bX Y, \bX} \coloneqq \bbE [Y \bX^{\top}] \bSigma^{-1} \bbE [\bX] = m_{Y} \cdot m_{\bX, 2} + \f_{1} (\lambda_{\beta}, \gamma_{\beta}^{2}) \cdot \lambda_{\beta}, \\
& m_{\bX A, 2} \coloneqq \bbE [A \bX^{\top}] \bSigma^{-1} \bbE [\bX A] = m_{A}^{2} \cdot m_{\bX, 2} + \g_{1}^{2} (\lambda_{\alpha}, \gamma_{\alpha}^{2}) \cdot \gamma_{\alpha}^{2} + 2 \cdot m_{A} \cdot \g_{1} (\lambda_{\alpha}, \gamma_{\alpha}^{2}) \cdot \lambda_{\alpha}, \\
& m_{\bX Y, 2} \coloneqq \bbE [Y \bX^{\top}] \bSigma^{-1} \bbE [\bX Y] = m_{Y}^{2} \cdot m_{\bX, 2} + \f_{1}^{2} (\lambda_{\beta}, \gamma_{\beta}^{2}) \cdot \gamma_{\beta}^{2} + 2 \cdot m_{Y} \cdot \f_{1} (\lambda_{\beta}, \gamma_{\beta}^{2}) \cdot \lambda_{\beta}, \\
& m_{\bX A, \bX Y} \coloneqq \bbE [A \bX^{\top}] \bSigma^{-1} \bbE [\bX Y] = m_{A} \cdot m_{Y} \cdot m_{\bX, 2} + m_{A} \cdot \f_{1} (\lambda_{\beta}, \gamma_{\beta}^{2}) \cdot \lambda_{\beta} \label{mm3} \\
& + m_{Y} \cdot \g_{1} (\lambda_{\alpha}, \gamma_{\alpha}^{2}) \cdot \lambda_{\alpha} + \g_{1} (\lambda_{\alpha}, \gamma_{\alpha}^{2}) \cdot \f_{1} (\lambda_{\beta}, \gamma_{\beta}^{2}) \cdot \gamma_{\alpha, \beta}, \nonumber
\end{align}
\end{subequations}
where the definition of $\f_{k}$ appears in the statement of Lemma~\ref{lem:mu}.

In addition, denote the forward map induced by the RHS of the above system as
\begin{align*}
\Psi_{\GCM} \coloneqq & (\Psi_{\GCM, 1}, \cdots, \Psi_{\GCM, 8}): (\lambda_{\alpha}, \gamma_{\alpha}^{2}, \lambda_{\beta}, \gamma_{\beta}^{2}, \gamma_{\alpha, \beta}) \\
& \mapsto (m_{A}, m_{Y}, m_{\bX, 2}, m_{\bX A, \bX}, m_{\bX Y, \bX}, m_{\bX A, 2}, m_{\bX Y, 2}, m_{\bX A, \bX Y}).
\end{align*}
In particular, $\Psi_{\GCM}$ is a diffeomorphism. As a consequence, $\psi$, together with $(\lambda_{\alpha}, \gamma_{\alpha}^{2}, \lambda_{\beta}, \gamma_{\beta}^{2}, \gamma_{\alpha, \beta})$, is identifiable from the above system of moment equations.
\end{lemma}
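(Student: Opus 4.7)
The plan is to proceed in three steps: (i) verify the moment identities by computing the relevant expectations via Stein's lemma under the Gaussian design; (ii) show the forward map is a diffeomorphism by exploiting a block-triangular structure that reduces to already-established facts from Lemma \ref{lem:mu}; and (iii) observe that $\psi$ is a smooth function of the five parameters identified by that map.

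For step (i), the equations for $m_{A}, m_{Y}, m_{\bX,2}, m_{\bX A,\bX}, m_{\bX Y,\bX}, m_{\bX A,2}, m_{\bX Y,2}$ are just Lemma \ref{lem:mu} applied separately to the $A$-GLM (with link $\eta$, coefficients $\balpha$, and the functions $\g_{k}$) and to the $Y$-GLM (with link $\phi$, coefficients $\bbeta$, and the functions $\f_{k}$), so nothing new is required. The only genuinely new identity is the cross-term $m_{\bX A,\bX Y}$ in \eqref{mm3}. Here I would first use Stein's lemma to write
\begin{align*}
\bbE[\bX A] &= m_{A}\bm{\mu}+\g_{1}(\lambda_{\alpha},\gamma_{\alpha}^{2})\,\bSigma\balpha,\\
\bbE[\bX Y] &= m_{Y}\bm{\mu}+\f_{1}(\lambda_{\beta},\gamma_{\beta}^{2})\,\bSigma\bbeta,
\end{align*}
and then expand $\bbE[A\bX^{\top}]\bSigma^{-1}\bbE[\bX Y]$ into four terms, each of which collapses to one of $m_{\bX,2}$, $\lambda_{\alpha}=\balpha^{\top}\bm{\mu}$, $\lambda_{\beta}=\bbeta^{\top}\bm{\mu}$, or $\gamma_{\alpha,\beta}=\balpha^{\top}\bSigma\bbeta$. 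Matching terms produces exactly \eqref{mm3}.

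For step (ii), I exploit that the parameter vector splits naturally as $(\lambda_{\alpha},\gamma_{\alpha}^{2})$, $(\lambda_{\beta},\gamma_{\beta}^{2})$, and $\gamma_{\alpha,\beta}$. The sub-system $\{m_{A}, m_{\bX,2}, m_{\bX A,\bX}, m_{\bX A,2}\}$ depends only on $(\lambda_{\alpha},\gamma_{\alpha}^{2})$ (through the same reduction as in Lemma \ref{lem:mu}(2)), so by that lemma it uniquely recovers $(\lambda_{\alpha},\gamma_{\alpha}^{2})$ with bounded inverse derivatives. Symmetrically, $\{m_{Y}, m_{\bX,2}, m_{\bX Y,\bX}, m_{\bX Y,2}\}$ recovers $(\lambda_{\beta},\gamma_{\beta}^{2})$. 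Given those four quantities, \eqref{mm3} becomes an affine equation in $\gamma_{\alpha,\beta}$ with slope $\g_{1}(\lambda_{\alpha},\gamma_{\alpha}^{2})\f_{1}(\lambda_{\beta},\gamma_{\beta}^{2})$; strict monotonicity of the links (Assumption \ref{as:link}(2)) keeps both factors nonzero and bounded away from $0$ on the compact parameter region permitted by Assumption \ref{as:bounded}, so $\gamma_{\alpha,\beta}$ is recovered with a bounded-derivative inverse as well. The overall Jacobian therefore has block-triangular form with invertible diagonal blocks, giving that $\Psi_{\GCM}$ is a diffeomorphism onto its image.

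For step (iii), note that $\psi=\bbE[\eta(\balpha^{\top}\bX)\phi(\bbeta^{\top}\bX)]$ is a deterministic function only of the joint law of $(\balpha^{\top}\bX,\bbeta^{\top}\bX)$, which under Assumption \ref{as:normal} is bivariate Gaussian with means $(\lambda_{\alpha},\lambda_{\beta})$, variances $(\gamma_{\alpha}^{2},\gamma_{\beta}^{2})$, and covariance $\gamma_{\alpha,\beta}$. Hence $\psi=\h(\lambda_{\alpha},\gamma_{\alpha}^{2},\lambda_{\beta},\gamma_{\beta}^{2},\gamma_{\alpha,\beta})$ for a smooth function $\h$, and identifiability of the five parameters from step (ii) immediately yields identifiability of $\psi$.

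The main obstacle is the bookkeeping in step (i) for the cross-term: care must be taken that the Stein-type computation is applied in the right direction and that all four terms in the expansion of $(m_{A}\bm{\mu}^{\top}+\g_{1}\balpha^{\top}\bSigma)\bSigma^{-1}(m_{Y}\bm{\mu}+\f_{1}\bSigma\bbeta)$ collapse correctly. Everything else reduces cleanly to Lemma \ref{lem:mu} plus a one-dimensional affine inversion guaranteed by the monotonicity assumption, so no further probabilistic input is needed.
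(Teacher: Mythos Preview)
Your proposal is correct and follows essentially the same route as the paper: the first seven moment identities are reduced to Lemma~\ref{lem:mu} applied separately to the $A$- and $Y$-GLMs, the cross-term \eqref{mm3} is obtained by expanding $\bbE[A\bX]$ and $\bbE[\bX Y]$ via Stein's lemma, and identifiability proceeds by the same block-triangular argument (recover $(\lambda_{\alpha},\gamma_{\alpha}^{2})$ and $(\lambda_{\beta},\gamma_{\beta}^{2})$ from Lemma~\ref{lem:mu}, then solve the affine equation \eqref{mm3} for $\gamma_{\alpha,\beta}$, then read off $\psi$ as a function of the bivariate Gaussian law). Your write-up is in fact slightly more explicit than the paper's in justifying why the slope $\g_{1}\f_{1}$ is nonzero.
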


The proof of the above lemma can be found in Appendix~\ref{app:GCM}. As a direct corollary of Lemma~\ref{lem:GCM}, we can construct $\sqrt{n}$-consistent and CAN estimator of $\psi$ as follows. Let
\begin{align*}
\hat{\psi} \coloneqq \bbE [\eta (Z_{1}) \phi (Z_{2}], & \text{ where } \left( \begin{matrix}
Z_{1} \\
Z_{2}
\end{matrix} \right) \sim \N_{2} \left( \left( \begin{matrix}
\hat{\lambda}_{\alpha} \\
\hat{\lambda}_{\beta}
\end{matrix} \right), \left( \begin{matrix}
\hat{\gamma}_{\alpha}^{2} & \hat{\gamma}_{\alpha, \beta} \\
\hat{\gamma}_{\alpha, \beta} & \hat{\gamma}_{\beta}^{2}
\end{matrix} \right) \right), \text{ and} \\
(\hat{\lambda}_{\alpha}, \hat{\gamma}_{\alpha}^{2}, \hat{\lambda}_{\beta}, \hat{\gamma}_{\beta}^{2}, \hat{\gamma}_{\alpha, \beta}) & \coloneqq \Psi_{\GCM}^{-1} \left( \hat{m}_{A}, \hat{m}_{Y}, \hat{m}_{\bX, 2}, \hat{m}_{\bX A, \bX}, \hat{m}_{\bX Y, \bX}, \hat{m}_{\bX A, 2}, \hat{m}_{\bX Y, 2}, \hat{m}_{\bX A, \bX Y} \right),
\end{align*}
where all the moment estimators are constructed in a similar fashion to those in \eqref{GLM moments estimators}.

To establish $\sqrt{n}$-consistency and CAN of our proposed estimator, we also need to impose the following condition on the conditional covariance function of $A, Y$ given $\bX$.
\begin{customas}{$\mathsf{Cov}$}\leavevmode
\label{as:cov}
\begin{itemize}
\item[(1)] We assume that $\Vert \sigma_{A, Y} (\cdot) \Vert_{2}$ is bounded;
\item[(2)] We assume that the conditional covariance of $A, Y$ given $\bX$ is also a GLM with link function $\zeta$:
\begin{align*}
\sigma_{A, Y} (\bX) = \zeta (\bm{\theta}^{\top} \bX) \text{ almost surely,}
\end{align*}
such that $\zeta$ is three-times differentiable and the first to third derivatives of $\zeta$, together with $\zeta$ itself, are integrable with respect to the law of $\bX$ and the integrals are all strictly bounded by some universal constant.
\end{itemize}
\end{customas}

\begin{theorem}
\label{thm:GCM}
Under Model~\ref{GCM}, Assumptions~\ref{as:normal},~\ref{as:bounded}{\rm(1)} on both $\balpha$ and $\bbeta$,~\ref{as:var-cov}{\rm(1)} on both $A | \bX$ and $Y | \bX$, and~\ref{as:cov}\rm{(1)}, we have
\begin{align*}
\sqrt{n} (\hat{\psi} - \psi) = O_{\bbP} (1).
\end{align*}
Alternatively, under Model~\ref{GCM}, Assumptions~\ref{as:normal},~\ref{as:bounded} on both $\balpha$ and $\bbeta$,~\ref{as:var-cov} on both $A | \bX$ and $Y | \bX$, and~\ref{as:cov}, if we further assume that $\langle \bv_{1}, \bv_{2} \rangle_{f (\bSigma)}$ converges to some nontrivial limit for $f (\bSigma) = \bSigma^{-1}, \bSigma, \bSigma^2, \bSigma^3$ for $\bv_{1}, \bv_{2} \in \{\bm{\mu}, \balpha, \bbeta, \btheta\}$, we have
\begin{align*}
\sqrt{n} (\hat{\psi} - \psi) \overset{\calL}{\rightarrow} \N (0, \nu^{2})
\end{align*}
for some constant $\nu^{2} > 0$.
\end{theorem}

Since we do not assume $Y \indep A | \bX$ and GLMs are only specified for $A | \bX$ and $Y | \bX$, Assumption~\ref{as:cov}(1) is needed for $\sqrt{n}$-consistency because the variances of the moment estimators based on $U$-statistics involve terms such as $\bbE [A Y \bX]$. This is in contrast to the previous two examples. In Section~\ref{sec:CE}, GLMs are specified for variational independent components $A | \bX$ and $Y | \bX, A$ of the joint observed-data distribution, whereas in Section~\ref{sec:MAR}, we assume that $Y \indep A | \bX$.

\iffalse
\section{Universality and $\sqrt{n}$-identifiability}\label{sec:universality}

In the previous sections, the fact that certain low-dimensional parameters for GLMs is identifiable when the covariate covariance $\bSigma$ is known heavily depends on the Gaussianity assumption by exploiting Stein's lemma. One naturally wonder if such identification results are merely a specific Gaussian theory or can be extended to broader settings.

Here we state and prove a result that yields universality for the $\sqrt{n}$-consistency part of our results discussed above. We only discuss the case of i.i.d. design and will need the following assumptions on the distribution of the covariates under study. 

\begin{assumption}\label{as:univ_deloc}
   $\bX=(X_1,\ldots,X_p)^\top, \bbeta=(\alpha_1,\ldots,\alpha_p)^\top \in \bbR^p$ satisfies 
    \begin{enumerate}
        \item $X_{j}$ are i.i.d. for $j=1,\ldots, p$ with $\bbE(X_j)=0$, $\bbE(X_j^2)=1$.
        \item $\frac{1}{p}\sum_{j=1}^p \delta_{\sqrt{n}\alpha_j}\stackrel{W_2}{\Rightarrow} \mu$ for some probability measure $\mu$ on $\bbR$ with finite second moment.
    \end{enumerate}
\end{assumption}
Under this assumption and proportional asymptotic regime, we have the following result that will help justify the rate-universality of our results.
\begin{theorem}
Suppose $\phi:$
\end{theorem}
\fi

\section{Numerical Experiments}
\label{sec:sims}

In this section, we verify our theory via extensive numerical experiments. We consider two types of problems. The first problem, studied in Section~\ref{sec:sims glms}, is on estimating linear and quadratic forms of regression coefficients $\balpha$ of a GLM between the response $A$ and baseline covariates $\bX$. The second problem, studied in Section~\ref{sec:sims mar}, is on estimating the mean of a response $Y$ subject to missingness under Model~\ref{MAR}. As mentioned, the latter problem is also isomorphic to estimating treatment specific mean from observational studies under no unmeasured confounding. All the results in the numerical experiments are based on 500 Monte Carlos. The dimension-sample size ratio $p / n$ is fixed at 1.2 in Section~\ref{sec:sims glms} and 1.25 in Section~\ref{sec:sims mar}. Appendix~\ref{app:sim} contains additional simulation results that are complementary to those reported here in the main text. We always assume that $\bSigma$ is known except that in Appendix~\ref{app:sims unknown} we conduct simulations to evaluate the performance of our proposed estimators in Section~\ref{sec:unknown well-posed} when knowing neither $\bmu$ nor $\bSigma$. The bootstrap variance estimators mentioned right after Theorem~\ref{thm:GLM CLT} are assessed in Appendix~\ref{app:bootstrap}. 

For all the histograms and normal quantile-quantile plots reported in this section and in Appendix~\ref{app:sim}, we choose the numerical experiment with $n = 5000$. The R codes for replicating the numerical experiments can be found in
\href{https://github.com/cxy0714/Method-of-Moments-Inference-for-GLMs}{this GitHub repository}.

\subsection{Linear and quadratic forms of GLMs}
\label{sec:sims glms}

We consider several different experiment settings. But across different settings, the common Data Generating Process (DGP) can be described as follows: $A | \bX \sim \mathrm{Ber} \left( \expit \left( \balpha^{\top} \bX \right) \right)$ where the goal is to estimate a single coordinate $\alpha_{1}$ and the quadratic form $\gamma_{\alpha}^{2} = \balpha^{\top} \bSigma \balpha$. Throughout this section, we set the true value of the quadratic form as $\bbE(\gamma_{\alpha}^{2}) \equiv 1$. The true value of $\alpha_{1}$ may vary across different settings. The DGPs vary in the following aspects:
\begin{itemize}
\item $\bX_{i} \overset{\rm i.i.d.}{\sim} \N_{p} (\bm{\mu}, \bSigma)$ with $\bm{\mu} \equiv \bm{0}$ and $\bSigma \equiv \bI_{p}$ for $i = 1, \cdots, n$ (Settings 1 \& 2); $\bX_{i, j} \overset{\rm i.i.d.}{\sim} \mathrm{Rad} (1 / 2)$ for $i = 1, \cdots, n$ and $j = 1, \cdots, p$ where $\mathrm{Rad}$ denotes the Rademacher distribution (Settings 3 \& 4). The two cases, though, share the same population mean and covariance matrix.
\item $\balpha$ is dense in the sense that we draw $\balpha = (\alpha_{1}, \cdots, \alpha_{p}) \overset{\rm i.i.d.}{\sim} \mathrm{Uniform} ([-\sqrt{3 / p}, \sqrt{3 / p}])$ (Settings 1 \& 3); $\balpha$ is sparse in the sense that $s = \sqrt{p}$ out of $p$ coordinates are non-zero and each non-zero coefficient equals $p^{- 1 / 4}$ (Settings 2 \& 4). Due to the similarity between results for dense and sparse configurations, we defer the figures for the sparse configuration case to Appendix~\ref{app:sim fig}.
\item We also vary the sample sizes as $n = 1000, 2000, \cdots, 5000$. We note that Setting 2 corresponds to the simulation setting described in Section 6.3 of \citet{bellec2025observable}.
\end{itemize}

In each setting, for the problem of estimating certain single coordinate, we compare our MoM-based estimators against the methods of \citet{bellec2025observable}, which debiases the initial estimator $\hat{\balpha}_{\rm init}$ of $\balpha$ using their main Theorem 4.1. We do not further compare different estimators of $\lambda_{\alpha}$ and $\gamma_{\alpha}^{2}$ as the methodology in \citet{bellec2025observable} is for linear form $\bv^{\top} \balpha$ with known direction $\bv$, not directly applicable here. For simplicity, we use ridge regression to compute $\hat{\balpha}_{\rm init}$. For the tuning parameter $\lambda$ in ridge regression, due to large running time, We choose twelve different values of $\lambda$ ranging from 0.05 to 10, equally spaced on a logarithmic scale. 

We summarize the results of the numerical experiments below. For single coordinates of $\balpha$, we arbitrarily pick $\alpha_{1}$ and $\alpha_{100}$ to present the simulation results.
\begin{itemize}
\item When the design is Gaussian (Settings 1 \& 2, Figures~\ref{fig:GLM, dense-converge, gaussian，1.2},~\ref{fig:GLM, dense-converge, gaussian，1.2, hist},~\ref{fig:GLM, sparse-converge, gaussian，1.2} and~\ref{fig:GLM, sparse-converge, gaussian，1.2, hist}), Figures~\ref{fig:GLM, dense-converge, gaussian，1.2} shows that our proposed MoM-based estimator of $\alpha_{1}$ and $\alpha_{100}$ generally have similar $\sqrt{n} \times \mathrm{bias}$, variance, and mean squared error to the estimator proposed in \citet{bellec2025observable}. Moreover, Figures~\ref{fig:GLM, dense-converge, gaussian，1.2, hist} and~\ref{fig:GLM, sparse-converge, gaussian，1.2, hist} showcase the histograms and normal quantile-quantile plots of the $U$-statistic-based moment estimators and estimators of $\lambda_{\alpha}$, $\gamma_{\alpha}^{2}$, $\alpha_{1}$ and $\alpha_{100}$ by solving the moment system \eqref{GLM chain}. It is clear from these figures that the sampling distributions of both the $U$-statistic-based moment estimators and our estimators $\alpha_{1}$ and $\gamma_{\alpha}^{2}$ are close to the Gaussian distribution, further confirming by our theoretical results on the GAN property of our proposed MoM-based estimators.

\item When the design is Rademacher (Settings 3 \& 4, Figures~\ref{fig:GLM, dense-converge, rademacher，1.2},~\ref{fig:GLM, dense-converge, rademacher，1.2, hist},~\ref{fig:GLM, sparse-converge, rademacher，1.2} and~\ref{fig:GLM, sparse-converge, rademacher，1.2, hist}), we observe similar results to those in the Gaussian settings. Therefore, under certain conditions on the design and the regression coefficients, our identification and estimation strategies based on Gaussian designs continue to be applicable -- demonstrating the universality of our proposed procedure. Interestingly, the debiased estimator also exhibits universality, which deserves a further theoretical investigation. It is worth noting that although Setting 4 concerns sparse regression coefficients, the number of non-zero coefficients is large and the values of non-zero coefficients are the same, so numerically our MoM-based estimators work still fine. In Appendix~\ref{app:sim univ}, we showcase a different simulation setting, in which only one coordinate of the coefficients is non-zero; there the results show that the Gaussian-based identification and estimation strategies no longer produce consistent estimators.
\end{itemize}

\begin{figure}[htbp]
\centering
\includegraphics[width = 0.65\textwidth]{./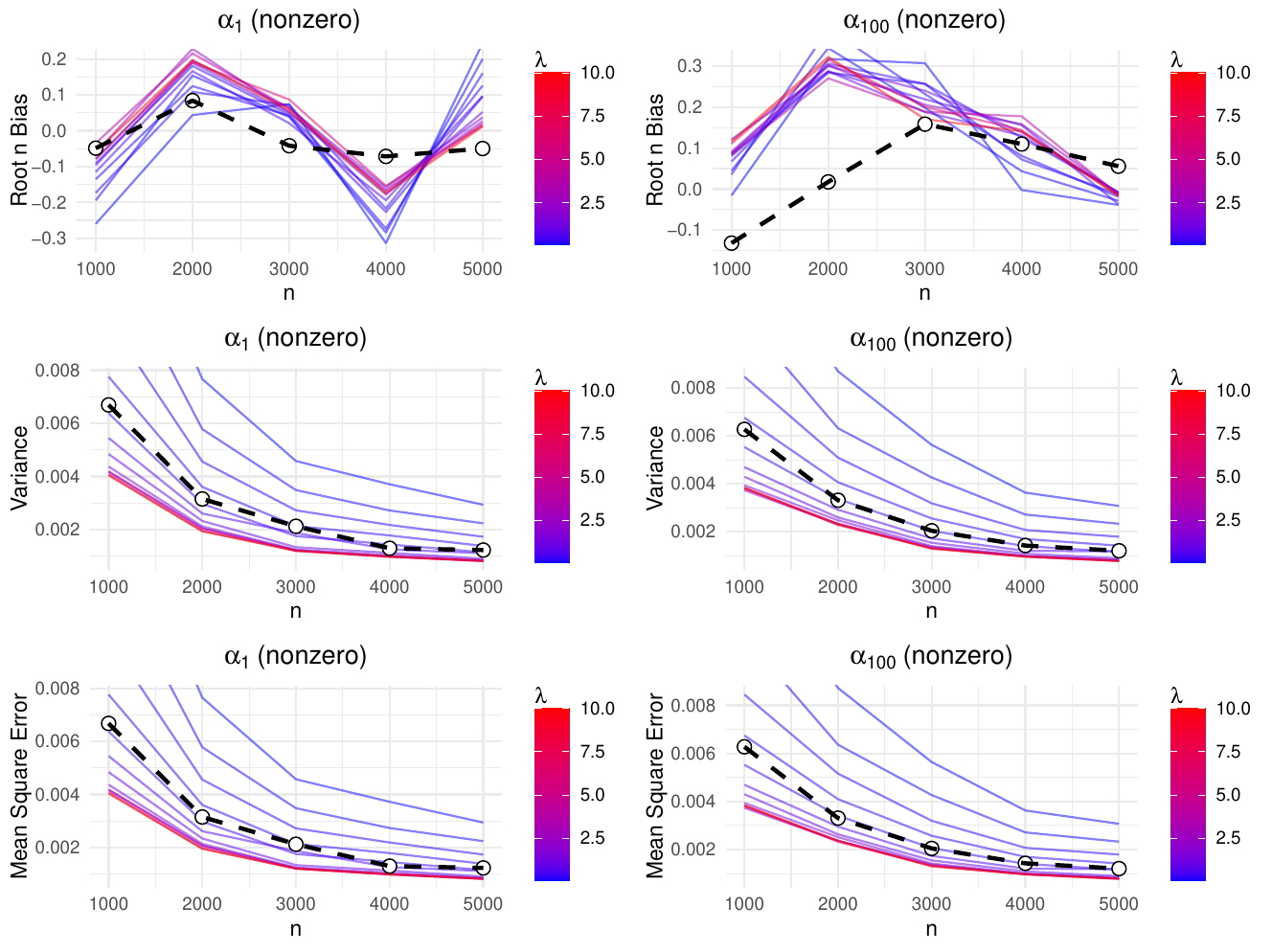}
\caption{Setting 1 of Section~\ref{sec:sims glms} (Gaussian design and dense regression coefficients).}
\label{fig:GLM, dense-converge, gaussian，1.2}
\end{figure}

\begin{figure}[htbp]
\centering
\includegraphics[width = 0.65\textwidth]{./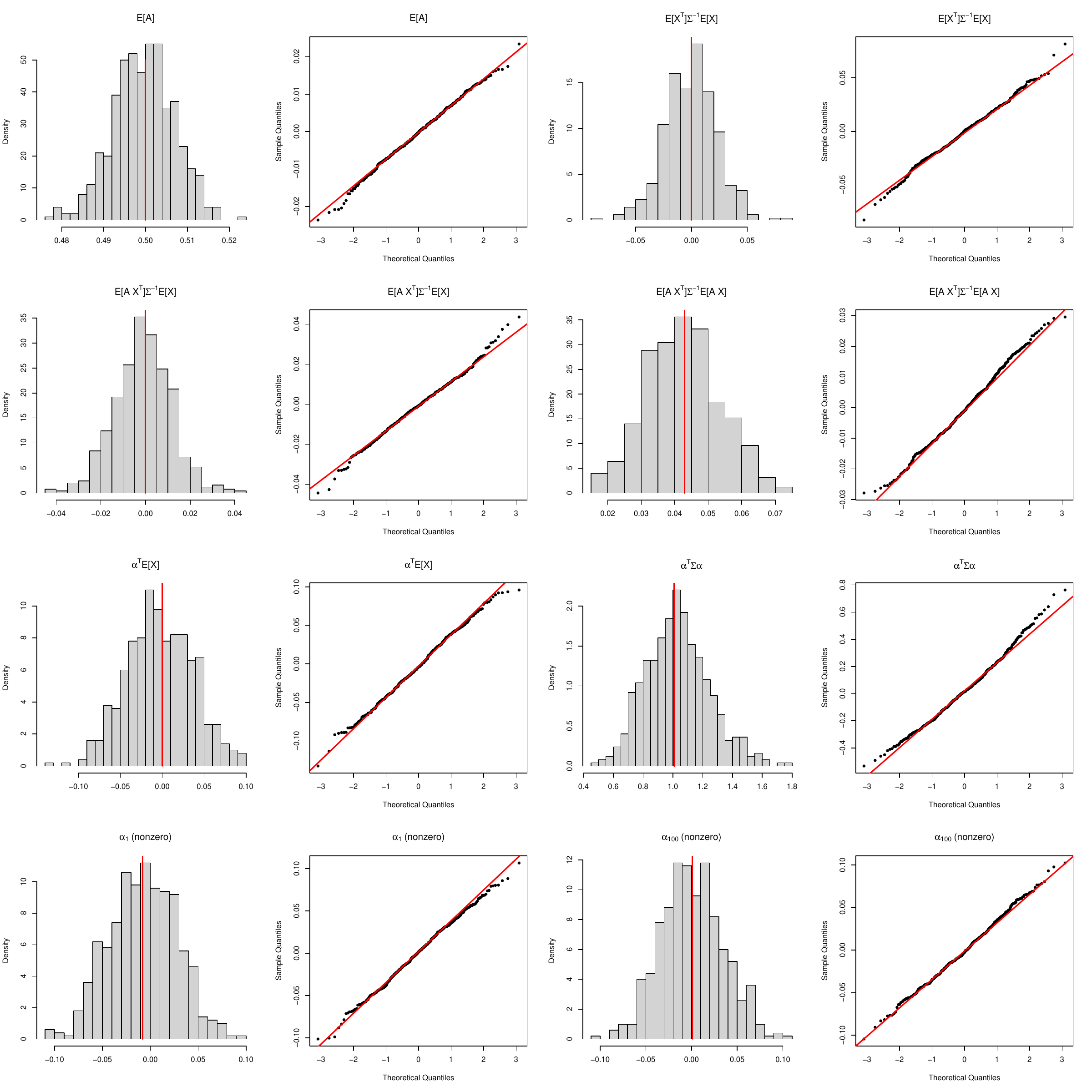}
\caption{Setting 1 of Section~\ref{sec:sims glms} (Gaussian design and dense regression coefficients): Sampling distributions of the moment estimators and the parameter estimators, over 500 Monte Carlos are displayed for the case of $n = 5000$.}
\label{fig:GLM, dense-converge, gaussian，1.2, hist}
\end{figure}

\begin{figure}[htbp]
\centering
\includegraphics[width = 0.65\textwidth]{./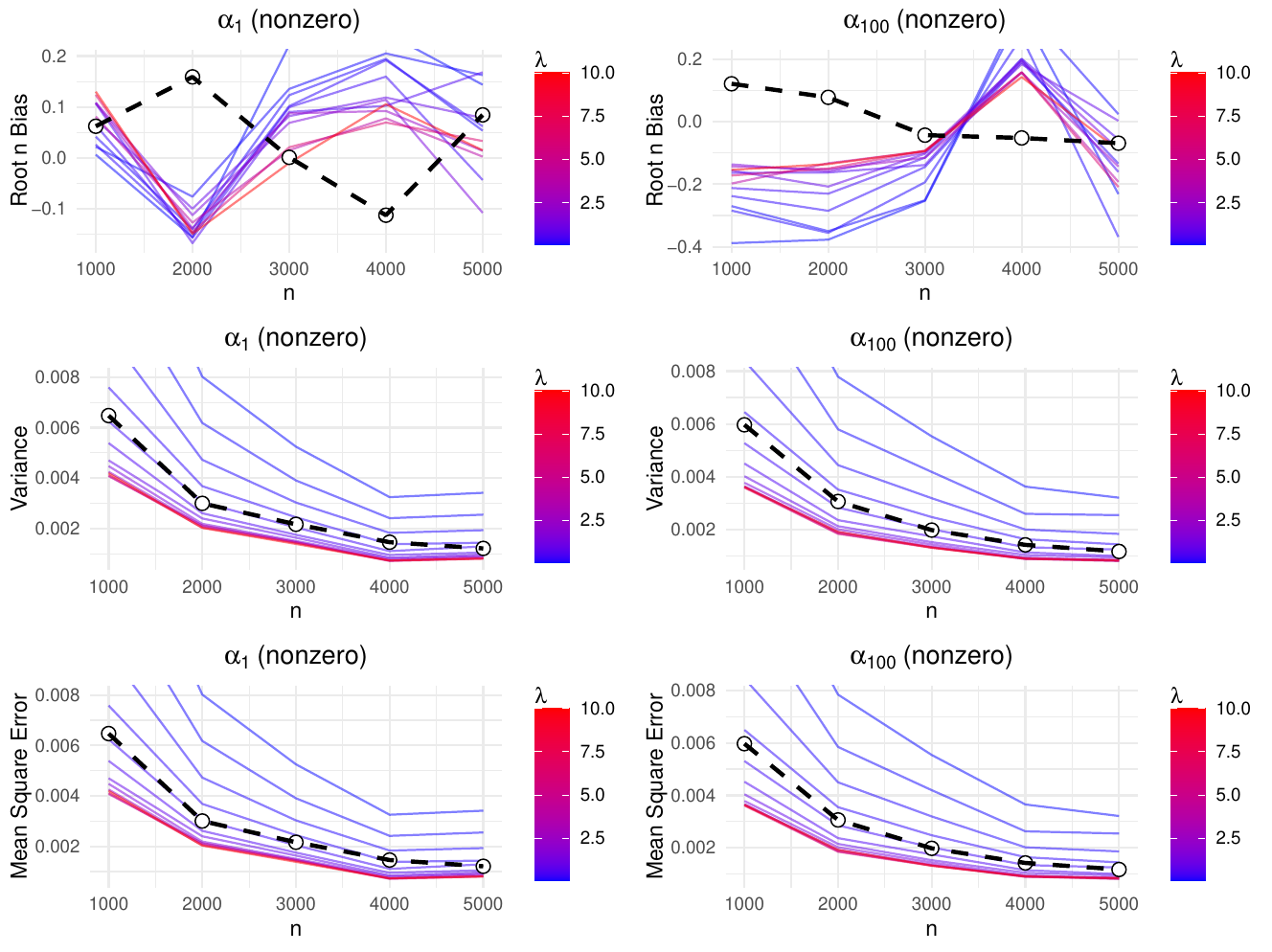}
\caption{Setting 3 of Section~\ref{sec:sims glms} (Rademacher design and dense regression coefficients).}
\label{fig:GLM, dense-converge, rademacher，1.2}
\end{figure}

\begin{figure}[htbp]
\centering
\includegraphics[width = 0.65\textwidth]{./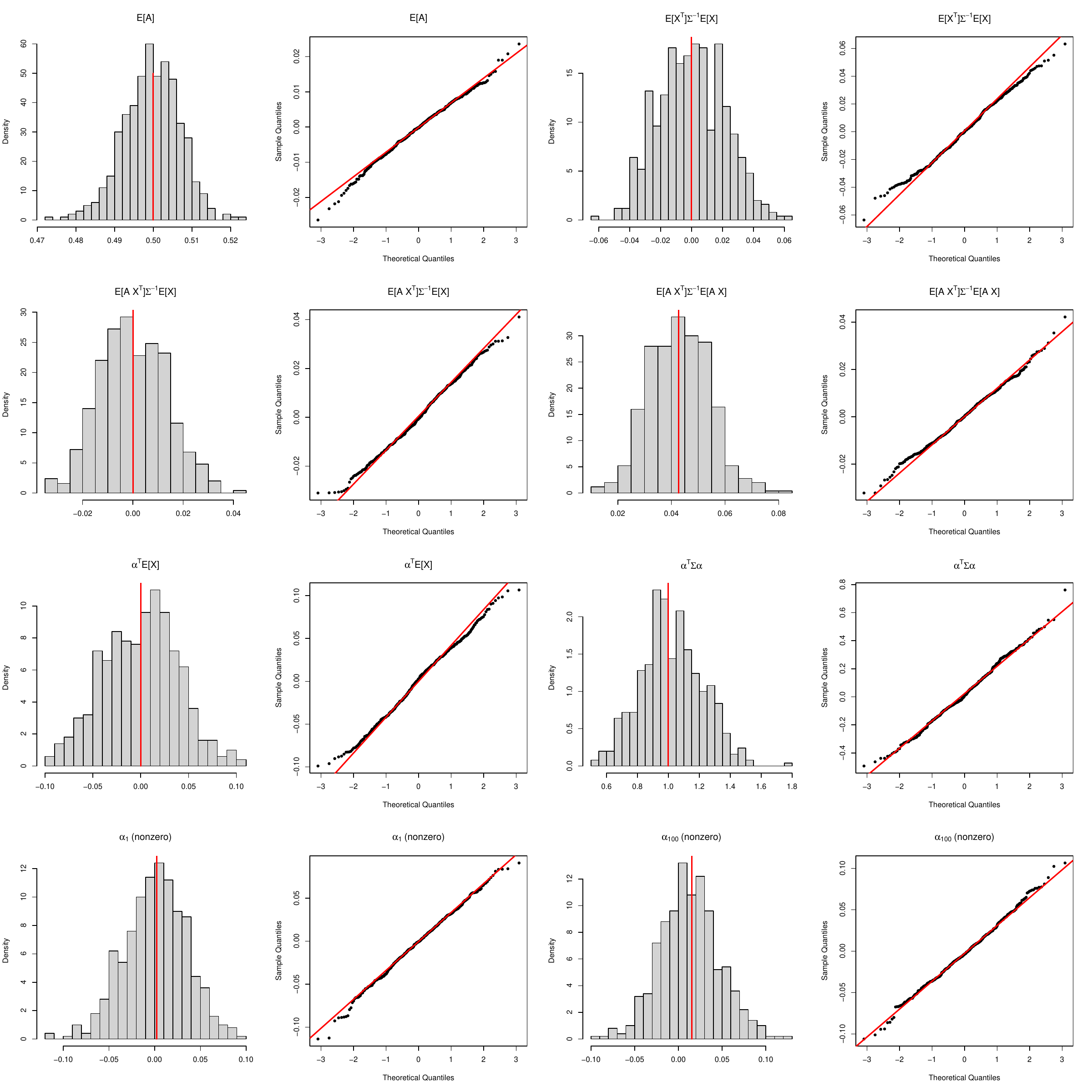}
\caption{Setting 3 of Section~\ref{sec:sims glms} (Rademacher design and dense regression coefficients): Sampling distributions of the moment estimators and the parameter estimators, over 500 Monte Carlos are displayed for the case of $n = 5000$.}
\label{fig:GLM, dense-converge, rademacher，1.2, hist}
\end{figure}

\subsection{Mean estimation with missing data under MAR}
\label{sec:sims mar}

For the problem of estimating the mean of a response $Y$ under Model~\ref{MAR}, we also consider several different settings. We first describe the common part of the DGP: the data is generated according to Model~\ref{MAR}, with $\bX_{i} \overset{\rm i.i.d.}{\sim} \N_{p} (\bm{0}, \bI_{p})$ for $i = 1, \cdots, n$. We take $\bbE [A | \bX] = 0.1 + 0.9 \cdot \expit (\bX^{\top} \balpha)$, as in \citet{celentano2023challenges}. Here the true value of the target parameter $\psi$ is 0 because $\bX$ has mean zero. We take the outcome noise $\varepsilon \sim \N (0, \sigma^{2})$ with $\sigma = 0.2$. The sample size varies from 100 to 10000. We compare our proposed MoM-based estimator $\hat{\psi}$ with the two estimators proposed in \citet{celentano2023challenges}, respectively termed as ``Oracle ASCW'' and ``Empirical SCA''. For these alternative estimators, initial estimates based on Ridge are also used, With the tuning parameter $\lambda$ taking 50 values ranging from $[0.01, 10]$ with equal steps on a logarithmic scale. We refer readers to \citet{celentano2023challenges} for the details of these two alternative estimators; here we simply implement the R code provided in the \href{https://github.com/mcelentano/Debiasing_for_missing_data}{GitHub repository} provided by the authors of \citet{celentano2023challenges}.

We also consider two different configurations for the regression coefficients $\balpha$ and $\bbeta$. In Setting 1 (Figures~\ref{fig:MAR, dense, gaussian} --~\ref{fig:MAR, parameters, dense, gaussian}), we consider dense regression coefficients, where both $\balpha$ and $\bbeta$ are drawn i.i.d. coordinate-wise from $\mathrm{Uniform} ([- \sqrt{3 / p}, \sqrt{3 / p}])$. In Appendix~\ref{app:sim mar}, we report results when changing the covariates distribution from Gaussian to $\bX_{i, j} \overset{\rm i.i.d.}{\sim} \mathrm{Rad} (1 / 2)$ for $i = 1, \cdots, n$ and $j = 1, \cdots, p$. In Setting 2 (Figures~\ref{fig:MAR, sparse, gaussian} --~\ref{fig:MAR, parameters, sparse, gaussian}), we consider sparse regression coefficients, where only $\alpha_{1}$ and $\beta_{1}$ are non-zero and are both equal to 1. Again, to save space, we defer the figures for Setting 2 to Appendix~\ref{app:sim mar}. 

As can be seen from Figures~\ref{fig:MAR, dense, gaussian} and~\ref{fig:MAR, sparse, gaussian}, the MoM-based estimators generally have lower $\sqrt{n} \times \mathrm{bias}$, variance, and mean squared error than the two estimators proposed in \citet{celentano2023challenges}, over a range of values of the tuning parameter $\lambda$, regardless of the configurations of the regression coefficients. Figures~\ref{fig:MAR, parameters, dense, gaussian} and~\ref{fig:MAR, parameters, sparse, gaussian} display the histograms and normal quantile-quantile plots of the $U$-statistic-based moment estimators and the estimators of the target parameter $\psi$, together with $\lambda_{\alpha}, \gamma_{\alpha}^{2}, \gamma_{\alpha, \beta}$ (the byproducts of the system \eqref{MAR chain}). It is clear from these figures that the sampling distributions of both the $U$-statistic-based moment estimators and of the estimators of the target parameter $\psi$, together with $\lambda_{\alpha}, \gamma_{\alpha}^{2}, \gamma_{\alpha, \beta}$ (the byproducts of the system \eqref{MAR chain}), are close to the Gaussian distribution.

\begin{figure}[htbp]
\centering
\includegraphics[width = 0.65\textwidth]{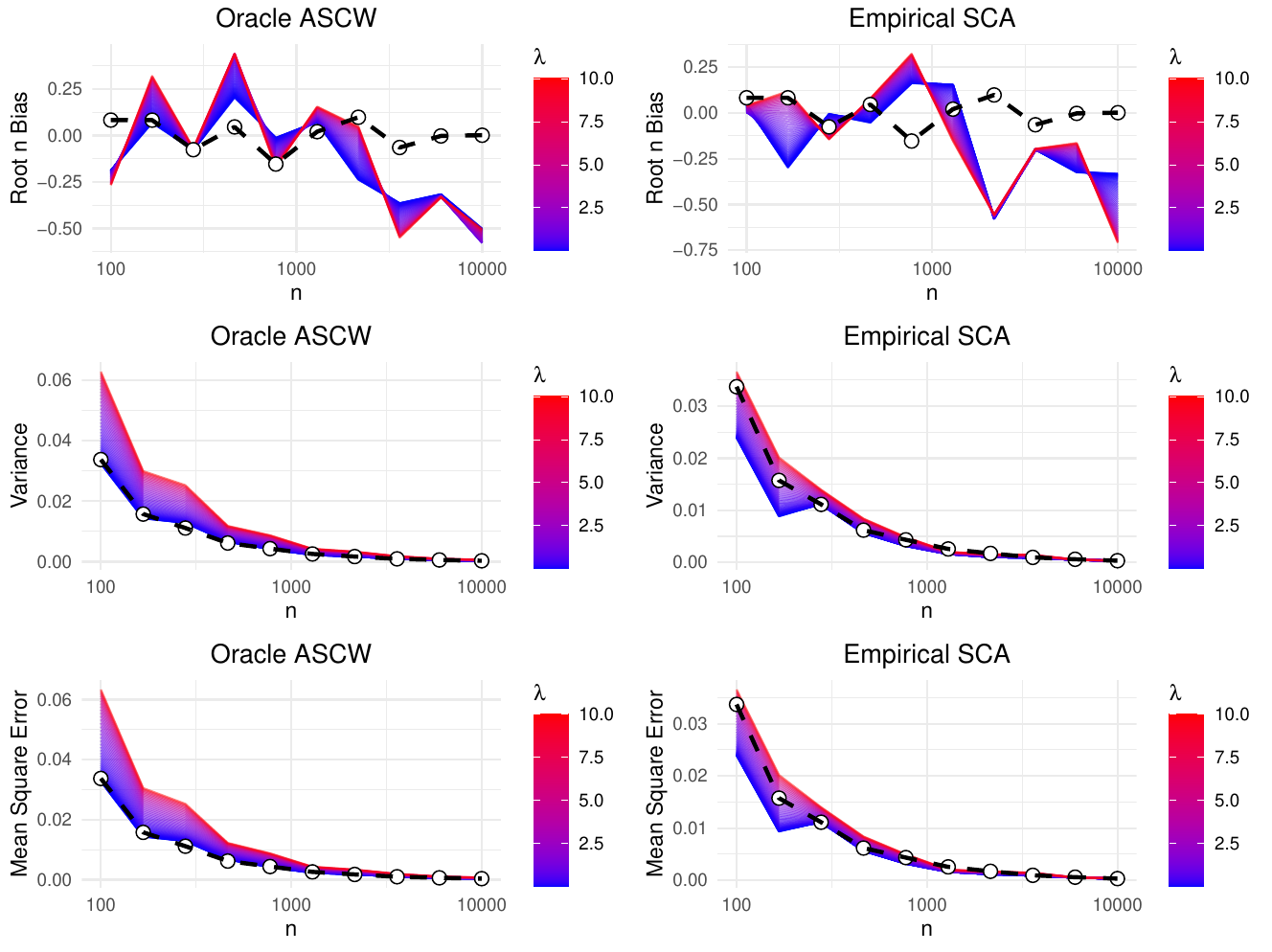}
\caption{Simulation results for Setting 1 (dense regression coefficients) in Section~\ref{sec:sims mar}. The two methods proposed in \citet{celentano2023challenges} are plotted separately in two columns of the figure, with color gradients from blue to red representing the increasing value of the tuning parameter $\lambda$. The MoM-based estimators are plotted with white circles and dashed black lines.}
\label{fig:MAR, dense, gaussian}
\end{figure}

\begin{figure}[htbp]
\centering
\includegraphics[width = 0.65\textwidth]{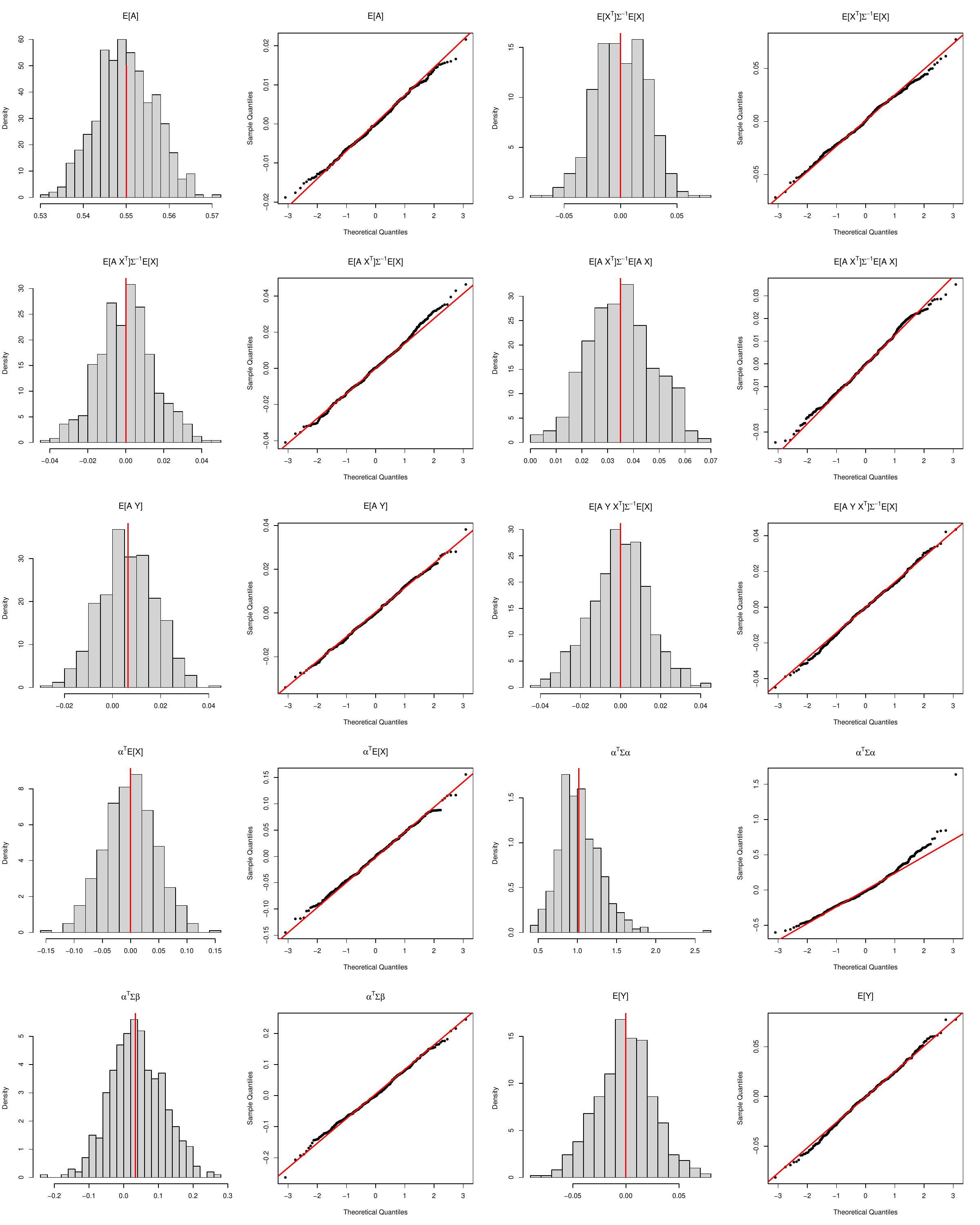}
\caption{Simulation results for Setting 1 (dense regression coefficients) in Section~\ref{sec:sims mar}: Sampling distributions of the moment estimators and the parameter estimators, over 500 Monte Carlos are displayed for the case of $n = 5000$.}
\label{fig:MAR, parameters, dense, gaussian}
\end{figure}

\section{Discussion}
\label{sec:discussion}

This paper proposes Method-of-Moments (MoM) estimators of functionals of the regression coefficients in high-dimensional GLMs under the proportional asymptotic regime, in the most part assuming Gaussian designs with known population covariance matrix $\bSigma$, following the current trend of literature. We demonstrate promising theoretical and numerical results about the MoM estimators. However, we emphasize that a more delicate comparison between our proposed approach and those based on debiasing \citep{bellec2025observable, celentano2023challenges} is warranted in a future work: for instance, which estimator has better efficiency or which estimator performs better when $\bSigma$ is unknown or when sufficient conditions (e.g. Assumption~\ref{as:beta}) for universality are violated with non-Gaussian designs. We end our article with the following discussion.

\subsection{Statistical Inference}
\label{sec:bootstrap}

% The variance of the proposed estimator can be estimated by combining the variance estimators of $U$-statistics (for estimating the moments) \citep{liu2024assumption} and the Delta Method. As mentioned after Theorem \ref{thm:GLM CLT}, in \citet{liu2024assumption}, we have developed consistent bootstrap variance estimators for the type of second-order $U$-statistic moment estimators, which can be used to deliver asymptotically valid inference. In Appendix~\ref{app:bootstrap}, we demonstrate the capable finite-sample performance of bootstrap variance estimator.

A problem that is worth pursuing in future work relates to the potentially non-Gaussian limiting distribution. When $\Vert \bbeta \Vert$ is small, the corresponding $U$-statistic estimator may have positive probability of being negative and the truncation used in \eqref{key estimator} could lead to non-Gaussian limiting distributions of $\hat{\gamma}_{\beta}^{2}$. It will be an interesting problem to investigate whether the bootstrap approach also delivers asymptotically valid inference in such a setting.

\subsection{Unknown Population Covariance Matrix}

In this paper, we mostly assume that $\bSigma$ is known, an assumption ubiquitously adopted in the current literature (see Section~\ref{sec:review}). Progress can be made when $\bSigma$ is unknown, but under specific conditions; see e.g. Section~\ref{sec:unknown} and Appendix~\ref{app:unknown}. It will be of greater interest to examine the case if we weaken the assumptions on $\bSigma$ or $\bX$ imposed in Section~\ref{sec:unknown}. %In fact, for linear models, such an extension is almost trivial (see Appendix~\ref{app:unknown linear}). 

\subsection{Single-Index Models (SIM) and Model Misspecification}

Compared to GLMs, isotonic Single-Index Models (SIMs) offer a more flexible modeling option as they allow the link function to be unknown and nonparametric. To the best of our knowledge, \citet{bellec2025observable} is among the first to systematically study inference of functionals of regression coefficients in SIMs under proportional asymptotics. It is interesting to extend our framework to single-index models but it requires estimating the link function and hence higher-order moments, a complication that we plan to address in a separate paper. Another direction is to investigate if similar results for ATE still hold under the assumptions of \citet{su2023estimated}, where only the propensity score is modeled by high-dimensional logistic regression but the outcome regression model is arbitrary. Finally, whenever parametric models such as GLMs are used, model misspecification is of potential concerns. The semiparametric partially nonlinear regression parameters considered in \citet{vansteelandt2022assumption}, which reduce to functionals of regression coefficients when GLMs are correctly specified, can be another promising future avenue for our work.

% \begin{align*}
% & \ \bbE \left[ \cov (A, g (\bbE (Y | A, \bX)) | \bX) \right] \\
% = & \ \bbE [A g (\bbE (Y | A, \bX))] - \bbE [\bbE (A | \bX) \bbE (g (\bbE (Y | A, \bX) | \bX)]
% \end{align*}

\section*{Acknowledgments}
The authors would like to thank Subhabrata Sen, Pragya Sur, and Nicolas Verzelen for helpful discussions. The computations in this paper were run on the Siyuan-1 and $\pi$-2.0 cluster supported by the Center for High Performance Computing at Shanghai Jiao Tong University. Xingyu Chen and Lin Liu were supported by NSFC Grant No. 12471274. Lin Liu was also partly supported by NSFC Grant No. 12090024.

\putbib[\myreferences]
\end{bibunit}

\newpage

\appendix

\begin{appendices}
\begin{bibunit}[plainnat]

\section{Preparatory Results}
\label{app:stein}

\begin{lemma}[First- and Second-Order Stein's lemma]
\label{lem:stein}
For $\bZ \sim \N_{p} (\bm{0}, \bI)$, we have for $f: \bbR^{p} \rightarrow \bbR$ that is differentiable,
\begin{align*}
\bbE \left[ \bZ f (\bZ) \right] = \bbE \left[ \nabla f (\bZ) \right].
\end{align*}
Similarly, we have for $h: \bbR^{p} \rightarrow \bbR$ and $f: \bbR^{p} \rightarrow \bbR$ that are twice differentiable,
\begin{align*}
\bbE \left[ \left( \bZ \bZ^{\top} - \bI \right) h (\bZ) \right] & = \bbE \left[ \nabla^{2} h (\bZ) \right], \\
\bbE \left[ \left( \bZ^{\top} f (\bZ) - \div f (\bZ) \right)^{2} \right] & = \bbE \left[ \Vert f (\bZ) \Vert^{2} + \trace \left( \nabla^{2} f (\bZ) \right) \right].
\end{align*}
\end{lemma}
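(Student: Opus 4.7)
The unifying idea is that all three identities follow from the base relation $z_i\phi(z) = -\partial_i \phi(z)$ satisfied by the standard Gaussian density $\phi(z) = (2\pi)^{-p/2}e^{-\|z\|^2/2}$, applied once or iteratively with integration by parts. The first identity is immediate from this base relation: writing $\mathbb{E}[Z_i f(\bZ)] = \int z_i f(z)\phi(z)\,dz$ and integrating by parts in the $i$-th coordinate yields $\mathbb{E}[\partial_i f(\bZ)]$; stacking across $i = 1,\ldots,p$ gives the vector version.

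For the Hessian identity, I would compute the $(i,j)$-entry of $\mathbb{E}[(\bZ\bZ^\top - \bI)h(\bZ)]$ by two successive applications of Stein's lemma. Treating $z_j h(z)$ as a new scalar function of $z$, the first application gives
\begin{align*}
\mathbb{E}[Z_i Z_j h(\bZ)] = \mathbb{E}[\partial_i(Z_j h(\bZ))] = \delta_{ij}\mathbb{E}[h(\bZ)] + \mathbb{E}[Z_j \partial_i h(\bZ)],
\end{align*}
and a second application to the scalar function $\partial_i h$ yields $\mathbb{E}[Z_j \partial_i h(\bZ)] = \mathbb{E}[\partial_j\partial_i h(\bZ)]$. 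Subtracting the $\delta_{ij}\mathbb{E}[h(\bZ)]$ that comes from the $\bI$ piece on the left, the $(i,j)$-entry is exactly $(\nabla^2 h(\bZ))_{ij}$.

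For the variance-type third identity, let $\delta(f) \coloneqq \bZ^\top f(\bZ) - \mathsf{div}\,f(\bZ)$. The plan is to first establish the duality $\mathbb{E}[\delta(f)\,g(\bZ)] = \mathbb{E}[\langle f(\bZ), \nabla g(\bZ)\rangle]$ by applying the base Stein identity coordinate-wise to the products $f_i g$, which exactly cancels the $\mathsf{div}\,f$ contribution. Setting $g = \delta(f)$ reduces $\mathbb{E}[\delta(f)^2]$ to $\mathbb{E}[\langle f, \nabla\delta(f)\rangle]$. A direct coordinate-wise computation gives $\partial_j \delta(f) = f_j + \sum_i Z_i \partial_j f_i - \sum_i \partial_i\partial_j f_i$; one more application of Stein's lemma to the middle term produces $\mathbb{E}[Z_i f_j \partial_j f_i] = \mathbb{E}[\partial_i f_j\,\partial_j f_i] + \mathbb{E}[f_j\partial_i\partial_j f_i]$, and the Hessian-type contribution cancels the third term above. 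What remains is $\mathbb{E}[\|f\|^2] + \sum_{i,j}\mathbb{E}[\partial_i f_j\,\partial_j f_i]$, which matches the right-hand side under the natural reading of the trace symbol.

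The main technical caveat, and the only real obstacle, is justifying the vanishing of boundary terms at every integration-by-parts step. This is ensured by combining the Gaussian weight with the regularity imposed throughout the paper (e.g.\ Assumption \ref{as:link}, where derivatives of the link are controlled by $e^{t^2 f(t)}$ with $f(t)\to 0$), together with the boundedness assumptions on the coefficient vectors. I would formalize this once by a standard truncation and dominated-convergence argument, then reuse it across all three parts, so that no further novelty is needed beyond the bookkeeping above.
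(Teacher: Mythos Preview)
Your proof is correct and follows the standard iterated integration-by-parts route. Note, however, that the paper does not actually prove this lemma: it is stated in Appendix~\ref{app:stein} as a classical preparatory result and used without argument (the proofs in that appendix begin only at Corollary~\ref{cor:stein}). So there is no ``paper's proof'' to compare against; your write-up simply supplies what the authors took for granted.

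One small point worth flagging: as written in the lemma statement, the third identity has $f:\bbR^p\to\bbR$, under which $\bZ^\top f(\bZ)$ and $\div f(\bZ)$ are ill-typed. You correctly read $f$ as a vector field $f:\bbR^p\to\bbR^p$ (consistent with the $\mathbf f$ in Corollary~\ref{cor:stein}) and interpreted $\trace(\nabla^2 f)$ as $\trace((\nabla f)^2)=\sum_{i,j}\partial_i f_j\,\partial_j f_i$. That is the only sensible reading, and your derivation of the Malliavin-type variance formula $\bbE[\delta(f)^2]=\bbE[\|f\|^2]+\bbE[\trace((\nabla f)^2)]$ via the duality $\bbE[\delta(f)g]=\bbE[\langle f,\nabla g\rangle]$ is clean and correct.
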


\begin{corollary}
\label{cor:stein}
For $\bX \sim \N_{p} (\bm{\mu}, \bSigma)$, we have, for $f: \bbR^{p} \rightarrow \bbR$ that is differentiable,
\begin{align*}
\bbE \left[ \bX f (\bX) \right] = \bm{\mu} \bbE [f (\bX)] + \bSigma \bbE [\nabla f (\bX)].
\end{align*}
In addition, we have, for $\mathbf{f}: \bbR^{p} \rightarrow \bbR^{p}$ that is coordinate-wisely differentiable, 
\begin{equation}
\label{trace}
\bbE [\bX^{\top} \mathbf{f} (\bX)] = \bm{\mu}^{\top} \bbE [\mathbf{f} (\bX)] + \trace \left[ \bSigma \bbE [\nabla \mathbf{f} (\bX)] \right].
\end{equation}
Similarly, we have for $h: \bbR^{p} \rightarrow \bbR$ that is twice differentiable,
\begin{equation}
\label{matrix}
\bbE \left[ \left( \bX \bX^{\top} - \bSigma \right) h (\bX) \right] = \bm{\mu} \bm{\mu}^{\top} \bbE [h (\bX)] + \bm{\mu} (\bSigma \bbE [\nabla h (\bX)])^{\top} + (\bSigma \bbE [\nabla h (\bX)]) \bm{\mu}^{\top} + \bSigma \bbE [\nabla^{2} h (\bX)] \bSigma.
\end{equation}
\end{corollary}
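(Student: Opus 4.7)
The plan is to reduce all three identities to the isotropic case already handled by Lemma \ref{lem:stein}, using the affine change of variables $\bX = \bm{\mu} + \bSigma^{1/2}\bZ$ with $\bZ \sim \N_p(\bm{0}, \bI)$. The key bookkeeping fact I will use repeatedly is that, since $\bSigma^{1/2}$ is symmetric, the chain rule gives, for the auxiliary function $g(\bz) \coloneqq h(\bm{\mu} + \bSigma^{1/2}\bz)$, the relations $\nabla g(\bz) = \bSigma^{1/2}\nabla h(\bm{\mu}+\bSigma^{1/2}\bz)$ and $\nabla^2 g(\bz) = \bSigma^{1/2}\nabla^2 h(\bm{\mu}+\bSigma^{1/2}\bz)\bSigma^{1/2}$. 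These are exactly what will produce the factors of $\bSigma = \bSigma^{1/2}\bSigma^{1/2}$ appearing in the statements.

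First I would establish the scalar-vector identity by splitting $\bX = \bm{\mu} + \bSigma^{1/2}\bZ$ inside $\bbE[\bX f(\bX)]$: the $\bm{\mu}$ piece yields $\bm{\mu}\,\bbE[f(\bX)]$ directly, while the $\bSigma^{1/2}\bZ$ piece becomes $\bSigma^{1/2}\,\bbE[\bZ\, g(\bZ)]$ with $g(\bz) \coloneqq f(\bm{\mu}+\bSigma^{1/2}\bz)$. The first-order Stein identity from Lemma \ref{lem:stein} then converts this to $\bSigma^{1/2}\,\bbE[\nabla g(\bZ)] = \bSigma\,\bbE[\nabla f(\bX)]$. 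Next, the trace identity \eqref{trace} will follow by a coordinatewise application of the scalar-vector identity: writing $\bbE[\bX^\top\mathbf{f}(\bX)] = \sum_i \bbE[X_i f_i(\bX)]$, applying the first identity with $f = f_i$, and extracting the $i$-th component, the constant terms assemble into $\bm{\mu}^\top\,\bbE[\mathbf{f}(\bX)]$ and the correction terms into $\sum_i \be_i^\top\bSigma\,\bbE[\nabla f_i(\bX)] = \trace[\bSigma\,\bbE[\nabla\mathbf{f}(\bX)]]$ under the convention that $\nabla\mathbf{f}$ is the Jacobian with $i$-th column $\nabla f_i$.

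For the matrix identity \eqref{matrix}, I would expand
\begin{equation*}
\bX\bX^\top - \bSigma = \bm{\mu}\bm{\mu}^\top + \bm{\mu}\bZ^\top\bSigma^{1/2} + \bSigma^{1/2}\bZ\bm{\mu}^\top + \bSigma^{1/2}(\bZ\bZ^\top - \bI)\bSigma^{1/2},
\end{equation*}
multiply by $h(\bX) = g(\bZ)$, and take expectations term by term. The pure $\bm{\mu}\bm{\mu}^\top$ contribution is immediate. The two cross-terms are handled by the first-order Stein identity applied to $g(\bZ)$, producing $\bm{\mu}(\bSigma\,\bbE[\nabla h(\bX)])^\top$ and its transpose after absorbing the remaining $\bSigma^{1/2}$ factors. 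The quadratic term is the one that genuinely uses the second-order Stein identity, which rewrites $\bbE[(\bZ\bZ^\top - \bI)g(\bZ)] = \bSigma^{1/2}\,\bbE[\nabla^2 h(\bX)]\,\bSigma^{1/2}$; sandwiching by $\bSigma^{1/2}$ on each side then yields $\bSigma\,\bbE[\nabla^2 h(\bX)]\,\bSigma$. There is no deep obstacle here — the only place to be careful is tracking the transposes and ensuring that each factor of $\bSigma^{1/2}$ introduced by the change of variables pairs correctly with a $\bSigma^{1/2}$ from the chain rule to produce $\bSigma$ (and not $\bSigma^{1/2}$) in the final expressions.
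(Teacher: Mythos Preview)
Your proposal is correct and follows essentially the same approach as the paper: reduce to the isotropic case via $\bX = \bm{\mu} + \bSigma^{1/2}\bZ$ and apply Lemma~\ref{lem:stein}, with the chain rule supplying the extra $\bSigma^{1/2}$ factors. The paper simply declares the first two identities trivial and writes out only the matrix identity \eqref{matrix}, using the same four-term expansion you describe.
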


\begin{proof}
The proof for the first statement is trivial and hence omitted. Let $\bZ \sim \N_{p} (\bm{0}, \bI)$ then $\bX \overset{d}{=} \bSigma^{1 / 2} \bZ + \bm{\mu}$.
\begin{align*}
& \ \bbE \left[ \left( \bX \bX^{\top} - \bSigma \right) h (\bX) \right] \\
= & \ \bbE \left[ \left( (\bX - \bm{\mu} + \bm{\mu}) (\bX - \bm{\mu} + \bm{\mu})^{\top} - \bSigma \right) h (\bX) \right] \\
= & \ \bm{\mu} \bm{\mu}^{\top} \bbE [h (\bX)] + \bm{\mu} \bbE \left[ (\bX - \bm{\mu})^{\top} h (\bX) \right] + \bbE \left[ (\bX - \bm{\mu}) h (\bX) \right] \bm{\mu}^{\top} + \bbE \left[ \left\{ (\bX - \bm{\mu}) (\bX - \bm{\mu})^{\top} - \bSigma \right\} h (\bX) \right] \\
= & \ \bm{\mu} \bm{\mu}^{\top} \bbE [h (\bX)] + \bm{\mu} (\bSigma \bbE [\nabla h (\bX)])^{\top} + (\bSigma \bbE [\nabla h (\bX)]) \bm{\mu}^{\top} + \bSigma^{1 / 2} \bbE \left[ (\bZ \bZ^{\top} - \bI) h (\bSigma^{1 / 2} \bZ + \bm{\mu}) \right] \bSigma^{1 / 2} \\
= & \ \bm{\mu} \bm{\mu}^{\top} \bbE [h (\bX)] + \bm{\mu} (\bSigma \bbE [\nabla h (\bX)])^{\top} + (\bSigma \bbE [\nabla h (\bX)]) \bm{\mu}^{\top} + \bSigma \bbE [\nabla^{2} h (\bX)] \bSigma.
\end{align*}
\end{proof}

\begin{corollary}
\label{cor:stein 2}
For $Z \sim \N (\lambda, \gamma^{2})$, given a twice-differentiable function $f: \bbR \rightarrow \bbR$, we have
\begin{align*}
\bbE f (Z) (Z - \lambda)^{2} = \gamma^{2} \bbE f (Z) + (\gamma^{2})^{2} \bbE f'' (Z).
\end{align*}
\end{corollary}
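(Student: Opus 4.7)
The plan is to reduce Corollary \ref{cor:stein 2} to the second-order Stein's lemma already recorded in Lemma \ref{lem:stein}. The only work consists in handling the shift by $\lambda$ and the scaling by $\gamma^2$, after which the identity drops out of a one-line computation.

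First I would substitute $Z = \lambda + \gamma U$ with $U \sim \N(0,1)$, and introduce the rescaled test function $g(u) \coloneqq f(\lambda + \gamma u)$, whose second derivative is $g''(u) = \gamma^{2} f''(\lambda + \gamma u)$. The left-hand side then becomes $\gamma^{2}\,\bbE[U^{2} g(U)]$. The univariate version of the second-order Stein identity from Lemma \ref{lem:stein} (take $p = 1$ and $h = g$ there) gives $\bbE[(U^{2}-1) g(U)] = \bbE[g''(U)]$, i.e.
\begin{equation*}
\bbE[U^{2} g(U)] \;=\; \bbE[g(U)] \;+\; \bbE[g''(U)].
\end{equation*}
Multiplying through by $\gamma^{2}$ and unpacking $g$ and $g''$ in terms of $f$ recovers exactly
\begin{equation*}
\bbE[(Z-\lambda)^{2} f(Z)] \;=\; \gamma^{2}\,\bbE[f(Z)] \;+\; \gamma^{4}\,\bbE[f''(Z)].
\end{equation*}

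Alternatively, one can avoid the substitution altogether and apply Corollary \ref{cor:stein} directly with $p = 1$, $\bm{\mu}=\lambda$, $\bSigma=\gamma^{2}$, and $h(z)=f(z)$: equation \eqref{matrix} yields $\bbE[(Z^{2}-\gamma^{2}) f(Z)] = \lambda^{2}\bbE f(Z) + 2\lambda\gamma^{2}\bbE f'(Z) + \gamma^{4}\bbE f''(Z)$, while the first-order identity yields $\bbE[Zf(Z)] = \lambda\bbE f(Z) + \gamma^{2}\bbE f'(Z)$. Expanding $(Z-\lambda)^{2} = Z^{2} - 2\lambda Z + \lambda^{2}$ and combining these two identities, all the $\lambda$-dependent and $f'$-dependent cross terms cancel and one is left with the claimed expression. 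There is no real obstacle here; the mild issue to watch for is simply ensuring the integrability conditions (bounded moments of $f(Z)$, $f'(Z)$, $f''(Z)$ under the Gaussian law, which are inherited from the Stein-type hypotheses used throughout the appendix) so that Fubini/integration by parts is justified.
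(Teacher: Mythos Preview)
Your proof is correct. The paper takes a slightly different route: instead of invoking the second-order Stein identity from Lemma~\ref{lem:stein} in one shot, it iterates the \emph{first}-order identity twice. Writing $\bbE[f(Z)(Z-\lambda)\cdot(Z-\lambda)]$ and applying $\bbE[(Z-\lambda)g(Z)]=\gamma^{2}\bbE[g'(Z)]$ with $g(z)=f(z)(z-\lambda)$ gives $\gamma^{2}\bbE[f'(Z)(Z-\lambda)+f(Z)]$; one more application to the $f'(Z)(Z-\lambda)$ term yields $\gamma^{4}\bbE[f''(Z)]$. Your standardization-plus-second-order-Stein argument and your alternative via equation~\eqref{matrix} both reach the same destination; since the second-order identity in Lemma~\ref{lem:stein} is itself obtained by iterating the first-order one, the difference is purely a matter of packaging rather than substance.
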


\begin{proof}
Repeatedly applying Lemma~\ref{lem:stein}, we have
\begin{align*}
\bbE f (Z) (Z - \lambda)^{2} & = \bbE f (Z) (Z - \lambda) (Z - \lambda) \\
& = \gamma^{2} \bbE \{f' (Z) (Z - \lambda) + f (Z)\} \\
& = \gamma^{2} \bbE f (Z) + (\gamma^{2})^{2} \bbE f'' (Z).
\end{align*}
\end{proof}

As our proposed MoM-based estimators critically rely on inverting a nonlinear map (generally without an analytic form), particularly in Appendix~\ref{app:identification}, we also need the following result that is a consequence of the Inverse Function Theorem and a result believed to be due to Hadamard (Theorem 6.2.8 of \citet{krantz2002implicit}).

\begin{lemma}[Hadamard Global Inverse Function Theorem]
\label{lem:inverse}
Suppose $\calR$ and $\calM$ are smooth, simply connected open sets of $\bbR^{k}$ for some positive integer $k$ and $\Psi: \calR \rightarrow \calM$ is a twice-differentiable and proper map. If the Jacobian $\nabla \Psi$ has determinant bounded away from 0 over $\calR$, then $\Psi$ is a diffeomorphism.
\end{lemma}

\begin{proof}
By Theorem 6.2.8 of \citet{krantz2002implicit}, since $\calR$ and $\calM$ are smooth, simply connected open sets of $\bbR^{k}$ and $\Psi$ is proper and has non-singular Jacobian, $\Psi$ is a homeomorphism and hence bijective. By the Inverse Function Theorem, since the Jacobian $\nabla \Psi$ has determinant bounded away from zero, $\Psi$ is a local diffeomorphism. A bijective map that is a local diffeomorphism is a (global) diffeomorphism.
\end{proof}

Taking $x \in \calR$. $\Psi$ is a proper map if $x \rightarrow \partial \calR$ implies that $\Psi (x) \rightarrow \partial \calM$: see Definition 6.2.2 of \citet{krantz2002implicit} and discussions thereafter.

\section{Proof for the system of moment equations for GLM}
\label{app:mu}

After the preparation in the previous section, we are ready to prove the first part of Lemma~\ref{lem:mu}. We only need to verify \eqref{GLM m3}, \eqref{GLM m4}, and \eqref{GLM m6} of \eqref{GLM chain}.

Regarding \eqref{GLM m3}, we can simply apply Corollary~\ref{cor:stein} and obtain:
\begin{align*}
m_{\bX Y, \bX} & = \bbE [\phi (\bbeta^{\top} \bX) \bX^{\top}] \bSigma^{-1} \bbE [\bX] = \bbE [\phi (\bbeta^{\top} \bX)] \bm{\mu}^{\top} \bSigma^{-1} \bm{\mu} + \bbE [\phi' (\bbeta^{\top} \bX)] \bbeta^{\top} \bSigma \bSigma^{-1} \bm{\mu} \\
& = \bbE [Y] \bm{\mu}^{\top} \bSigma^{-1} \bm{\mu} + \bbE [\phi' (\bbeta^{\top} \bX)] \bbeta^{\top} \bm{\mu} \\
& = m_{Y} \cdot m_{\bX, 2} + \f_{1} (\lambda_{\beta}, \gamma_{\beta}^{2}) \cdot \lambda_{\beta}.
\end{align*}
The derivations of \eqref{GLM m4} and \eqref{GLM m6} follow the same strategy, hence omitted.

\section{Derivations of the system of moment equations for the examples in Section~\ref{sec:obs}}

This section is devoted to deriving the system of moment equations for the three examples considered in Section~\ref{sec:obs}.

\subsection{Derivation of \texorpdfstring{\eqref{CE chain}}{CE chain}}
\label{app:CE}

In this section, we derive the system of moment equations \eqref{CE chain} in Lemma~\ref{lem:CE}.

\begin{proof}[Derivation of \eqref{CE chain}]
The system of moment equations in \eqref{CE chain} follows directly from Corollary~\ref{cor:stein} and some elementary calculations. We only show the following three identities as the others are trivial:
\begin{enumerate}
\item Derivation related to $\bbE [A \bX^{\top}] \bSigma^{-1} \bbE [\bX A]$:
\begin{align*}
& \ \bbE [A \bX^{\top}] \bSigma^{-1} \bbE [\bX A] \\
= & \ \bbE [\phi (\balpha^{\top} \bX) \bX^{\top}] \bSigma^{-1} \bbE [\bX A] \\
= & \ \bbE [A] \bm{\mu}^{\top} \bSigma^{-1} \bbE [\bX A] + \bbE [\phi' (\balpha^{\top} \bX)] \balpha^{\top} \bbE [\bX A].
\end{align*}

\item Derivation related to $\bbE [Y \bX^{\top}] \bSigma^{-1} \bbE [\bX A]$:
\begin{align*}
\bbE [Y \bX^{\top}] \bSigma^{-1} \bbE [\bX A] = \psi \cdot \bbE [A \bX^{\top}] \bSigma^{-1} \bbE [\bX A] + \bbeta^{\top} \bbE [\bX A].
\end{align*}

\item Derivation related to $\bbE [Y A \bX^{\top}] \bSigma^{-1} \bbE [\bX A]$:
\begin{align*}
& \ \bbE [Y A \bX^{\top}] \bSigma^{-1} \bbE [\bX A] \\
= & \ \psi \cdot \bbE [A \bX^{\top}] \bSigma^{-1} \bbE [\bX A] + \bbeta^{\top} \bbE [\bX \bX^{\top} A] \bSigma^{-1} \bbE [\bX A] \\
= & \ \psi \cdot m_{\bX A, 2} + \bbeta^{\top} \bbE [\bX \bX^{\top} \phi (\balpha^{\top} \bX)] \bSigma^{-1} \bbE [\bX A] \\
= & \ \psi \cdot m_{\bX A, 2} + \bbeta^{\top} \left\{ \begin{array}{c} \bSigma \bbE [A] + \bm{\mu} \bm{\mu}^{\top} \bbE [A] + \bbE [\phi' (\balpha^{\top} \bX)] \left( \bm{\mu} \balpha^{\top} \bSigma + \bSigma \balpha \bm{\mu}^{\top} \right) \\
+ \, \bbE [\phi'' (\balpha^{\top} \bX)] \bSigma \balpha \balpha^{\top} \bSigma
\end{array} \right\} \bSigma^{-1} \bbE [\bX A] \\
= & \ \psi \cdot m_{\bX A, 2} + \bbeta^{\top} \bbE [\bX A] \bbE [A] + \lambda_{\beta} \bbE [A] \bm{\mu}^{\top} \bSigma^{-1} \bbE [\bX A] \\
& + \f_{1} (\lambda_{\alpha}, \gamma_{\alpha}^{2}) \left( \lambda_{\beta} \balpha^{\top} \bbE [\bX A] + \gamma_{\alpha, \beta} \bm{\mu}^{\top} \bSigma^{-1} \bbE [\bX A] \right) + \bbE [\phi'' (\balpha^{\top} \bX)] \gamma_{\alpha, \beta} \balpha^{\top} \bbE [\bX A].
\end{align*}
\end{enumerate}
\end{proof}

\subsection{Derivation of \texorpdfstring{\eqref{MAR chain}}{MAR chain}}
\label{app:MAR}

In this section, we derive the system of moment equations \eqref{MAR chain} in Lemma~\ref{lem:MAR}.

\begin{proof}[Derivation of \eqref{MAR chain}]
Again, the system of moment equations in \eqref{MAR chain} follows directly from Corollary~\ref{cor:stein} and
some elementary calculation. We only show the following identities as the others are trivial.
\begin{enumerate}
\item Derivation related to $\bbE [A Y]$:
\begin{align*}
\bbE [A Y] & = \bbE [\eta (\balpha^{\top} \bX) \bX^{\top}] \bbeta \\
& = m_{0} \cdot \psi + \bbE [\eta' (\balpha^{\top} \bX)] \cdot \balpha^{\top} \bSigma \bbeta.
\end{align*}
\item Derivation related to $\bbE [A \bX^{\top}] \bSigma^{-1} \bm{\mu}$ and $\bbE [A Y \bX^{\top}] \bSigma^{-1} \bm{\mu}$: Given any direction $\bv$, applying Corollary~\ref{cor:stein}, we have
\begin{align*}
\bbE [A \bX^{\top}] \bv = & \ \bbE [\eta (\balpha^{\top} \bX) \bX^{\top}] \bv = \left( \bbE [\eta (\balpha^{\top} \bX)] \bm{\mu}^{\top} + \bbE [\eta' (\balpha^{\top} \bX)] \balpha^{\top} \bSigma \right) \bv \\
= & \ m_{0} \cdot \bm{\mu}^{\top} \bv + \bbE [\eta' (\balpha^{\top} \bX)] \cdot \balpha^{\top} \bSigma \bv, \text{ and } \\
\bbE [A Y \bX^{\top}] \bv = & \ \bbeta^{\top} \bbE [\bX \bX^{\top} \eta (\balpha^{\top} \bX)] \bv \\
= & \ \bbE [\bbeta^{\top} \bX \eta (\balpha^{\top} \bX) \bX^{\top}] \bv \\
= & \ \left( \bbE [\bbeta^{\top} \bX \eta (\balpha^{\top} \bX)] \bm{\mu}^{\top} + \bbE [\eta (\balpha^{\top} \bX)] \bbeta^{\top} \bSigma + \bbE [\bbeta^{\top} \bX \eta' (\balpha^{\top} \bX)] \balpha^{\top} \bSigma \right) \bv \\
= & \ \bbE [\eta (\balpha^{\top} \bX) \bX^{\top}] \bbeta \cdot \bm{\mu}^{\top} \bv + m_{0} \cdot \bbeta^{\top} \bSigma \bv + \bbE [\eta' (\balpha^{\top} \bX) \bX^{\top}] \bbeta \cdot \balpha^{\top} \bSigma \bv \\
= & \ \bbE [\eta (\balpha^{\top} \bX)] \bm{\mu}^{\top} \bbeta \cdot \bm{\mu}^{\top} \bv + \bbE [\eta' (\balpha^{\top} \bX)] \balpha^{\top} \bSigma \bbeta \cdot \bm{\mu}^{\top} \bv + m_{0} \cdot \bbeta^{\top} \bSigma \bv \\
& + \bbE [\eta' (\balpha^{\top} \bX)] \bm{\mu}^{\top} \bbeta \cdot \balpha^{\top} \bSigma \bv + \bbE [\eta'' (\balpha^{\top} \bX)] \balpha^{\top} \bSigma \bbeta \cdot \balpha^{\top} \bSigma \bv \\
= & \ \left\{ m_{0} \cdot \bm{\mu}^{\top} \bv + \bbE [\eta' (\balpha^{\top} \bX)] \cdot \balpha^{\top} \bSigma \bv \right\} \cdot \bbeta^{\top} \bm{\mu} + m_{0} \cdot \bbeta^{\top} \bSigma \bv \\
& + \left\{ \bbE [\eta' (\balpha^{\top} \bX)] \cdot \bm{\mu}^{\top} \bv + \bbE [\eta'' (\balpha^{\top} \bX)] \cdot \balpha^{\top} \bSigma \bv \right\} \cdot \balpha^{\top} \bSigma \bbeta \\
= & \ \bbE [A \bX^{\top}] \bv \cdot \psi + m_{0} \cdot \bbeta^{\top} \bSigma \bv + \left\{ \bbE [\eta' (\balpha^{\top} \bX)] \cdot \bm{\mu}^{\top} \bv + \bbE [\eta'' (\balpha^{\top} \bX)] \cdot \balpha^{\top} \bSigma \bv \right\} \cdot \balpha^{\top} \bSigma \bbeta.
\end{align*}
Taking $\bv$ to certain specific direction, we further obtain the following list of identities:
\begin{align*}
\bbE [A \bX^{\top}] \bSigma^{-1} \bm{\mu} = & \ m_{0} \cdot \bm{\mu}^{\top} \bSigma^{-1} \bm{\mu} + \bbE [\eta' (\balpha^{\top} \bX)] \balpha^{\top} \bm{\mu}, \text{ and} \\
\bbE [A Y \bX^{\top}] \bSigma^{-1} \bm{\mu} = & \ \bbE [A \bX^{\top}] \bSigma^{-1} \bm{\mu} \cdot \psi + m_{0} \cdot \bbeta^{\top} \bm{\mu} \\
& + \left\{ \bbE [\eta' (\balpha^{\top} \bX)] \cdot \bm{\mu}^{\top} \bSigma^{-1} \bm{\mu} + \bbE [\eta'' (\balpha^{\top} \bX)] \cdot \balpha^{\top} \bm{\mu} \right\} \cdot \balpha^{\top} \bSigma \bbeta \\
= & \ \left( \bbE [A \bX^{\top}] \bSigma^{-1} \bm{\mu} + m_{0} \right) \cdot \psi \\
& + \left\{ \bbE [\eta' (\balpha^{\top} \bX)] \cdot \bm{\mu}^{\top} \bSigma^{-1} \bm{\mu} + \bbE [\eta'' (\balpha^{\top} \bX)] \cdot \balpha^{\top} \bm{\mu} \right\} \cdot \balpha^{\top} \bSigma \bbeta.
\end{align*}
\item Derivation related to $\bbE [A \bX^{\top}] \bSigma^{-1} \bbE [\bX A]$: Again, choosing $\bv$ appropriately, we have
\begin{align*}
& \ \bbE [A \bX^{\top}] \bSigma^{-1} \bbE [\bX A] = m_{0}^{2} \cdot \bm{\mu}^{\top} \bSigma^{-1} \bm{\mu} + \bbE^{2} [\eta' (\balpha^{\top} \bX)] \cdot \balpha^{\top} \bSigma \balpha + 2 \cdot m_{0} \cdot \bbE [\eta' (\balpha^{\top} \bX)] \cdot \balpha^{\top} \bm{\mu}.
\end{align*}
\end{enumerate}

\end{proof}

\subsection{Derivation of \texorpdfstring{\eqref{GCM chain}}{GCM chain}}
\label{app:GCM}

In this section, we derive the system of moment equations \eqref{GCM chain} in Lemma~\ref{lem:GCM}.

\begin{proof}[Derivation of \eqref{GCM chain}]
All the moment equations of \eqref{GCM chain} are equivalent to those given in Lemma~\ref{lem:mu} except the last one, which we now derive by again applying Corollary~\ref{cor:stein}:
\begin{align*}
& \ \bbE [A \bX^{\top}] \bSigma^{-1} \bbE [\bX Y] \\
= & \ \bbE \left[ \eta (\balpha^{\top} \bX) \bX^{\top} \right] \bSigma^{-1} \bbE \left[ \bX \phi (\bX^{\top} \bbeta) \right] \\
= & \ \left\{ \bbE [A] \bm{\mu}^{\top} + \bbE [\eta' (\balpha^{\top} \bX)] \balpha^{\top} \bSigma \right\} \bSigma^{-1} \left\{ \bm{\mu} \bbE [Y] + \bSigma \bbeta \bbE [\phi' (\bX^{\top} \bbeta)] \right\} \\
= & \ m_{A} \cdot m_{Y} \cdot m_{\bX, 2} + m_{A} \cdot \f_{1} (\lambda_{\beta}, \gamma_{\beta}^{2}) \cdot \lambda_{\beta} + m_{Y} \cdot \g_{1} (\lambda_{\alpha}, \gamma_{\alpha}^{2}) \cdot \lambda_{\alpha} + \g_{1} (\lambda_{\alpha}, \gamma_{\alpha}^{2}) \cdot \f_{1} (\lambda_{\beta}, \gamma_{\beta}^{2}) \cdot \gamma_{\alpha, \beta}.
\end{align*}
\end{proof}

\section{Proofs Related to Identifications under Gaussian Designs}
\label{app:identification}

\subsection{GLMs with non-zero covariate mean \texorpdfstring{$\bm{\mu}$}{m}}
\label{app:non-zero}

In this section, we first prove Lemma~\ref{lem:mu}, which states that the linear form $\lambda_{\beta} = \bbeta^{\top} \bm{\mu}$ and the quadratic form $\gamma_{\beta}^{2} = \bbeta^{\top} \bSigma \bbeta$ of the regression coefficient vector $\bbeta$ in Model~\ref{GLM} are simultaneously identifiable from the system of (population) moment equations \eqref{GLM chain}.

\begin{proof}[Proof of Lemma~\ref{lem:mu}]
Let $Z \sim \N (\lambda_{\beta}, \gamma_{\beta}^{2})$. Recall that we reduce the first four moment equations \eqref{GLM chain} as
\begin{align*}
& \f_{0} (\lambda_{\beta}, \gamma_{\beta}^{2}) - m_{1} = 0, \\
& \gamma_{\beta}^{2} \f_{1}^{2} (\lambda_{\beta}, \gamma_{\beta}^{2}) - m_{2} = 0,
\end{align*}
and denote the forward map as $\Psi_{\GLM, \beta}$.

We first compute the Jacobian $\nabla \Psi_{\GLM, \beta}$ as
\begin{align*}
\nabla \Psi_{\GLM, \beta} = & \ \left( \begin{matrix}
\bbE \phi' (Z) & 2 \bbE \phi' (Z) \bbE \phi' (Z) (Z - \lambda_{\beta}) \\
\dfrac{1}{2 \gamma_{\beta}^{2}} \bbE \phi' (Z) (Z - \lambda_{\beta}) & \dfrac{1}{\gamma_{\beta}^{2}} \bbE \phi' (Z) \bbE \phi' (Z) (Z - \lambda_{\beta})^{2}
\end{matrix} \right) \\
= & \left( \begin{matrix}
\bbE \phi' (Z) & 2 \gamma_{\beta}^{2} \bbE \phi' (Z) \bbE \phi'' (Z) \\
\frac{1}{2} \bbE \phi'' (Z) & \bbE \phi' (Z) \left( \bbE \phi' (Z) + \gamma_{\beta}^{2} \bbE \phi''' (Z) \right)
\end{matrix} \right).
\end{align*}

Without loss of generality, we take the link function $\phi$ to be monotonically strictly increasing. The determinant of $\nabla \Psi_{\GLM, \beta}$ is
\begin{align*}
|\nabla \Psi_{\GLM, \beta}| & = \frac{1}{\gamma_{\beta}^{2}} \bbE \phi' (Z) \left\{ \bbE \phi' (Z) \bbE \phi' (Z) (Z - \lambda_{\beta})^{2} - \left( \bbE \phi' (Z) (Z - \lambda_{\beta}) \right)^{2} \right\} \\
& = \bbE \phi' (Z) \left\{ \bbE^{2} \phi' (Z) + \gamma_{\beta}^{2} \left( \bbE \phi' (Z) \bbE \phi''' (Z) - \bbE^{2} \phi'' (Z) \right) \right\} \geq 0
\end{align*}
where $|\nabla \Psi_{\GLM, \beta}| \geq 0$ follows from applying Cauchy-Schwarz inequality to the first line of the above display and $\phi$ monotonically increasing. Here, we note that monotonicity is but a sufficient condition for the above inequality to hold -- which leaves the door open to extend our approach to SIMs with general non-monotonic link functions.

Lemma~\ref{lem:inverse} implies that we are only left to show that the above inequality is strict and the corresponding map $\Psi$ is proper. Suppose on the contrary, the above inequality is an equality. Then we have, with probability one,
\begin{align*}
\sqrt{\phi' (Z)} (Z - \lambda_{\beta}) \equiv \frac{\bbE \phi' (Z) (Z - \lambda_{\beta})}{\bbE \phi' (Z)} \sqrt{\phi' (Z)},
\end{align*}
which is equivalent to, with probability one,
\begin{align*}
Z - \lambda_{\beta} \equiv \frac{\bbE \phi' (Z) (Z - \lambda_{\beta})}{\bbE \phi' (Z)} \text{ or } \sqrt{\phi' (Z)} \equiv 0,
\end{align*}
a contradiction unless $Z$ is degenerate (i.e. $\gamma_{\beta}^{2} = 0$). Thus we have proved that the forward map $\Psi_{\GLM, \beta}$ is a local diffeomorphism. $\Psi_{\GLM, \beta}$ is also proper under Assumption~\ref{as:link}.

In fact, for linear and log-linear link functions, it is easy to see that $|\nabla \Psi|$ is strictly positive for strictly bounded $\lambda_{\beta}$ and $\gamma_{\beta}^{2}$. For probit link, we observe that
\begin{align*}
|\nabla \Psi_{\GLM, \beta}| = & \ \bbE \phi' (Z) \left\{ \bbE^{2} \phi' (Z) + \gamma_{\beta}^{2} \left( \bbE \phi' (Z) \bbE \phi''' (Z) - \bbE^{2} \phi'' (Z) \right) \right\} \\
= & \ \frac{1}{(2 \pi (1 + \gamma_{\beta}^{2}))^{3 / 2}} \exp \left\{ - \frac{3 \lambda_{\beta}^{2}}{2 (1 + \gamma_{\beta}^{2})} \right\} \left( 1 + \gamma_{\beta}^{2} \left\{ \frac{\lambda_{\beta}^{2}}{(1 + \gamma_{\beta}^{2})^{2}} - \frac{1}{1 + \gamma_{\beta}^{2}} - \frac{\lambda_{\beta}^{2}}{(1 + \gamma_{\beta}^{2})^{2}} \right\} \right) \\
= & \ \frac{1}{(2 \pi (1 + \gamma_{\beta}^{2}))^{3 / 2}} \exp \left\{ - \frac{3 \lambda_{\beta}^{2}}{2 (1 + \gamma_{\beta}^{2})} \right\} \frac{1}{1 + \gamma_{\beta}^{2}} > 0
\end{align*}
if $\lambda_{\beta}^{2}$ and $\gamma_{\beta}^{2}$ are strictly bounded.

For logit link, there is no analytic expression for $|\nabla \Psi_{\GLM, \beta}|$ but it is easy to numerically show that $|\nabla \Psi_{\GLM, \beta}|$ is strictly larger than zero, and hence $\Psi_{\GLM, \beta}$ is a diffeomorphism. The remaining part of Lemma~\ref{lem:mu} is straightforward to prove and hence omitted.

\end{proof}

\subsection{Identification under Model \texorpdfstring{\ref{CE}}{ce}}
\label{app:CE id}

\begin{proof}[Proof of Lemma~\ref{lem:CE}]\leavevmode
The proof follows directly from the proof of Lemma~\ref{lem:mu} and the linearity in $\psi$ of \eqref{CE chain}.
\end{proof}

\subsection{Identification under Model \texorpdfstring{\ref{MAR}}{mar}}
\label{app:MAR id}

\begin{proof}[Proof of Lemma~\ref{lem:MAR}]
The proof follows directly from the proof of Lemma~\ref{lem:mu} and the linearity in $\psi$ of \eqref{MAR chain}.
\end{proof}

\subsection{Identification under Model \texorpdfstring{\ref{GCM}}{gcm}}
\label{app:GCM id}

\begin{proof}[Proof of Lemma~\ref{lem:GCM}]
Denote the forward map induced by the first seven equations in system \eqref{GCM chain} as 
\begin{align*}
\Psi_{\GLM}: (\lambda_{\alpha}, \gamma_{\alpha}^{2}, \lambda_{\beta}, \gamma_{\beta}^{2})^{\top} \mapsto (m_{A}, m_{Y}, m_{\bX, 2}, m_{\bX A, \bX}, m_{\bX Y, \bX}, m_{\bX A, 2}, m_{\bX Y, 2})^{\top}.
\end{align*}
As an immediate consequence of Lemma~\ref{lem:mu}, $\Psi_{\GLM}$ is a diffeomorphism and thus $(\lambda_{\alpha}, \gamma_{\alpha}^{2}, \lambda_{\beta}, \gamma_{\beta}^{2})$ is identifiable by inverting $\Psi_{\GLM}$. $\gamma_{\alpha, \beta}$ is then identified by solving \eqref{mm3}. Finally, since the target parameter $\psi$ can be written as a known function of $(\lambda_{\alpha}, \gamma_{\alpha}^{2}, \lambda_{\beta}, \gamma_{\beta}^{2}, \gamma_{\alpha, \beta})$, the proof is complete.
\end{proof}

\section{Proofs Related to CAN}
\label{app:CLT}

\subsection{General results for limiting distributions of moment estimators}
\label{app:clt}

We first state a general result. Consider the following pair of random variables, comprised of a first-order and a second-order degenerate $U$-statistic:
\begin{equation}
\label{pair}
\left( \begin{matrix}
\sum\limits_{i = 1}^{n} g_{n} (O_{i}) & \sum\limits_{1 \leq i_{1} < i_{2} \leq n} h_{n} (O_{i_{1}}, O_{i_{2}})
\end{matrix} \right)^{\top},
\end{equation}
with $\bbE g_{n} (O) = 0$ and $\bbE h_{n} (O, o) = 0$ almost surely. Our goal is to establish their joint limiting distribution as the sample size $n$ approaches infinity.

The following result in \citet{bhattacharya1992class} provides a set of generic sufficient conditions under which \eqref{pair} has a Gaussian limit.
\begin{lemma}[Corollary 1.4 of \citet{bhattacharya1992class}]
\label{lem:clt}
Suppose that the following conditions hold: as $n \rightarrow \infty$,
\begin{enumerate}[label = (\arabic*)]
\item $n \bbE g_{n} (O)^{2} \rightarrow \sigma_{g}^{2}$;
\item $n^{2} \bbE h_{n} (O_{1}, O_{2})^{2} \rightarrow \sigma_{h}^{2}$;
\item $n \bbE g_{n} (O)^{4} \rightarrow 0$;
\item $n^{3} \bbE \left( \int_{o} h_{n} (O, o)^{2} \diff \bbP (o) \right)^{2} \rightarrow 0$;
\item $n^{4} \bbE \left( \int_{o} h_{n} (O_{1}, o) h_{n} (O_{2}, o) \diff \bbP (o) \right)^{2} \rightarrow 0$;
\item $n^{2} \bbE h_{n} (O_{1}, O_{2})^{4} \rightarrow 0$;
\item $n^{3} \bbE \left( \int_{o} g_{n} (o) h_{n} (O, o) \diff \bbP (o) \right)^{2} \rightarrow 0$.
\end{enumerate}
We then have
\begin{align*}
\left( \begin{matrix}
\sum\limits_{i = 1}^{n} g_{n} (O_{i}) \\ \sum\limits_{1 \leq i_{1} \neq i_{2} \leq n} h_{n} (O_{i_{1}}, O_{i_{2}})
\end{matrix} \right) \overset{\calL}{\rightarrow} \N_{2} \left( \left( \begin{matrix}
0 \\
0
\end{matrix} \right), \left( \begin{matrix}
\sigma_{g}^{2} & 0 \\
0 & \sigma_{h}^{2} / 2
\end{matrix} \right) \right).
\end{align*}
\end{lemma}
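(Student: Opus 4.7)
The plan is to recast both coordinates of \eqref{pair} as a single vector-valued martingale array and apply a multivariate martingale CLT (e.g., Hall--Heyde). Let $\mathcal{F}_k = \sigma(O_1, \ldots, O_k)$ and define
\begin{align*}
\xi_{n, k} \coloneqq \left( g_{n}(O_{k}), \, \sum_{i = 1}^{k - 1} h_{n}(O_{i}, O_{k}) \right)^{\top}, \qquad k = 1, \ldots, n,
\end{align*}
with the empty sum set to zero. Since $\bbE g_{n}(O) = 0$ and, crucially, $\bbE[h_{n}(O, o)] = 0$ a.s.\ in $o$ (the degeneracy assumption), $\{\xi_{n, k}, \mathcal{F}_{k}\}$ forms a martingale difference array whose partial sums equal $(U_{1, n}, U_{2, n})$, where $U_{1, n} \coloneqq \sum_i g_n(O_i)$ and $U_{2, n} \coloneqq \sum_{i<j} h_n(O_i, O_j)$.

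I would then verify the two standard ingredients of the multivariate martingale CLT: (i) convergence in probability of $V_{n} \coloneqq \sum_{k} \bbE[\xi_{n,k}\xi_{n,k}^{\top} \mid \mathcal{F}_{k-1}]$ to $\mathrm{diag}(\sigma_{g}^{2}, \sigma_{h}^{2}/2)$, and (ii) a conditional Lyapunov condition $\sum_{k} \bbE \|\xi_{n, k}\|^{4} \to 0$. The $(1, 1)$ entry of $V_{n}$ is $\sum_{k} g_{n}(O_{k})^{2}$, with mean $n\bbE g_{n}^{2} \to \sigma_{g}^{2}$ by (1) and variance $\le n \bbE g_{n}^{4} \to 0$ by (3). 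The $(1, 2)$ entry has conditional expectation $\sum_{k}\sum_{i<k} K_{n}(O_{i})$ with $K_{n}(x) \coloneqq \int g_{n}(o) h_{n}(x, o)\, \diff\bbP(o)$; since $\bbE K_{n} = 0$, its mean is zero and its variance is $\le n^{3}\bbE K_{n}^{2}$, which vanishes by (7).

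The main work is the $(2, 2)$ entry, which equals $\sum_{k}\sum_{i, j<k} H_{n}(O_{i}, O_{j})$, where $H_{n}(x, y) \coloneqq \int h_{n}(x, o) h_{n}(y, o)\, \diff\bbP(o)$. I would split into the ``diagonal'' piece $\sum_{k}\sum_{i<k} H_{n}(O_{i}, O_{i})$ and the ``off-diagonal'' piece $\sum_{k}\sum_{i \neq j, \, i, j<k} H_{n}(O_{i}, O_{j})$. The diagonal piece has mean $\binom{n}{2}\bbE h_{n}(O_{1}, O_{2})^{2} \to \sigma_{h}^{2}/2$ by (2), and its variance is controlled by $n^{3} \bbE[(\int h_{n}(O, o)^{2}\, \diff\bbP(o))^{2}]$, which vanishes by (4); the off-diagonal piece is itself a degenerate $U$-statistic in $H_{n}$ whose $L^{2}$-norm is governed by $n^{4} \bbE[(\int h_{n}(O_{1}, o)h_{n}(O_{2}, o)\, \diff\bbP(o))^{2}]$, vanishing by (5). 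For (ii), the first coordinate gives $n\bbE g_{n}^{4} \to 0$ by (3); the second coordinate is $\bbE(\sum_{i<k} h_{n}(O_{i}, O_{k}))^{4}$, which by a Rosenthal-type inequality for conditionally independent mean-zero summands is bounded by a combination of $n^{2}\bbE h_{n}^{4}$ and $n^{2}\bbE[(\int h_{n}^{2}\,\diff\bbP)^{2}]$, both vanishing by (4) and (6).

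The main obstacle is the $(2, 2)$ conditional variance accounting: the terms in $\sum_{k}\sum_{i, j<k} H_{n}(O_{i}, O_{j})$ are triply indexed and must be carefully partitioned so that each residual fluctuation maps cleanly onto exactly one of the moment hypotheses (4)--(5); the delicate point is that the ``off-diagonal'' fluctuation is a degenerate $U$-statistic whose variance can only be written directly in terms of $\int h_{n}(x, o) h_{n}(y, o)\, \diff\bbP(o)$, which is precisely why hypothesis (5) takes its stated form. Once these moment bounds are in hand, joint convergence and block-diagonality of the limiting covariance follow from the martingale CLT, and independence of the two Gaussian coordinates in the limit is a consequence.
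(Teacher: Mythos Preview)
The paper does not supply its own proof of this lemma: it is quoted verbatim as Corollary~1.4 of \citet{bhattacharya1992class} and then invoked as a black box inside the proof of Proposition~\ref{prop:general clt}. So there is no ``paper's proof'' to compare against beyond the cited reference.

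That said, your martingale-array approach is exactly the classical route for this kind of joint CLT and is essentially what Bhattacharya--Ghosh do. Your mapping of the hypotheses (1)--(7) onto the two martingale-CLT ingredients is correct: (1)--(2) pin down the limiting conditional covariance, (3) and (6) with a conditional Rosenthal bound give the Lyapunov condition, and (4), (5), (7) are precisely what is needed to kill the random fluctuations in the off-diagonal and $(2,2)$ blocks of $V_n$. One small slip: the $(1,1)$ entry of $V_n$ is $\sum_k \bbE[g_n(O_k)^2 \mid \calF_{k-1}] = n\,\bbE g_n(O)^2$, which is deterministic (since $O_k \perp \calF_{k-1}$), not $\sum_k g_n(O_k)^2$ as you wrote; this only makes that step easier. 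Also watch the normalization: the paper's display \eqref{pair} uses $\sum_{i_1<i_2}$ while the lemma's conclusion is written with $\sum_{i_1\ne i_2}$, so a factor of $2$ floats around when matching your $U_{2,n}$ to the stated variance $\sigma_h^2/2$; this is a cosmetic inconsistency in the paper rather than a flaw in your argument.
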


Let $\calO \ni O \coloneqq (\bX, W)$ where $\bX \in \bbR^{p}$ and $W \in \calW$. In the sequel, we assume that there exist two functions $f, g: \calW \rightarrow \bbR$ of $W$. We further let $\mu_{q} (\bX) = \bbE [q (W) | \bX]$, $\nu^{2}_{q} (\bX) \coloneqq \bbE [q (W)^{2} | \bX]$, $\sigma^{2}_{q} (\bX) \coloneqq \var [q (W) | \bX]$, and $\bSigma_{q} \coloneqq \bbE [\nu^{2}_{q} (\bX) \bX \bX^{\top}]$ for $q \in \{f, g, \sqrt{f g}\}$. Here without loss of generality, we take $f g \geq 0$. Obviously, $\nu_{q}^{2} \equiv \sigma_{q}^{2}  + \mu_{q}^{2}$ almost surely.

We have the following representations of $\nu_{q}^{2}$ and $\bSigma_{q}$ that will be useful for later development.

\begin{lemma}
\label{lem:stein quad}
Under Assumption~\ref{as:Sigma} and~\ref{as:normal}, given a n.n.s.d. matrix $\bbM$, if $q$ is chosen such that $\bbE [\nabla \nu_{q}^{2} (\bX)]$ and $\bbE [\nabla^{2} \nu_{q}^{2} (\bX)]$ exist, we have
\begin{equation}
\label{stein quad}
\begin{split}
\bSigma_{q} = \bSigma \bbE [q (W)^{2}] & + \bm{\mu} \bm{\mu}^{\top} \bbE [q (W)^{2}] + \bm{\mu} \bbE [\nabla \nu_{q}^{2} (\bX)]^{\top} \bSigma + \bSigma \bbE [\nabla \nu_{q}^{2} (\bX)] \bm{\mu}^{\top} + \bSigma \bbE [\nabla^{2} \nu_{q}^{2} (\bX)] \bSigma, \\
\bbE [\bX^{\top} \bbM \bX \nu_{q}^{2} (\bX)] = & \ \bm{\mu}^{\top} \bbM \bm{\mu} \bbE [q (W)^{2}] + 2 \bm{\mu}^{\top} \bbM \bSigma \bbE [\nabla \nu_{q}^{2} (\bX)] + \trace [\bSigma \bbM] \bbE [q (W)^{2}] \\
& + \trace [\bSigma \bbM \bSigma \bbE [\nabla^{2} \nu_{q}^{2} (\bX)]].
\end{split}
\end{equation}
\end{lemma}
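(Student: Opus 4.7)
The plan is to derive both identities as direct consequences of the matrix-valued Stein identity \eqref{matrix} in Corollary \ref{cor:stein}, applied to the scalar function $h(\bX) = \nu_q^2(\bX)$, which is well-defined thanks to the stated existence of $\bbE[\nabla \nu_q^2(\bX)]$ and $\bbE[\nabla^2 \nu_q^2(\bX)]$. First, I would note the tower-property identity $\bbE[\nu_q^2(\bX)] = \bbE\bbE[q(W)^2 \mid \bX] = \bbE[q(W)^2]$, which is what produces the $\bbE[q(W)^2]$ factors on the right-hand side.

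Second, substituting $h = \nu_q^2$ into \eqref{matrix} gives
\begin{align*}
\bbE[\bX\bX^\top \nu_q^2(\bX)] - \bSigma\,\bbE[q(W)^2] = \bm{\mu}\bm{\mu}^\top \bbE[q(W)^2] + \bm{\mu}\bigl(\bSigma\bbE[\nabla \nu_q^2(\bX)]\bigr)^\top + \bigl(\bSigma\bbE[\nabla \nu_q^2(\bX)]\bigr)\bm{\mu}^\top + \bSigma\,\bbE[\nabla^2 \nu_q^2(\bX)]\,\bSigma,
\end{align*}
and rearranging yields precisely the first claim, since $\bSigma_q = \bbE[\nu_q^2(\bX)\bX\bX^\top]$ by definition.

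For the second identity, I would simply observe that $\bX^\top \bbM \bX = \trace(\bbM \bX\bX^\top)$, so $\bbE[\bX^\top \bbM \bX\,\nu_q^2(\bX)] = \trace(\bbM\,\bSigma_q)$. Applying the trace to the first identity term-by-term, using cyclicity of the trace and the fact that for any vectors $\bu, \bv$ one has $\trace(\bbM\bu\bv^\top) = \bv^\top \bbM \bu$, the two cross terms involving $\bm{\mu}$ and $\bSigma\bbE[\nabla \nu_q^2(\bX)]$ are transposes of each other and combine to $2\bm{\mu}^\top \bbM \bSigma\,\bbE[\nabla \nu_q^2(\bX)]$. The remaining terms produce $\trace(\bSigma\bbM)\,\bbE[q(W)^2]$, $\bm{\mu}^\top \bbM \bm{\mu}\,\bbE[q(W)^2]$, and $\trace(\bSigma\bbM\bSigma\,\bbE[\nabla^2 \nu_q^2(\bX)])$, completing the second claim.

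There is no real obstacle here: the lemma is essentially a bookkeeping corollary of the matrix Stein identity once one recognizes $\nu_q^2(\bX)$ as the appropriate scalar function of $\bX$ to plug in. The regularity hypotheses on the derivatives of $\nu_q^2$ are exactly what is needed to legitimately invoke Corollary \ref{cor:stein}, and Assumption \ref{as:Sigma} ensures the various quadratic forms are finite. The only mild care needed is verifying that the differentiation under the expectation implicit in passing from $q(W)^2$ to $\nu_q^2(\bX) = \bbE[q(W)^2 \mid \bX]$ is justified, which we simply take as a hypothesis of the lemma.
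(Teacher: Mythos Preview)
Your proof is correct. For the first identity, your approach is identical to the paper's: apply the matrix Stein identity \eqref{matrix} with $h=\nu_q^2$ and use the tower property $\bbE[\nu_q^2(\bX)]=\bbE[q(W)^2]$.

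For the second identity, you take a slightly different route than the paper. The paper re-applies Stein's lemma from scratch, this time via the trace identity \eqref{trace} with $\mathbf{f}(\bX)=\bbM\bX\,\nu_q^2(\bX)$, then expands $\bbE[\bX\,\nu_q^2(\bX)]$ and $\bbE[\bX(\nabla\nu_q^2(\bX))^\top]$ using first-order Stein again, and finally collects terms. Your approach is more economical: you simply observe that $\bbE[\bX^\top\bbM\bX\,\nu_q^2(\bX)]=\trace(\bbM\,\bSigma_q)$ and apply $\trace(\bbM\,\cdot)$ term-by-term to the already-established first identity. This yields the same four terms with less bookkeeping, the symmetry of $\bbM$ (implied by the n.n.s.d.\ hypothesis) being all that is needed to combine the two cross terms into $2\bm{\mu}^\top\bbM\bSigma\,\bbE[\nabla\nu_q^2(\bX)]$. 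Both arguments are valid; yours avoids the second pass through Stein's lemma and makes the second identity a transparent corollary of the first.
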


\begin{proof}
First, by applying \eqref{matrix} from Corollary~\ref{cor:stein}, we have
\begin{align*}
\bSigma_{q} = & \ \bSigma \bbE [\nu_{q}^{2} (\bX)] + \bm{\mu} \bm{\mu}^{\top} \bbE [\nu_{q}^{2} (\bX)] + \bm{\mu} \bbE [\nabla \nu_{q}^{2} (\bX)]^{\top} \bSigma + \bSigma \bbE [\nabla \nu_{q}^{2} (\bX)] \bm{\mu}^{\top} + \bSigma \bbE [\nabla^{2} \nu_{q}^{2} (\bX)] \bSigma \\
= & \ \bSigma \bbE [q (W)^{2}] + \bm{\mu} \bm{\mu}^{\top} \bbE [q (W)^{2}] + \bm{\mu} \bbE [\nabla \nu_{q}^{2} (\bX)]^{\top} \bSigma + \bSigma \bbE [\nabla \nu_{q}^{2} (\bX)] \bm{\mu}^{\top} + \bSigma \bbE [\nabla^{2} \nu_{q}^{2} (\bX)] \bSigma.
\end{align*}
Second, by applying \eqref{trace} from Corollary~\ref{cor:stein}, we have
\begin{align*}
& \ \bbE [\bX^{\top} \bbM \bX \nu_{q}^{2} (\bX)] \\
= & \ \bm{\mu}^{\top} \bbM \bbE [\bX \nu_{q}^{2} (\bX)] + \trace [\bSigma \bbM \bSigma \bbE [\nabla (\bX \nu_{q}^{2} (\bX)]] \\
= & \ \bm{\mu}^{\top} \bbM \bm{\mu} \bbE [q (W)^{2}] + \bm{\mu}^{\top} \bbM \bSigma \bbE [\nabla \nu_{q}^{2} (\bX)] + \trace [\bSigma \bbM (\bbE [\nu_{q}^{2} (\bX)] + \bbE [\bX \nabla \nu_{q}^{2} (\bX)^{\top}])] \\
= & \ \bm{\mu}^{\top} \bbM \bm{\mu} \bbE [q (W)^{2}] + \bm{\mu}^{\top} \bbM \bSigma \bbE [\nabla \nu_{q}^{2} (\bX)] + \trace [\bSigma \bbM] \bbE [q (W)^{2}] \\
& + \trace [\bSigma \bbM \bm{\mu} \nabla \bbE [\nu_{q}^{2} (\bX)^{\top}]] + \trace [\bSigma \bbM \bSigma \bbE [\nabla^{2} \nu_{q}^{2} (\bX)]].
\end{align*}
\end{proof}

Define
\begin{align*}
T_{n} \coloneqq \bbU_{n, 2} \left[ f (W_{1}) \bX_{1}^{\top} \bbM \bX_{2} g (W_{2}) \right] \equiv \bbU_{n, 2} \left[ \bbS_{2} \{f (W_{1}) \bX_{1}^{\top} \bbM \bX_{2} g (W_{2})\} \right]
\end{align*}
where $\bbM$ is a fixed symmetric positive semi-definite matrix with strictly bounded eigenvalues and $\bbS_{2}$ denotes the symmetrization operator: $\bbS_{2} \{h (O_{1}, O_{2}\} \equiv \{h (O_{1}, O_{2}) + h (O_{2}, O_{1})\} / 2$ for any function $h: \calO_{1} \times \calO_{2} \rightarrow \bbR$. To simplify notation, we let $s (O_{1}, O_{2}) \coloneqq \bbS_{2} \{f (W_{1}) \bX_{1}^{\top} \bbM \bX_{2} g (W_{2})\}$.

\begin{proposition}
\label{prop:general clt}
Under Assumptions~\ref{as:Sigma},~\ref{as:link},~\ref{as:normal}, and~\ref{as:bounded}, suppose that the following are additionally satisfied: Given a n.n.s.d. matrix $\bbM$, if either the following quantities
\begin{subequations}
\label{check conditions 1}
\begin{align}
& \bm{\mu}^{\top} \bbM \bm{\mu}, \bm{\mu}^{\top} \bbM \bSigma \bbM \bm{\mu}, \bm{\mu}^{\top} \bbM \bSigma \bbE [\nabla^{2} \nu_{q}^{2} (\bX)] \bSigma \bbM \bm{\mu}, \bbE [q (W)], \bbE [q (W)^{2}], \\
& \bm{\mu}^{\top} \bbM \bSigma \bbE [\nabla \mu_{q} (\bX)], \bm{\mu}^{\top} \bbM \bSigma \bbE [\nabla \nu_{q}^{2} (\bX)], \bm{\mu}^{\top} \bbM \bSigma \bbM \bSigma \bbE [\nabla \mu_{q} (\bX)], \\
& \bm{\mu}^{\top} \bbM \bSigma \bbE [\nabla^{2} \nu_{q'}^{2} (\bX)] \bSigma \bbM \bSigma \bbE [\nabla \mu_{q} (\bX)] \\
& \bbE [\nabla \mu_{q} (\bX)]^{\top} \bSigma \bbM \bSigma \bbM \bSigma \bbE [\nabla \mu_{q'} (\bX)], \bbE [\nabla \nu_{q}^{2} (\bX)]^{\top} \bSigma \bbM \bSigma \bbE [\nabla \mu_{q'} (\bX)], \\
& \bbE [\nabla \mu_{q} (\bX)]^{\top} \bSigma \bbM \bSigma \bbE [\nabla^{2} \nu_{q''}^{2} (\bX)] \bSigma \bbM \bSigma \bbE [\nabla \mu_{q'} (\bX)],
\end{align}
\end{subequations}
or the following quantities (or both)
\begin{subequations}
\label{check conditions 2}
\begin{align}
& p^{-1} \trace [\bSigma \bbM \bSigma \bbM] \bbE [q (W)^{2}] \bbE [q' (W)^{2}], p^{-1} \trace [\bSigma \bbM \bSigma \bbM \bSigma \bbE [\nabla^{2} \nu_{q}^{2} (\bX)]], \\
& p^{-1} \trace [\bSigma \bbM \bSigma \bbE [\nabla^{2} \nu_{q}^{2} (\bX)] \bSigma \bbM \bSigma \bbE [\nabla^{2} \nu_{q'}^{2} (\bX)]],
\end{align}
\end{subequations}
converge to nontrivial limits as $n \rightarrow \infty$, where $q, q', q'' \in \{f, g, \sqrt{fg}\}$, and if there exists a universal constant $B > 0$ and a non-negative scalar function $\bar{\eta} (\bX) \in L_{2} (\bbP)$ such that for $\bX' \indep \bX$,
\begin{equation}
\label{cond:clt}
\lambda_{\max} (\bbE [\nabla^{2} \eta_{q} (\bX)]) \leq B < \infty, \text{ and } |\bX^{'\top} \nabla^{2} \eta_{q} (\bX) \bX'| \lesssim \Vert \bX' \Vert^{2} \bar{\eta} (\bX)
\end{equation}
where $\eta_{q} (\bX) \coloneqq \bbE [q (W)^{4} | \bX]$ and $\eta_{q} \in L_{2} (\bbP)$ for $q \in \{f, g, \sqrt{fg}\}$, then we have
\begin{align*}
\sqrt{n} \left( T_{n} - \bbE T_{n} \right) \overset{\calL}{\rightarrow} \N (0, \nu^{2})
\end{align*}
for some positive constant $\nu^{2} > 0$.
\end{proposition}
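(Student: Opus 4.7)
The plan is to apply the two-term CLT recorded in Lemma \ref{lem:clt} to the Hoeffding decomposition of $T_n$. Writing $\theta \coloneqq \bbE T_n$, defining the projection kernel $g(o) \coloneqq \bbE[s(o, O')] - \theta$ and the degenerate kernel $h(o_1, o_2) \coloneqq s(o_1, o_2) - g(o_1) - g(o_2) - \theta$ (so that $\bbE[h(o, O')] = 0$ a.s.), one has
\begin{equation*}
\sqrt{n}\,(T_n - \theta) = \sum_{i = 1}^{n} \frac{2}{\sqrt{n}} g(O_i) + \sum_{1 \leq i_1 \neq i_2 \leq n} \frac{\sqrt{n}}{n(n-1)} h(O_{i_1}, O_{i_2}),
\end{equation*}
which matches the template of Lemma \ref{lem:clt} with $g_n(o) = 2 g(o)/\sqrt{n}$ and $h_n(o_1, o_2) = \sqrt{n}\, h(o_1, o_2)/\{n(n-1)\}$. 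It then suffices to verify the seven moment conditions of that lemma.

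For conditions (1) and (2), I would compute the leading variances explicitly. Using Corollary \ref{cor:stein} on the inner expectation, $g(O)$ reduces to a symmetric combination of $f(W)\,\bX^\top \bbM\,(\bm{\mu}\bbE g(W) + \bSigma \bbE[\nabla \mu_g(\bX)])$ and its $f\leftrightarrow g$ analogue, minus $\theta$. Squaring and taking expectations, then invoking Lemma \ref{lem:stein quad} to evaluate every factor of the form $\bbE[q(W)^2 \bX^\top \bbM \bX]$ and $\bSigma_q$, turns $\bbE g^2$ into a finite sum whose summands are precisely the scalar quadratic forms listed in \eqref{check conditions 1}. Hence $n\bbE g_n^2 = 4\bbE g^2 \to \sigma_g^2$ under those convergence hypotheses. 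For (2), the identity $\bbE h^2 = \bbE s^2 - 2\bbE g^2 - \theta^2$ together with a further application of \eqref{trace}, \eqref{matrix} and Lemma \ref{lem:stein quad} to $\bbE[f(W_1)^2 g(W_2)^2(\bX_1^\top \bbM \bX_2)^2]$ produces $\bbE h^2 = p \cdot (\text{limits of the }p^{-1}\trace\text{ quantities in \eqref{check conditions 2}}) + O(1)$; since $n^2\bbE h_n^2 = n(n-1)^{-2}\bbE h^2$ and $p \asymp n$ by Assumption \ref{as:proportion}, this gives $n^2\bbE h_n^2 \to \sigma_h^2$, with the \emph{either/or/both} structure in the hypotheses accounting for which of the projection or the degenerate part (or both) contributes a nondegenerate share to $\nu^2 = \sigma_g^2 + \sigma_h^2/2$.

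The remaining conditions (3)--(7) are all of the form ``higher moment of $g_n$ or $h_n$ is $o(1)$ at the relevant scale''. Condition (3), $n\bbE g_n^4 = 16\bbE g^4/n \to 0$, is immediate from $\bbE g^4 = O(1)$ (sub-Gaussianity of $\bX$ from Assumption \ref{as:beta} and the $L_2$-bound on $q(W)$ via Assumption \ref{as:var-cov}). For (6), the key input is that for Gaussian $\bX_1, \bX_2$ one has $\bbE(\bX_1^\top \bbM \bX_2)^{2k} \lesssim p^k$, so $\bbE h^4 \lesssim p^2 \bbE\{f(W_1)^4 g(W_2)^4\} \asymp n^2$ under \eqref{cond:clt}, giving $n^2\bbE h_n^4 \asymp n^{-2} \to 0$. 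Conditions (4), (5), (7) require bounding $\bbE_{\bX_1}[h(O_1, O_2)^2]$ and the cross term $\int g\,h$; after expanding these via Cauchy--Schwarz and Corollary \ref{cor:stein}, the main new quantity that appears is $\bbE[\eta_q(\bX) \bX^\top \bbM \bX']$-type expressions, which is precisely why the proposition imposes the boundedness condition \eqref{cond:clt} on $\nabla^2 \eta_q$: it allows a second-order Stein expansion that keeps these terms $O(p)$ rather than $O(p^2)$, so the overall scaling defeats the $n^3$, $n^4$ prefactors in conditions (4), (5), (7).

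\textbf{Main obstacle.} The core difficulty is Step~(2), and more subtly the fourth-moment bookkeeping in (4)--(7). The bilinear form $\bX_1^\top \bbM \bX_2$ is a Gaussian chaos of order two whose moments scale polynomially in $p$, and it is coupled to the random weights $f(W), g(W)$ through the conditional laws $\sigma_f^2(\bX), \sigma_g^2(\bX), \mu_f(\bX), \mu_g(\bX)$ and their squared versions $\nu_q^2, \eta_q$. Keeping careful track of which $\trace$ terms survive the $p^{-1}$ or $n^{-1}$ normalization -- and verifying that none of the symmetrization cross terms ruin the $o(1)$ bounds in conditions (4), (5), (7) -- is the bulk of the work, and is what forces both the convergence hypotheses in \eqref{check conditions 1}--\eqref{check conditions 2} and the smoothness hypothesis \eqref{cond:clt} on $\eta_q$.
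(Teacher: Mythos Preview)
Your proposal is correct and follows essentially the same route as the paper: Hoeffding decomposition of $T_n$, then verification of the seven conditions of Lemma~\ref{lem:clt}, with Stein's lemma (Corollary~\ref{cor:stein}, Lemma~\ref{lem:stein quad}) doing the heavy lifting in reducing all second moments to the scalar lists \eqref{check conditions 1}--\eqref{check conditions 2}. Two minor slips to fix: you cite Assumption~\ref{as:beta} for sub-Gaussianity, but the proposition is stated under the Gaussian design Assumption~\ref{as:normal} (which of course implies sub-Gaussianity anyway); and for condition~(3), an $L_2$-bound on $q(W)$ is not enough to control $\bbE g^4$ --- you need $\eta_q = \bbE[q(W)^4\mid\bX]\in L_2$, which is exactly what \eqref{cond:clt} supplies (the paper handles this via Cauchy--Schwarz against $\bbE^{1/2}[\eta_q^2]$ in its Lemma~\ref{lem:p}).
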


\begin{proof}
We first apply the Hoeffding decomposition to $T_{n}$ and get:
\begin{align*}
T_{n} - \bbE T_{n} & \equiv \bbU_{n, 2} [s (O_{1}, O_{2}) - \bbE s (O_{1}, O_{2})] \\
& = \bbU_{n, 1} [\bbE [s (O_{1}, O_{2} | O_{1}] - \bbE s (O_{1}, O_{2})] + \bbU_{n, 2} [s (O_{1}, O_{2}) - \bbE [s (O_{1}, O_{2} | O_{1}]] \\
& = \frac{1}{\sqrt{n}} \left\{ \sum_{1 \leq i_{1} < i_{2} \leq n} \frac{s (O_{i_{1}}, O_{i_{2}}) - \bbE [s (O_{i_{1}}, O) | O_{i_{1}}]}{\sqrt{n} (n - 1)} + \sum_{i = 1}^{n} \frac{\bbE [s (O_{i_{1}}, O) | O_{i_{1}}] - \bbE s (O_{1}, O_{2})}{\sqrt{n}} \right\}.
\end{align*}
We take $g_{n} (O_{i}) \equiv \frac{\bbE [s (O_{i}, O) | O_{i}] - \bbE s (O_{1}, O_{2})}{\sqrt{n}}$ and $h_{n} (O_{i_{1}}, O_{i_{2}}) \equiv \frac{s (O_{i_{1}}, O_{i_{2}}) - \bbE [s (O_{i_{1}}, O) | O_{i_{1}}]}{\sqrt{n} (n - 1)}$.

We are then left to check the seven conditions of Lemma~\ref{lem:clt} one by one. We first check Condition~(1).
\begin{align*}
& \ n \bbE g_{n} (O)^{2} \\
= & \ \bbE \left[ \left\{ \bbE [s (O, O') | O] - \bbE s (O_{1}, O_{2}) \right\}^{2} \right] \\
= & \ \bbE \left[ \left\{ \bbE [\bbS_{2} \{f (W) \bX^{\top} \bbM \bX' g (W'\} | O] - \bbE [f (W) \bX^{\top}] \bbM \bbE [\bX g (W)] \right\}^{2} \right] \\
= & \ \bbE \left[ \left\{ \frac{1}{2} \left( f (W) \bX^{\top} \bbM \bbE [\bX g (W)] + g (W) \bX^{\top} \bbM \bbE [\bX f (W)] \right) - \bbE [f (W) \bX^{\top}] \bbM \bbE [\bX g (W)] \right\}^{2} \right] \\
= & \ \frac{1}{4} \bbE \left[ \left\{ (f (W) \bX^{\top} - \bbE [f (W) \bX^{\top}]) \bbM \bbE [\bX g (W)] \right\}^{2} \right] + \frac{1}{4} \bbE \left[ \left\{ (g (W) \bX^{\top} - \bbE [g (W) \bX^{\top}]) \bbM \bbE [\bX f (W)] \right\}^{2} \right] \\
& + \frac{1}{2} \bbE \left[ (f (W) \bX^{\top} - \bbE [f (W) \bX^{\top}]) \bbM \bbE [\bX g (W)] \cdot (g (W) \bX^{\top} - \bbE [g (W) \bX^{\top}]) \bbM \bbE [\bX f (W)] \right] \\
= & \ \frac{1}{4} \bbE \left[ \left\{ f (W) \bX^{\top} \bbM \bbE [\bX g (W)] \right\}^{2} \right] + \frac{1}{4} \bbE \left[ \left\{ g (W) \bX^{\top} \bbM \bbE [\bX f (W)] \right\}^{2} \right] \\
& + \frac{1}{2} \bbE \left[ f (W) g (W) \bX^{\top} \bbM \bbE [\bX f (W)] \cdot \bX^{\top} \bbM \bbE [\bX g (W)] \right] - \left( \bbE [f (W) \bX^{\top}] \bbM \bbE [\bX g (W)] \right)^{2} \\
\eqqcolon & \ \frac{1}{4} I_{1} + \frac{1}{4} I_{2} + \frac{1}{2} I_{3} - I_{0}.
\end{align*}
We then compute the limit of $I_{0}, I_{1}, I_{2}, I_{3}$ separately.

We first handle $I_{0}$.
\begin{align*}
% I_{0} & = \left\{ \left( \bm{\mu} \bbE [f (W)] + \bSigma \balpha \bbE [\zeta_{f}' (\balpha^{\top} \bX)] \right)^{\top} \bbM \left( \bm{\mu} \bbE [g (W)] + \bSigma \bbeta \bbE [\zeta_{g}' (\bbeta^{\top} \bX)] \right) \right\}^{2} \\
I_{0} & = \left( \begin{array}{c}
\bbE [f (W)] \bbE [g (W)] \bm{\mu}^{\top} \bbM \bm{\mu} + \bbE [\nabla \mu_{f} (\bX)^{\top}] \bSigma \bbM \bSigma \bbE [\nabla \mu_{g} (\bX)] \\
+ \, \bbE [f (W)] \bm{\mu}^{\top} \bbM \bSigma \bbE [\nabla \mu_{g} (\bX)] + \bbE [g (W)] \bm{\mu}^{\top} \bbM \bSigma \bbE [\nabla \mu_{f} (\bX)]
\end{array} \right)^{2}
\end{align*}

We next analyze $I_{1}$ by repeatedly applying Lemma~\ref{lem:stein quad}.
\begin{align*}
I_{1} = & \ \bbE [\bX^{\top} \bbM \bbE [\bX \mu_{g} (\bX)] \bbE [\bX \mu_{g} (\bX)]^{\top} \bbM \bX \nu_{f}^{2} (\bX)] \\
= & \ \bbE [\bX^{\top} \bbM \left( \bm{\mu} \bbE [\mu_{g} (\bX)] + \bSigma \bbE [\nabla \mu_{g} (\bX)] \right) \left( \bm{\mu} \bbE [\mu_{g} (\bX)] + \bSigma \bbE [\nabla \mu_{g} (\bX)] \right)^{\top} \bbM \bX \nu_{f}^{2} (\bX)] \\
= & \ \bbE [\bX^{\top} \bbM \bm{\mu} \bm{\mu}^{\top} \bbM \bX \nu_{f}^{2} (\bX)] \bbE^{2} [g (W)] + 2 \bbE [\bX^{\top} \bbM \bSigma \bbE [\nabla \mu_{g} (\bX)] \bm{\mu}^{\top} \bbM \bX \nu_{f}^{2} (\bX)] \bbE [g (W)] \\
& + \bbE [\bX^{\top} \bbM \bSigma \bbE [\nabla \mu_{g} (\bX)] \bbE [\nabla \mu_{g} (\bX)]^{\top} \bSigma \bbM \bX \nu_{f}^{2} (\bX)] \\
= & \ \left( \bm{\mu}^{\top} \bbM \bm{\mu} \right)^{2} \bbE [f (W)^{2}] \bbE^{2} [g (W)] + 2 \bm{\mu}^{\top} \bbM \bm{\mu} \bm{\mu}^{\top} \bbM \bSigma \bbE [\nabla \nu_{f}^{2} (\bX)] \bbE^{2} [g (W)] \\
& + \trace [\bSigma \bbM \bm{\mu} \bm{\mu}^{\top} \bbM] \bbE [f (W)^{2}] \bbE^{2} [g (W)] + \trace [\bSigma \bbM \bm{\mu} \bm{\mu}^{\top} \bbM \bSigma \bbE [\nabla^{2} \nu_{f}^{2} (\bX)]] \bbE^{2} [g (W)] \\
& + 2 \bm{\mu}^{\top} \bbM \bSigma \bbE [\nabla \mu_{g} (\bX)] \bm{\mu}^{\top} \bbM \bm{\mu} \bbE [f (W)^{2}] \bbE [g (W)] + 4 \bm{\mu}^{\top} \bbM \bSigma \bbE [\nabla \mu_{g} (\bX)] \bm{\mu}^{\top} \bbM \bSigma \bbE [\nabla \nu_{f}^{2} (\bX)] \bbE [g (W)] \\
& + 2 \trace [\bSigma \bbM \bSigma \bbE [\nabla \mu_{g} (\bX)] \bm{\mu}^{\top} \bbM] \bbE [f (W)^{2}] \bbE [g (W)] + 2 \trace [\bSigma \bbM \bSigma \bbE [\nabla \mu_{g} (\bX)] \bm{\mu}^{\top} \bbM \bSigma \bbE [\nabla^{2} \nu_{f}^{2} (\bX)]] \bbE [g (W)] \\
& + (\bm{\mu}^{\top} \bbM \bSigma \bbE [\nabla \mu_{g} (\bX)])^{2} \bbE [f (W)^{2}] + 2 \bm{\mu}^{\top} \bbM \bSigma \bbE [\nabla \mu_{g} (\bX)] \bbE [\nabla \mu_{g} (\bX)]^{\top} \bSigma \bbM \bSigma \bbE [\nabla \nu_{f}^{2} (\bX)] \\
& + \trace [\bSigma \bbM \bSigma \bbE [\nabla \mu_{g} (\bX)] \bbE [\nabla \mu_{g} (\bX)]^{\top} \bSigma \bbM] \bbE [f (W)^{2}] \\
& + \trace [\bSigma \bbM \bSigma \bbE [\nabla \mu_{g} (\bX)] \bbE [\nabla \mu_{g} (\bX)]^{\top} \bSigma \bbM \bSigma \bbE [\nabla^{2} \nu_{f}^{2} (\bX)]] \\
= & \ \left( \bm{\mu}^{\top} \bbM \bm{\mu} \right)^{2} \bbE [f (W)^{2}] \bbE^{2} [g (W)] + \bm{\mu}^{\top} \bbM \bSigma \bbM \bm{\mu} \bbE [f (W)^{2}] \bbE^{2} [g (W)] + \bm{\mu}^{\top} \bbM \bSigma \bbE [\nabla^{2} \nu_{f}^{2} (\bX)] \bSigma \bbM \bm{\mu} \cdot \bbE^{2} [g (W)] \\
& + 2 \bm{\mu}^{\top} \bbM \bm{\mu} \cdot \bm{\mu}^{\top} \bbM \bSigma \bbE [\nabla \nu_{f}^{2} (\bX)] \bbE^{2} [g (W)] + 2 \bm{\mu}^{\top} \bbM \bm{\mu} \cdot \bm{\mu}^{\top} \bbM \bSigma \bbE [\nabla \mu_{g} (\bX)] \bbE [f (W)^{2}] \bbE [g (W)] \\
& + 4 \bm{\mu}^{\top} \bbM \bSigma \bbE [\nabla \mu_{g} (\bX)] \cdot \bm{\mu}^{\top} \bbM \bSigma \bbE [\nabla \nu_{f}^{2} (\bX)] \bbE [g (W)] + 2 \bm{\mu}^{\top} \bbM \bSigma \bbM \bSigma \bbE [\nabla \mu_{g} (\bX)] \bbE [f (W)^{2}] \bbE [g (W)] \\
& + 2 \bm{\mu}^{\top} \bbM \bSigma \bbE [\nabla^{2} \nu_{f}^{2} (\bX)] \bSigma \bbM \bSigma \bbE [\nabla \mu_{g} (\bX)] \bbE [g (W)] + (\bm{\mu}^{\top} \bbM \bSigma \bbE [\nabla \mu_{g} (\bX)])^{2} \bbE [f (W)^{2}] \\
& + 2 \bm{\mu}^{\top} \bbM \bSigma \bbE [\nabla \mu_{g} (\bX)] \cdot \bbE [\nabla \mu_{g} (\bX)]^{\top} \bSigma \bbM \bSigma \bbE [\nabla \nu_{f}^{2} (\bX)] + \bbE [\nabla \mu_{g} (\bX)]^{\top} \bSigma \bbM \bSigma \bbM \bSigma \bbE [\nabla \mu_{g} (\bX)] \cdot \bbE [f (W)^{2}] \\
& + \bbE [\nabla \mu_{g} (\bX)]^{\top} \bSigma \bbM \bSigma \bbE [\nabla^{2} \nu_{f}^{2} (\bX)] \bSigma \bbM \bSigma \bbE [\nabla \mu_{g} (\bX)].
\end{align*}
By symmetry, $I_{2}$ has the same form as $I_{1}$ except that $f$ and $g$ are swapped. By the same argument, $I_{3}$ has a similar form:
\begin{align*}
I_{3} = & \ \bbE [\bX^{\top} \bbM \bbE [\bX \mu_{f} (\bX)] \bbE [\mu_{g} (\bX) \bX^{\top}] \bbM \bX \nu_{\sqrt{fg}}^{2} (\bX)] \\
= & \ \bbE [\bX^{\top} \bbM \bm{\mu} \bm{\mu}^{\top} \bbM \bX \nu_{\sqrt{fg}}^{2} (\bX)] \bbE [f (W)] \bbE [g (W)] + \bbE [\bX^{\top} \bbM \bSigma \bbE [\nabla \mu_{g} (\bX)] \bm{\mu}^{\top} \bbM \bX \nu_{\sqrt{fg}}^{2} (\bX)] \bbE [f (W)] \\
& + \bbE [\bX^{\top} \bbM \bSigma \bbE [\nabla \mu_{f} (\bX)] \bm{\mu}^{\top} \bbM \bX \nu_{\sqrt{fg}}^{2} (\bX)] \bbE [g (W)] + \bbE [\bX^{\top} \bbM \bSigma \bbE [\nabla \mu_{g} (\bX)] \bbE [\nabla \mu_{f} (\bX)]^{\top} \bSigma \bbM \bX \nu_{\sqrt{fg}}^{2} (\bX)] \\
= & \ \left( \bm{\mu}^{\top} \bbM \bm{\mu} \right)^{2} \bbE [f (W) g (W)] \bbE [f (W)] \bbE [g (W)] + \bm{\mu}^{\top} \bbM \bSigma \bbM \bm{\mu} \bbE [f (W) g (W)] \bbE [f (W)] \bbE [g (W)] \\
& + \bm{\mu}^{\top} \bbM \bSigma \bbE [\nabla^{2} \nu_{\sqrt{fg}}^{2} (\bX)] \bSigma \bbM \bm{\mu} \cdot \bbE [f (W)] \bbE [g (W)] + 2 \bm{\mu}^{\top} \bbM \bm{\mu} \cdot \bm{\mu}^{\top} \bbM \bSigma \bbE [\nabla \nu_{\sqrt{fg}}^{2} (\bX)] \bbE [f (W)] \bbE [g (W)] \\
& + \bm{\mu}^{\top} \bbM \bm{\mu} \cdot \bm{\mu}^{\top} \bbM \bSigma \bbE [\nabla \mu_{f} (\bX)] \bbE [f (W) g (W)] \bbE [g (W)] + \bm{\mu}^{\top} \bbM \bm{\mu} \cdot \bm{\mu}^{\top} \bbM \bSigma \bbE [\nabla \mu_{g} (\bX)] \bbE [f (W) g (W)] \bbE [f (W)] \\
& + 2 \bm{\mu}^{\top} \bbM \bSigma \bbE [\nabla \mu_{f} (\bX)] \cdot \bm{\mu}^{\top} \bbM \bSigma \bbE [\nabla \nu_{\sqrt{fg}}^{2} (\bX)] \bbE [g (W)] + 2 \bm{\mu}^{\top} \bbM \bSigma \bbE [\nabla \mu_{g} (\bX)] \cdot \bm{\mu}^{\top} \bbM \bSigma \bbE [\nabla \nu_{\sqrt{fg}}^{2} (\bX)] \bbE [f (W)] \\
& + \bm{\mu}^{\top} \bbM \bSigma \bbM \bSigma \bbE [\nabla \mu_{f} (\bX)] \bbE [f (W) g (W)] \bbE [g (W)] + \bm{\mu}^{\top} \bbM \bSigma \bbM \bSigma \bbE [\nabla \mu_{g} (\bX)] \bbE [f (W) g (W)] \bbE [f (W)] \\
& + \bm{\mu}^{\top} \bbM \bSigma \bbE [\nabla^{2} \nu_{\sqrt{fg}}^{2} (\bX)] \bSigma \bbM \bSigma \bbE [\nabla \mu_{f} (\bX)] \bbE [g (W)] + \bm{\mu}^{\top} \bbM \bSigma \bbE [\nabla^{2} \nu_{\sqrt{fg}}^{2} (\bX)] \bSigma \bbM \bSigma \bbE [\nabla \mu_{g} (\bX)] \bbE [f (W)] \\
& + \bm{\mu}^{\top} \bbM \bSigma \bbE [\nabla \mu_{g} (\bX)] \cdot \bm{\mu}^{\top} \bbM \bSigma \bbE [\nabla \mu_{f} (\bX)] \bbE [f (W) g (W)] \\
& + \bbE [\nabla \mu_{f} (\bX)]^{\top} \bSigma \bbM \bSigma \bbM \bSigma \bbE [\nabla \mu_{g} (\bX)] \cdot \bbE [f (W) g (W)] \\
& + \bm{\mu}^{\top} \bbM \bSigma \bbE [\nabla \mu_{f} (\bX)] \cdot \bbE [\nabla \mu_{g} (\bX)]^{\top} \bSigma \bbM \bSigma \bbE [\nabla \nu_{\sqrt{fg}}^{2} (\bX)] \\
& + \bm{\mu}^{\top} \bbM \bSigma \bbE [\nabla \mu_{g} (\bX)] \cdot \bbE [\nabla \mu_{f} (\bX)]^{\top} \bSigma \bbM \bSigma \bbE [\nabla \nu_{\sqrt{fg}}^{2} (\bX)] \\
& + \bbE [\nabla \mu_{f} (\bX)]^{\top} \bSigma \bbM \bSigma \bbE [\nabla^{2} \nu_{\sqrt{fg}}^{2} (\bX)] \bSigma \bbM \bSigma \bbE [\nabla \mu_{g} (\bX)].
\end{align*}

We next check Condition (2).
\begin{align*}
& \ n^{2} \bbE [h_{n} (O_{1}, O_{2})^{2}] \\
= & \ \frac{n}{(n - 1)^{2}} \bbE \left[ \left( s (O_{1}, O_{2}) - \bbE [s (O_{1}, O) | O_{1}] \right)^{2} \right] \\
= & \ \frac{n}{(n - 1)^{2}} \bbE \left[ \left\{ \frac{1}{2} \left( f (W_{1}) \bX_{1}^{\top} \bbM \bX_{2} g (W_{2}) + f (W_{2}) \bX_{2}^{\top} \bbM \bX_{1} g (W_{1}) \right) - \bbE [f (W) \bX^{\top}] \bbM \bbE [\bX g (W)] \right\}^{2} \right] \\
= & \ \frac{n}{2 (n - 1)^{2}} \bbE \left[ \bbE [g (W)^{2} | \bX] \bX^{\top} \bbM \bbE \left[ \bbE [f (W)^{2} | \bX] \bX \bX^{\top} \right] \bbM \bX \right] \\
& + \frac{n}{2 (n - 1)^{2}} \bbE \left[ \bbE [f (W) g (W) | \bX] \bX^{\top} \bbM \bbE \left[ \bbE [f (W) g (W) | \bX] \bX \bX^{\top} \right] \bbM \bX \right] \\
& - \frac{n}{(n - 1)^{2}} \left( \bbE [f (W) \bX^{\top}] \bbM \bbE [\bX g (W)] \right)^{2} \\
\eqqcolon & \ \frac{n}{2 (n - 1)^{2}} J_{1} + \frac{n}{2 (n - 1)^{2}} J_{2} - \frac{n}{(n - 1)^{2}} J_{3}.
\end{align*}

We first simplify $J_{1}$ by using \eqref{stein quad} of Lemma~\ref{lem:stein quad}.
\begin{align*}
J_{1} = & \ \bbE \left[ \bbE [g (W)^{2} | \bX] \bX^{\top} \bbM \bbE \left[ \bbE [f (W)^{2} | \bX] \bX \bX^{\top} \right] \bbM \bX \right] \\
= & \ \bbE \left[ \bX^{\top} \bbM \bSigma_{f} \bbM \bX \nu_{g}^{2} (\bX) \right] \\
= & \ \bm{\mu}^{\top} \bbM \bSigma_{f} \bbM \bm{\mu} \bbE [g (W)^{2}] + 2 \bm{\mu}^{\top} \bbM \bSigma_{f} \bbM \bSigma \bbE [\nabla \nu_{g}^{2} (\bX)] + \trace [\bSigma \bbM \bSigma_{f} \bbM] \bbE [g (W)^{2}] \\
& + \trace [\bSigma \bbM \bSigma_{f} \bbM \bSigma \bbE [\nabla^{2} \nu_{g}^{2} (\bX)]].
\end{align*}
Again, from \eqref{stein quad}, we have
\begin{align*}
\bSigma_{f} = \bSigma \bbE [f (W)^{2}] + \bm{\mu} \bm{\mu}^{\top} \bbE [f (W)^{2}] + \bm{\mu} \bbE [\nabla \nu_{f}^{2} (\bX)]^{\top} \bSigma + \bSigma \bbE [\nabla \nu_{f}^{2} (\bX)] \bm{\mu}^{\top} + \bSigma \bbE [\nabla^{2} \nu_{f}^{2} (\bX)] \bSigma.
\end{align*}
Combining the above, we have
\begin{align*}
J_{1} = & \ \bm{\mu}^{\top} \bbM \bSigma \bbM \bm{\mu} \bbE [f (W)^{2}] \bbE [g (W)^{2}] + (\bm{\mu}^{\top} \bbM \bm{\mu})^{2} \bbE [f (W)^{2}] \bbE [g (W)^{2}] + 2 \bm{\mu}^{\top} \bbM \bm{\mu} \bbE [\nabla \nu_{f}^{2} (\bX)^{\top}] \bSigma \bbM \bm{\mu} \bbE [g (W)^{2}] \\
& + \bm{\mu}^{\top} \bbM \bSigma \bbE [\nabla^{2} \nu_{f}^{2} (\bX)] \bSigma \bbM \bm{\mu} \bbE [g (W)^{2}] + 2 \bm{\mu}^{\top} \bbM \bSigma \bbM \bSigma \bbE [\nabla \nu_{g}^{2} (\bX)] \bbE [f (W)^{2}] \\
& + 2 \bm{\mu}^{\top} \bbM \bm{\mu} \bm{\mu}^{\top} \bbM \bSigma \bbE [\nabla \nu_{g}^{2} (\bX)] \bbE [f (W)^{2}] + 2 \bm{\mu}^{\top} \bbM \bm{\mu} \bbE [\nabla \nu_{f}^{2} (\bX)^{\top}] \bSigma \bbM \bSigma \bbE [\nabla \nu_{g}^{2} (\bX)] \\
& + 2 \bm{\mu}^{\top} \bbM \bSigma \bbE [\nabla \nu_{f}^{2} (\bX)] \bm{\mu}^{\top} \bbM \bSigma \bbE [\nabla \nu_{g}^{2} (\bX)] + 2 \bm{\mu}^{\top} \bbM \bSigma \bbE [\nabla^{2} \nu_{f}^{2} (\bX)] \bSigma \bbM \bSigma \bbE [\nabla \nu_{g}^{2} (\bX)] \\
& + \trace [\bSigma \bbM \bSigma \bbM] \bbE [f (W)^{2}] \bbE [g (W)^{2}] + \trace [\bSigma \bbM \bm{\mu} \bm{\mu}^{\top} \bbM] \bbE [f (W)^{2}] \bbE [g (W)^{2}] \\
& + \trace [\bSigma \bbM \bm{\mu} \bbE [\nabla \nu_{f}^{2} (\bX)^{\top}] \bSigma \bbM] \bbE [g (W)^{2}] + \trace [\bSigma \bbM \bSigma \bbE [\nabla \nu_{f}^{2} (\bX)] \bm{\mu}^{\top} \bbM] \bbE [g (W)^{2}] \\
& + \trace [\bSigma \bbM \bSigma \bbE [\nabla^{2} \nu_{f}^{2} (\bX)] \bSigma \bbM] \bbE [g (W)^{2}] + \trace [\bSigma \bbM \bSigma \bbM \bSigma \bbE [\nabla^{2} \nu_{g}^{2} (\bX)]] \bbE [f (W)^{2}] \\
& + \trace [\bSigma \bbM \bm{\mu} \bm{\mu}^{\top} \bbM \bSigma \bbE [\nabla^{2} \nu_{g}^{2} (\bX)]] \bbE [f (W)^{2}] + \trace [\bSigma \bbM \bm{\mu} \bbE [\nabla \nu_{f}^{2} (\bX)^{\top}] \bSigma \bbM \bSigma \bbE [\nabla^{2} \nu_{g}^{2} (\bX)]] \\
& + \trace [\bSigma \bbM \bSigma \bbE [\nabla \nu_{f}^{2} (\bX)] \bm{\mu}^{\top} \bbM \bSigma \bbE [\nabla^{2} \nu_{g}^{2} (\bX)]] + \trace [\bSigma \bbM \bSigma \bbE [\nabla^{2} \nu_{f}^{2} (\bX)] \bSigma \bbM \bSigma \bbE [\nabla^{2} \nu_{g}^{2} (\bX)]] \\
= & \ 2 \bm{\mu}^{\top} \bbM \bSigma \bbM \bm{\mu} \bbE [f (W)^{2}] \bbE [g (W)^{2}] + (\bm{\mu}^{\top} \bbM \bm{\mu})^{2} \bbE [f (W)^{2}] \bbE [g (W)^{2}] \\
& + 2 \bm{\mu}^{\top} \bbM \bm{\mu} \bbE [\nabla \nu_{f}^{2} (\bX)^{\top}] \bSigma \bbM \bSigma \bbE [\nabla \nu_{g}^{2} (\bX)] + 2 \bm{\mu}^{\top} \bbM \bSigma \bbE [\nabla \nu_{f}^{2} (\bX)] \bm{\mu}^{\top} \bbM \bSigma \bbE [\nabla \nu_{g}^{2} (\bX)] \\
& + 2 \bm{\mu}^{\top} \bbM \bm{\mu} \bbE [\nabla \nu_{f}^{2} (\bX)^{\top}] \bSigma \bbM \bm{\mu} \bbE [g (W)^{2}] + 2 \bm{\mu}^{\top} \bbM \bm{\mu} \bbE [\nabla \nu_{g}^{2} (\bX)^{\top}] \bSigma \bbM \bm{\mu} \bbE [f (W)^{2}] \\
& + \bm{\mu}^{\top} \bbM \bSigma \bbE [\nabla^{2} \nu_{f}^{2} (\bX)] \bSigma \bbM \bm{\mu} \bbE [g (W)^{2}] + \bm{\mu}^{\top} \bbM \bSigma \bbE [\nabla^{2} \nu_{g}^{2} (\bX)] \bSigma \bbM \bm{\mu} \bbE [f (W)^{2}] \\
& + 2 \bm{\mu}^{\top} \bbM \bSigma \bbM \bSigma \bbE [\nabla \nu_{g}^{2} (\bX)] \bbE [f (W)^{2}] + 2 \bm{\mu}^{\top} \bbM \bSigma \bbM \bSigma \bbE [\nabla \nu_{f}^{2} (\bX)] \bbE [g (W)^{2}] \\
& + 2 \bm{\mu}^{\top} \bbM \bSigma \bbE [\nabla^{2} \nu_{f}^{2} (\bX)] \bSigma \bbM \bSigma \bbE [\nabla \nu_{g}^{2} (\bX)] + 2 \bm{\mu}^{\top} \bbM \bSigma \bbE [\nabla^{2} \nu_{g}^{2} (\bX)] \bSigma \bbM \bSigma \bbE [\nabla \nu_{f}^{2} (\bX)] \\
& + \trace [\bSigma \bbM \bSigma \bbM] \bbE [f (W)^{2}] \bbE [g (W)^{2}] + \trace [\bSigma \bbM \bSigma \bbE [\nabla^{2} \nu_{f}^{2} (\bX)] \bSigma \bbM \bSigma \bbE [\nabla^{2} \nu_{g}^{2} (\bX)]] \\
& + \trace [\bSigma \bbM \bSigma \bbM \bSigma \bbE [\nabla^{2} \nu_{f}^{2} (\bX)]] \bbE [g (W)^{2}] + \trace [\bSigma \bbM \bSigma \bbM \bSigma \bbE [\nabla^{2} \nu_{g}^{2} (\bX)]] \bbE [f (W)^{2}] \\
= & \ O (1) + \trace [\bSigma \bbM \bSigma \bbM] \bbE [f (W)^{2}] \bbE [g (W)^{2}] + \trace [\bSigma \bbM \bSigma \bbE [\nabla^{2} \nu_{f}^{2} (\bX)] \bSigma \bbM \bSigma \bbE [\nabla^{2} \nu_{g}^{2} (\bX)]] \\
& + \trace [\bSigma \bbM \bSigma \bbM \bSigma \bbE [\nabla^{2} \nu_{f}^{2} (\bX)]] \bbE [g (W)^{2}] + \trace [\bSigma \bbM \bSigma \bbM \bSigma \bbE [\nabla^{2} \nu_{g}^{2} (\bX)]] \bbE [f (W)^{2}].
\end{align*}
By symmetry,
\begin{align*}
J_{2} = & \ O (1) + \trace [\bSigma \bbM \bSigma \bbM] \bbE^{2} [f (W) g (W)] + \trace [(\bSigma \bbM \bSigma \bbE [\nabla^{2} \nu_{\sqrt{f g}}^{2} (\bX)])^{2}] \\
& + 2 \trace [\bSigma \bbM \bSigma \bbM \bSigma \bbE [\nabla^{2} \nu_{\sqrt{f g}}^{2} (\bX)]] \bbE [f (W) g (W)].
\end{align*}
Finally, $J_{3} = O (1)$ is the same as $I_{0}$.

Conditions (3) -- (7) only concern rates instead of actual convergence to a particular (non-trivial) limit and under our conditions, they are all satisfied. We start by checking Conditions (3) \& (6). For (3), we need to show the following terms vanishing to 0:
\begin{align*}
& \ n \frac{1}{n^{2}} \bbE \left\{ f (W) \bX^{\top} \bbM \bbE [\bX g (W)] \right\}^{4} \\
= & \ \frac{1}{n} \bbE \left\{ \eta_{f} (\bX) (\bX^{\top} \bbM \bbE [\bX g (W)])^{4} \right\} \\
\leq & \ \frac{1}{n} \bbE^{1 / 2} \eta_{f} (\bX)^{2} \bbE^{1 / 2} (\bX^{\top} \bbM \bbE [\bX g (W)])^{8} \lesssim \frac{1}{n} \rightarrow 0,
\end{align*}
where the last inequality follows from Lemma~\ref{lem:p}. By symmetry, we also have
\begin{align*}
n \frac{1}{n^{2}} \bbE \left\{ g (W) \bX^{\top} \bbM \bbE [\bX f (W)] \right\}^{4} \rightarrow 0.
\end{align*}

For (6), we need to show the following term vanishing to 0. Recall that $\eta_{f} (\bX) \coloneqq \bbE [f (W)^{4} | \bX]$ and $\eta_{g} (\bX) \coloneqq \bbE [g (W)^{4} | \bX]$.
\begin{align*}
& \ n^{2} \frac{1}{n^{2} (n - 1)^{4}} \bbE \left\{ f (W_{1})^{4} g (W_{2})^{4} \{\bX_{1}^{\top} \bbM \bX_{2}\}^{4} \right\} \\
\leq & \ \frac{1}{n^{4}} \bbE \left\{ \eta_{f} (\bX_{1}) \eta_{g} (\bX_{2}) \{\bX_{1}^{\top} \bbM \bX_{2}\}^{4} \right\} \\
\lesssim & \ \frac{p^{3}}{n^{4}} \rightarrow 0
\end{align*}
where the last inequality follows from Lemma~\ref{lem:p}.

Next, we check Condition (4). We need to show the following terms vanishing to 0:
\begin{align*}
& \ n^{3} \frac{1}{n^{2} (n - 1)^{4}} \bbE \left\{ f (W_{1})^{2} \bX_{1}^{\top} \bbM \bbE [g (W_{2})^{2} \bX_{2} \bX_{2}^{\top}] \bbM \bX_{1} \right\}^{2} \\
= & \ C \frac{1}{n^{3}} \bbE \left\{ f (W_{1})^{4} \left( \bX_{1}^{\top} \bbM \bbE [g (W_{2})^{2} \bX_{2} \bX_{2}^{\top}] \bbM \bX_{1} \right)^{2} \right\} \\
= & \ C \frac{1}{n^{3}} \bbE \left\{ \eta_{f} (\bX_{1}) \left( \bX_{1}^{\top} \bbM \bbE [\{\nu_{g}^{2} (\bX_{2}) + \zeta_{g}^{2} (\bX_{2})\} \bX_{2} \bX_{2}^{\top}] \bbM \bX_{1} \right)^{2} \right\} \\
\lesssim & \ \frac{p^{2}}{n^{3}} \rightarrow 0,
\end{align*}
where the last inequality follows from a direct application of \eqref{matrix} from Corollary~\ref{cor:stein}. By symmetry, we have
\begin{align*}
n^{3} \frac{1}{n^{2} (n - 1)^{4}} \bbE \left\{ g (W_{1})^{2} \bX_{1}^{\top} \bbM \bbE [f (W_{2})^{2} \bX_{2} \bX_{2}^{\top}] \bbM \bX_{1} \right\}^{2} \rightarrow 0.
\end{align*}

Then we check Condition (5). We need to show the following terms vanishing to 0:
\begin{align*}
& \ n^{4} \frac{1}{n^{2} (n - 1)^{4}} \bbE \left\{ f (W_{1}) \bX_{1}^{\top} \bbM \bbE [f (W) g (W) \bX \bX^{\top}] \bbM \bX_{2} g (W_{2}) \right\}^{2} \\
= & \ \frac{C}{n^{2}} \bbE \left\{ \bbE [f (W_{1})^{2} | \bX_{1}] \bX_{1}^{\top} \bbM \bbE [f (W) g (W) \bX \bX^{\top}] \bbM \bbE [g (W_{2})^{2} \bX_{2} \bX_{2}^{\top}] \bbM \bbE [f (W) g (W) \bX \bX^{\top}] \bbM \bX_{1} \right\} \\
\lesssim & \ \frac{p}{n^{2}} \rightarrow 0,
\end{align*}
where the last inequality follows from a direct application of \eqref{matrix} from Corollary~\ref{cor:stein}. By symmetry, we have
\begin{align*}
n^{4} \frac{1}{n^{2} (n - 1)^{4}} \bbE \left\{ g (W_{1}) \bX_{1}^{\top} \bbM \bbE [f (W)^{2} \bX \bX^{\top}] \bbM \bX_{2} g (W_{2}) \right\}^{2} \rightarrow 0
\end{align*}
and
\begin{align*}
n^{4} \frac{1}{n^{2} (n - 1)^{4}} \bbE \left\{ f (W_{1}) \bX_{1}^{\top} \bbM \bbE [g (W)^{2} \bX \bX^{\top}] \bbM \bX_{2} f (W_{2}) \right\}^{2} \rightarrow 0.
\end{align*}

Finally, we check Condition (7). We need to show the following terms vanishing to 0:
\begin{align*}
& \ n^{3} \frac{1}{n^{2} (n - 1)^{2}} \bbE \left\{ f (W) \bX^{\top} \bbM \bbE [g (W) f (W) \bX \bX^{\top}] \bbM \bbE [\bX g (W)] \right\}^{2} \\
= & \ \frac{1}{n} \bbE f (W)^{2} \left( \bX^{\top} \bbM \bbE [\bbE [g (W) f (W) | \bX] \bX \bX^{\top}] \bbM \bbE [\bX g (W)] \right)^{2} \\
\lesssim & \ \frac{1}{n} \bbE^{1 / 2} f (W)^{4} \bbE^{1 / 2} \left( \bX^{\top} \bbM \bbE [\bbE [g (W) f (W) | \bX] \bX \bX^{\top}] \bbM \bbE [\bX g (W)] \right)^{4} \lesssim \frac{1}{n} \rightarrow 0,
\end{align*}
where the last inequality follows from Lemma~\ref{lem:p}.
\end{proof}

\begin{lemma}
\label{lem:p}
Under the assumptions of Proposition~\ref{prop:general clt}, we have, for $k \leq 8$,
\begin{align*}
& \bbE \left\{ \bX^{\top} \bbM \bbE [\bX f (W)] \right\}^{k} = O (1), \bbE \left\{ \bX^{\top} \bbM \bbE [\bX g (W)] \right\}^{k} = O (1), \\
& \bbE \left\{ \eta_{f} (\bX_{1}) \eta_{g} (\bX_{2}) \{\bX_{1}^{\top} \bbM \bX_{2}\}^{4} \right\} \lesssim p^{3}.
\end{align*}
\end{lemma}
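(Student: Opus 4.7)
The three claims of Lemma \ref{lem:p} are moment bounds on linear and bilinear functionals of the Gaussian design vector $\bX$. The plan is to treat the first two claims together since they share the same structure, and to handle the bilinear-form bound separately.

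For the first two, the key step will be a norm bound $\|\bbE[\bX f(W)]\| = O(1)$. Setting $\bv_{f} := \bbE[\bX f(W)]$, Cauchy--Schwarz applied to the identity $\|\bv_{f}\|^{2} = \bbE[(\bv_{f}^{\top}\bX)f(W)]$ gives $\|\bv_{f}\| \leq \sqrt{\lambda_{\max}(\bSigma) + \|\bm{\mu}\|^{2}}\,\|f\|_{2}$, which is $O(1)$ by Assumptions \ref{as:Sigma} and \ref{as:var-cov} (and the same bound will hold for $\bv_{g} := \bbE[\bX g(W)]$). Under Assumption \ref{as:normal}, $\bX^{\top}\bbM\bv_{f}$ is then Gaussian with mean $\bm{\mu}^{\top}\bbM\bv_{f} = O(1)$ and variance $\bv_{f}^{\top}\bbM\bSigma\bbM\bv_{f} = O(1)$ (using the bounded spectrum of $\bbM$ and Step 1), so all moments up to order $8$ will be $O(1)$; the argument for $\bv_{g}$ is identical.

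For the bilinear bound, I would first split the integrand via Cauchy--Schwarz:
\begin{align*}
\bbE\bigl[\eta_{f}(\bX_{1})\eta_{g}(\bX_{2})(\bX_{1}^{\top}\bbM\bX_{2})^{4}\bigr] \leq \|\eta_{f}\|_{2}\,\|\eta_{g}\|_{2}\cdot\bbE\bigl[(\bX_{1}^{\top}\bbM\bX_{2})^{8}\bigr]^{1/2}.
\end{align*}
The first two factors are $O(1)$ by the $L_{2}$ hypothesis on $\eta_{f},\eta_{g}$ from Proposition \ref{prop:general clt}. For the last factor, I would condition on $\bX_{1}$: since $\bX_{1}^{\top}\bbM\bX_{2}\mid\bX_{1} \sim \N(\bX_{1}^{\top}\bbM\bm{\mu},\,\bX_{1}^{\top}\bbM\bSigma\bbM\bX_{1})$, its conditional $8$th moment is dominated by $(\bX_{1}^{\top}\bbM\bm{\mu})^{8} + (\bX_{1}^{\top}\bbM\bSigma\bbM\bX_{1})^{4}$. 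Marginalizing via the standard Gaussian moment formulas for linear and quadratic forms, the first summand gives $O(1)$ while the second gives $O(p^{4})$ (leading order $\trace(\bSigma\bbM\bSigma\bbM)^{4} = O(p^{4})$ with dimension-free cross terms in $\bm{\mu}$). Square-rooting then yields $O(p^{2}) \leq p^{3}$ as required.

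The only non-routine element will be tracking the exact $p$-scaling in the $8$th moment of the quadratic form $\bX_{1}^{\top}\bbM\bSigma\bbM\bX_{1}$; this reduces to the fact that $\trace((\bSigma\bbM)^{k})$ scales linearly in $p$ while any trace products involving $\bm{\mu}$ remain bounded by Assumption \ref{as:Sigma}. The stated $p^{3}$ bound is loose---I actually expect to obtain $p^{2}$---but this looseness is harmless for the applications of Lemma \ref{lem:p} in the proof of Proposition \ref{prop:general clt}.
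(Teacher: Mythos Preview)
Your proposal is correct and takes a genuinely different route from the paper. The paper works under the simplification $\bm{\mu}=\mathbf{0}$, $\bSigma=\bI$ and repeatedly applies Stein's identity (Corollary \ref{cor:stein}) to peel off factors of $\bX$, eventually reducing both the linear-form moment and the bilinear expectation to trace- and inner-product-type quantities that can be bounded directly; this is what produces the $p^{3}$ scaling in the bilinear bound. You instead avoid Stein's lemma entirely: for the linear-form moments you first bound $\|\bbE[\bX f(W)]\|$ by a self-referential Cauchy--Schwarz argument and then exploit that $\bX^{\top}\bbM\bv_{f}$ is a scalar Gaussian with $O(1)$ mean and variance; for the bilinear bound you Cauchy--Schwarz off the $\eta_{f},\eta_{g}$ factors (legitimate since $\eta_{q}\in L_{2}$ by hypothesis) and condition on $\bX_{1}$ to reduce to moments of a Gaussian quadratic form. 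Your approach is more elementary and actually sharper, yielding $O(p^{2})$ rather than $O(p^{3})$ for the third claim, as you note. One minor point: the finiteness of $\|f\|_{2}$ you invoke follows from the hypothesis $\eta_{f}\in L_{2}(\bbP)$ of Proposition \ref{prop:general clt} (since $\bbE[f(W)^{2}]\leq \bbE[f(W)^{4}]^{1/2}=\bbE[\eta_{f}(\bX)]^{1/2}$), not from Assumption \ref{as:var-cov}, which is specific to the GLM setting rather than the general $f,g$ framework here.
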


\begin{proof}
Without loss of generality, we assume $\bm{\mu} = \bm{0}$ and $\bSigma = \bI$. We start by showing the second part of this lemma. By repeatedly applying the results in Corollary~\ref{cor:stein}, we have, with $C$ a constant independent of $n$ changing from line to line,
\begin{align*}
& \ \bbE \left\{ \eta_{f} (\bX_{1}) \eta_{g} (\bX_{2}) \{\bX_{1}^{\top} \bbM \bX_{2}\}^{4} \right\} \\
= & \ C \bbE \left\{ \bX_{2}^{\top} \bbM^{2} \bX_{2} \eta_{f} (\bX_{1}) \eta_{g} (\bX_{2}) \{\bX_{1}^{\top} \bbM \bX_{2}\}^{2} \right\} + C \bbE \left\{ \nabla \eta_{f} (\bX_{1})^{\top} \bbM \bX_{2} \eta_{g} (\bX_{2}) \{\bX_{1}^{\top} \bbM \bX_{2}\}^{3} \right\} \\
= & \ C \bbE \left\{ \eta_{g} (\bX_{2}) \bX_{2}^{\top} \bbM^{2} \bX_{2} \bX_{2}^{\top} \bbM \bbE [\eta_{f} (\bX_{1}) \bX_{1} \bX_{1}^{\top}] \bbM \bX_{2} \right\} + C \bbE \left\{ \bX_{2}^{\top} \bbM \bX_{2} \nabla \eta_{f} (\bX_{1})^{\top} \bbM \eta_{g} (\bX_{2}) \bX_{2} \bX_{2}^{\top} \bbM \bX_{1} \right\} \\
& + C \bbE \left\{ \bX_{2}^{\top} \bbM \nabla^{2} \eta_{f} (\bX_{1}) \bbM \bX_{2} \eta_{g} (\bX_{2}) \{\bX_{1}^{\top} \bbM \bX_{2}\}^{2} \right\} \\
= & \ C \bbE \left\{ \eta_{g} (\bX_{2}) \bX_{2}^{\top} \bbM^{2} \bX_{2} \bX_{2}^{\top} \bbM \bbE [\nabla^{2} \eta_{f} (\bX_{1})] \bbM \bX_{2} \right\} + C \bbE \left\{ \bX_{2}^{\top} \bbM \bX_{2} \eta_{g} (\bX_{2}) \bX_{2}^{\top} \bbM \bbE [\nabla^{2} \eta_{f} (\bX_{1})] \bbM \bX_{2} \right\} \\
& + C \bbE \left\{ \bX_{2}^{\top} \bbM \nabla^{2} \eta_{f} (\bX_{1}) \bbM \bX_{2} \eta_{g} (\bX_{2}) \{\bX_{1}^{\top} \bbM \bX_{2}\}^{2} \right\} \lesssim p^{3}.
\end{align*}
For the first part, we only need to show $k = 4$ and $k = 2$ is a direct consequence. Again, we repeatedly apply Corollary~\ref{cor:stein} and obtain
\begin{align*}
& \ \bbE [\bX^{\top} \bbM \bbE [\bX f (W)] \{\bX^{\top} \bbM \bbE [\bX f (W)]\}^{3}] \\
= & \ \bbE [\bbE [f (W) \bX^{\top}] \bbM^{2} \bbE [\bX f (W)] \{\bX^{\top} \bbM \bbE [\bX f (W)]\}^{2}] \\
= & \ \bbE \left[ \left( \bbE [f (W) \bX^{\top}] \bbM^{2} \bbE [\bX f (W)] \right)^{3} \right] = O (1).
\end{align*}
\end{proof}

For the concrete examples considered in the main text with $\mu_{f}, \mu_{g}$ being GLMs, we need to check if the terms in \eqref{check conditions 1} or \eqref{check conditions 2} in Proposition~\ref{prop:general clt} have nontrivial limits, in the special case $\bbM = \bSigma^{-1}$. Then \eqref{check conditions 1} and \eqref{check conditions 2}, respectively, reduce to
\begin{subequations}
\label{simple check conditions 1}
\begin{align}
& \bm{\mu}^{\top} \bSigma^{-1} \bm{\mu}, \bm{\mu}^{\top} \bbE [\nabla^{2} \nu_{q}^{2} (\bX)] \bm{\mu}, \bbE [q (W)], \bbE [q (W)^{2}], \\
& \bm{\mu}^{\top} \bbE [\nabla \mu_{q} (\bX)], \bm{\mu}^{\top} \bbE [\nabla \nu_{q}^{2} (\bX)], \bm{\mu}^{\top} \bbE [\nabla^{2} \nu_{q'}^{2} (\bX)] \bSigma \bbE [\nabla \mu_{q} (\bX)] \\
& \bbE [\nabla \mu_{q} (\bX)]^{\top} \bSigma \bbE [\nabla \mu_{q'} (\bX)], \bbE [\nabla \nu_{q}^{2} (\bX)]^{\top} \bSigma \bbE [\nabla \mu_{q'} (\bX)], \\
& \bbE [\nabla \mu_{q} (\bX)]^{\top} \bSigma \bbE [\nabla^{2} \nu_{q''}^{2} (\bX)] \bSigma \bbE [\nabla \mu_{q'} (\bX)]
\end{align}
\end{subequations}
and
\begin{subequations}
\label{simple check conditions 2}
\begin{align}
& \bbE [q (W)^{2}] \bbE [q' (W)^{2}], p^{-1} \trace [\bSigma \bbE [\nabla^{2} \nu_{q}^{2} (\bX)]], \\
& p^{-1} \trace [\bSigma \bbE [\nabla^{2} \nu_{q}^{2} (\bX)] \bSigma \bbE [\nabla^{2} \nu_{q'}^{2} (\bX)]].
\end{align}
\end{subequations}

Thus for the four specific cases, we just need to check the above two conditions for all the moment estimators involved.

\subsection{Linear and Quadratic Forms of GLMs}
\label{app:GLM functional CLT}

The key step is to simplify $\bbE [\nabla \mu (\bX)]$, $\bbE [\nabla \nu^{2} (\bX)]$ and $\bbE [\nabla^{2} \nu^{2} (\bX)]$; note that here we have considered the joint convergence of the $U$-statistic-based estimators of the involved moments in \eqref{GLM chain} using the Cram\'{e}r-Wold device. To this end, we have
\begin{align*}
\bbE [\nabla \mu (\bX)] & = \bbE [\nabla \phi (\bbeta^{\top} \bX)] = \bbE [\phi' (\bbeta^{\top} \bX)] \bX \\
& = \bbE [\phi' (\bbeta^{\top} \bX)] \bm{\mu} + \bbE [\phi' (\bbeta^{\top} \bX)] \bSigma \bbeta, \\
& = (\bm{\mu} + \bSigma \bbeta) \bbE [\mu' (\bX)] \\
\bbE [\nabla \nu^{2} (\bX)] & = \bbE [\nabla \{\phi^{2} (\bbeta^{\top} \bX) + \sigma^{2} (\bbeta^{\top} \bX)\}] \\
& = \{\bbE [\phi^{2}{}' (\bbeta^{\top} \bX)] + \bbE [\sigma^{2}{}' (\bbeta^{\top} \bX)]\} \cdot (\bm{\mu} + \bSigma \bbeta) \\
& = (\bm{\mu} + \bSigma \bbeta) \bbE [\nu^{2}{}' (\bX)], \\
\bbE [\nabla^{2} \nu^{2} (\bX)] & = \bbE [\nabla^{2} \{\phi^{2} (\bbeta^{\top} \bX) + \sigma^{2} (\bbeta^{\top} \bX)\}] \\
& = (\bm{\mu} + \bSigma \bbeta) \{\bbE [\nabla \phi^{2}{}' (\bbeta^{\top} \bX)] + \bbE [\nabla \sigma^{2}{}' (\bbeta^{\top} \bX)]\} \\
& = (\bm{\mu} + \bSigma \bbeta) (\bm{\mu} + \bSigma \bbeta)^{\top} \{\bbE [\phi^{2}{}'' (\bbeta^{\top} \bX)] + \bbE [\sigma^{2}{}'' (\bbeta^{\top} \bX)]\} \\
& = (\bm{\mu} + \bSigma \bbeta) (\bm{\mu} + \bSigma \bbeta)^{\top} \{\bbE [\nu^{2}{}'' (\bX)]\}.
\end{align*}

Checking Condition~\ref{simple check conditions 1}, we need the following terms to converge to certain limits or $o (1)$:
\begin{align*}
\bm{\mu}^{\top} \bSigma^{-1} \bm{\mu}, \bm{\mu}^{\top} \bm{\mu}, \bm{\mu}^{\top} \bbeta, \bm{\mu}^{\top} \bSigma \bbeta, \bm{\mu}^{\top} \bSigma^{2} \bbeta, \bbeta^{\top} \bSigma \bbeta, \bbeta^{\top} \bSigma^{3} \bbeta.
\end{align*}
Checking Condition~\ref{simple check conditions 2}, we need the following terms to converge to certain limits:
\begin{align*}
\bm{\mu}^{\top} \bbeta, \bbeta^{\top} \bSigma \bbeta.
\end{align*}
When $\bm{\mu} = \bm{0}$, the above conditions reduce to $\bbeta^{\top} \bSigma \bbeta$ and/or $\bbeta^{\top} \bSigma^{3} \bbeta$ converge to a limit.

We then need to check \eqref{cond:clt}. Under Assumptions~\ref{as:bounded} and~\ref{as:var-cov}, both requirements in \eqref{cond:clt} hold due to GLM and boundedness of $\Vert \bbeta \Vert$.

\subsection{Causal Effects in Linear Structural Equation Models}
\label{app:CE CLT}

For the causal effect estimation problem in Section~\ref{sec:CE},
\begin{align*}
\bbE [A Y | \bX] & = \bbE [A \bbE [Y | \bX, A]] = \bbE [A \bbeta^{\top} \bX + \psi \cdot A | \bX] \\
& = \bbeta^{\top} \bX \cdot \eta (\balpha^{\top} \bX) + \psi \cdot \eta (\balpha^{\top} \bX)
\end{align*}
which is a function of $(\balpha^{\top} \bX, \bbeta^{\top} \bX)$. Thus we only need to replace $\bbeta$'s in Appendix~\ref{app:GLM functional CLT} by $\balpha$ or a mix of $\balpha$ and $\bbeta$.

\subsection{Mean Estimation under MAR}
\label{app:MAR CLT}

For the problem of mean estimation under MAR in Section~\ref{sec:MAR}, we only need to additionally consider to replace $\bbeta$'s in Appendix~\ref{app:GLM functional CLT} by $\balpha$ or a mix of $\balpha$ and $\bbeta$. The derivation is hence omitted.

\subsection{Generalized Covariance Measure}
\label{app:GCM CLT}

For the problem of estimating GCM in Section~\ref{sec:GCM}, as we indicate after Theorem~\ref{thm:GCM}, we only need to introduce additional terms that involve the inner product between $\bm{\theta}^{\top} f (\bSigma)$ and $\balpha, \bbeta$ or $\bm{\mu}$ where $f$ is either the linear, the quadratic or the cubic function.

\section{Proof Related to Universality}
\label{app:universality}

To extend the results from Gaussian designs to non-Gaussian designs, the following lemma from \citet{chatterjee2006generalization} (also see \citet{han2023universality}) is pivotal.

\begin{lemma}
\label{lemma:stein_universal_identity}

Let $\bX \coloneqq (X_{1}, \cdots, X_{p})^{\top}$ and $\bZ \coloneqq (Z_{1}, \cdots, Z_{p})^{\top}$ be two collections of random vectors with independent coordinates and matching first and second moments: $\bbE [X_{j}^{k}] = \bbE [Z_{j}^{k}]$ for $j = 1, \cdots, p$ and $k \in \{1, 2\}$. For any three-time differentiable function $f$,
\begin{equation}
\left\vert \bbE f (\bX) - \bbE f (\bZ) \right\vert \leq \sum_{j = 1}^{p} \max_{W_{j} \in \{X_{j}, Z_{j}\}} \left\vert \bbE \int_{0}^{W_{j}} \partial^{3}_{j} f (X_{1}, \cdots, X_{j - 1}, t, Z_{j + 1}, \cdots, Z_{p}) (W_{j} - t)^{2} \diff t \right\vert,
\end{equation}
where $\partial^{3}_{j} f$ denotes the third-derivative of $f$ with respect to the $j$-th argument.
\end{lemma}

\begin{lemma}
\label{lem:app universality}
Suppose that Assumptions~\ref{as:link} and~\ref{as:beta} hold and $\bbM$ is a n.n.s.d. matrix such that $M^{-1} \leq \lambda_{\min} (\bbM) \leq \lambda_{\max} (\bbM) \leq M$. Let $\bZ$ be the corresponding Gaussian vector with the same first and second moments as $\bX$. We then have the following assertions:
 \begin{enumerate}
     \item  $\bbE(\bbeta^\top \bX \phi(\balpha^\top \bX)) = \bbeta^\top \bSigma \balpha \bbE [\phi'(\bZ^\top \balpha)] +\bbeta^\top \bmu \bbE [\phi(\bZ^\top \balpha)] + O(p^{-3/4})$;
     \item $\bbE [\phi (\balpha^\top \bX)] = \bbE [\phi (\balpha^\top \bZ)] + O (p^{-3 / 4})$;
     \item $\bbE [\eta (\balpha^\top \bX) \bX^{\top} \bbM \bbE [\phi (\bX^{\top} \bbeta) \bX \bX^{\top}] \bbM \bX]$;
     \item $\{Y_i, \bX_i\}_{i = 1}^{n} \overset{\rm i.i.d.}{\sim} \bbP$ with $\bbE [Y | \bX] = \phi (\balpha^\top \bX)$ and let $T_n = \bbU_{n, 2} (Y_1 \bX_1^\top \bbM \bX_2 Y_2)$. Then $\var (T_n) = O (1 / n + p / n^2)$ if $\bbE [\sigma^4 (\bX)] < \infty$ with $\sigma^2 (\bX) = \var (Y | \bX)$.
 \end{enumerate}
  %  Then the following two assertions hold.
 %  \begin{enumerate}
 %      \item  

%\item $ | \bbE(\phi(\bX^\top\bbeta))-\bbE(\phi(\bY^\top\bbeta))|=O(p^{-1})$
%    where $\bY\sim \mathrm{N}_p(\bmu,\bSigma)$.

%   \end{enumerate}
\end{lemma}
\begin{proof}

%\textcolor{magenta}{This approach is too long. Below we give a one-shot approach of comparing $\bbE(\bX^\top\bbeta\phi(\bX^\top\alpha))$ and $\bbE(\bY^\top\bbeta\top\phi(\bY^\top\alpha))$} using Lemma~\ref{lemma:stein_universal_identity}. 
We prove assertion 1 and the proof of 2 follows by parallel arguments with obvious modifications. To prove 1, we first start with the case $\bSigma = \bI$ and $\bmu=\mathbf{0}$ and subsequently provide the details necessary for extending to the general mean and covariance cases. To this end define the function $f:\mathbb{R}^p \rightarrow \mathbb{R}$ as $f(\boldsymbol{\omega})=\boldsymbol{\omega}^\top\bbeta\phi(\boldsymbol{\omega}^\top \balpha)$ for $\boldsymbol{\omega}=(\omega,\ldots,\omega_p)^\top\in \bbR^p$. Therefore it is clear that
\begin{align*}
    \frac{\partial^3}{\partial \omega_j^3}f(\boldsymbol{\omega})&=\boldsymbol{\omega}^\top\bbeta\alpha_j^3\phi^{(3)}(\boldsymbol{\omega}^\top\balpha)+3\beta_j\alpha_j^2\phi^{(2)}(\boldsymbol{\omega}^\top\balpha).
\end{align*}
Therefore by Lemma~\ref{lemma:stein_universal_identity} 
\begin{align*}
\ & |\bbE(\bX^\top\bbeta\phi(\bX^\top\alpha))- \bbE(\bY^\top\bbeta\phi(\bY^\top\alpha))|\\
&\leq \sum_{j=1}^p \max_{V_j\in \{X_j,Y_j\}}\left\vert\bbE \int_{0}^{V_j}\frac{\partial^3 }{\partial\omega_j^3}f(X_1,\ldots,X_{j-1},t, Y_{j+1},\ldots,Y_p)(Z_j-t)^2 dt\right\vert
\\
&\leq T_1+T_2,
\end{align*}
where $T_1,T_2$ respect the decomposition of $\frac{\partial^3}{\partial \omega_j^3}f(\boldsymbol{\omega})$ after applying triangle inequality to the display above. We first consider $T_1$ as follows by defining $W_{l,j}(\kappa,t)=\kappa(X_l\mathbbm{1}(l< j)+Y_l\mathbbm{1}(l> j)+ t\mathbbm{1}(l=j))$
\begin{align*}
    T_1&=\sum_{j=1}^p |\alpha_j^3| \max\limits_{V_j \in \{X_j,Y_j\}} \left\vert \bbE\int_{0}^{V_j} \sum_{l = 1}^p W_{l, j} (\beta_l, t) \phi^{(3)} \left(\sum_{l = 1}^p W_{l,j} (\alpha_l, t)\right) (V_j - t)^2 \diff t \right\vert.
\end{align*}
Now by Cauchy-Schwarz inequality,
\begin{equation}
\label{cs}
\begin{split}
  \ & \left\vert \bbE\int_{0}^{V_j} \sum_{l\neq j} W_{l, j} (\beta_l, t) \phi^{(3)} \left(\sum_{l = 1}^p W_{l,j} (\alpha_l, t)\right) (V_j - t)^2 \diff t \right\vert \\
  & \leq \left( \left\vert \bbE \int_{0}^{V_j}\left\{\phi^{(3)} \left(\sum_{l = 1}^p W_{l,j} (\alpha_l, t)\right)\right\}^2\diff t \right\vert \right)^{1/2} \left( \left\vert \bbE \int_{0}^{V_j}\left\{\sum_{l\neq j} W_{l, j} (\beta_l, t)(V_j-t)^2\right\}^2 \diff t \right\vert \right)^{1/2}.
\end{split}
\end{equation}
We analyze the two terms of the product in the last display as follows.
\begin{align*}
   \ & \left\vert\bbE\int_{0}^{V_j}\left\{\phi^{(3)} \left(\sum_{l = 1}^p W_{l,j} (\alpha_l, t)\right)\right\}^2\diff t\right\vert\\
   &\leq \left\vert\bbE\int_{0}^{V_j}\left\{\phi^{(3)} \left(\sum_{l \neq j}^p W_{l,j} (\alpha_l,0)\right)\right\}^2\diff t\right\vert +
   |\alpha_j|\left\vert\bbE\int_{0}^{V_j}tf_{\phi}\left(\sum_{l\neq j}^p W_{l,j} (\alpha_l, 0)+\alpha_j\chi (t)\right)\diff t\right\vert\\
   &=|\alpha_j|\left\vert\bbE\int_{0}^{V_j}tf_{\phi}\left(\sum_{l\neq j}^p W_{l,j} (\alpha_l, 0)+\alpha_j\chi (t)\right)\diff t\right\vert
\end{align*}
where $\chi (t)$ is a $\sum_{l \neq j}^{p} W_{l, j} (\alpha_{l}, 0)$-measurable quantity between $\sum_{l \neq j}^{p} W_{l, j} (\alpha_{l}, 0)$ and $t$ by the exact form of Taylor remainder theorem, and we used the facts that $W_{l,j}(\kappa,t)=W_{l,j}(\kappa,0)$ for $l\neq j$, $\bbE(V_j)=0$ and $V_j \indep \sum_{l\neq j} W_{l,j}(\alpha_l,0)$, and also the short-hand notation $f_{\phi}(\cdot) \coloneqq \{\phi^{(3)} (\cdot)^2\}'$. Now note that by the property of the function $\phi$ and the range of integration over $t$,
\begin{align*}
\ & \left\vert\bbE\int_{0}^{V_j}tf_{\phi}\left(\sum_{l\neq j}^p W_{l,j} (\alpha_l, 0)+\alpha_j\chi (t)\right)\diff t\right\vert \\
& \leq 2\left\vert\bbE\int_{0}^{V_j}|t|\exp\left\{2\left(\sum_{l\neq j}^p W_{l,j} (\alpha_l, 0)+\alpha_j\chi (t)\right)^2 f \left( \sum_{l\neq j}^p W_{l,j} (\alpha_l, 0)+\alpha_j\chi (t) \right) \right\} \diff t \right\vert \\
%&\leq 2\left\vert\bbE\int_{0}^{V_j}|t|\exp\left[\left\{4\left(\sum_{l\neq j}^p W_{l,j} (\alpha_l, 0)\right)^2+4\left(\alpha_j\chi (t)\right)^2\right\} f(\sum_{l\neq j}^p W_{l,j} (\alpha_l, 0)+\alpha_j\chi (t))\right]\diff t\right\vert\\
%&\leq 2\left\vert\bbE_j\int_{0}^{V_j}|t|\exp\left[\left\{4\left(\sum_{l\neq j}^p W_{l,j} (\alpha_l, 0)\right)^2+4\left(\alpha_j\chi (t)\right)^2\right\} f(\sum_{l\neq j}^p W_{l,j} (\alpha_l, 0)+\alpha_j\chi (t))\right]\diff t\right\vert
&\leq 2\left\vert\bbE\int_{0}^{V_j}|V_j|\exp\left\{2\left(\sum_{l\neq j}^p W_{l,j} (\alpha_l, 0)+\alpha_j\chi (t)\right)^2 f \left( \sum_{l\neq j}^p W_{l,j} (\alpha_l, 0)+\alpha_j\chi (t) \right)\right\}\diff t\right\vert.
\end{align*}
Next, we again note that by the exact form of Taylor remainder theorem it is clear that $\chi (t)$ is $\sum_{l\neq j}^p W_{l,j} (\alpha_l, 0)$-measurable and hence $$\exp\left\{2\left(\sum_{l\neq j}^p W_{l,j} (\alpha_l, 0)+\alpha_j\chi (t)\right)^2 f \left( \sum_{l\neq j}^p W_{l,j} (\alpha_l, 0)+\alpha_j\chi (t) \right) \right\} \indep V_j.$$ 

Now by direct calculations and the fact that $\max_{l=1}^p\|V_l\|_{\psi_2}\leq M$, $|\chi (t)| \leq |V_j|$ and $\lim_{|x|\rightarrow \infty} f(x)=0$ it is straightforward to verify that $$2\left\vert\bbE\int_{0}^{V_j}|V_j|\exp\left\{2\left(\sum_{l\neq j}^p W_{l,j} (\alpha_l, 0)+\alpha_j\chi (t)\right)^2 f \left( \sum_{l\neq j}^p W_{l,j} (\alpha_l, 0)+\alpha_j\chi (t) \right)\right\}\diff t\right\vert$$ is bounded. Similarly using the fact that $\|\sum_{l\neq j}W_{l,j}(\beta_l,t)\|_{\psi_2}\leq  M$ and it is independent of $V_j$ which also has $\psi_2$-norm bounded by $M$ one has by straightforward algebra that $$\left\vert\int_{0}^{V_j}\bbE\left\{\sum_{l\neq j} W_{l, j} (\beta_l, t)(V_j-t)^2\right\}^2\diff t\right\vert$$ is bounded as well. Consequently there exists a constant $C(M)$ depending on $M$ such that 
\begin{align*}
     \left\vert \bbE\int_{0}^{V_j} \sum_{l\neq j} W_{l, j} (\beta_l, t) \phi^{(3)} \left(\sum_{l = 1}^p W_{l,j} (\alpha_l, t)\right) (V_j - t)^2 \diff t \right\vert \leq C(M)|\alpha_j|.
\end{align*}
Therefore for some constant $C'(M)$ depending on $M$
\begin{align*}
    T_1 &\leq C'(M)\sum_{j=1}^p |\alpha_j|^{7/2}=O(p^{-3/4}).
\end{align*}
By a parallel argument, it is easy to show that for some constant $C''(M)$ depending on $M$
\begin{align*}
    T_2 &\leq C''(M)\sum_{j=1}^p |\beta_j||\alpha_j|^{5/2}=O(p^{-3/4}).
\end{align*}
This completes the proof of the first statement. The second statement follows directly from the same proof strategy and is thus omitted. It is noteworthy that if $\bbE [V_{j}] = \mu_{j}$ is not necessarily 0 when $\bm{\mu} \neq \bm{0}$, we can still apply the same arguments by further controlling the first term on the LHS of \eqref{cs} by leveraging the assumption on $\bm{\mu}$ imposed in Assumption~\ref{as:Sigma}. When $\bSigma \neq \bI$, the above proof also goes through by renaming $\balpha$ and $\bbeta$ as $\bSigma^{1 / 2} \balpha$ and $\bSigma^{1 / 2} \bbeta$.

Finally, we prove the last claim that, for $T_n = \bbU_{n,2} (Y_1 \bX_1^\top \bbM \bX_2 Y_2)$, $\var (T_n) = O(1/n+p/n^2)$ when $\bX = \bSigma^{1/2} \bZ + \bmu$. We note that by Assumption~\ref{as:Sigma} it is enough to consider the case $\bmu=\mathbf{0}$, which we assume henceforth. Thereafter, let $T_{1n}$ and $T_{2n}$ denote the first and second order term of the Hoeffding decomposition of $T_n$. Note that the flavor of this claim is different from the previous ones, as it is not about approximating the results that hold exactly under Gaussianity by Stein's lemma. We first show that $\var(T_{1n})=O(1/n)$. To show this, note that with $\upsilon_{\bbM} \coloneqq \bbM\bbE(\bX Y) = \bbM\bbE(\bX\phi(\bX^\top\bbeta))$ we have that
\begin{align*}
    \var(T_{1n})&=\frac{1}{n}\var\left(Y\bX^\top\upsilon_{\bbM}\right)\\
    &=\frac{1}{n}\var(\phi(\bX^\top\bbeta)\bX^\top \upsilon_{\bbM})+\frac{1}{n}\bbE(\sigma^2(\bX)(\upsilon_{\bbM}^\top\bX)^2)
\end{align*}
Now since $\|\upsilon_{\bbM}^\top\bX\|_{\psi_2}\leq M\|\upsilon_{\bbM}\|_2$, $\|\upsilon_{\bbM}\|_2^2\leq \lambda_{\max}^2(\bbM)\|\bbE(\bX^\top\phi(\bX^\top\bbeta))\|_2^2$. But $\bX^\top\bSigma^{-1}\bbE(\bX\phi(\bX^\top\bbeta))=\mathrm{Proj}_{L_2(\bbP)}\left(\phi(\bX^\top\bbeta)|V\right)$ where $V=\mathrm{Span}(\bX)$. Therefore by the length contraction property of projection, we have
\begin{align*}
\|\bX^\top\bSigma^{-1}\bbE(\bX\phi(\bX^\top\bbeta))\|_{L_2(\bbP)}^2\leq \|\phi(\bX^\top\bbeta)\|_{L_2(\bbP)}^2.
\end{align*}
However,
\begin{align*}
\|\bX^\top\bSigma^{-1}\bbE(\bX\phi(\bX^\top\bbeta))\|_{L_2(\bbP)}^2&=\bbE(\bX^\top\phi(\bX^\top\bbeta))\bSigma^{-1}\bbE(\bX\phi(\bX^\top\bbeta))\\
&\geq \lambda_{\max}^{-1}(\bSigma)\|\bbE(\bX\phi(\bX^\top\bbeta))\|_2^2.
\end{align*}
Therefore
\begin{align*}
   \|\upsilon_{\bbM}\|_2^2\leq \lambda_{\max}^2(\bbM)\|\bbE(\bX^\top\phi(\bX^\top\bbeta))\|_2^2 \leq  \lambda_{\max}(\bSigma)\lambda_{\max}(\bbM)\|\phi(\bX^\top\bbeta)\|_{L_2(\bbP)}^2<\infty.
\end{align*}
Thus we have
\begin{align*}\var(\phi(\bX^\top\bbeta)\bX^\top \upsilon_{\bbM})&\leq 
\bbE^{1/2}\left(\phi^4(\bX^\top\bbeta)\right)\bbE^{1/2}\left((\upsilon_{\bbM}^\top\bX)^4\right)<\infty, \\
\bbE(\sigma^2(\bX)(\upsilon_{\bbM}^\top\bX)^2)&\leq \bbE^{1/2}\left(\sigma^4(\bX)\right)\bbE^{1/2}\left((\upsilon_{\bbM}^\top\bX)^4\right)<\infty.
\end{align*}
This completes the proof of the fact that $\var(T_{1n})=O(1/n)$. Next, wring $\bZ_{l,M}=\bbM^{1/2}\bSigma^{1/2}\bZ_l$, we note that $\|\bX_1^\top \bbM\bX_2\|^2_{\psi_1}\leq 2 \|\|\bZ_{1,\bbM}+\bZ_{2,\bbM}\|_2^2\|_{\psi_1}^2+2\|\|\bZ_{1,\bbM}-\bZ_{2,\bbM}\|_{2}^2\|_{\psi_1}^2\leq 16\sum_{j=1}^p\|Z_{1,M}^2(j)\|_{\psi_1}^2\leq 16 M^2 p$ for some $C$ depending on $M$. Therefore for any $k\in \mathbb{N}$ we have $\bbE\left\{\bX_1^{\top}\bbM\bX_2\right\}^k\leq C'(M)p^{k/2}$ for some $C'$ depending on $M$. Consequently, we have by Jensen's inequality 
\begin{align*}
    \var(T_{2n})&\leq \frac{C}{n^2}\bbE\left(Y_i^2Y_j^2(\bX_i^{\top}\bbM\bX_j)^2\right)\\
    &=\frac{C}{n^2}\bbE\left[\left\{\sigma^{2}(\bX_1)\sigma^{2}(\bX_2)+2\sigma^{2}(\bX_1)\phi^2(\bX_2^{\top}\bbeta)+\phi^2(\bX_1^{\top}\bbeta)\phi^2(\bX_2^{\top}\bbeta)\right\}\left\{\bX_1^{\top}\bbM\bX_2\right\}^2\right]\\
    &\leq \frac{C}{n^2}\bbE^{1/2}\left[\left\{\sigma^{2}(\bX_1)\sigma^{2}(\bX_2)+2\sigma^{2}(\bX_1)\phi^2(\bX_2^{\top}\bbeta)+\phi^2(\bX_1^{\top}\bbeta)\phi^2(\bX_2^{\top}\bbeta)\right\}^2\right]\bbE^{1/2}\left[\left(\bX_1^{\top}\bbM\bX_2\right)^4\right]\\
    &\leq \frac{C''(M)p}{n^2},
\end{align*} 
for some $C''$ depending on $M$. This completes the proof of the lemma.
\end{proof}

With Lemma~\ref{lem:app universality}, we are ready to prove Lemma~\ref{lem:mu, universality}, and as a direct corollary of Lemma~\ref{lem:mu, universality}, Lemma~\ref{lem:glm mean zero, universality}.

\begin{proof}[Proof of Lemma~\ref{lem:mu, universality}]
It is easy to see that to show Lemma~\ref{lem:mu, universality}, it is sufficient to apply Lemma~\ref{lem:app universality} to each individual moment equation in \eqref{GLM chain} separately. In particular, the proof is complete once one observes that the LHS of each identity in \eqref{GLM chain} either has the form of $\bbE [\bbeta^{\top} \bX \phi (\balpha^{\top} \bX)]$ or $\bbE [\phi (\balpha^{\top} \bX)]$ with appropriately chosen $\balpha$, $\bbeta$, and $\phi$.
\end{proof}

\section{Unknown Population Covariance Matrix}
\label{app:unknown}

\subsection{Linear Models}
\label{app:unknown linear}

For notational convenience, we take $\bm{\mu} = \bm{0}$ so the population covariance matrix $\bSigma$ is identical to the population Gram matrix $\bbE [\bX \bX^{\top}] \equiv \bSigma + \bm{\mu} \bm{\mu}^{\top}$. But we consider the situation where the statistician does not know $\bm{\mu} = \bm{0}$. Under the linear model with homoscedastic variance, for estimating the linear and quadratic forms $\lambda_{\beta} = \bbeta^{\top} \bm{\mu}$ and $\gamma_{\beta}^{2} = \bbeta^{\top} \bSigma \bbeta$, we consider the following estimators without knowing $\bSigma$ but under the assumption that the empirical covariance matrix $\hat{\bSigma}$ is invertible:
\begin{align*}
\hat{\lambda}_{\beta} & \coloneqq \bbU_{n, 1} [Y] \equiv \frac{1}{n} \mathbf{1}^{\top} \bY, \\
\hat{\gamma}_{\beta}^{2} & \coloneqq \frac{1}{n - p} \left\{ \bY^{\top} \left( \bbH - \frac{p}{n} \bI \right) \bY \right\},
\end{align*}
where we denote $\bbX \coloneqq (\bX_{1}, \cdots, \bX_{n})^{\top}$ as the $n \times p$ design matrix, $\bbH \coloneqq \bbX (\bbX^{\top} \bbX)^{-1} \bbX$ as the ``hat'' projection matrix, $\bY \coloneqq (Y_{1}, \cdots, Y_{n})^{\top} \in \bbR^{n}$ as the vector collecting the responses over all the subjects, and $\mathbf{1}$ as the $n$-dimensional all-1 vector. The unbiasedness, $\sqrt{n}$-consistency and CAN of $\hat{\lambda}_{\beta}$ is trivial. The unbiasedness, $\sqrt{n}$-consistency and CAN of $\hat{\gamma}_{\beta}^{2}$ follow directly from Theorem 1 of \citet{guo2022moderate}. $\hat{\gamma}_{\beta}^{2}$ can also be viewed as approximating a second-order $U$-statistic with the removed diagonal part approximated by $\frac{1}{n} \bY^{\top} \cdot \frac{p}{n - p} \bI \cdot \bY$.

For a single coordinate $\beta_{j}$, we consider the following estimator:
\begin{align*}
\hat{\beta}_{j} = e_{j}^{\top} (\bbX^{\top} \bbX)^{-1} \bbX^{\top} \bY.
\end{align*}
First, $\hat{\beta}_{j}$ is obviously conditionally, and hence unconditionally, unbiased. We only need to control its variance:
\begin{align*}
\var (\hat{\beta}_{j}) & = \bbE [\var (\hat{\beta}_{j} | \bbX)] + \var (\bbE [\var (\hat{\beta}_{j}) | \bbX]) \\
& = \bbE [\var (e_{j}^{\top} (\bbX^{\top} \bbX)^{-1} \bbX^{\top} \bY | \bbX)].
\end{align*}
Then
\begin{align*}
\var (e_{j}^{\top} (\bbX^{\top} \bbX)^{-1} \bbX^{\top} \bY | \bbX) = \sigma^{2} e_{j}^{\top} (\bbX^{\top} \bbX)^{-1} e_{j}.
\end{align*}
So
\begin{align*}
\var (\hat{\beta}_{j}) = \sigma^{2} e_{j}^{\top} \bbE [(\bbX^{\top} \bbX)^{-1}] e_{j}.
\end{align*}

A common theme of the above estimators is the reliance on the correctness of the homoscedastic linear model. It remains to be seen if a similar strategy can be applied to estimating the moments involved in system of equations such as \eqref{GLM chain} for GLMs or more general nonlinear models, when $\bSigma$ is unknown and $p < n$.

\subsection{Generalized Linear Models}
\label{app:unknown GLMs}

\subsubsection{The case of \texorpdfstring{$p < n$}{well}}
\label{app:well-posed}

In this section, we first prove Proposition~\ref{prop:unknown}. 

\begin{proof}[Proof of Proposition~\ref{prop:unknown}]
In this proof, we take $n / 2$ as $n$ to avoid notation clutter. As a result, the notation in this proof will follow that in Section~\ref{sec:unknown} only. We only need to show that $\hat{\gamma}_{\beta}^{2}$, as defined in Section~\ref{sec:unknown}, is a $\sqrt{n}$-consistent estimator of $\gamma_{\beta}^{2}$. We also let $\bOmega \coloneqq \bSigma^{-1}$ and $\tilde{\bOmega} \coloneqq \tilde{\bSigma}^{-1}$ and each element of $\bOmega$ and $\tilde{\bOmega}$ as, respectively, $\omega_{i, j}$ and $\tilde{\omega}_{i, j}$ for $i, j \in [p]$.

We first make the following important observation, which is another important implication of the Gaussian design \citep{couillet2022random, derezinski2021sparse}.
\begin{lemma}
\label{lem:Wishart}
Under Assumption~\ref{as:normal mean zero}, if $n > p + 3$,
\begin{align*}
\tilde{\bSigma} \sim \Wishart_{p} (n^{-1} \bSigma, n), \tilde{\bSigma}^{-1} \equiv \tilde{\bOmega} \sim \Wishart_{p}^{-1} (n \bSigma^{-1} \equiv n \bOmega, n)
\end{align*}
where $\Wishart_{p} (\bV, m)$ and $\Wishart_{p}^{-1} (\bV, m)$, respectively, denote the $p$-dimensional Wishart and Inverse-Wishart distributions with $m$ degrees-of-freedom and scale matrix $\bV$. Thus
\begin{equation}
\label{mean wishart}
\bbE [\tilde{\bSigma}^{-1}] \equiv \bbE [\tilde{\bOmega}] \equiv \frac{n}{n - p - 1} \bSigma^{-1}
\end{equation}
and for $i, j \in [p]$,
\begin{equation}
\label{cov wishart}
\cov \left( \tilde{\omega}_{i, j}, \tilde{\omega}_{k, l} \right) = \frac{2 n^{2} \omega_{i, j} \omega_{k, l} + (n - p - 1) n^{2} (\omega_{i, k} \omega_{j, l} + \omega_{i, l} \omega_{k, j})}{(n - p) (n - p - 1)^{2} (n - p - 3)}
\end{equation}
\end{lemma}

It is easy to see that $\hat{m}_{\bX Y, 2}$ is an unbiased estimator of $m_{\bX Y, 2}$. We are now left to control its variance. Under Assumption~\ref{as:proportion}, the prefactor of \eqref{alternative} has limit $1 - \delta$, which can be treated as $O (1)$ without loss of generality. We first note that
\begin{align*}
\var (\hat{m}_{\bX Y, 2}) = \var \left( \bbE [\hat{m}_{\bX Y, 2} | \tilde{\bSigma}] \right) + \bbE \left[ \var (\hat{m}_{\bX Y, 2} | \tilde{\bSigma}) \right] \eqqcolon V_{1} + V_{2}.
\end{align*}

We first control $V_{1}$. Let $\bv \coloneqq \bbE [\bX Y]$. Following \eqref{cov wishart}, we have
\begin{align*}
V_{1} \lesssim \var \left( \bv^{\top} \tilde{\bSigma}^{-1} \bv \right) = \sum_{i = 1}^{p} \sum_{j = 1}^{p} \sum_{k = 1}^{p} \sum_{l = 1}^{p} v_{i} v_{j} v_{k} v_{l} \cov \left( \tilde{\omega}_{i, j}, \tilde{\omega}_{k, l} \right) \lesssim \frac{1}{n} (\bv^{\top} \bSigma^{-1} \bv)^{2}.
\end{align*}

Finally, we control $V_{2}$. Conditioning on $\tilde{\bSigma}$, we have, by Hoeffding decomposition and Corollary~\ref{cor:stein},
\begin{align*}
\var (\hat{m}_{\bX Y, 2} | \tilde{\bSigma}) \lesssim \frac{1}{n} \left( \bv^{\top} \bSigma \tilde{\bSigma}^{-1} \bSigma \bv \right)^{2} + \frac{1}{n^{2}} \trace \left( \bSigma \tilde{\bSigma}^{-1} \bSigma \tilde{\bSigma}^{-1} \right).
\end{align*}
Finally, the proof is complete once we marginalize over $\tilde{\bSigma}^{-1}$ using \eqref{cov wishart}:
\begin{align*}
\bbE [\var (\hat{m}_{\bX Y, 2} | \tilde{\bSigma})] \lesssim \frac{1}{n} + \frac{p}{n^{2}} \lesssim \frac{1}{n}.
\end{align*}
\end{proof}

\iffalse
We next prove Proposition~\ref{prop:unknown universality}, where the Gaussian assumption is dropped but Assumption~\ref{as:beta mean zero} is instead imposed.

\begin{proof}[Proof of Proposition~\ref{prop:unknown universality}]
Similarly, we take $n / 2$ as $n$ to avoid notation clutter.

We first rephrase Proposition 4.1 of \citet{derezinski2021sparse}, which will be the key towards our proof.
\begin{lemma}[Proposition 4.1 of \citet{derezinski2021sparse}]
Under Assumption~\ref{as:beta mean zero}~(1), there exists an absolute constant $C > 0$ such that for every $\eps, \delta > 0$, if $n \geq C (p + \sqrt{p} / \eps + \log (1 / \delta))$, then there exists an event $\calA$ with probability no smaller than $1 - \delta$ for which the following hold:
\begin{align*}
\Vert \bbE [\tilde{\bSigma}^{-1} | \calA] - \bSigma^{-1} \Vert_{\op} \leq \eps,
\end{align*}
and $\Vert \tilde{\bSigma}^{-1} \Vert_{\op} \lesssim \Vert \bSigma^{-1} \Vert_{\op}$ conditioning on $\calA$.
\end{lemma}
\end{proof}
\fi

\subsubsection{The case of \texorpdfstring{$p \geq n$}{ill}}
\label{app:ill-posed}

\begin{proof}[Proof of Proposition~\ref{prop:ill posed}]\leavevmode

The constructed estimator as in Section~\ref{sec:unknown ill-posed} crucially relies on the following result on the error of using Chebyshev polynomials to reciprocal $1 / x$.

\begin{lemma}[Rephrasing of Lemma 7.8 or Lemma 7.16 of \citet{orecchia2012approximating}]
\label{lem:chebyshev}
For any $\epsilon > 0$ and $b > a > 0$, let $\bM$ be a $p \times p$ p.s.d. matrix with eigenvalues bounded between $a$ and $b$. Then, there exists a polynomial $q_{J} (\cdot; a, b, \eps) \equiv q_{J} (\cdot)$ of maximum degree $J$ such that:
\begin{equation*}
\| \bM^{-1} - q_{J} (\bM) \|_{\op} \lesssim e^{- J}.
\end{equation*}
As a consequence, the following also holds: given any two vectors $\bv_{1}, \bv_{2} \in \bbR^{p}$,
\begin{equation*}
\left| \bv_{1}^{\top} \bM^{-1} \bv_{2} - \bv_{1}^{\top} q_{J} (\bM) \bv_{2} \right| \lesssim \left\Vert \bv_{1} \right\Vert_{2} \left\Vert \bv_{2} \right\Vert_{2} e^{- J}.
\end{equation*}
In particular, $q_{J} (\cdot)$ can be constructed as follows:
\begin{align*}
q_{J} (x) \coloneqq \frac{1}{x} \left( 1 - \frac{T_{J + 1} (\frac{b + a - 2 x}{b - a})}{T_{J + 1} (\frac{b + a}{b - a})} \right),
\end{align*}
where $T_{J + 1} (\cdot)$ is the Chebyshev polynomial of degree $J + 1$.
\end{lemma}

Armed with the above lemma, we can tell that $\tilde{m}_{\bX Y, 2, J (n)}$ with $J (n) \asymp \log \left( \frac{1}{\eps} \right)$ and coefficients $\{c_{l}, l = 0, \cdots, J (n)\}$ as specified in $q_{J (n)}$ satisfies:
\begin{align*}
|m_{\bX Y, 2} - \tilde{m}_{\bX Y, 2, J (n)}| \lesssim \eps.
\end{align*}

Since for each $l = 0, \cdots, J (n)$, $\hat{m}_{\bX Y, 2}^{(l)}$ is an unbiased, $l + 2$-th order $U$-statistic estimator of $m_{\bX Y, 2}^{(l)}$, we have
\begin{align*}
\var \left( \hat{m}_{\bX Y, 2, J (n)} \right) \lesssim \frac{1}{n} \left( \frac{J (n)^{6} \cdot p}{n} \right)^{J (n) + 1} \lesssim \frac{1}{n} \left( J (n)^{6} \cdot \delta \right)^{J (n) + 1},
\end{align*}
following Proposition 7 of \citet{kong2018estimating}. One could also obtain a similar variance bound by Lemma 12 of \citet{liu2017semiparametric}. By choosing, say, $J (n) \asymp (\log n)^{c}$ for some $c$ strictly less than 1, both the bias and variance still diminish to zero.
\end{proof}

\subsubsection{The case of knowing neither \texorpdfstring{$\bm{\mu}$}{mean} nor \texorpdfstring{$\bm{\Sigma}$}{covariance}}
\label{app:unknown complete}

Lastly, in this section, we explain how to handle the case where both $\bmu$ and $\bSigma$ are unknown and need to be estimated from data. Though almost a direct corollary of combining ideas in Sections~\ref{sec:mu} and~\ref{sec:unknown}, we decide to spell it out for the sake of completeness. In the \href{https://github.com/cxy0714/Method-of-Moments-Inference-for-GLMs}{accompanying GitHub repository}, we have also implemented the method for this general scenario.

We first focus on the case of $p < n / 2$, under Assumption~\ref{as:beta}. We first estimate $\bSigma$ by
\begin{align*}
\tilde{\bSigma} \coloneqq \frac{1}{\frac{n}{2} - p - 1} \sum_{j \in I_{2}} (\bX_{j} - \bar{\bX}_{I_{2}}) (X_{j} - \bar{\bX}_{I_{2}})^{\top}, \text{ where } \bar{\bX}_{I_{2}} \coloneqq \frac{1}{n / 2} \sum_{j \in I_{2}} \bX_{j}.
\end{align*}
Then we construct the same estimator $\hat{m}_{\bX Y, 2}$ as in \eqref{alternative}.

For the case of $p \geq n$, we use the approach of approximating reciprocal by Chebyshev polynomials. Since $\gamma_{\beta}^{2 (l)} = \bbE [Y \bX^{\top}] \bSigma^{l} \bbE [\bX Y]$ involves $\bSigma^{l}$, we construct the following unbiased higher-order $U$-statistic estimator:
\begin{align*}
\hat{\gamma}_{\beta}^{2 (l)} = \frac{(n - 2 l - 2)!}{n!} \sum_{1 \leq i_{1} \neq i_{2} \neq \cdots \neq i_{2 l + 1} \neq i_{2 l + 2} \leq n} Y_{i_{1}} \bX_{i_{1}}^{\top} \prod_{s = 1}^{l} \left( \bX_{i_{2 s + 1}} \bX_{i_{2 s + 1}}^{\top} - \bX_{i_{2 s + 1}} \bX_{i_{2 s + 2}}^{\top} \right) \bX_{i_{2}} Y_{i_{2}}.
\end{align*}
It is worth noting that when $\bmu$ is unknown, the involved $U$-statistics have asymmetric kernels, rendering the computation much more complicated as mentioned in Remark \ref{rem:hoif}.

\section{Additional Results on Numerical Experiments}
\label{app:sim}

In this section, we provide complementary simulation results to those in Section~\ref{sec:sims}. Appendix~\ref{app:sim fig} displays additional figures related to the settings in Section~\ref{sec:sims}. Appendix~\ref{app:sim univ} is devoted to comparing the performance of our proposed estimator and the debiased estimator of \citet{bellec2025observable} when universality conditions such as Assumption~\ref{as:beta} or \ref{as:beta mean zero} fail to hold. Appendix~\ref{app:sim mar} complements Section~\ref{sec:sims mar} by examining non-Gaussian designs

\subsection{Supplementary figures for Setting 2 of Section~\ref{sec:sims glms}}
\label{app:sim fig}

Here we collect Figures~\ref{fig:GLM, sparse-converge, gaussian，1.2} --~\ref{fig:GLM, sparse-converge, rademacher，1.2, hist} for numerical experiments of Setting 2 of Section~\ref{sec:sims glms}.

\begin{figure}[H]
\centering
\includegraphics[width = 0.65\textwidth]{./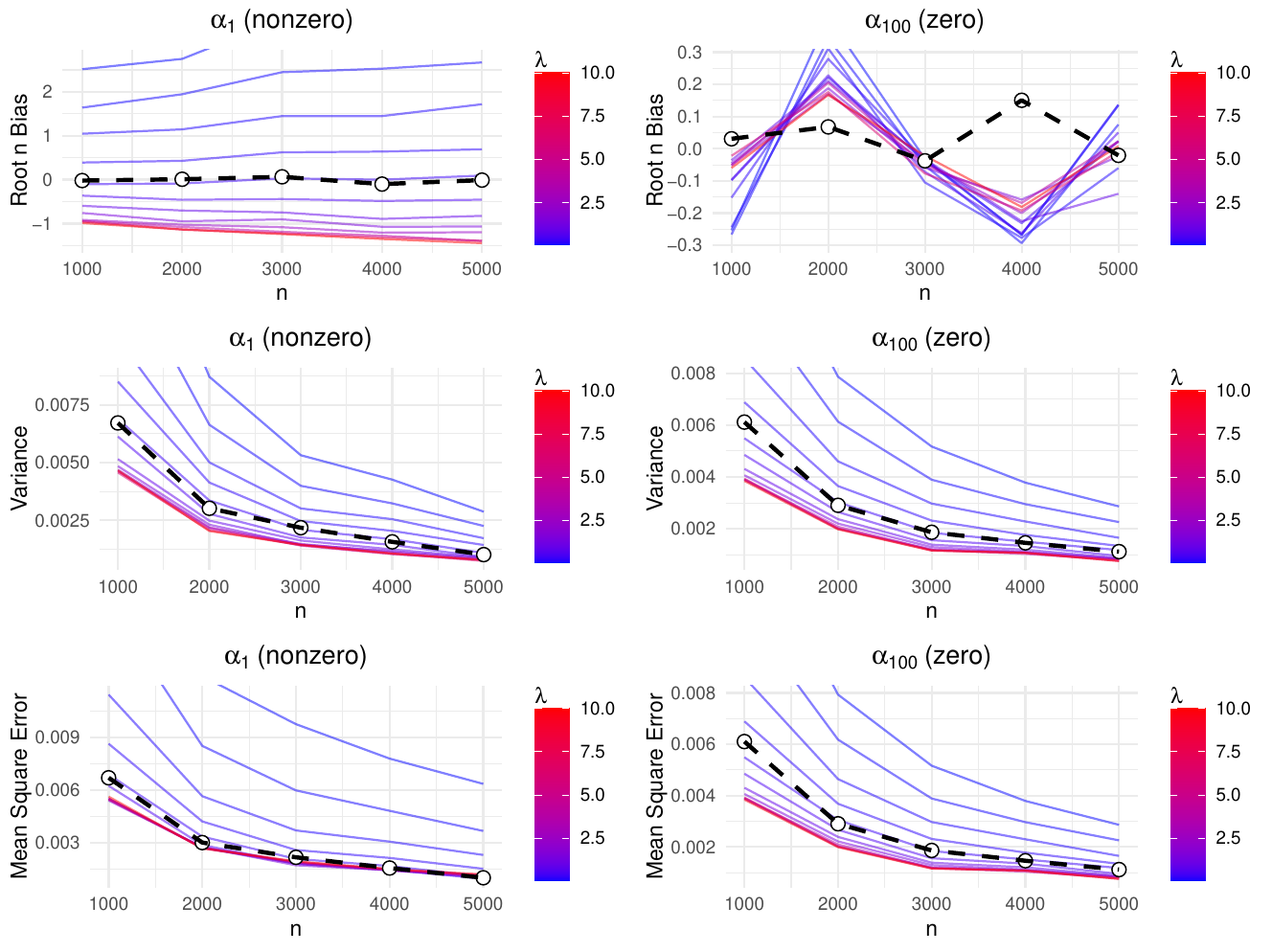}
\caption{Setting 2 of Section~\ref{sec:sims glms} (Gaussian design and sparse regression coefficients).}
\label{fig:GLM, sparse-converge, gaussian，1.2}
\end{figure}

\begin{figure}[H]
\centering
\includegraphics[width = 0.65\textwidth]{./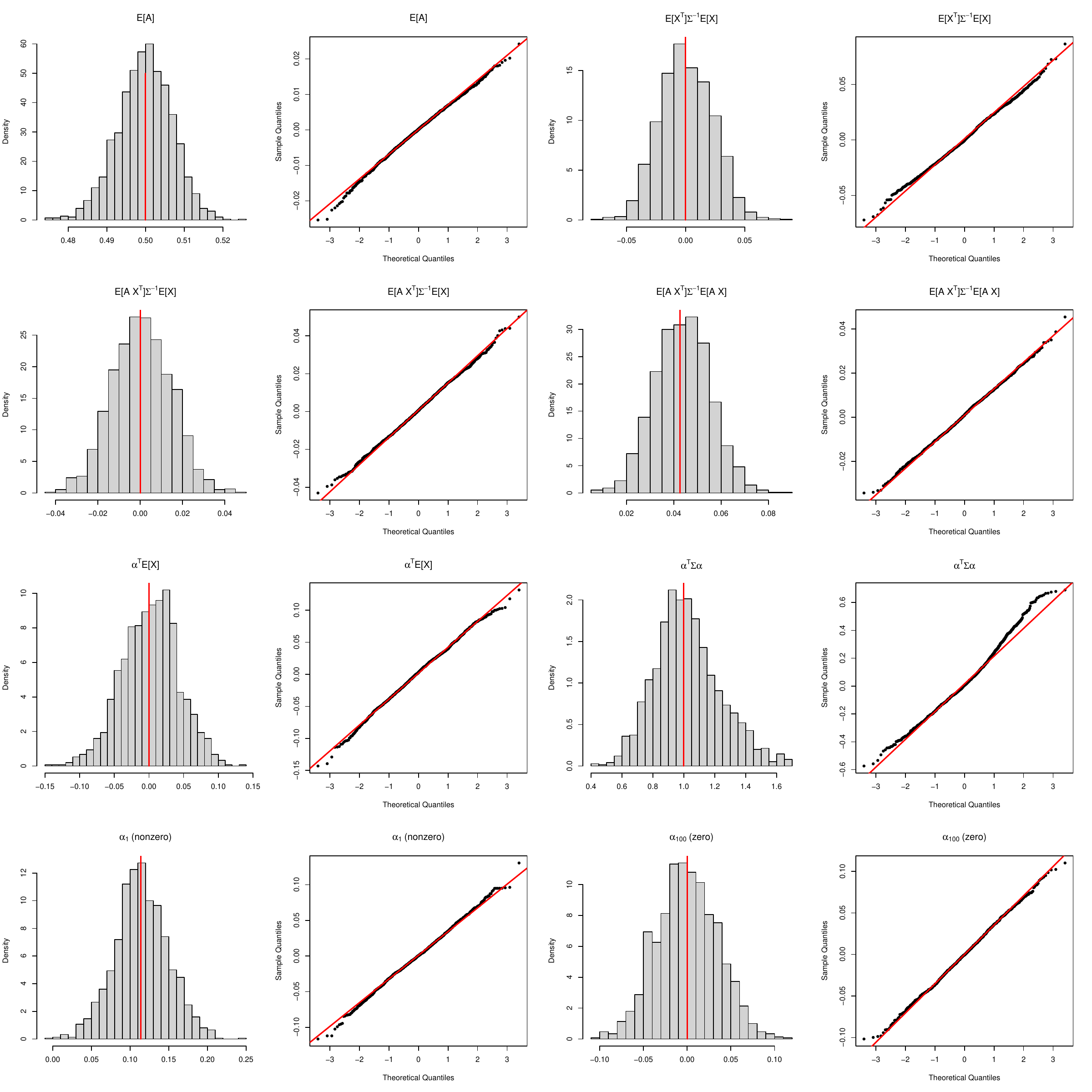}
\caption{Setting 2 of Section~\ref{sec:sims glms} (Gaussian design and sparse regression coefficients): Sampling distributions of the moment estimators and the parameter estimators, over 500 Monte Carlos are displayed for the case of $n = 5000$.}
\label{fig:GLM, sparse-converge, gaussian，1.2, hist}
\end{figure}

\begin{figure}[H]
\centering
\includegraphics[width = 0.65\textwidth]{./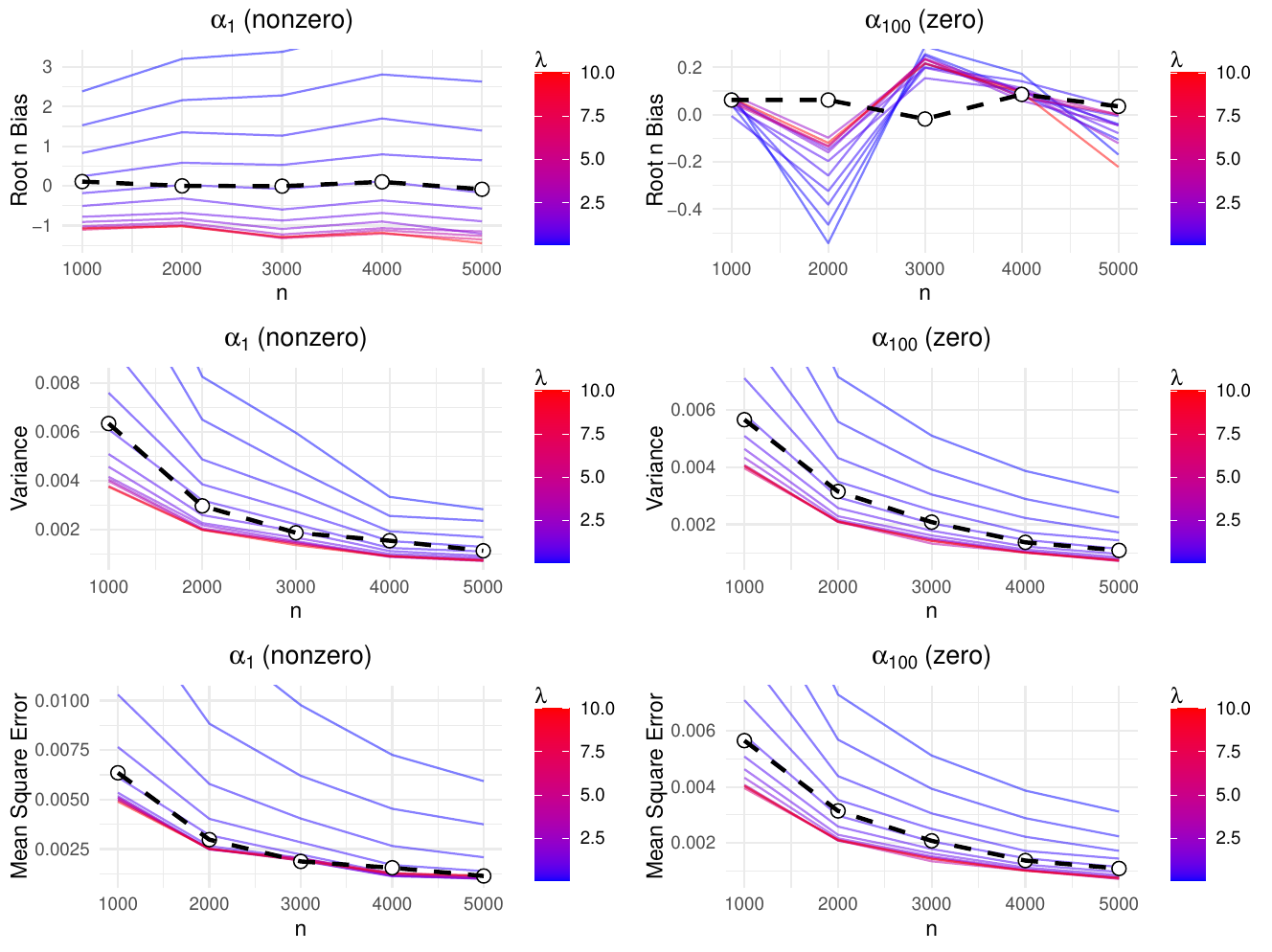}
\caption{Setting 4 of Section~\ref{sec:sims glms} (Rademacher design and sparse regression coefficients).}
\label{fig:GLM, sparse-converge, rademacher，1.2}
\end{figure}

\begin{figure}[H]
\centering
\includegraphics[width = 0.65\textwidth]{./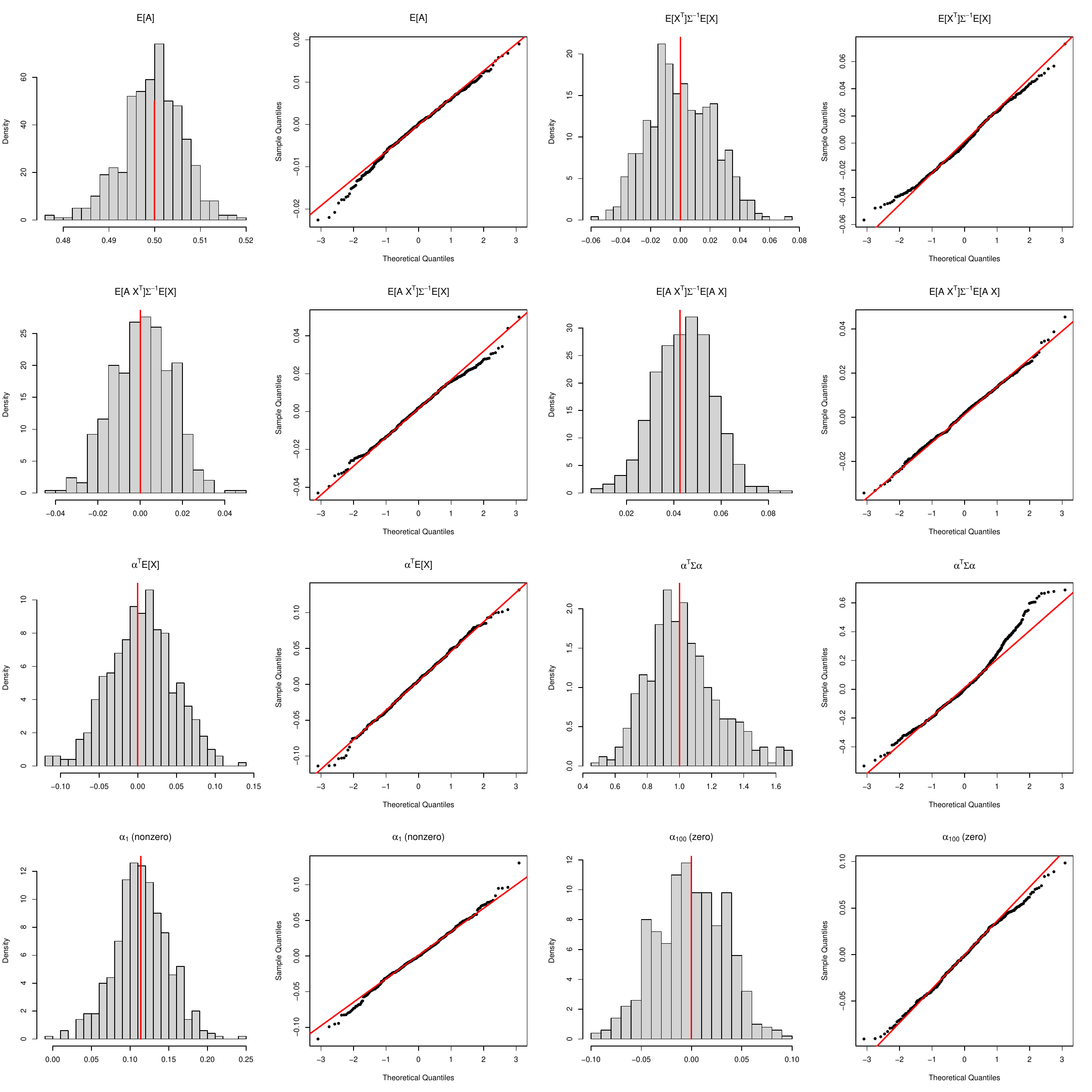}
\caption{Setting 4 of Section~\ref{sec:sims glms} (Rademacher design and sparse regression coefficients): Sampling distributions of the moment estimators and the parameter estimators, over 500 Monte Carlos are displayed for the case of $n = 5000$.}
\label{fig:GLM, sparse-converge, rademacher，1.2, hist}
\end{figure}

\subsection{A simulation setting where universality does not hold}
\label{app:sim univ}

As we mentioned in Remark~\ref{rem:non-gaussian}, the delocalization of regression coefficients or conditions alike shall be necessary to establish universality. Therefore, in this section, we conduct numerical experiments where Assumption~\ref{as:beta}\rm{(2)} or Assumption~\ref{as:beta mean zero}\rm{(2)} is violated and demonstrate that the identification strategy based on Gaussian design indeed fails to deliver the correct estimates of the quadratic form or a single coordinate. To this end, we consider the DGP in which $\bX_{i, j} \overset{\rm i.i.d.}{\sim} \mathrm{Rad} (1 / 2)$, and $\balpha = (1, 0, \cdots, 0)^{\top}$.

The results are summarized in Figures~\ref{fig:GLM, sparse-converge, rademacher, univ} and~\ref{fig:GLM, hist, sparse-converge, rademacher, univ} below. It is clear that our MoM estimators of the quadratic form $\gamma_{\alpha}^{2} \equiv \balpha^{\top} \bSigma \balpha$ and the single coordinate $\alpha_{1}$ are quite different from the true target parameters. For example, speculating Figure~\ref{fig:GLM, hist, sparse-converge, rademacher, univ}, the histogram of $\hat{\alpha}_{1}$ over 500 Monte Carlos does not even cover the true value of $\alpha_{1}$; similarly, the true value of $\gamma_{\alpha}^{2}$ is on the edge of the histogram of $\hat{\gamma}_{\alpha}^{2}$ over 500 Monte Carlos. Interestingly, when $\lambda$ is relatively large, the debiased estimator of \citet{bellec2025observable} has $\sqrt{n} \times \mathrm{bias}$ close to 0, in which case the estimated regression coefficients are mostly very small. It will be interesting to further investigate the effect of $\lambda$ on the bias, variance, and mean squared error of the debiased estimators.

\begin{figure}[htbp]
\centering
\includegraphics[width = 0.65\textwidth]{./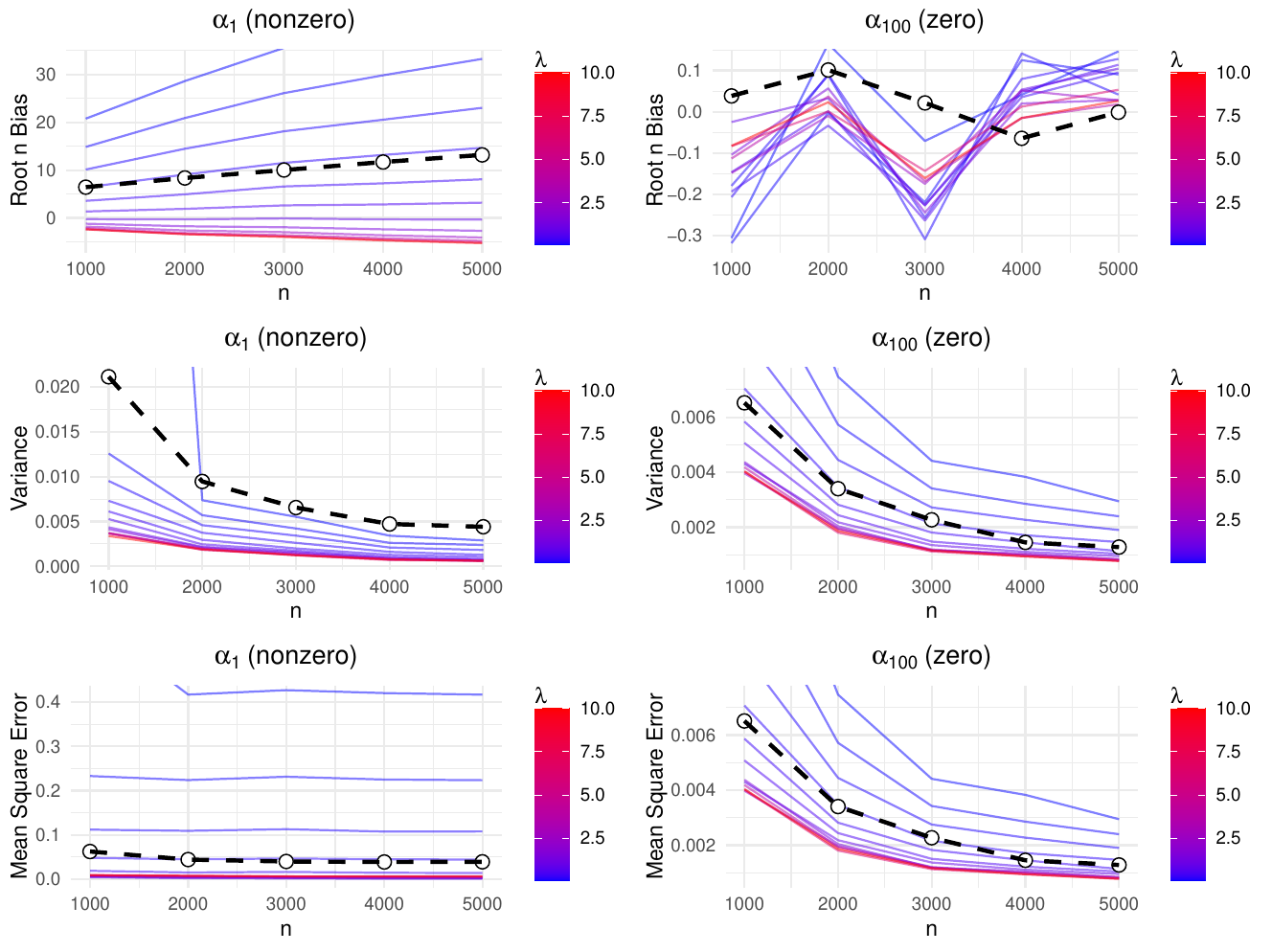}
\caption{Setting in Appendix~\ref{app:sim univ}.}
\label{fig:GLM, sparse-converge, rademacher, univ}
\end{figure}

\begin{figure}[htbp]
\centering
\includegraphics[width = 0.65\textwidth]{./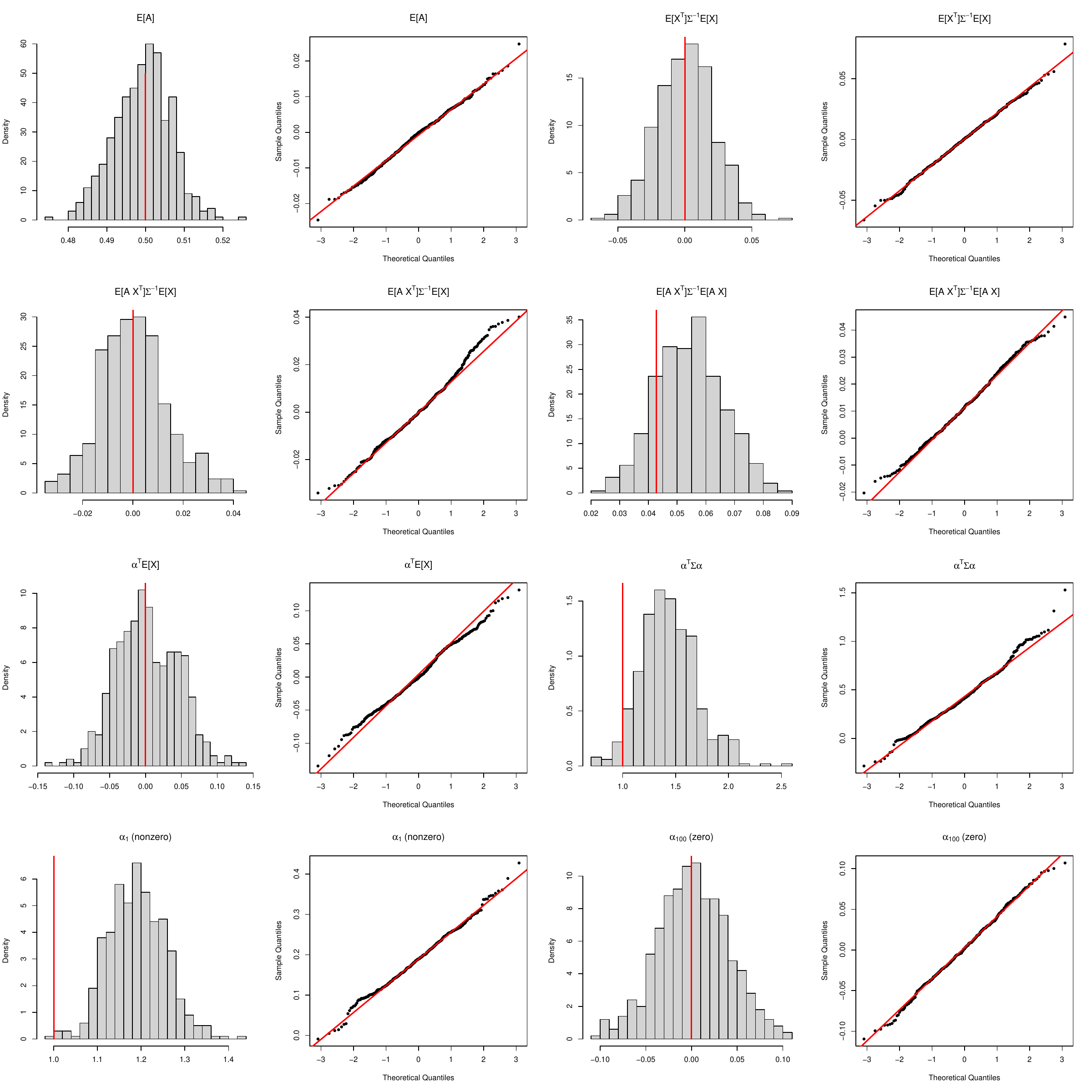}
\caption{Setting for Appendix~\ref{app:sim univ}: Sampling distributions of the moment estimators and the parameter estimators, over 500 Monte Carlos are displayed for the case of $n = 5000$.}
\label{fig:GLM, hist, sparse-converge, rademacher, univ}
\end{figure}

\subsection{Estimating the mean response under MAR: Supplementary simulation results}
\label{app:sim mar}

In this section, we revisit the numerical experiments in Section~\ref{sec:sims mar}, first with the Gaussian design replaced by the Rademacher design. Here we only consider the setting with dense regression coefficients. The results are very similar to the settings with the Gaussian design. In the end of this section, we display results when the regression coefficients are sparse under the Gaussian design.

\begin{figure}[htbp]
\centering
\includegraphics[width = 0.65\textwidth]{./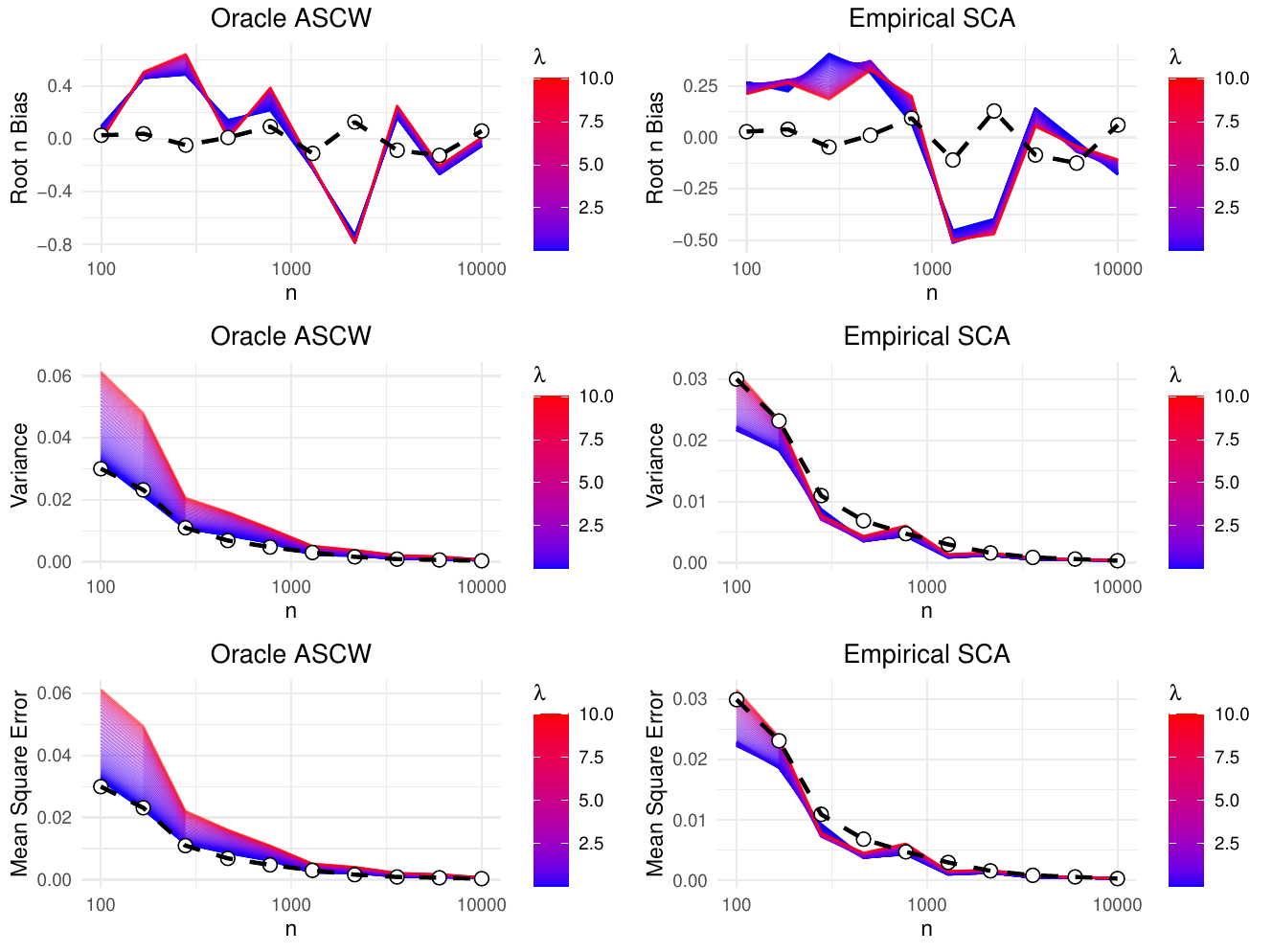}
\caption{Simulation results for Setting 2 (dense regression coefficients) in Section~\ref{sec:sims mar}, but with the Gaussian design replaced by the Rademacher design. The two methods proposed in \citet{celentano2023challenges} are plotted separately in two columns of the figure, with color gradients from blue to red representing the increasing value of the tuning parameter $\lambda$. The MoM-based estimators are plotted with white circles and dashed black lines.}
\label{fig:MAR, dense, Rademacher}
\end{figure}

\begin{figure}[htbp]
\centering
\includegraphics[width = 0.65\textwidth]{./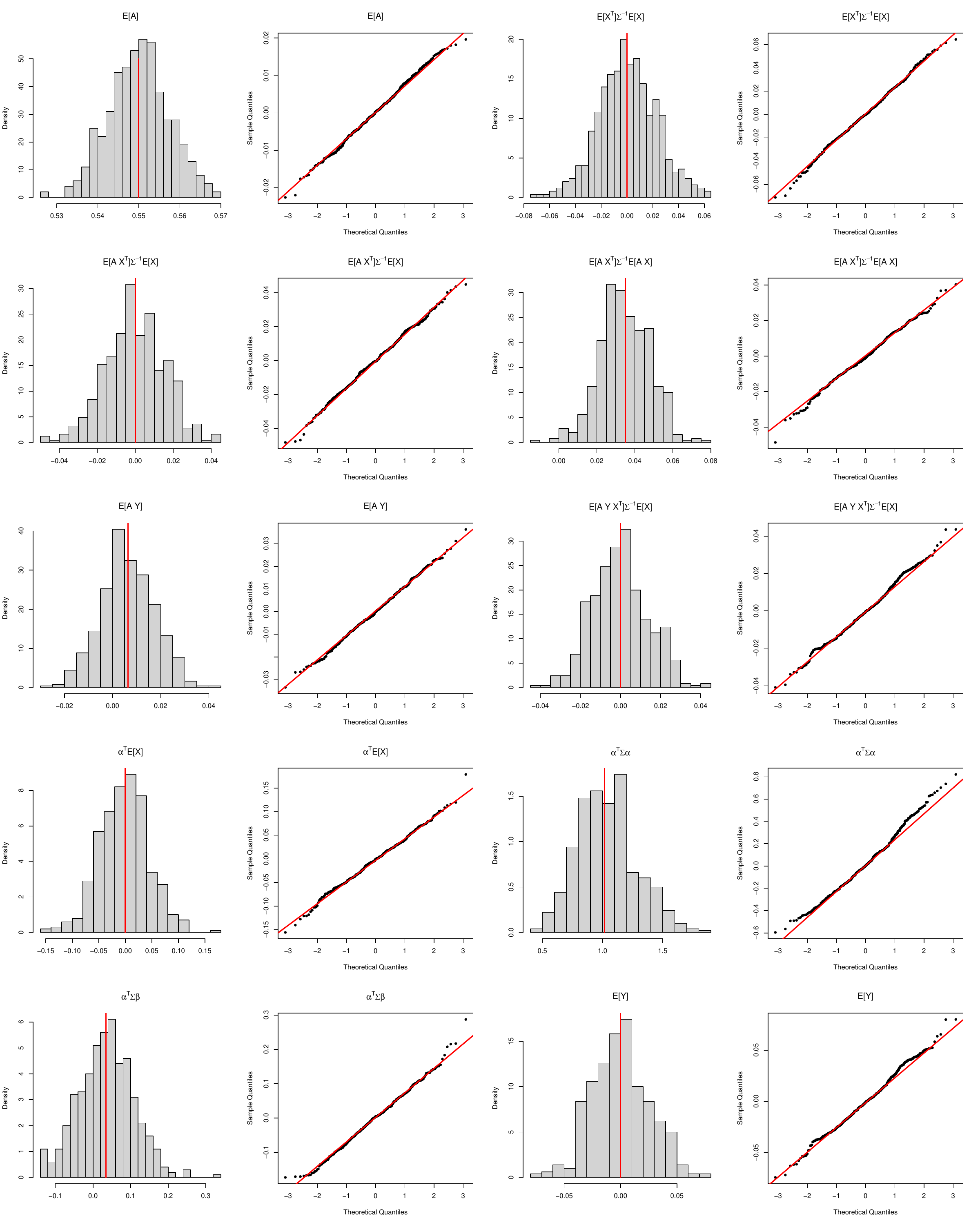}
\caption{Simulation results for Setting 2 (dense regression coefficients) in Section~\ref{sec:sims mar}, but with the Gaussian design replaced by the Rademacher design. The histograms and normal quantile-quantile plots of of the moment estimators and the estimators of the target parameters, including $\lambda_{\alpha}, \gamma_{\alpha}^{2}, 
\gamma_{\alpha, \beta}$ and $\psi$, over 500 Monte Carlos are displayed for the case of $n = 5000$.}
\label{fig:MAR, parameters, dense, rademacher}
\end{figure}

Finally, the simulation results when the regression coefficients are sparse are reported in Figures~\ref{fig:MAR, sparse, gaussian} --~\ref{fig:MAR, parameters, sparse, gaussian}.

\begin{figure}[htbp]
\centering
\includegraphics[width = 0.65\textwidth]{./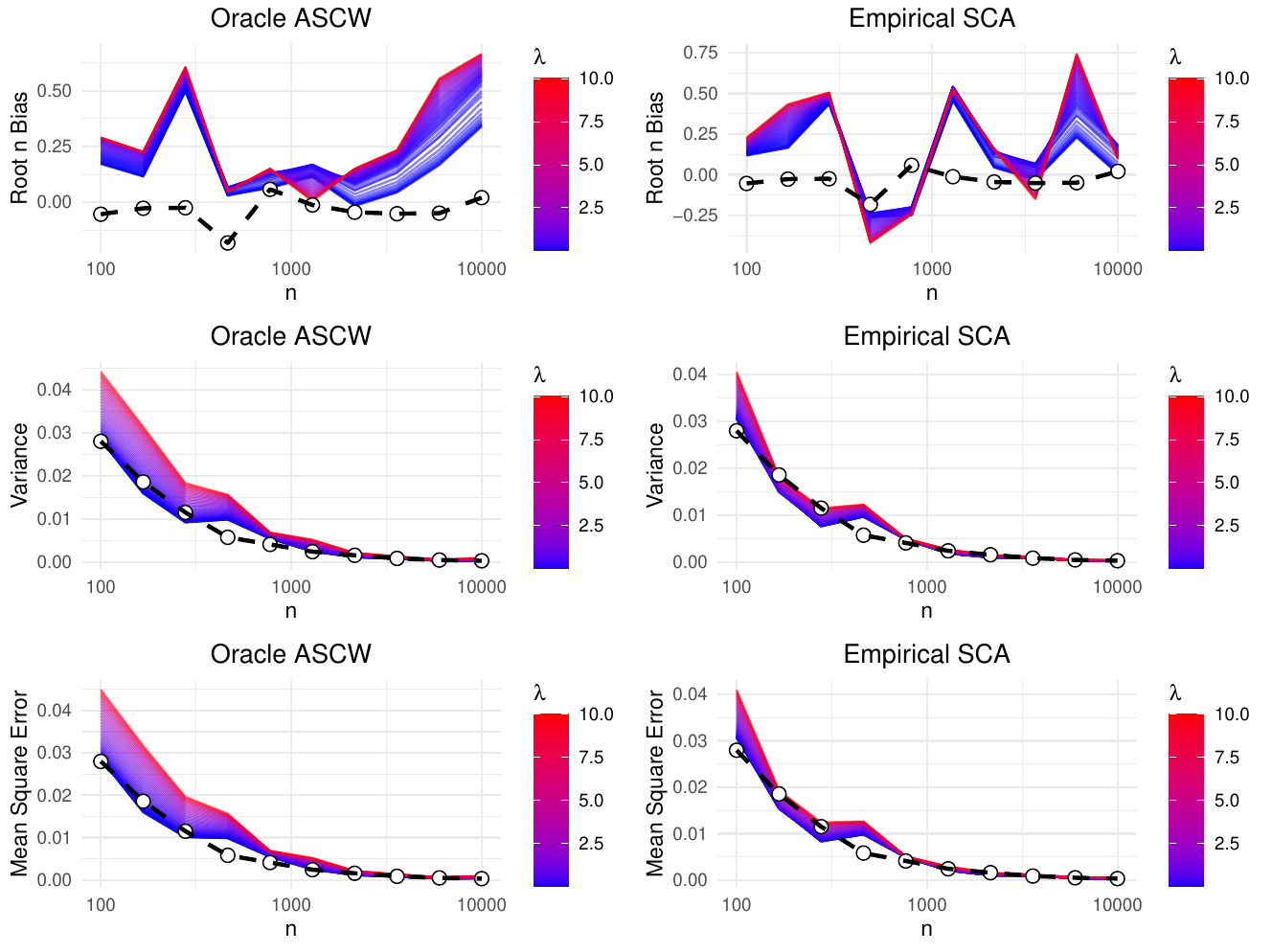}
\caption{Simulation results for Setting 2 (sparse regression coefficients) in Section~\ref{sec:sims mar}. The two methods proposed in \citet{celentano2023challenges} are plotted separately in two columns of the figure, with color gradients from blue to red representing the increasing value of the tuning parameter $\lambda$. The MoM-based estimators are plotted with white circles and dashed black lines.}
\label{fig:MAR, sparse, gaussian}
\end{figure}

\begin{figure}[htbp]
\centering
\includegraphics[width = 0.65\textwidth]{./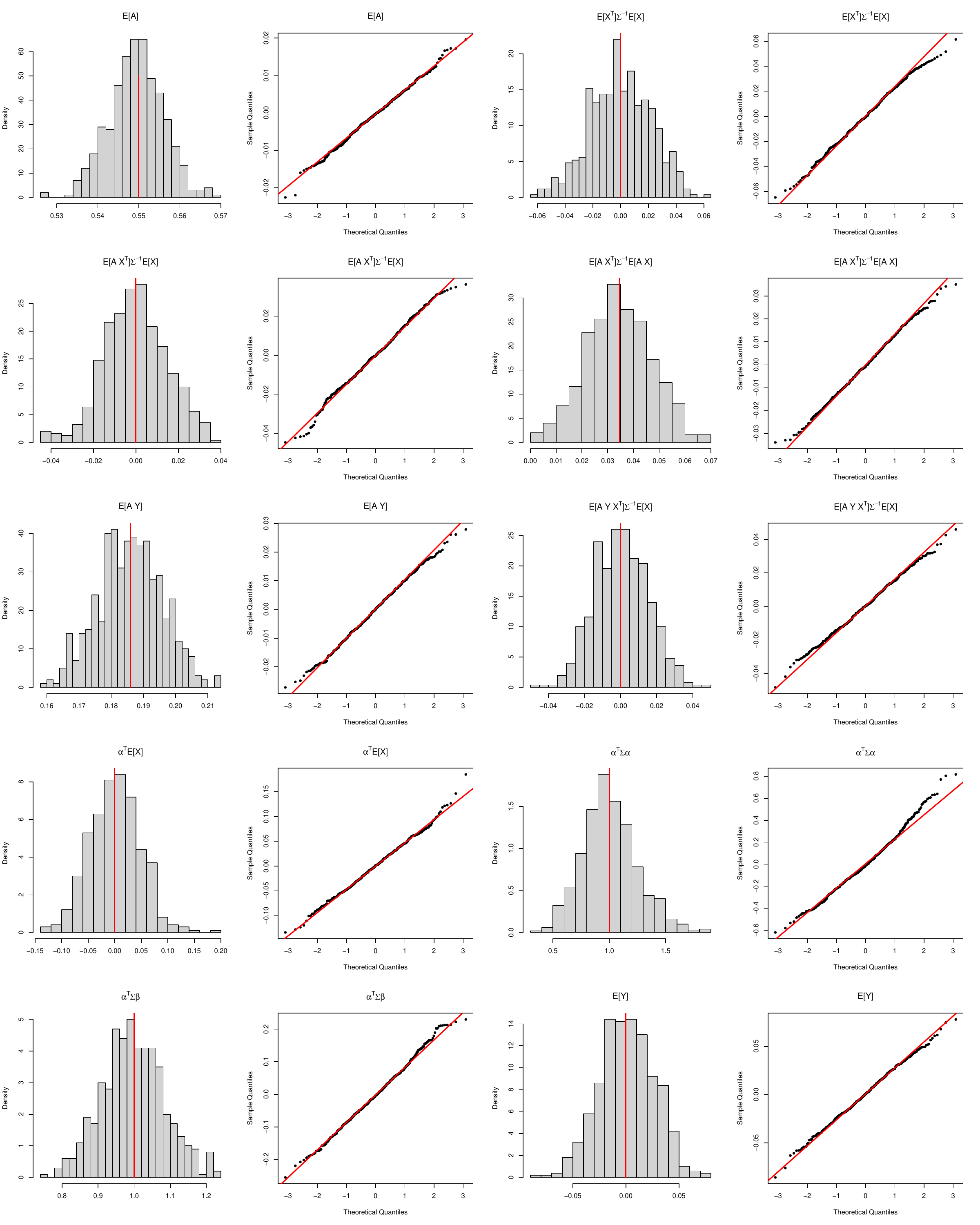}
\caption{Simulation results for Setting 2 (sparse regression coefficients) in Section~\ref{sec:sims mar}: Sampling distributions of the moment estimators and the parameter estimators, over 500 Monte Carlos are displayed for the case of $n = 5000$.}
\label{fig:MAR, parameters, sparse, gaussian}
\end{figure}

\subsection{Simulations knowing neither \texorpdfstring{$\bm{\mu}$}{mean} nor \texorpdfstring{$\bm{\Sigma}$}{covariance}}
\label{app:sims unknown}

In this section, we further examine the finite sample performance of the proposed moment-based estimators for linear and quadratic forms of GLM parameters under varying knowledge of $\bm{\mu}$ and $\bm{\Sigma}$, with $p < n / 2$. We focus exclusively on the case of dense regression coefficients and Gaussian designs. We split the sample into two non-overlapping subsamples $I_{1}$ and $I_{2}$ with equal size. We estimate $\bSigma$ from $I_{1}$ and then estimate the moments with $U$-statistics from $I_{2}$; afterwards we reverse the roles of $I_{1}$ and $I_{2}$ and implement cross-fitting. Finally we use the cross-fitted moment estimator to estimate the linear form $\balpha^{\top} \bmu$ and the quadratic form $\balpha^{\top} \bSigma \balpha$ of the regression coefficients.

In Figure~\ref{fig:GLM_known_nothing, linear,quad, comparision,dense, gaussian} and Figure~\ref{fig:GLM_known_nothing, alpha, comparision,dense, gaussian}, we compare four scenarios for estimators of linear form $\balpha^{\top} \bm{\mu}$ , quadratic form $\Vert \balpha \Vert_{\bSigma}^{2}$, $\alpha_1$ and $\alpha_{100}$: (i) neither $\bm{\mu}$ nor $\bm{\Sigma}$ is known, (ii) only $\bm{\mu}$ is known, (iii) only $\bm{\Sigma}$ is known, and (iv) both $\bm{\mu}$ and $\bm{\Sigma}$ are known. 

For the linear form $\balpha^{\top} \bm{\mu}$ and the quadratic form $\Vert \balpha \Vert_{\bSigma}^{2}$ in Figure~\ref{fig:GLM_known_nothing, linear,quad, comparision,dense, gaussian}, when $\bm{\Sigma}$ is unknown, the root-$n$ bias diminishes steadily with growing $n$, supporting $\sqrt{n}$-consistency of the proposed estimators. The variance and mean squared error also decrease as $n$ rises; however, compared to the scenario where $\bm{\Sigma}$ is known, the variance exhibits a more substantial inflation, highlighting the effect of estimating $\bSigma$.

In contrast, for the estimators of $\alpha_1$ and $\alpha_{100}$ in Figure~\ref{fig:GLM_known_nothing, alpha, comparision,dense, gaussian}, the root-$n$ bias, variance, and mean squared error become increasingly similar across all four scenarios as $n$ grows. Notably, the variances are only slightly elevated when $\bm{\Sigma}$ is unknown.

In Figure~\ref{fig:hist_known_nothing, GLM, dense, gaussian, p/n = 0.40} ($n = 5000, p = 2000$), Figure~\ref{fig:hist_known_nothing, GLM, dense, gaussian, p/n = 0.10} ($n = 10000, p = 1000$) and Figure~\ref{fig:hist_known_nothing, GLM, dense, gaussian, p/n = 0.20} ($n = 10000, p = 2000$), the histograms displayed entail that when both $\bSigma$ and $\bm{\mu}$ are unknown, the sampling distributions of our proposed estimators remain centered around the true values. In particular, it is evident that the sampling distributions our proposed estimators for both the linear form and quadratic form are getting closer to normal distribution as $n / p$ increases,

\begin{figure}[htbp]
\centering
\includegraphics[width = 0.65\textwidth]{./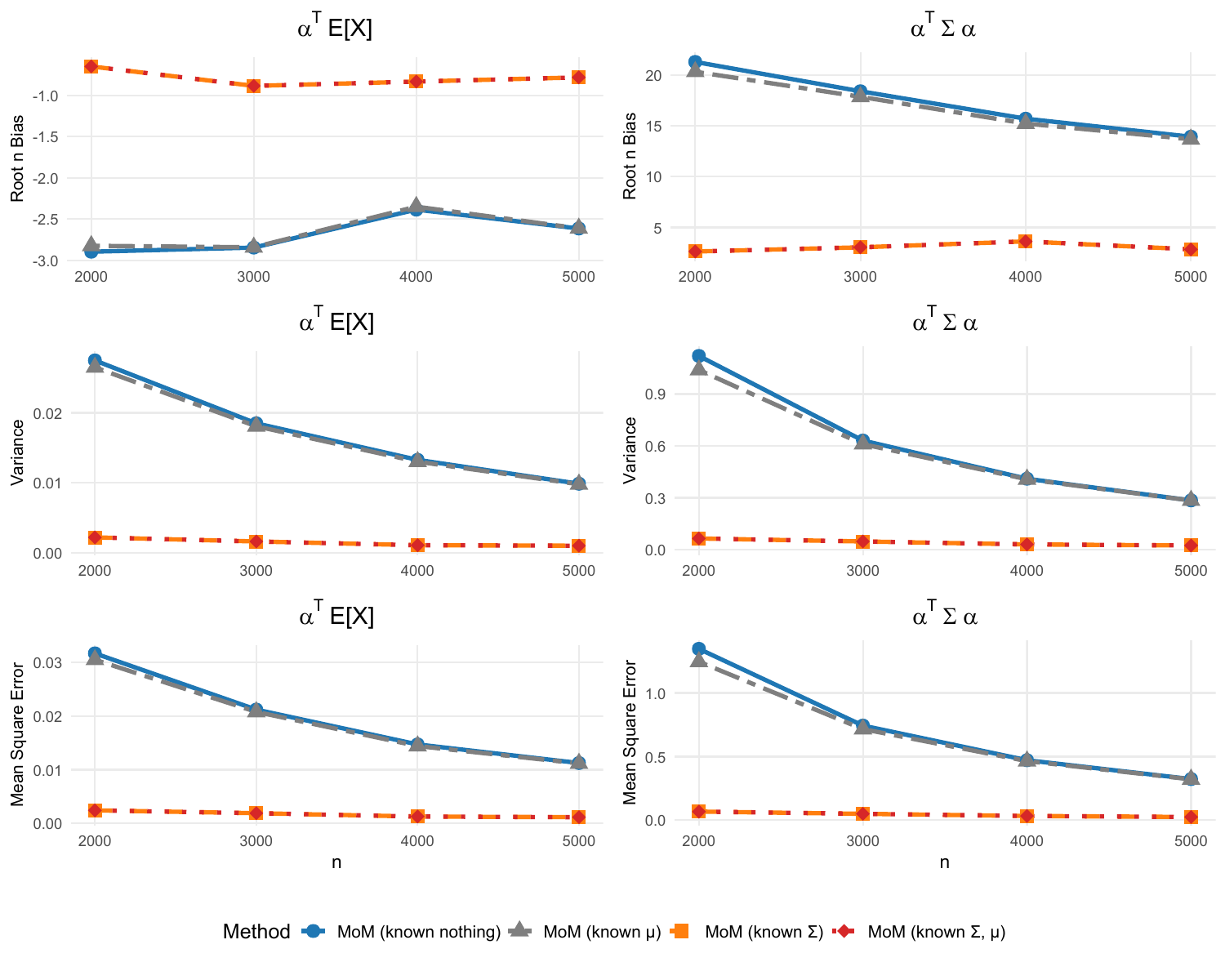}
\caption{Root-$n$ Bias, Variance, and Mean Squared Error of Estimators for linear form $\balpha^{\top} \bm{\mu}$ and quadratic form $\Vert \balpha \Vert_{\bSigma}^{2}$ of GLM parameters under Setting~\ref{sec:sims glms} (Gaussian
design and dense regression coefficients) in Section~\ref{sec:sims glms} with varying knowledge of $\bm{\mu}$ and $\bm{\Sigma}$, Based on 500 Monte Carlo Simulations with $p/n = 0.4$ fixed and $n$ range from $2000$ to $5000$.}
\label{fig:GLM_known_nothing, linear,quad, comparision,dense, gaussian}
\end{figure}

\begin{figure}[htbp]
\centering
\includegraphics[width = 0.65\textwidth]{./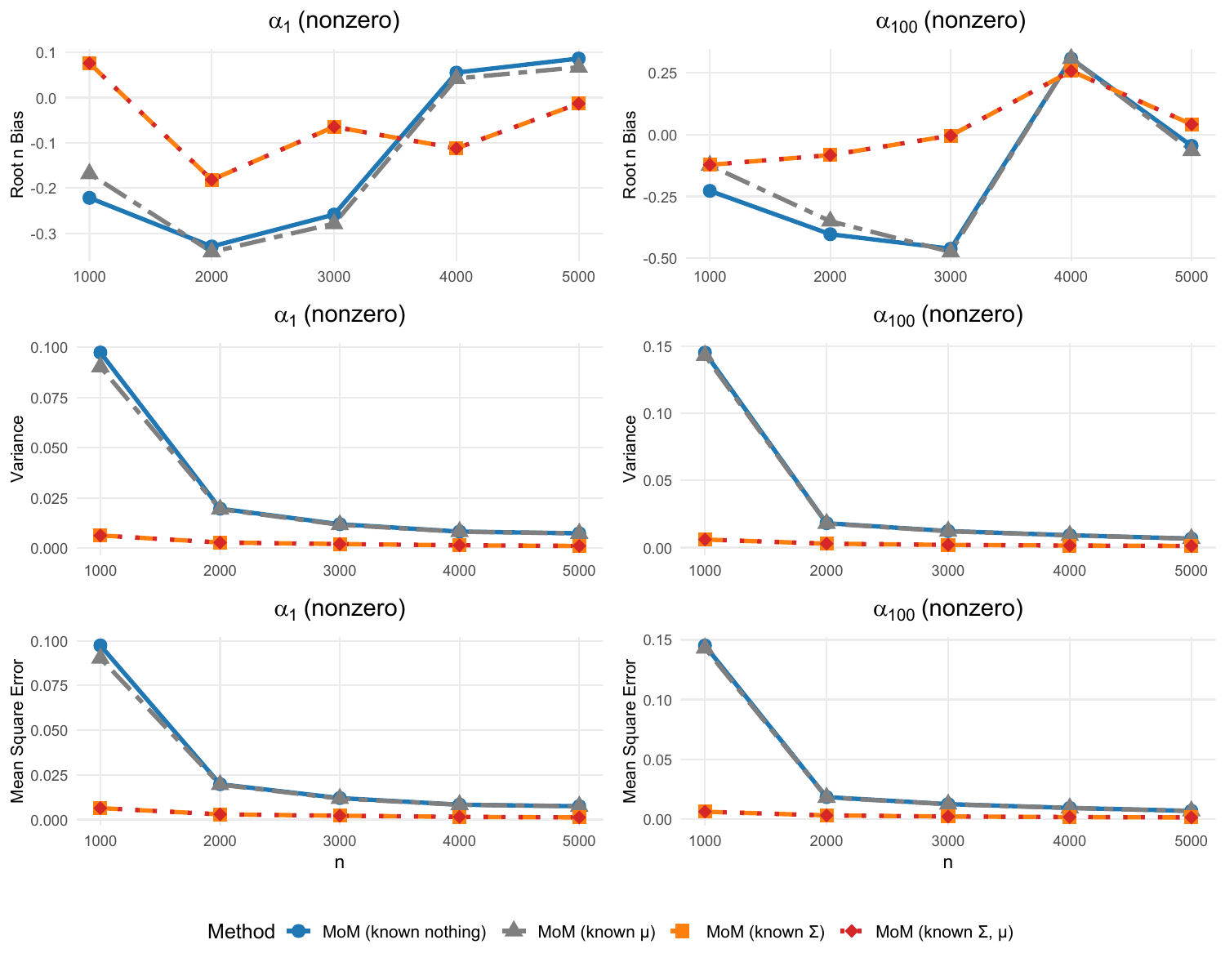}
\caption{Root-$n$ Bias, Variance, and Mean Squared Error of Estimators for $\alpha_1$ and $\alpha_{100}$ under Setting~\ref{sec:sims glms} (Gaussian
design and dense regression coefficients) in Section~\ref{sec:sims glms} with varying knowledge of $\bm{\mu}$ and $\bm{\Sigma}$, Based on 500 Monte Carlo Simulations with $p/n = 0.4$ fixed and $n$ range from $1000$ to $5000$.}
\label{fig:GLM_known_nothing, alpha, comparision,dense, gaussian}
\end{figure}

\begin{figure}[htbp]
\centering
\includegraphics[width = 0.65\textwidth]{./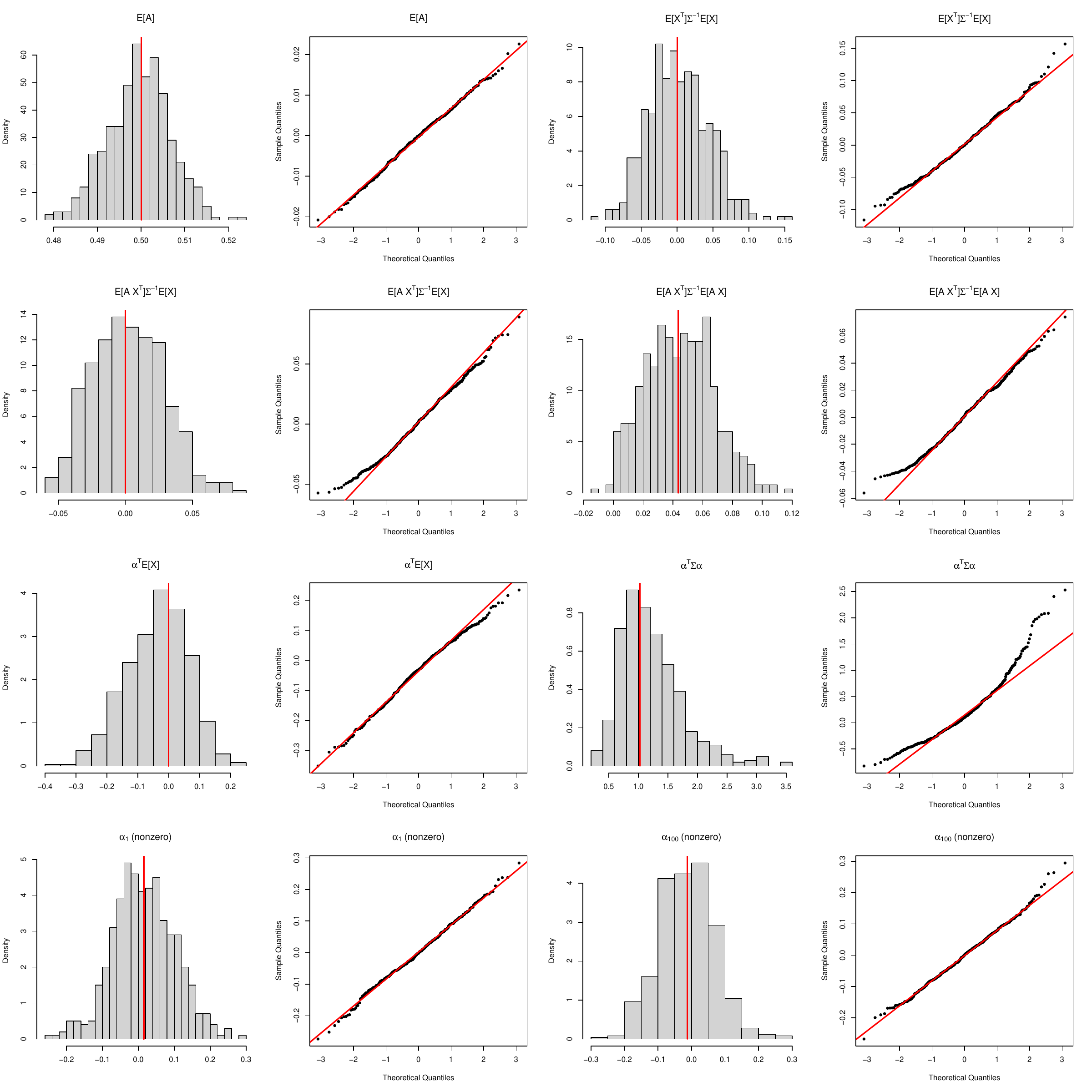}
\caption{Simulation results for Setting~\ref{sec:sims glms} (Gaussian
design and dense regression coefficients) in Section~\ref{sec:sims glms}: Sampling distributions of the moment estimators and the parameter estimators when both $\bSigma$ and $\bm{\mu}$ are unknown, over 500 Monte Carlos are displayed for the case of $n = 5000$ and $p/n = 0.4$.}
\label{fig:hist_known_nothing, GLM, dense, gaussian, p/n = 0.40}
\end{figure}

\begin{figure}[htbp]
\centering
\includegraphics[width = 0.65\textwidth]{./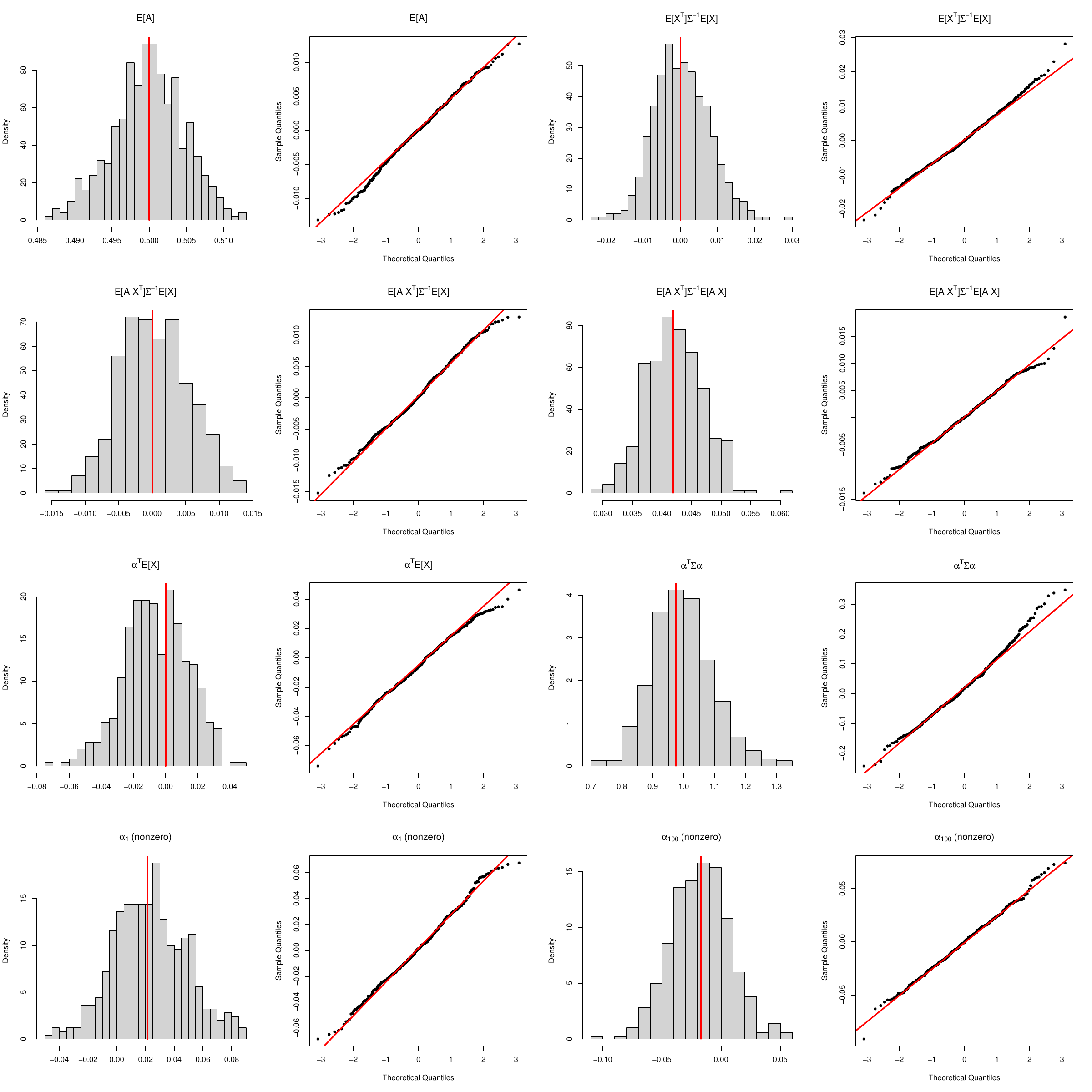}
\caption{Simulation results for Setting~\ref{sec:sims glms} (Gaussian
design and dense regression coefficients) in Section~\ref{sec:sims glms}: Sampling distributions of the moment estimators and the parameter estimators when both $\bSigma$ and $\bm{\mu}$ are unknown, over 500 Monte Carlos are displayed for the case of $n = 10000$ and $p/n = 0.1$.}
\label{fig:hist_known_nothing, GLM, dense, gaussian, p/n = 0.10}
\end{figure}

\begin{figure}[htbp]
\centering
\includegraphics[width = 0.65\textwidth]{./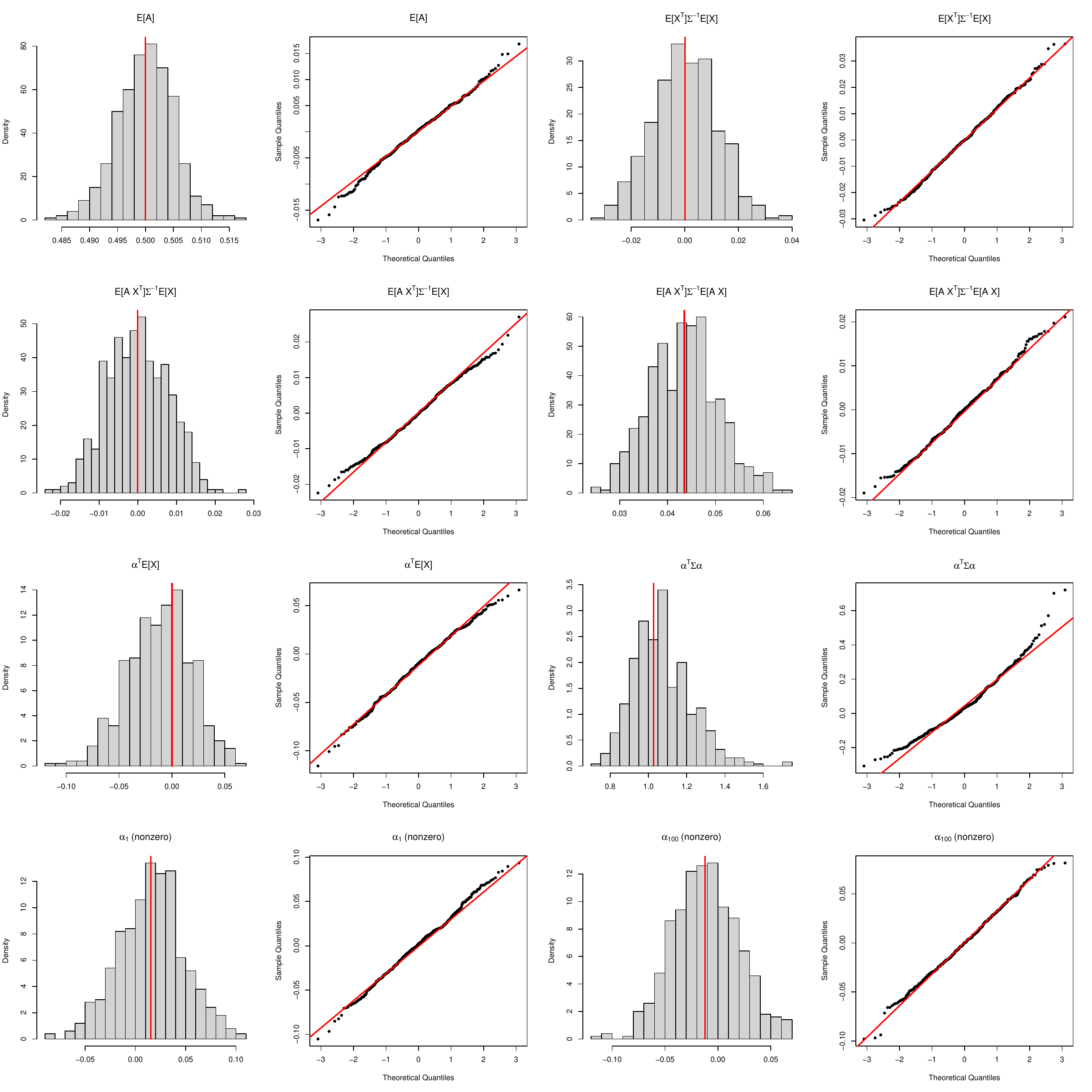}
\caption{Simulation results for Setting~\ref{sec:sims glms} (Gaussian
design and dense regression coefficients) in Section~\ref{sec:sims glms}: Sampling distributions of the moment estimators and the parameter estimators when both $\bSigma$ and $\bm{\mu}$ are unknown, over 500 Monte Carlos are displayed for the case of $n = 10000$ and $p/n = 0.2$.}
\label{fig:hist_known_nothing, GLM, dense, gaussian, p/n = 0.20}
\end{figure}

\subsection{Bootstrap variance estimators}
\label{app:bootstrap}

As mentioned in Section~\ref{sec:bootstrap}, to conduct statistical inference, the variances of the proposed moment-based estimators also need to be estimated. In \citet{liu2024assumption}, we have developed consistent variance estimators based on bootstrap. 

For settings where $p > n$ ($n = 5000, p = 6000$), we consider the four scenarios in Section~\ref{sec:sims glms} under known $\bSigma$ and unknown $\bmu$, with results summarized in Tables~\ref{tab:Bootstrap_variance_estimator_p/n_1.20, gaussian, dense} -- \ref{tab:Bootstrap_variance_estimator_p/n_1.20, rademacher, sparse}. For settings where $p < n$ ($n = 5000, p = 2000$ and $n = 10000$ with $p = 1000$ or $2000$), we focus on Gaussian designs with dense regression coefficients under both unknown $\bSigma$ and unknown $\bmu$, as shown in Tables~\ref{tab:Bootstrap_variance_estimator_p/n_0.40} -- \ref{tab:Bootstrap_variance_estimator_p/n_0.20}.

Across all settings, the Monte Carlo variances -- serving as proxies for the true variances -- closely align with the mean of the bootstrap variance estimates over 500 Monte Carlo draws, as evidenced by their ratios floating around 1. The variances of the bootstrap variance estimators are also generally smaller in magnitude.

\begin{table}[htbp]
\centering
\small
\setlength{\tabcolsep}{4pt}
\renewcommand{\arraystretch}{1.2}
\caption{Bootstrap Variance Estimators vs. Monte Carlo Variances under Setting~\ref{sec:sims glms} (Gaussian design and dense regression coefficients) in Section~\ref{sec:sims glms}, Based on 500 Monte Carlo Simulations with $n = 5000$, $p/n = 1.2$. Here $\bm{\mu}$ is unknown but $\bm{\Sigma}$ is known.}
\begin{tabular}{lccccc}
\toprule
& \textbf{MC Var} & \textbf{Mean Est. Var} & $\frac{\textbf{Mean Est. Var}}{\textbf{MC Var}}$ & \textbf{Std Est. Var} & \textbf{MSE} \\
\midrule
$\mathbb{E} A$ & 4.81e-05 & 5.01e-05 & 1.041 & 2.24e-06 & 3.00e-06 \\
$\mathbb{E} [A \bX^{\top}] \bSigma^{-1} \bmu$ & 4.38e-04 & 4.77e-04 & 1.091 & 7.04e-05 & 8.08e-05 \\
$\mathbb{E} [A \bX^{\top}] \bSigma^{-1} \mathbb{E} [A \bX]$ & 1.85e-04 & 1.87e-04 & 1.009 & 2.83e-05 & 2.84e-05 \\
$\mathbb{E} [A \bX^{\top}] \bSigma^{-1} \mathbb{E} [A \bX]$ & 1.41e-04 & 1.36e-04 & 0.965 & 2.03e-05 & 2.09e-05 \\
$\balpha^{\top} \bmu$ & 2.96e-03 & 3.01e-03 & 1.017 & 1.62e-03 & 1.62e-03 \\
$\balpha^{\top} \bSigma \balpha$ & 6.42e-02 & 6.76e-02 & 1.053 & 3.69e-02 & 3.71e-02 \\
$\alpha_{1}$ & 1.15e-03 & 1.20e-03 & 1.043 & 1.02e-04 & 1.13e-04 \\
$\alpha_{100}$ & 1.13e-03 & 1.20e-03 & 1.060 & 1.08e-04 & 1.28e-04 \\
\bottomrule
\end{tabular}
\label{tab:Bootstrap_variance_estimator_p/n_1.20, gaussian, dense}
\end{table}

\begin{table}[htbp]
\centering
\small
\setlength{\tabcolsep}{4pt}
\renewcommand{\arraystretch}{1.2}
\caption{Bootstrap Variance Estimators vs. Monte Carlo Variances under Setting~\ref{sec:sims glms} (Gaussian design and sparse regression coefficients) in Section~\ref{sec:sims glms}, Based on 500 Monte Carlo Simulations with $n = 5000$, $p/n = 1.2$. Here $\bm{\mu}$ is unknown but $\bm{\Sigma}$ is known.}
\begin{tabular}{lccccc}
\toprule
& \textbf{MC Var} & \textbf{Mean Est. Var} & $\frac{\textbf{Mean Est. Var}}{\textbf{MC Var}}$ & \textbf{Std Est. Var} & \textbf{MSE} \\
\midrule
$\mathbb{E} A$ & 4.59e-05 & 4.99e-05 & 1.087 & 2.19e-06 & 4.57e-06 \\
$\mathbb{E} [A \bX^{\top}] \bSigma^{-1} \bmu$ & 4.48e-04 & 4.78e-04 & 1.065 & 6.95e-05 & 7.54e-05 \\
$\mathbb{E} [A \bX^{\top}] \bSigma^{-1} \mathbb{E} [A \bX]$ & 1.80e-04 & 1.88e-04 & 1.047 & 2.75e-05 & 2.88e-05 \\
$\mathbb{E} [A \bX^{\top}] \bSigma^{-1} \mathbb{E} [A \bX]$ & 1.29e-04 & 1.36e-04 & 1.058 & 1.98e-05 & 2.12e-05 \\
$\balpha^{\top} \bmu$ & 2.67e-03 & 3.05e-03 & 1.140 & 1.62e-03 & 1.67e-03 \\
$\balpha^{\top} \bSigma \balpha$ & 5.93e-02 & 6.67e-02 & 1.126 & 3.46e-02 & 3.54e-02 \\
$\alpha_{1}$ & 1.23e-03 & 1.23e-03 & 0.996 & 1.13e-04 & 1.13e-04 \\
$\alpha_{100}$ & 1.21e-03 & 1.21e-03 & 0.997 & 1.07e-04 & 1.07e-04 \\
\bottomrule
\end{tabular}
\label{tab:Bootstrap_variance_estimator_p/n_1.20, gaussian, sparse}
\end{table}

\begin{table}[htbp]
\centering
\small
\setlength{\tabcolsep}{4pt}
\renewcommand{\arraystretch}{1.2}
\caption{Bootstrap Variance Estimators vs. Monte Carlo Variances under Setting~\ref{sec:sims glms} (Rademacher design and dense regression coefficients) in Section~\ref{sec:sims glms}, Based on 500 Monte Carlo Simulations with $n = 5000$, $p/n = 1.2$. Here $\bm{\mu}$ is unknown but $\bm{\Sigma}$ is known.}
\begin{tabular}{lccccc}
\toprule
& \textbf{MC Var} & \textbf{Mean Est. Var} & $\frac{\textbf{Mean Est. Var}}{\textbf{MC Var}}$ & \textbf{Std Est. Var} & \textbf{MSE} \\
\midrule
$\mathbb{E} A$ & 4.77e-05 & 5.00e-05 & 1.046 & 2.21e-06 & 3.13e-06 \\
$\mathbb{E} [A \bX^{\top}] \bSigma^{-1} \bmu$ & 1.20e-04 & 1.19e-04 & 0.995 & 1.81e-05 & 1.81e-05 \\
$\mathbb{E} [A \bX^{\top}] \bSigma^{-1} \mathbb{E} [A \bX]$ & 4.44e-05 & 4.60e-05 & 1.037 & 6.96e-06 & 7.15e-06 \\
$\mathbb{E} [A \bX^{\top}] \bSigma^{-1} \mathbb{E} [A \bX]$ & 3.19e-05 & 3.24e-05 & 1.017 & 4.73e-06 & 4.76e-06 \\
$\balpha^{\top} \bmu$ & 4.70e-04 & 5.52e-04 & 1.175 & 2.79e-04 & 2.91e-04 \\
$\balpha^{\top} \bSigma \balpha$ & 4.66e-03 & 5.17e-03 & 1.109 & 2.45e-03 & 2.51e-03 \\
$\alpha_{1}$ & 4.67e-04 & 4.47e-04 & 0.958 & 2.34e-05 & 3.05e-05 \\
$\alpha_{100}$ & 4.63e-04 & 4.49e-04 & 0.971 & 2.35e-05 & 2.72e-05 \\
\bottomrule
\end{tabular}
\label{tab:Bootstrap_variance_estimator_p/n_1.20, rademacher, dense}
\end{table}

\begin{table}[htbp]
\centering
\small
\setlength{\tabcolsep}{4pt}
\renewcommand{\arraystretch}{1.2}
\caption{Bootstrap Variance Estimators vs. Monte Carlo Variances under Setting~\ref{sec:sims glms} (Rademacher design and sparse regression coefficients) in Section~\ref{sec:sims glms}, Based on 500 Monte Carlo Simulations with $n = 5000$, $p/n = 1.2$. Here $\bm{\mu}$ is unknown but $\bm{\Sigma}$ is known.}
\begin{tabular}{lccccc}
\toprule
& \textbf{MC Var} & \textbf{Mean Est. Var} & $\frac{\textbf{Mean Est. Var}}{\textbf{MC Var}}$ & \textbf{Std Est. Var} & \textbf{MSE} \\
\midrule
$\mathbb{E} A$ & 4.60e-05 & 4.99e-05 & 1.086 & 2.18e-06 & 4.51e-06 \\
$\mathbb{E} [A \bX^{\top}] \bSigma^{-1} \bmu$ & 1.20e-04 & 1.19e-04 & 0.995 & 1.81e-05 & 1.81e-05 \\
$\mathbb{E} [A \bX^{\top}] \bSigma^{-1} \mathbb{E} [A \bX]$ & 4.66e-05 & 4.56e-05 & 0.979 & 6.69e-06 & 6.76e-06 \\
$\mathbb{E} [A \bX^{\top}] \bSigma^{-1} \mathbb{E} [A \bX]$ & 3.06e-05 & 3.20e-05 & 1.046 & 4.72e-06 & 4.93e-06 \\
$\balpha^{\top} \bmu$ & 4.73e-04 & 5.48e-04 & 1.157 & 2.84e-04 & 2.94e-04 \\
$\balpha^{\top} \bSigma \balpha$ & 4.40e-03 & 5.26e-03 & 1.195 & 2.70e-03 & 2.84e-03 \\
$\alpha_{1}$ & 4.81e-04 & 4.49e-04 & 0.934 & 2.34e-05 & 3.93e-05 \\
$\alpha_{100}$ & 4.32e-04 & 4.48e-04 & 1.037 & 2.34e-05 & 2.83e-05 \\
\bottomrule
\end{tabular}
\label{tab:Bootstrap_variance_estimator_p/n_1.20, rademacher, sparse}
\end{table}

\begin{table}[htbp]
\centering
\small
\setlength{\tabcolsep}{4pt}
\renewcommand{\arraystretch}{1.2}
\caption{Bootstrap Variance Estimators vs. Monte Carlo Variances under Setting~\ref{sec:sims glms} (Gaussian
design and dense regression coefficients) in Section~\ref{sec:sims glms}, Based on 500 Monte Carlo Simulations with $n = 5000$, $p/n = 0.4$. Here neither $\bm{\mu}$ nor $\bm{\Sigma}$ is known.}
\begin{tabular}{lccccc}
\toprule
& \textbf{MC Var} & \textbf{Mean Est. Var} & $\frac{\textbf{Mean Est. Var}}{\textbf{MC Var}}$ & \textbf{Std Est. Var} & \textbf{MSE} \\
\midrule
$\mathbb{E} A$ & 5.08e-05 & 4.99e-05 & 0.982 & 2.41e-06 & 2.57e-06 \\
$\mathbb{E} [A \bX^{\top}] \bSigma^{-1} \bmu$ & 1.72e-03 & 1.62e-03 & 0.940 & 4.26e-04 & 4.38e-04 \\
$\mathbb{E} [A \bX^{\top}] \bSigma^{-1} \mathbb{E} [A \bX]$ & 6.92e-04 & 6.50e-04 & 0.940 & 1.60e-04 & 1.65e-04 \\
$\mathbb{E} [A \bX^{\top}] \bSigma^{-1} \mathbb{E} [A \bX]$ & 5.26e-04 & 4.95e-04 & 0.941 & 1.16e-04 & 1.20e-04 \\
$\balpha^{\top} \bmu$ & 9.88e-03 & 1.13e-02 & 1.142 & 5.67e-03 & 5.84e-03 \\
$\balpha^{\top} \bSigma \balpha$ & 2.85e-01 & 3.41e-01 & 1.197 & 3.29e-01 & 3.34e-01 \\
$\alpha_{1}$ & 7.41e-03 & 6.39e-03 & 0.863 & 1.21e-03 & 1.58e-03 \\
$\alpha_{100}$ & 6.68e-03 & 6.39e-03 & 0.956 & 1.20e-03 & 1.23e-03 \\
\bottomrule
\end{tabular}
\label{tab:Bootstrap_variance_estimator_p/n_0.40}
\end{table}

\begin{table}[htbp]
\centering
\small
\setlength{\tabcolsep}{4pt}
\renewcommand{\arraystretch}{1.2}
\caption{Bootstrap Variance Estimators vs. Monte Carlo Variances under Setting~\ref{sec:sims glms} (Gaussian
design and dense regression coefficients) in Section~\ref{sec:sims glms}, Based on 500 Monte Carlo Simulations with $n = 10000$, $p/n = 0.1$. Here neither $\bm{\mu}$ nor $\bm{\Sigma}$ is known.}
\begin{tabular}{lccccc}
\toprule
& \textbf{MC Var} & \textbf{Mean Est. Var} & $\frac{\textbf{Mean Est. Var}}{\textbf{MC Var}}$ & \textbf{Std Est. Var} & \textbf{MSE} \\
\midrule
$\mathbb{E} A$ & 2.26e-05 & 2.49e-05 & 1.101 & 1.15e-06 & 2.57e-06 \\
$\mathbb{E} [A \bX^{\top}] \bSigma^{-1} \bmu$ & 5.45e-05 & 5.04e-05 & 0.924 & 8.27e-06 & 9.25e-06 \\
$\mathbb{E} [A \bX^{\top}] \bSigma^{-1} \mathbb{E} [A \bX]$ & 2.64e-05 & 2.42e-05 & 0.915 & 3.73e-06 & 4.35e-06 \\
$\mathbb{E} [A \bX^{\top}] \bSigma^{-1} \mathbb{E} [A \bX]$ & 2.18e-05 & 2.24e-05 & 1.029 & 2.90e-06 & 2.97e-06 \\
$\balpha^{\top} \bmu$ & 3.88e-04 & 4.07e-04 & 1.051 & 1.53e-04 & 1.54e-04 \\
$\balpha^{\top} \bSigma \balpha$ & 9.73e-03 & 1.03e-02 & 1.064 & 2.81e-03 & 2.88e-03 \\
$\alpha_{1}$ & 6.48e-04 & 7.35e-04 & 1.134 & 4.33e-05 & 9.72e-05 \\
$\alpha_{100}$ & 6.17e-04 & 7.29e-04 & 1.182 & 4.88e-05 & 1.22e-04 \\
\bottomrule
\end{tabular}
\label{tab:Bootstrap_variance_estimator_p/n_0.10}
\end{table}

\begin{table}[htbp]
\centering
\small
\setlength{\tabcolsep}{4pt}
\renewcommand{\arraystretch}{1.2}
\caption{Bootstrap Variance Estimators vs. Monte Carlo Variances under Setting~\ref{sec:sims glms} (Gaussian
design and dense regression coefficients) in Section~\ref{sec:sims glms}, Based on 500 Monte Carlo Simulations with $n = 10000$, $p/n = 0.2$. Here neither $\bm{\mu}$ nor $\bm{\Sigma}$ is known.}
\begin{tabular}{lccccc}
\toprule
& \textbf{MC Var} & \textbf{Mean Est. Var} & $\frac{\textbf{Mean Est. Var}}{\textbf{MC Var}}$ & \textbf{Std Est. Var} & \textbf{MSE} \\
\midrule
$\mathbb{E} A$ & 2.57e-05 & 2.50e-05 & 0.970 & 1.14e-06 & 1.38e-06 \\
$\mathbb{E} [A \bX^{\top}] \bSigma^{-1} \bmu$ & 1.43e-04 & 1.34e-04 & 0.936 & 2.16e-05 & 2.35e-05 \\
$\mathbb{E} [A \bX^{\top}] \bSigma^{-1} \mathbb{E} [A \bX]$ & 6.18e-05 & 5.74e-05 & 0.929 & 8.82e-06 & 9.85e-06 \\
$\mathbb{E} [A \bX^{\top}] \bSigma^{-1} \mathbb{E} [A \bX]$ & 5.09e-05 & 4.73e-05 & 0.929 & 6.91e-06 & 7.81e-06 \\
$\balpha^{\top} \bmu$ & 9.02e-04 & 9.67e-04 & 1.071 & 4.12e-04 & 4.17e-04 \\
$\balpha^{\top} \bSigma \balpha$ & 2.34e-02 & 2.37e-02 & 1.013 & 8.84e-03 & 8.84e-03 \\
$\alpha_{1}$ & 1.07e-03 & 1.00e-03 & 0.935 & 7.46e-05 & 1.02e-04 \\
$\alpha_{100}$ & 9.68e-04 & 9.99e-04 & 1.033 & 7.27e-05 & 7.92e-05 \\
\bottomrule
\end{tabular}
\label{tab:Bootstrap_variance_estimator_p/n_0.20}
\end{table}

% \subsection{Performance in estimating AMP parameters}
% \label{app:sim amp}

% Finally, in the last section, we briefly compare the performance in estimating the parameters of the AMP fixed point equations \eqref{fixed point equation} across three different methods: \texttt{ProbeFrontier} of \citet{sur2019modernb}, \texttt{SLOE} of \citet{yadlowsky2021sloe} and our proposed approach.
\newpage
\putbib[\myreferencesapp]

\end{bibunit}
\end{appendices}

\end{document}